\documentclass[11pt]{amsart}
\usepackage[colorlinks=true,allcolors=blue]{hyperref}
\usepackage{amssymb,latexsym,amsmath,tikz-cd,tikz}
\usepackage{enumitem}
\usepackage[mathscr]{eucal}
\usepackage{bbm,xcolor,comment}
\usepackage{mathdots}
\usepackage{chngcntr}

\numberwithin{equation}{section}
\numberwithin{table}{section} 
\numberwithin{figure}{subsection}

\setlength{\evensidemargin}{0.25in}
\setlength{\oddsidemargin}{0.25in}
\setlength{\textwidth}{6in}
\setlength{\parindent}{20pt}
\linespread{1.2} 


\newenvironment{a_list}
  {\begin{enumerate}[label=(\alph*),itemsep=3pt,leftmargin=25pt,listparindent=20pt]}
  {\end{enumerate}}
\newenvironment{a.list}
  {\begin{enumerate}[label=\alph*.,itemsep=3pt,leftmargin=25pt,listparindent=20pt]}
  {\end{enumerate}}

\newenvironment{num.list}
  {
  \begin{enumerate}[itemsep=3pt,leftmargin=25pt,listparindent=20pt,label={\arabic*.}]
  }
  {\end{enumerate}}
\newenvironment{i_list}
  {\begin{enumerate}[label=(\roman*),itemsep=3pt,leftmargin=25pt,listparindent=20pt]}
  {\end{enumerate}}
\newenvironment{i_list_emph}
  {\begin{enumerate}[label=\emph{(\roman*)},itemsep=3pt,leftmargin=25pt,listparindent=20pt]}
  {\end{enumerate}}



\newcommand{\eref}[1]{\emph{\ref{#1}}}



\newcounter{letcnt1} 

\newcounter{letcnt2} 
\counterwithin{letcnt2}{letcnt1}

\def\a{\alpha}

\def\d{\delta}

\def\s{\sigma}
 
\def\x{\xi}

\def\tAd{\mathrm{Ad}} \def\tad{\mathrm{ad}}

\def\tAut{\mathrm{Aut}}

\def\bC{\mathbb{C}} 

\def\fc{\mathfrak{c}}
\def\tcodim{\mathrm{codim}}

\def\td{\mathrm{d}}

\def\tdim{\mathrm{dim}}

\def\tEnd{\mathrm{End}}
\def\cF{\mathcal{F}}

\def\cG{\mathcal{G}}

\def\tGL{\mathrm{GL}}
\def\tGr{\mathrm{Gr}}
\def\fg{\mathfrak{g}}
\def\fgl{\mathfrak{gl}}

\def\sH{\mathscr{H}}
\def\tHom{\mathrm{Hom}}

\def\bi{\mathbf{i}}
\def\cI{\mathcal{I}}
 \def\tIm{\mathrm{Im}}
 \def\tim{\mathrm{im}}
\def\sJ{\mathscr{J}}

\def\tker{\mathrm{ker}}
\def\cL{\mathcal{L}}

\def\sM{\mathscr{M}}

\def\cN{\mathcal{N}}

\def\fp{\mathfrak{p}}

\def\bQ{\mathbb{Q}} 
\def\sQ{\mathscr{Q}}
\def\bR{\mathbb{R}}

\def\tStab{\mathrm{Stab}}

 \def\tspan{\mathrm{span}}
\def\fsl{\mathfrak{sl}}  
 
\def\bT{\mathbb{T}}
\def\cT{\mathcal{T}}

\def\fu{\mathfrak{u}}
\def\cV{\mathcal{V}}

\def\bZ{\mathbb{Z}}

\def\half{\tfrac{1}{2}}


\def\tand{\quad\hbox{and}\quad}
\def\bs{\backslash}

\def\smallb{{\hbox{\small{$\bullet$}}}}
\def\inj{\hookrightarrow}

\def\op{\oplus}
\def\ot{\otimes}


\newtheorem{corollary}[equation]{Corollary}
\newtheorem{lemma}[equation]{Lemma}

\newtheorem{theorem}[equation]{Theorem}

\newtheorem*{mainthm*}{Main Theorem}
\newtheorem*{theorem*}{Theorem}
\newtheorem*{corollary*}{Corollary}
\newtheorem*{lemma*}{Lemma}

\theoremstyle{definition}

\newtheorem*{boldQ*}{Question}
\newtheorem*{boldP*}{Problem}


\theoremstyle{definition}

\theoremstyle{remark}
\newtheorem{definition}[equation]{Definition}
\newtheorem*{definition*}{Definition}

\newtheorem*{example*}{Example}
\newtheorem{remark}[equation]{Remark}

\newtheorem*{emphQ*}{Question}

\theoremstyle{plain}

\newtheorem*{mainthmD}{Theorem${}^\dagger$ \ref{T:imageThetaI(A)}}
\newtheorem*{mainthmE}{Theorem${}^\dagger$ \ref{T:mark}}
\newtheorem*{mainthmF}{Corollary${}^\dagger$ \ref{C:mark}}
\newtheorem*{mainthmG}{Corollary${}^\dagger$ \ref{C:thatsall}}

\def\tsum{\textstyle{\sum}}

\def\olB{{\overline{B}}}

\def\sD{\mathscr{D}}
\def\ff{\mathfrak{f}}
\def\cG{\mathcal{G}}
\def\sH{\mathscr{H}}

\def\cI{\mathcal{I}}

\def\tPhi{\widetilde\Phi}

\def\sM{\mathscr{M}}

\def\sfn{\mathsf{n}}
\def\sS{\mathscr{S}}
\def\sU{\mathscr{U}}
\def\olU{\overline\sU}

\def\Zc{Z^\mathrm{c}}
\def\Zw{Z^\mathrm{w}}

\def\CKS{MR840721}
\def\CMSP{MR3727160}

\begin{document}
\title{Period maps at infinity}

\author[Green]{Mark Green}
\email{mlg@math.ucla.edu}
\address{UCLA Mathematics Department, Box 951555, Los Angeles, CA 90095-1555}

\author[Griffiths]{Phillip Griffiths}
\email{pg@math.ias.edu}
\address{Institute for Advanced Study, 1 Einstein Drive, Princeton, NJ 08540}
\address{University of Miami, Department of Mathematics, 1365 Memorial Drive, Ungar 515, Coral Gables, FL  33146}

\author[Robles]{Colleen Robles}
\email{colleen.robles@duke.edu}
\address{Mathematics Department, Duke University, Box 90320, Durham, NC  27708-0320} 
\thanks{Robles is partially supported by NSF DMS 1611939, 1906352.}

\date{\today}

\begin{abstract}
Let $\olB$ be a smooth projective varieity, and $Z \subset \olB$ a simple normal crossing divisor.  Assume that $B = \olB - Z$ admits a variation of pure, polarized Hodge structure.  The divisor $Z$ is naturally stratified, and Schmid's nilpotent orbit theorem defines a family/variation of nilpotent orbits along each strata.  We study the rich geometric structure encoded by this family, its relationship to the induced (quotient) variation of pure Hodge structure on the strata, and establish a relationship between the extension data in the nilpotent orbits and the normal bundles of the smooth irreducible components of $Z$.
\end{abstract}

\keywords{Period map, variation of Hodge structure}
\subjclass[2010]
{
 14D07, 32G20, 
 58A14. 
}
\maketitle

\setcounter{tocdepth}{1}
\let\oldtocsection=\tocsection
\let\oldtocsubsection=\tocsubsection
\let\oldtocsubsubsection=\tocsubsubsection
\renewcommand{\tocsection}[2]{\hspace{0em}\oldtocsection{#1}{#2}}
\renewcommand{\tocsubsection}[2]{\hspace{3em}\oldtocsubsection{#1}{#2}}

\section{Introduction} 

\subsection{The setup} \label{S:setup}

Fix a smooth projective variety $\olB$ with simple normal crossing divisor $Z$.  Suppose the complement $B = \olB - Z$ admits a variation of (pure) polarized Hodge structure with local system
\[
\begin{tikzcd}[row sep=small,column sep=tiny]
  \mathbb{V} \arrow[r,equal] \arrow[d] &
  \tilde B \times_{\pi_1(B)} V_\bZ 
  \\ B &
\end{tikzcd}\]
having unipotent local monodromy around $Z$, and Hodge bundles
\begin{equation} \nonumber 
  \cF^p \ \subset \ 
  \cV \ = \ \tilde B \times_{\pi_1(B)} V_\bC \,.
\end{equation}
Here $V_\bZ$ is a lattice, $\tilde B \to B$ is the universal cover,  $\rho : \pi_1(B) \to \tGL(V_\bZ)$ is the monodromy representation, and $V_\bC = V_\bZ \ot_\bZ \bC$.  Let
\begin{equation} \nonumber 
  \Phi : B \ \to \ \Gamma \bs \sD 
\end{equation}
be the induced period map.  Here $\sD$ is a period domain parameterizing pure, weight $\sfn$, $Q$--polarized integral Hodge structures on $V_\bZ$; and $\Gamma = \rho(\pi_1(B))$ is the image of the monodromy representation.  Applying a Tate-twist if necessary, we may assume that the Hodge structures parameterized by $\sD$ are effective.  

\subsection{Nilpotent orbits at infinity} \label{S:nilp-orb-infty}

Write 
\[
  Z \ = \ Z_1 \,\cup\, Z_2 \,\cup\cdots\cup\, Z_\nu \,,
\]
with smooth irreducible components $Z_i$.  We denote by 
\[
  Z_I \ = \ \cap_{i\in I} \, Z_i
\]
the closed strata, and
\[
  Z_I^* \ = \ Z_I \,-\, \cup_{j\not\in I} \, Z_j 
  \ = \ \cap_{i \in I} \, Z_i^*
\] 
the smooth strata.  As we approach a point $o \in Z_I^*$ (a local lift of) the period map $\Phi$ degenerates to a limiting mixed Hodge structure $(W,F_o)$ that is polarized by a cone $\s_I \subset \fgl(V_\bQ)$ of nilpotent operators (arising from logarithms of the local monodromy around $o$).  The Hodge filtration $F_o$ depends on a choice of local coordinates at $o$, and is defined only up to the action of $\exp(\bC\s_I) \subset \tGL(V_\bC)$ on the compact dual $\check \sD$; here
\[
  \bC\s_I \ = \ \tspan_\bC\{\s_I\} \ \subset \ \fgl(V_\bC)\,.
\]
The orbit $\exp(\bC\s_I) \cdot F_o \subset \check \sD$ is independent of our choice of local coordinates.  The triple $(W,\,\s_I,\,\exp(\bC\s_I)\cdot F_o)$ depends on our choice of local lift, and so is well-defined only up to the action of $\Gamma$.  The $\Gamma$--conjugacy classes of $W$ and $\s_I$ are locally constant on $Z_I^*$.  The $\Gamma$--conjugacy class of the orbit $\exp(\bC\s_I) \cdot F_o \subset \check \sD$ may vary along $Z_I^*$.

For notational convenience we assume that the strata $Z_I$ are connected.\footnote{This may always be arranged after replacing $\olB$ with a suitable log modification $\hat B \to \olB$.  Alternatively, one may index the connected components $Z^*_{I,s} \subset Z_I^*$, and modify the arguments that follows accordingly.}  Then $Z_I^*$ is connected, and the $\Gamma$--conjugacy class of the pair $(W,\s_I)$ is constant along $Z_I^*$.  Fix an element of this conjugacy class.  Let $\sM_I \subset \check \sD$ be the Hodge filtrations $F$ in the compact dual with the property that $(W,F)$ is a mixed Hodge structure polarized by the nilpotent operators $N \in \s_I$.  Let $\Gamma_I \subset \Gamma$ be the subgroup centralizing the cone $\s_I$.  This group also stabilizes the weight filtration $W$.   Then we obtain map 
\begin{equation}\label{E:PsiI}
  \Psi_I : Z_I^* \ \to \ 
  (\Gamma_I\exp(\bC\s_I))\backslash \sM_I \,, 
\end{equation}
cf.~\S\ref{S:PsiI}.

\subsection{Goal and motivation}

The goal of this paper is to study the structure of the maps $\Psi_I$.  This is motivated by the idea that we should be able to use the maps $\Psi_I$ to construct a Hodge--theoretically meaningful algebraic completion of the period map $\Phi$.  In the case that $\sD$ is hermitian and $\Gamma$ is neat the toroidal compactifications $\Gamma \bs \sD_\Sigma$ of Ash--Mumford--Rapoport--Tai \cite{MR0485875, MR0457437} may be seen, a posteori, to be of this form (where we take the period map to be the identity $\Gamma \bs \sD \to \Gamma \bs \sD$).  In the general case ($\sD$ not necessarily hermitian) Kato--Usui \cite{MR2465224}, with refinements by Kato--Nakayama--Usui \cite{MR2721860, MR3084721}, have proposed a generalization of AMRT's construction that, if realized, would also yield an extension $\Phi_\Sigma : \olB \to \Gamma \bs \sD_\Sigma$.  The problem here is to find a ``$\Gamma$--strongly compatible weak fan'' $\Sigma$, consisting of nilpotent cones $\tau\subset \fg_\bQ$, and having the property that for every $(\s_I,F_o)$ arising as in \S\ref{S:nilp-orb-infty} there exists a unique minimal $\tau \in \Sigma$ so that $\s_I \subset \tau$ and $(\tau,F_o)$ also defines a nilpotent orbit.  Then $\Phi_\Sigma$ maps $o \in Z_I^*$ to the $\Gamma$--conjugacy class of the nilpotent orbit $\exp(\bC\tau)\cdot F_o$.  In particular, the restriction $\left.\Phi_\Sigma\right|_{Z_I^*}$ is of the form \eqref{E:PsiI}, but with $\tau$ in place of $\s_I$.

\begin{remark}[Related work by Chen, Deng and Robles]
The construction of the fan $\Sigma$ is trivial when $\tdim\,B=1$.  The first nontrivial\footnote{By ``nontrivial'' we mean that $\tdim\,B \ge 2$ and the image of the period map does not factor through a locally hermitian symmetric space.} example of a KNU completion $\Phi_\Sigma : \olB \to \Gamma \bs \sD_\Sigma$ with $\tdim\,B=2$ was given by H.~Deng \cite{MR4441155}.  This was shortly followed by a second example by C.~Chen \cite{Chen1221}.  In both cases, $B$ parameterizes families of Calabi--Yau varieties arising as mirrors to complete intersections in toric varieties.  Recently Deng and Robles have shown that every period map with $\tdim\,B=2$ admits a KNU completion \cite{DRdim2}.  It remains an open, and seemingly difficult, problem to demonstrate the existence of compatible weak fan $\Sigma$ when $\tdim\,B\ge3$.  Nonetheless the $\Psi_I$ can be patched together to define an algebraic completion of the Stein factorization of the period map \cite{deng-gentor, DRstein}.  From a Hodge theoretic perspective these completions all encode the same same information at infinity: conjugacy classes of nilpotent orbits.
\end{remark}

\subsection{The structure of $\Psi_I$}

Returning to our present goal, the Hodge filtration $F_o$ induces a pure Hodge structure on the quotient spaces $\tGr^W_\ell = W_\ell/W_{\ell-1}$, and $N \in \s_I$ determines a polarized sub-Hodge structure $P_\ell(N) \subset \tGr^W_\ell$, cf.~\S\ref{S:LMHS-F(w)}.  In this way, we obtain a period map $\Phi_I : Z_I^* \to \Gamma_I \bs \sD_I$ factoring through $\Psi_I$, cf.~\S\ref{S:PhiI}.  By considering the structures that $\Psi_I$ induces on $W_\ell/W_{\ell-2}$ we obtain an intermediate map $\Theta_I$, and commutative diagram
\begin{equation}\label{E:PhiTheta}
  \begin{tikzcd}
    Z_I^* \arrow[r,"\Psi_I"]  
    	\arrow[rd,"\Theta_I"',end anchor={west}]
    	\arrow[rdd,"\Phi_I"',end anchor={west},bend right]
    & (\Gamma_I \exp(\bC\s_I))\bs \sM_I 
		\arrow[d,"\pi_1"] \\
    & \Gamma_I \bs \sM_I^1 \arrow[d,"\pi_0"] \\
    & \Gamma_I \bs \sD_I \,,
  \end{tikzcd}
\end{equation}
cf.~\S\ref{S:ThetaI}.  This allows us to study the structure of $\Psi_I$ in two steps: (i) the relationship between $\Phi_I$ and $\Theta_I$; and (ii) the relationship between $\Theta_I$ and $\Psi_I$.

There is a rich geometric structure relating the maps $\Phi_I$ and $\Theta_I$:

\begin{mainthmD}
The fibres of $\pi_0 : \Gamma_I \bs \sM_I^1 \to \Gamma_I \bs \sD_I$ are finite quotients of complex tori.  Each torus contains an abelian variety.  Let $A$ be a connected component of a $\Phi_I$--fibre.  \emph{(In particular, $\Theta_I(A)$ is contained in a $\pi_0$--fibre).}  Then $\Theta_I(A)$ is contained in a finite quotient of a translate of the abelian variety.  The finite quotients are trivial if $\Gamma$ is neat.
\end{mainthmD}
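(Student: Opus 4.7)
\medskip
\noindent\textbf{Proof plan.}
The plan is to decompose the statement into (i) a description of the fibres of $\pi_0$ over $\sD_I$ before quotienting by $\Gamma_I$ as Carlson-type extension tori, (ii) the identification of an abelian subvariety cut out by the polarization, (iii) an argument using Griffiths transversality to confine $\Theta_I(A)$ to a horizontal translate, and (iv) handling of the $\Gamma_I$-action.

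First I would unwind what $\sM_I^1$ parameterizes: a Hodge filtration on each two-step quotient $W_\ell/W_{\ell-2}$ inducing, on $\tGr^W_\ell \oplus \tGr^W_{\ell-1}$, a fixed polarized pure Hodge structure and extending it compatibly with the weight filtration. For a fixed point in $\sD_I$ (i.e. a fixed graded polarized Hodge structure), the set of such extensions in $\sM_I^1$ is a torsor under the complex vector space
\[
  E \ = \ \bigoplus_\ell \tHom(\tGr^W_\ell,\tGr^W_{\ell-1})_\bC \big/ F^0\tHom(\tGr^W_\ell,\tGr^W_{\ell-1})_\bC\,,
\]
where $F^0$ denotes the morphisms preserving the Hodge filtrations on the graded pieces. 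The integral structure $V_\bZ$ cuts out a full lattice $L \subset E$, and the fibres of the unquotiented $\sM_I^1 \to \sD_I$ are the generalized intermediate Jacobians $E/L$, by Carlson's extension theorem applied to the weight $-1$ Hodge structure on $\tHom(\tGr^W_\ell,\tGr^W_{\ell-1})$.

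Next I would locate the abelian variety inside $E/L$. The polarizing cone $\s_I$ together with $Q$ equips each weight $-1$ piece $\tHom(\tGr^W_\ell,\tGr^W_{\ell-1})$ with a polarization, and on the Hodge theoretic level this polarization is positive on the \emph{horizontal} direct summand $E_h \subset E$ (the subspace consistent with Griffiths transversality, i.e. the $(-1,0)$ Hodge summand of this weight $-1$ Hodge structure). The Riemann bilinear relations then show that $E_h/(E_h \cap L)$ is a polarized abelian variety sitting inside the complex torus $E/L$.

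Now I would pass to the content of the second statement about $\Theta_I(A)$. Because $A$ is a connected component of a fibre of $\Phi_I$, the graded polarized Hodge structures are constant along $A$, so $\Theta_I(A)$ lies in a single fibre of $\pi_0$, i.e. in a torus $E/L$ (up to the $\Gamma_I$-action). The differential $d\Theta_I$ is a quotient of the differential of the full limiting map $\Psi_I$; by Griffiths transversality (inherited from the horizontality of the original period map $\Phi$), the image of $d\Theta_I$ lies in the horizontal directions, i.e. in $E_h$. Hence along $A$, $\Theta_I$ has image tangent at every point to a single translate of $E_h$, so $\Theta_I(A)$ is contained in a translate of the abelian subvariety $E_h/(E_h\cap L)$.

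Finally, the $\Gamma_I$-action on $\sM_I^1$ covers the $\Gamma_I$-action on $\sD_I$. The stabilizer of a fixed graded structure acts on the fibre $E/L$ through a discrete group; it is finite (the stabilizer of a polarized Hodge structure is compact, and its image in $\tGL(E/L)$ is discrete, hence finite), and it is trivial when $\Gamma$ is neat. This produces the claimed finite quotients.

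The main technical obstacle I anticipate is step three: identifying $E_h$ precisely with the horizontal tangent space of the fibre, and verifying that the positivity coming from the polarized mixed Hodge structure cuts out exactly this summand. The ingredients are classical (Deligne splittings, Cattani--Kaplan--Schmid's $\fsl_2$-structure on the cone $\s_I$, and Carlson's formulas), but assembling them uniformly in $\ell$ and across the whole cone $\s_I$ — rather than for a single $N$ — requires care, as does checking that the various notions of ``horizontality'' for $\Psi_I$, $\Theta_I$, and infinitesimal variations of the extension data all agree.
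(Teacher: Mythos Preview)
Your overall architecture matches the paper's: describe the $p_0$--fibres as complex tori built from extension data, single out a polarized piece via the Hodge--Riemann relations, use horizontality to confine $\Theta_I(A)$, and handle finite stabilizers for the $\Gamma_I$--quotient. Two points need correction, one minor and one a genuine gap.

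The minor point: your identification of the fibre $E$ with $\bigoplus_\ell \tHom(\tGr^W_\ell,\tGr^W_{\ell-1})/F^0$ is too large in general. Because $\sM_I^1 = C_{I,\bC}^{-2}\bs\sM_I$ is built from the centralizer $C_I$ of the cone rather than from the full parabolic $P_W$, the fibre is $E_\bC = \fc_{I,\bC}^{-1}/(\fc_{I,\bC}^{-2} + (\fc_{I,\bC}^{-1}\cap\ff))$, a subspace of your $E$. The paper notes explicitly (in the remark following its description of the $\pi_0$--fibres) that equality holds only in the Hermitian case.

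The genuine gap is your claim that $E_h/(E_h\cap L)$ is a polarized abelian variety. The horizontal subspace $E_h$ you single out corresponds to the Deligne summand $\fc_{I,F}^{-1,0}$, which depends on the Hodge filtration $F$ and is \emph{not} defined over $\bQ$. Consequently $E_h\cap L$ need not be a full lattice in $E_h$, and the image of $E_h$ in the torus $E_\bZ\bs E_\bC$ is in general of the form $\bC^a\times(\bC^*)^b\times\sJ$, with only the compact factor $\sJ$ an abelian variety. Your transversality argument correctly shows that $\Theta_I(A)$ is tangent to a translate of $E_h$, hence lands in a translate of $\bC^a\times(\bC^*)^b\times\sJ$; but to conclude it lands in a translate of $\sJ$ you need one more step: $A$ is compact (because $\Theta_I$ is proper on $\Zw_I$), and a holomorphic map from a compact connected space to $\bC^a\times(\bC^*)^b$ is constant. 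This compactness step is essential and is how the paper closes the argument.
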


\noindent (In the interest of conciseness, some results discussed in this Introduction are stated imprecisely and/or incompletely; this is indicated by the superscript ${}^\dagger$.  The reader will find the complete and precise statements, with all necessary definitions, in the body of the paper.)

\begin{remark}[Related work of Kerr and Pearlstein]
The map $\pi_0$ of \eqref{E:PhiTheta} is one piece of a fibration tower interpolating between $(\Gamma_I \exp(\bC\s_I)) \bs \sM_I$ and $\Gamma_I \bs \sD_I$.  This tower is studied in \cite[\S7]{MR3474815}, where it is shown that the iterated fibres are generalized intermediate Jacobians.
\end{remark}

\begin{remark}[Related work of Bakker, Brunebarbe, Klingler and Tsimerman]
The triple $(W,F_o,\s_I)$ of \S\ref{S:nilp-orb-infty} defines a graded-polarized mixed Hodge structure.  If the $\Gamma$--conjugacy class of the Hodge filteration $F_o \in \check \sD$ were well-defined, then we would obtain a variation of graded-polarized mixed Hodge structures along $Z_I^*$.  However, in our situation it is only the $\Gamma$--conjugacy class of the orbit $\exp(\bC\s_I)\cdot F_o \subset \check \sD$ that is well-defined; the map $\Psi_I$ of \eqref{E:PsiI} may be viewed as the quotient of a variation of graded-polarized mixed Hodge structures.  The (mixed) period map of an admissible graded-polarized integral mixed Hodge structure is known to be $\mathbb{R}_\mathrm{an,exp}$--definable \cite{MR4742809}.  We anticipate that the ideas there may be adapted to show that the map $\Psi_I : Z_I^* \to (\Gamma_I \exp(\bC\s_I)) \bs \sM_I$ of \eqref{E:PsiI} is $\mathbb{R}_\mathrm{an,exp}$--definable, but do not pursue this here.
 
The definibility result of \cite{MR4742809} was used by Bakker--Brunebarbe--Tsimerman to show that the image of a (proper, mixed) period map is quasi-projective \cite{MR4661535}.  In that work, they established an analog of Theorem \ref{T:imageThetaI(A)} for mixed period maps \cite[Proposition 2.17]{MR4661535}.  They also show that the theta bundle is relatively ample over the image of the (pure) period map induced by taking the weight-graded quotient of the mixed period map \cite[Corollary 2.18]{MR4661535}.  This result is analogous to Theorem \ref{T:thatsall} (discussed below), and the analogy is strongest under certain nondegeneracy conditions on the cones, cf.~Corollary \ref{C:thatsall}.
\end{remark}

The next result relates the geometry \emph{along} the $\Phi_I$--fibres to the geometry \emph{normal} to $Z$. 

\begin{mainthmE}
There exist line bundles $\Lambda_M$ over $\Gamma_I \bs \sM_I^1$ polarizing the abelian varieties of Theorem \ref{T:imageThetaI(A)} so that
\begin{equation}\label{E:mark-intro}
  \left.\Theta_I^*(\Lambda_M)\right|_A
  \ = \ 
  \sum_j \left.\sQ(M,N_j) [Z_j]\right|_A \,,
\end{equation}
with $\sQ(M,N_j) \in \bZ$, and summing over all $Z_j \cap A \not=\emptyset$.
\end{mainthmE}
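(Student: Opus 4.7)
My plan is to construct $\Lambda_M$ intrinsically from the Hodge-theoretic data on the fibres of $\pi_0$, compute its pullback metric via Schmid's asymptotic expansion of $\Phi$ near $Z$, and then extract the coefficients via Poincar\'e--Lelong.

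\textbf{Construction of $\Lambda_M$.} By Theorem \ref{T:imageThetaI(A)}, each fibre of $\pi_0 : \Gamma_I \bs \sM_I^1 \to \Gamma_I \bs \sD_I$ is (a finite quotient of) a complex torus containing a polarized abelian variety arising from the extension data between consecutive weight-graded pieces of the LMHS, essentially a generalized intermediate Jacobian of $W_\ell/W_{\ell-2}$. For $M$ an appropriate element (polarizing weight datum on the cone $\s_I$, paired against the polarization form $Q$) one obtains a rational alternating form on the underlying integral lattice, which is the first Chern class of a line bundle $\Lambda_M$ defined on all of $\Gamma_I \bs \sM_I^1$, polarizing each abelian fibre. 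Well-definedness on the quotient uses that $\Gamma_I$ centralizes $\s_I$ and preserves both $W$ and $Q$, so $\Gamma_I$-equivariance of the form is automatic.

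\textbf{Reduction to a fibre $A$.} Fix a connected component $A$ of a $\Phi_I$-fibre. By Theorem \ref{T:imageThetaI(A)}, $\Theta_I(A)$ is contained in a single finite quotient of a translate of the abelian variety $V/L$ sitting in one $\pi_0$-fibre; so $\Theta_I^*(\Lambda_M)\vert_A$ is naturally a line bundle built from the above alternating form evaluated along the (varying) extension data. What we need to identify is the class of this line bundle in $H^2(A,\bZ)$.

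\textbf{Asymptotic computation.} Fix a point $o \in Z_I^* \cap \overline A$ and choose local coordinates $(s_1,\ldots,s_\nu, z)$ on $\olB$ with $Z_j = \{s_j = 0\}$ for $j \in I$. A local lift of $\Phi$ has the nilpotent orbit form
\[
  \widetilde\Phi(s,z) \ = \ \exp\left(\tsum_j \tfrac{\log s_j}{2\pi\bi}\, N_j\right) \cdot F(s,z),
\]
with $F(s,z)$ extending holomorphically across $\{s=0\}$. Equip $\Lambda_M$ with the Hermitian metric $\Vert\cdot\Vert_M^2$ induced by $Q$ on the lattice. Pulling back under $\Theta_I$ and applying $-dd^c \log$, the curvature splits into (i) a smooth piece pulled back from $\Gamma_I \bs \sD_I$, which vanishes on $A$ because $\Phi_I$ is constant along $A$, and (ii) a singular piece arising from the $\log s_j$ factors in the exponent. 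A direct calculation using the compatibility of $Q$ with the weight filtration identifies the coefficient of $dd^c \log\vert s_j\vert^2$ in this second piece with an integer $\sQ(M,N_j) \in \bZ$, which is the pairing of $M$ and $N_j$ induced from $Q$ on the appropriate graded piece. Integrality follows from the $\bZ$-integrality of $N_j$ (as a monodromy logarithm) and of $Q$ on the lattice. By the Poincar\'e--Lelong formula, $dd^c \log\vert s_j\vert^2$ represents $[Z_j]$ on $\olB$, so restricting to $A$ yields \eqref{E:mark-intro}.

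\textbf{Main obstacle.} The principal difficulty is the bookkeeping needed to verify that the ``smooth piece'' of the curvature really does vanish identically on $A$ (not merely cohomologically), and that the residual contribution from the nilpotent orbit approximation is exactly $\sum_j \sQ(M,N_j)\, dd^c\log\vert s_j\vert^2$ with no extra smooth correction depending on $F(s,z)$. Showing this requires a careful analysis of how the extension data inside $\pi_0$-fibres pairs with $M$ in the metric $\Vert\cdot\Vert_M^2$, and uses the fact that $F(s,z) \bmod \exp(\bC\s_I)$ is constant along $A$. Once this vanishing is established, the formula and the integer structure of $\sQ(M,N_j)$ follow from the nilpotent orbit theorem and the polarization relations.
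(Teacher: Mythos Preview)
Your curvature/Poincar\'e--Lelong strategy is a plausible alternative to the paper's argument, but there are two concrete gaps, and one of them is a genuine error.

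\textbf{Integrality.} Your claim that ``integrality follows from the $\bZ$-integrality of $N_j$ and of $Q$ on the lattice'' is false. The logarithm $N_j = \log T_j$ of a unipotent $T_j \in \tGL(V_\bZ)$ lies only in $\fgl(V_\bQ)$, not $\fgl(V_\bZ)$; and $M$, being determined by the $\fsl_2$--triple relations, is likewise only rational (Remark~\ref{R:Yratl}). So $\sQ(M,N_j)$ is a priori only in $\bQ$. The paper obtains integrality by a nontrivial normalization (Lemma~\ref{L:Qint} and Remark~\ref{R:normM}): one rescales $M$ so that $\sQ(M,b) \in \bZ$ for every $\gamma = \alpha\exp(b) \in \Gamma_I$, and the argument uses that arithmetic groups are finitely generated. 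Without this step your alternating form is only rational on the lattice, so it need not be $c_1$ of a line bundle, and your $\Lambda_M$ may not exist.

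\textbf{The divisor computation.} The paper does not use curvature or Poincar\'e--Lelong for \eqref{E:mark-intro}. Instead it builds an auxiliary line bundle $\tilde\Lambda_M$ over $\Gamma_{I,\infty}\bs(\sD\cap\sS)$ via an explicit factor of automorphy $\tilde e_M$ (\S\ref{S:prf-mark2}), and exhibits a concrete section whose pullback under the lifted period map $\Phi_{I,\infty}$ is $\exp(2\pi\bi\,\sQ(M,\log g_o))\prod_j t_j^{\sQ(M,N_j)}$; reading off the divisor gives Lemma~\ref{L:mark} directly. Then a comparison of factors of automorphy (\S\ref{S:prf-mark3}) shows $\Phi_{I,\infty}^*(\tilde\Lambda_M)$ and $\Theta_I^*(\Lambda_M)$ agree on $A$. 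Your approach instead asks for a singular Hermitian metric on $\Theta_I^*(\Lambda_M)$ over $\Zw_I$ whose curvature current has the prescribed residues along the $Z_j\cap A$. You correctly identify this as the ``main obstacle'' but do not resolve it: you have not specified which metric you pull back, nor shown why its curvature on $A$ has no smooth correction from the holomorphic part $F(s,z)$ of the nilpotent orbit expansion. The paper's section-based argument avoids this analytic difficulty entirely, at the cost of the bookkeeping in \S\ref{S:prf-mark3} comparing two Hodge filtrations $F$ and $\hat F$ in the same $p$--fibre.
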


\begin{mainthmF}
If the differential of $\left.\Theta_I\right|_A$ is injective, then the line bundle $-\sum\left.\sQ(M,N_j)\,\cN^*_{Z_j/\olB}\right|_A$ is ample. 
\end{mainthmF}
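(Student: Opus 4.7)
The plan is to combine Theorem~\ref{T:mark} with Theorem~\ref{T:imageThetaI(A)}, converting the divisor-class identity \eqref{E:mark-intro} into a conormal-bundle identity and then invoking ampleness of the polarization. First I would restrict the sum in \eqref{E:mark-intro} to those $Z_j$ that actually meet $A$. Since $A \subset Z_I^*$ and $Z_I^* \cap Z_j = \emptyset$ for $j \notin I$, only the indices $j \in I$ contribute. For any such $j$ we have $A \subset Z_j$, so the self-intersection (adjunction) formula $\cO_{\olB}(Z_j)|_{Z_j} \cong \cN_{Z_j/\olB}$ gives
\[
  [Z_j]|_A \;=\; \cN_{Z_j/\olB}|_A \;=\; -\,\cN^*_{Z_j/\olB}|_A ,
\]
and substituting into \eqref{E:mark-intro} rewrites the line bundle appearing in the Corollary as $\Theta_I^*(\Lambda_M)|_A$.

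Next I would invoke Theorem~\ref{T:imageThetaI(A)}: the image $\Theta_I(A)$ lies in a finite quotient of a translate of an abelian variety $\mathsf{A}_M$ inside the $\pi_0$-fibre through $\Theta_I(A)$, and $\Lambda_M$ was chosen so that its restriction polarizes $\mathsf{A}_M$, hence is ample there. With the differential of $\Theta_I|_A$ injective by hypothesis, $\Theta_I|_A$ becomes a local embedding of $A$ into $\mathsf{A}_M$ (modulo the finite group identified in Theorem~\ref{T:imageThetaI(A)}); pulling back the polarization $\Lambda_M$ under this embedding then yields ampleness of $\Theta_I^*(\Lambda_M)|_A$, which by the previous paragraph is exactly $-\sum_j \sQ(M,N_j)\,\cN^*_{Z_j/\olB}|_A$.

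The main obstacle, and the step that will require the most care, is the last inference. For the pullback of an ample bundle under a holomorphic map to be ample one needs the map to be finite onto its image, which in turn requires properness input: the stratum $Z_I^*$, and hence the fibre component $A$, need not be proper. One must therefore either argue that $A$ (or a suitable compactification thereof) is proper, so that the injective differential combined with the dimensional containment of $\Theta_I(A)$ in the fixed abelian variety $\mathsf{A}_M$ forces finiteness onto the image, or reinterpret the ampleness statement relative to a completion of $\Phi_I$ assembled from the maps $\Psi_I$. Verifying that the pullback identity from Theorem~\ref{T:mark} survives this passage to a proper model, and that the coefficients $\sQ(M,N_j)$ remain those computed in Theorem~\ref{T:mark}, is the technical heart of the argument.
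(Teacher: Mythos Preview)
Your overall strategy---identify $\Theta_I^*(\Lambda_M)|_A$ with the conormal combination via Theorem~\ref{T:mark}, then pull back the ampleness of $\Lambda_M$ on the abelian variety through $\Theta_I|_A$---is exactly the paper's argument. But the obstacle you flag at the end is not an obstacle at all, and your attempt to work around it introduces an error.

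The key point you are missing is that $A$ is \emph{compact}. In the precise version of the statement (Corollary~\ref{C:mark}), $A$ is a connected component of a fibre of the \emph{proper} extension $\Phi_I:\Zw_I\to\Gamma_I\backslash\sD_I$ of Lemma~\ref{L:proper2}, not of the map on the open stratum $Z_I^*$. Properness of $\Phi_I$ forces every fibre, and hence $A$, to be compact. With $A$ compact and the differential of $\Theta_I|_A$ injective, the holomorphic map $\Theta_I|_A$ into the (finite quotient of the) translate of the abelian variety $\sJ$ is automatically finite onto its image, and the pullback of the ample polarization $\Lambda_M$ (Lemma~\ref{L:sJ}) is ample. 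No passage to a further compactification is needed, and none of the delicate verification you describe in your last paragraph is required.

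A secondary issue: your reduction of the sum to indices $j\in I$ is not correct in this setting. Since $A\subset\Zw_I$ rather than $A\subset Z_I^*$, the fibre component $A$ can meet divisors $Z_j$ with $j\notin I$ (namely those $j$ with $\{j\}\cup I\in\mathrm{wt}(I)$), and these contribute to the sum in \eqref{E:mark}. The identification $[Z_j]|_A=-\cN^*_{Z_j/\olB}|_A$ should be read with $[Z_j]=\cO_{\olB}(Z_j)$ as a line bundle on $\olB$ restricted to $A$.
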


Surprisingly, the map $\Psi_I$ is almost completely determined by $\Theta_I$ (Theorem \ref{T:thatsall}).  Moreover, the information in $\Psi_I$ that is not captured by $\Theta_I$ is encoded by certain line bundles and their sections (Remark \ref{R:thatsall}).  We defer the precise statements of these results to \S\ref{S:Psi-v-Theta}, as they involve a somewhat subtle relationship between proper extensions of $\Psi_I$ and $\Theta_I$.  A special case of Theorem \ref{T:thatsall} is

\begin{mainthmG}
Given $j \not \in I$, assume that the weight filtrations $W(\s_I) \not= W(\s_{I \cup\{j\}})$ do not coincide. Then $\Psi_I$ is locally constant on the fibres of $\Theta_I$. 
\end{mainthmG}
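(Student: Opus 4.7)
The plan is to derive this corollary directly from Theorem \ref{T:thatsall}. That theorem, together with Remark \ref{R:thatsall}, records the information lost in passing from $\Psi_I$ to $\Theta_I$ as a collection of line bundles with distinguished sections. The strategy is to show that under the hypothesis $W(\s_I)\neq W(\s_{I\cup\{j\}})$ for every $j\not\in I$, all of these residual bundles are trivial; equivalently, $\Psi_I$ factors through $\Theta_I$ on $Z_I^*$, from which the corollary is immediate.

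First I would unpack the source of the residual data. The codomain $(\Gamma_I\exp(\bC\s_I))\bs\sM_I$ of $\Psi_I$ parametrizes $\s_I$-polarized graded-polarized mixed Hodge structures modulo the complex cone action, while the codomain $\Gamma_I\bs\sM_I^1$ of $\Theta_I$ records only the data visible on the quotients $W_\ell/W_{\ell-2}$. The difference is therefore controlled by extension data of $(W,F)$ sitting ``higher'' in the weight filtration, and this extension data can vary along $Z_I^*$ only through normal directions to $Z_I^*$ inside $\olB$, i.e.\ through nilpotent generators of the adjacent cones $\s_{I\cup\{j\}}$.

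Next I would invoke the hypothesis to argue that these extensions cannot vary. The heuristic is that if $W(\s_I)=W(\s_{I\cup\{j\}})$, then the extra operator $N_j$ preserves $W(\s_I)$ and genuinely contributes to the residual extension data; whereas if $W(\s_I)\neq W(\s_{I\cup\{j\}})$, then $N_j$ alters the weight filtration and contributes instead to the weight-graded map $\Theta_I$ itself. Under our blanket hypothesis every $\s_{I\cup\{j\}}$ is of the latter type, so the residual line bundles of Theorem \ref{T:thatsall} / Remark \ref{R:thatsall} admit no directions of nontrivial variation along $Z_I^*$. Consequently $\Psi_I$ is locally a function of $\Theta_I$, as claimed.

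The main obstacle will be verifying, inside the proof of Theorem \ref{T:thatsall}, that the residual line bundle data is indexed by precisely those adjacent $j$ with $W(\s_I)=W(\s_{I\cup\{j\}})$. Concretely, this amounts to tracking how the relative monodromy filtrations of the pairs $(\s_I,\s_{I\cup\{j\}})$ interact with the construction of the inclusion $\sM_I^1\hookrightarrow\sM_I$; once this identification is in place, the hypothesis eliminates the relevant index set and the corollary follows formally.
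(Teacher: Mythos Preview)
Your overall plan---derive the corollary from Theorem \ref{T:thatsall}---is correct, and your final observation that the hypothesis ``eliminates the relevant index set'' is exactly the point.  But you are taking a detour through Remark \ref{R:thatsall} and the line bundles $\tilde\Lambda_{M'}$ that the paper avoids entirely.  The paper's argument is a one-liner: the hypothesis $W(\s_I)\neq W(\s_{I\cup\{j\}})$ for all $j\notin I$ says precisely that $\mathrm{wt}(I)=\{I\}$, so $\Zw_I=Z_I^*=\Zc_I$.  The sentence immediately preceding Corollary \ref{C:thatsall} then records that $\bC\s_{A'}=\bC\s_I$, whence the two quotients in the diagram of Theorem \ref{T:thatsall} coincide, $\Psi_{I,\infty}'=\Psi_{I,\infty}''$, and the theorem's conclusion (that $\Psi_{I,\infty}''$ is locally constant on $\Theta_{I,\infty}'$--fibres) applies directly to $\Psi_I$.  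There is no need to analyze any line bundles: the index set you worry about is empty, not merely one whose bundles are trivial.

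A couple of places where your heuristics drift: (i) when $W(\s_I)\neq W(\s_{I\cup\{j\}})$, the stratum $Z_{I\cup\{j\}}^*$ is simply not in the domain $\Zw_I$ of the extended $\Theta_I$; it does not ``contribute to $\Theta_I$'' as you suggest.  (ii) There is no ``inclusion $\sM_I^1\hookrightarrow\sM_I$''; $\sM_I^1=C_{I,\bC}^{-2}\bs\sM_I$ is a quotient.  Neither of these derails your endpoint, but they indicate that the mechanism you have in mind is not quite the paper's.  Once you replace the line-bundle analysis with the single observation $\Zc_I=\Zw_I\Rightarrow\bC\s_{A'}=\bC\s_I$, your proof collapses to the paper's.
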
 

\begin{remark}
The implication of Theorem \ref{T:thatsall} is that \eqref{E:mark-intro} \emph{is the central geometric information that arises when considering the variation $\Psi_I$ along the $\Phi_I$--fiber}.
\end{remark}

\subsection*{Acknowledgements}

This paper grew out of discussions and work \cite{GGLR} with Radu Laza.  CR is indebted to Haohua Deng for many illuminating discussions, including those that led to Lemma \ref{L:nbd-ZI}.

\tableofcontents 

\section{Review of local behavior at infinity} 

Here we set notation and review well-known properties of period maps and their local behavior at infinity.  Good references for this material include \cite{\CMSP, \CKS, MR2918237, MR0259958, MR3474815, MR0382272}.

\subsection{Group notation}

Given a ring $\bZ \subset R \subset \bC$, define $V_R = V_\bZ \ot_\bZ R$.  The polarization is a nondegenerate bilinear form $Q : V_\bQ \times V_\bQ \to \bQ$ satisfying
\[
  Q(u,v) \ = \ (-1)^\sfn Q(v,u) \,,
  \quad\hbox{for all}\quad u,v \in V_\bQ \,.
\]
Let $\tGL(V_R) \simeq \tGL_r(R)$ be the group of invertible $R$--linear maps $V_R \to V_R$.  And let 
\[
  G_R \ = \ \tGL(V_R,Q)
  \ = \ 
  \{ g \in \tGL(V_R) \ | \ Q(gu,gv) = Q(u,v) \,,\ 
  \forall \ u,v \in V_R\}
\]
be the subgroup preserving the polarization.  We have
\[
  \Gamma \ \subset \ G_\bZ \,.
\]
Let $\fgl(V_R) \simeq \fgl_r(R)$ be the Lie algebra of $R$--linear maps $V_R \to V_R$.  Set
\[
  \fg_R \ = \ \fgl(V_R,Q) \ = \ 
  \{ \xi \in \fgl(V_R) \ | \ 
  0 = Q(\xi u, v) +Q(u,\xi v) \,,\ \forall \ u,v \in V_R \} \,.
\]
When $R = \bR,\bC$, $G_R$ is a Lie group with Lie algebra $\fg_R$.

\subsection{Period maps at infinity} \label{S:vhs}

\subsubsection{}

Let 
\[
  \Delta \ = \ \{ t \in \bC \ | \ |t|<1\}
\]
denote the unit disc, and 
\[
  \Delta^* \ = \ \{ t \in \bC \ | \ 0<|t|<1\} 
\]
the punctured unit disc.  The upper half plane
\[
  \sH \ = \ \{ z \in \bC \ | \ \tIm\,z>0 \}
\]
is the universal cover of $\Delta^*$, with covering map
\[
  \sH \ \to \ \Delta^*
  \quad\hbox{sending}\quad
  z \ \mapsto \ t = e^{2\pi\bi z} \,.
\]
Let 
\[
  \ell(t) \ = \ \frac{\log t}{2\pi\bi}
\]
denote the multi-valued inverse.

\subsubsection{} 

Set $|I| = \tcodim\,Z_I$.  Then $I = \{i_1,\ldots,i_k\}$, with $k = |I|$.  Fix a point $o \in Z_I^* \subset \olB$.    We may choose a local coordinate chart 
\[
  (t,w) : \olU \,\subset\,\olB \ \stackrel{\simeq}{\longrightarrow} \ 
  \Delta^{k+r}
\]
centered at $o$, so that 
\[
  \olU \,\cap\, Z_{i_a} \ = \ 
  \olU \cap Z_{i_a}^* \ = \ 
  \{ t_a=0\} \,,
  \quad \hbox{for all}\quad 1 \le a \le k \,,
\]
and
\[
  (t,w) : \sU \,=\,B \,\cap\,\olU \ \stackrel{\simeq}{\longrightarrow} \ 
  (\Delta^*)^k \times \Delta^r \,.
\]

\subsubsection{} \label{S:Ti}

Given any point $(t,w) \in \sU$ and $i_a \in I$, the closed curve parameterized by 
\[
  \mathbf{c}_{i_a}(s) \ = \ (t_1,\ldots,t_{a-1},\,t_a\,e^{2\pi\bi s},\, t_{a+1},\ldots,t_k;\,w) \,,
  \quad 0 \le s \le 1\,,
\]
is contained in $\sU$ and circles $Z_{i_a}$.  These curves define counter-clockwise generators $\{[\mathbf{c}_{i_a}]\}_{i=1}^k$ of the fundamental group $\pi_1(\sU , (t,w)) \simeq \pi_1((\Delta^*)^k \times \Delta^r) \simeq \bZ^k$.  Parallel transportation (by the Gauss--Manin connection) along the curve $\mathbf{c}_{i_a}$ defines an operator $\mathbf{T}_{i_a}(t,w) \in \tGL(\mathbb{V}_{(t,w)})$ that depends only on the homotopy class $[\mathbf{c}_{i_a}] \in \pi_1(\sU;(t,w))$.  These operators are the \emph{local monodromy about $Z$}.  In general they are quasi-unipotent.  In this paper we are assuming that the operators are unipotent (\S\ref{S:setup}).  Each $\mathbf{T}_{i_a}$ is a flat section of $\tGL(\mathbb{V})$ over $\sU$, \cite{MR0382272}.  

\subsubsection{} \label{S:loclift}

Each flat section $\mathbf{T}_{i_a} \in H^0(\sU,\left.\tGL(\mathbb{V})\right|_\sU)$ of \S\ref{S:Ti} determines a $\Gamma$--conjugacy class of unipotent operators $T_{i_a} \in \tGL(V_\bZ)$.  More generally, the $k$-tuple of flat sections $\{\mathbf{T}_{i_a}\}_{a=1}^k \subset H^0(\sU,\left.\tGL(\mathbb{V})\right|_\sU)$ in \S\ref{S:Ti} determines a $\Gamma$--conjugacy class $\cT_\sU \subset \tGL(V_\bZ) \,\times\cdots\times\, \tGL(V_\bZ)$.  A choice of element $\{T_{i_a}\}_{i_a \in I} \in \cT_{\sU}$ in the $\Gamma$--conjugacy class determines a local lift $\Phi_\sU$ of the period map as follows.  Let $\Gamma_\sU \subset \Gamma$ be the subgroup generated by $\{T_{i_a}\}_{a=1}^k$.  There is a commutative diagram
\begin{equation}\label{E:PhisU} \begin{tikzcd}  
  & \Gamma_\sU \bs \sD \arrow[d] \\
  \sU \arrow[r,"\Phi"'] \arrow[ru,"\Phi_\sU"]
  & \Gamma \bs \sD \,.
\end{tikzcd}\end{equation}
The nilpotent orbit theorem \cite{MR0382272} describes the structure of the local lift $\Phi_\sU$, as follows.

By hypothesis (\S\ref{S:Ti}) the $\mathbf{T}_{i_a}(t,w): \mathbb{V}_{(t,w)} \to \mathbb{V}_{(t,w)}$ are unipotent.  Equivalently, the $T_{i_a} : V_\bZ \to V_\bZ$ are unipotent.  Let 
\[
  N_{i_a} \ = \ \log T_{i_a} \ \in \ \fgl(V_\bQ)
\]
be the logarithm of $T_{i_a}$.  The universal cover of $\sU$ is $\widetilde \sU = \sH^k \times \Delta^r$, and we have a commutative diagram
\[ \begin{tikzcd} 
  \widetilde\sU \arrow[r,"\tPhi"] \arrow[d]
  & \sD \arrow[d] \\
  \sU \arrow[r,"\Phi_\sU"]
  & \Gamma_\sU \bs \sD \,.
\end{tikzcd} \]
The lift to the universal cover is of the form
\begin{equation}\label{E:tPhi}
  \tPhi(t,w) \ = \ \exp( \tsum \ell(t_a) N_{i_a} ) 
  \,g(t,w) \cdot F_o \,.
\end{equation}
Here, $F_o$ is an element of the compact dual $\check \sD \supset \sD$ (which is the flag variety parameterizing the filtrations $F^p(V_\bC)$ that satisfy the first Hodge--Riemann bilinear relation
$Q(F^p,F^q) = 0$ for all $p+q > \sfn$,
but not necessarily the second); the group $G_\bC$ acts transitively on $\check \sD$, and 
\begin{equation}\label{E:tilde-g}
  g : \olU \ \to \ G_\bC
\end{equation}
is a holomorphic map; and we abuse notation by conflating the multi-valued $\ell(t_a)$ with the coordinates $z_a$ on $\sH^k$.

\subsection{Limiting mixed Hodge structures} \label{S:LMHS}

Fix an element $\{T_{i_a}\}_{i_a \in I}$ of the $\Gamma$--conjugacy class $\cT_\sU$.  Let 
\begin{equation} \nonumber 
  \s_I \ = \ \{ y_1 N_{i_1} + \cdots + y_k N_{i_k} \ | \ 0 < y_a \in \bQ \}
  \ \subset \ V_\bQ
\end{equation}
be the rational cone generated by the nilpotent $N_{i_a} = \log T_{i_a}$.

\subsubsection{}  \label{S:W}

A nilpotent operator $N\in\s_I$ determines a rational, increasing filtration $W_0 \subset W_1 \subset \cdots \subset W_{2\sfn} = V_\bQ$, \cite{MR1251060}.  This is the unique filtration satisfying the conditions \eqref{SE:W(N)}: first, 
\begin{subequations}\label{SE:W(N)}
\begin{equation} \label{E:W(N)}
  N(W_\ell) \ \subset \ W_{\ell-2} \,.
\end{equation}
If we set 
\[
  \tGr^W_\ell \ = \ W_\ell/W_{\ell-1} \,,
\]
then \eqref{E:W(N)} implies that $N$ induces a well-defined map $N: \tGr^W_\ell \to \tGr^W_{\ell-2}$.  The map 
\begin{equation}\label{E:Niso}
  N^k : \tGr^W_{\sfn+k} \ \to \ \tGr^W_{\sfn-k}
  \quad\hbox{is an isomorphism for all } k \ge 0 \,.
\end{equation}
\end{subequations}

\subsubsection{} \label{S:LMHS-F(w)}

Any two $N , N' \in \s_I$ determine the same filtration $W$, and we call $W = W(\s_I)$ the \emph{weight filtration} of the monodromy cone, \cite[Theorem 3.3]{MR664326}.  If $F(w) = g(0,w)\cdot F_o$, then $(W,F(w))$, is a mixed Hodge structure (MHS) that is polarized by $\s_I$.  This means that $F(w)$ induces a weight $\ell$ Hodge structure $\tGr^W_\ell$; for each $N \in \s_I$, the kernel
\[
  P_{\sfn+k}(N) \ = \ 
  \tker\{ N^{k+1} : \tGr^W_{\sfn+k} \to \tGr^W_{\sfn-k-2}\}
\]  
is a rational Hodge substructure; and the weight $\sfn+k$ Hodge structure on $P_{\sfn+k}(N)$ is polarized by $Q(\cdot , N^k \cdot )$.
The triple $(W,F(w),\s_I)$ is a \emph{limiting} (or \emph{polarized}) \emph{mixed Hodge structure} (LMHS); and we say $\s_I$ \emph{polarizes} $(W,F(w))$.  

\subsubsection{}  \label{S:MI}

The set
\[
  \sM_I \ = \ \{ F \in \check \sD \ | \ (W,F) 
  \hbox{ is a MHS polarized by } \s_I \} 
\]
is a complex submanifold.  A subgroup $\cG_I \subset G_\bC$ acts on $\sM_I$ by biholomorphism.  To describe the group $\cG_I$, let $P_W \subset G$ be the parabolic subgroup stabilizing $W$.  The unipotent radical 
\[
  P_W^{-1} \ = \ \{ g \in P_W \ | \ g \hbox{ acts as the identity on }
  \tGr^W_\ell \,,\ \forall \ \ell \}
\]
is the subgroup of $P_W$ acting trivially on $\tGr^W_\ell$, for all $\ell$.  Since $W = W(\s_I)$, the centralizer
\[
  C_I \ = \ \{ g \in G \ | \ \tAd_g(N) = N \,,\ \forall \ N \in \s_I \}
\] 
of the cone $\s_I$ is a subgroup of $P_W$
\begin{equation}\label{E:CinP}
   C_I \ \subset \ P_W \,.
\end{equation}
The group 
\[
  C_I^{-1} \ = \ C_I \,\cap\, P_W^{-1}
\]
is the unipotent radical of $C_I$, and a normal subgroup. The group
\begin{equation}\label{E:cGI}
  \cG_I \ = \ C_{I,\bR} \cdot C_{I,\bC}^{-1} 
  \ \subset \ P_{W,\bC}
\end{equation}
acts on $\sM_I$.    

\begin{remark} \label{R:GonM}
The action of $\cG_I$ on $\sM_I$ is almost transitive: $\sM_I$ consists of finitely many connected components, and the connected identity component $\cG_I^\circ \subset \cG_I$ acts transitively on each connected component of $\sM_I$, \cite{MR3474815}.  In particular, each $\cG_I$--orbit is open and closed in $\sM_I$ (a union of connected components).  In a mild abuse of notation, we will conflate $\sM_I$ with the connected component $\sM_I^\circ \subset \sM_I$ containing the $F_o$, with $o \in Z_I^*$, and $\cG_I$ with the connected identity component $\cG_I^\circ \subset \cG_I$.
\end{remark}

\begin{remark}
Note that $\exp(\bC\s_I)$ is a normal subgroup of $\cG_I$, and $\exp(\bC\s_I) \bs \cG_I$ acts transitively on the quotient manifold $\exp(\bC\s_I)\bs \sM_I$.
\end{remark}

\begin{lemma}\label{L:good-quotients}
The quotients $\sM_I \to \Gamma_I\bs \sM_I$ and $\exp(\bC\s_I) \bs 
\sM_I \to (\Gamma_I \exp(\bC\s_I)) \bs \sM_I$ are morphisms of complex analytic spaces.    
\end{lemma}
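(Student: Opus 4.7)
The plan is to present each quotient as arising from a properly discontinuous holomorphic action of a discrete group on a complex manifold (possibly after first passing to a free holomorphic quotient by a connected complex Lie subgroup), and then invoke the standard fact that such quotients carry the structure of a normal complex analytic space.

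First I would verify that $\Gamma_I$ acts on $\sM_I$ by biholomorphisms. Since $\Gamma_I$ centralizes $\s_I$ and stabilizes $W = W(\s_I)$, we have $\Gamma_I \subset C_{I,\bZ} \subset C_{I,\bR} \subset \cG_I$ by \eqref{E:cGI}, so the biholomorphic action of $\cG_I$ on $\sM_I$ from \S\ref{S:MI} restricts to $\Gamma_I$. Discreteness of $\Gamma_I$ in $G_\bR$ is immediate from $\Gamma_I \subset G_\bZ$. The crux is then proper discontinuity of the $\Gamma_I$-action on $\sM_I$, which I would establish by using the fibration of $\sM_I$ over the classical period domain $\sD_I$ parameterizing pure polarized Hodge structures on $\tGr^W_\bullet$ (the composition $\pi_0 \circ \pi_1$ of \eqref{E:PhiTheta}). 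The image of $\Gamma_I$ in the Levi quotient $C_I/C_I^{-1}$ is arithmetic, so it acts properly discontinuously on $\sD_I$ by classical results; meanwhile the kernel $\Gamma_I \cap C_I^{-1}$ is a discrete subgroup of the real unipotent Lie group $C_{I,\bR}^{-1}$, and discrete subgroups of unipotent Lie groups automatically act properly on the homogeneous fibers of $\sM_I \to \sD_I$. Combining these via the fibration yields proper discontinuity on $\sM_I$, from which one concludes that $\Gamma_I \backslash \sM_I$ is a normal complex analytic space (with at worst finite quotient singularities) and the projection is a holomorphic morphism.

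For the second quotient, I would observe that $\exp(\bC\s_I)$ is a closed connected abelian complex Lie subgroup of $\cG_I$ of dimension $|I|$, since the $N_{i_a}$ commute and are linearly independent. Its action on $\sM_I$ is free: if $\exp(X) \cdot F = F$ for some $X \in \bC\s_I$, then $X$ preserves $F^\bullet$, but any nonzero element of $\s_I$ strictly lowers the Hodge filtration induced on $\tGr^W_\bullet$ (as part of the polarizing property of the LMHS), forcing $X = 0$. The orbits are therefore closed embedded complex submanifolds, so $\exp(\bC\s_I) \backslash \sM_I$ inherits a canonical structure of complex manifold and the projection is a holomorphic submersion. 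Because $\Gamma_I$ centralizes $\s_I$, it normalizes $\exp(\bC\s_I)$ and descends to a biholomorphic action on the quotient that inherits proper discontinuity from the previous paragraph (together with closedness of the $\exp(\bC\s_I)$-orbits). One last application of the analytic quotient result yields the second claim.

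The main obstacle is the proper discontinuity of $\Gamma_I$ on $\sM_I$; a fully self-contained verification would require one either to appeal to Cattani--Kaplan rigidity to control the $C_I^{-1}$-direction at infinity, or to invoke the corresponding statement already available in the Kato--Usui framework for LMHS moduli. The freeness and closedness assertions for the $\exp(\bC\s_I)$-action are comparatively routine once the structure of $\cG_I$ acting on $\sM_I$ in \S\ref{S:MI} is in hand.
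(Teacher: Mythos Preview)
Your strategy is sound and parallels the paper's, but the paper takes a cleaner route at exactly the point you flag as the ``main obstacle.'' Rather than assembling proper discontinuity of $\Gamma_I$ on $\sM_I$ from a fibration argument over $\sD_I$, the paper invokes directly the result of Bakker--Brunebarbe--Klingler--Tsimerman \cite[Proposition~3.7]{MR4742809} that the full real group $C_{I,\bR}$ acts \emph{properly} on $\sM_I$; discreteness of $\Gamma_I \subset C_{I,\bR}$ then immediately gives proper discontinuity and the Cartan quotient theorem finishes the first claim. For the second quotient, the paper again reduces to this single black box: one checks that $(\Gamma_I \cap \exp(\bC\s_I))\backslash\Gamma_I$ sits discretely inside $\exp(\bR\s_I)\backslash C_{I,\bR}$, and properness of the latter group's action on $\exp(\bC\s_I)\backslash\sM_I$ is obtained by lifting compact sets back to $\sM_I$ (via a section of $\sM_I \to \exp(\bC\s_I)\backslash\sM_I$ constructed from the parametrization in Lemma~\ref{L:paramMI}) and invoking properness of $C_{I,\bR}$ on $\sM_I$ once more. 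Your fibration approach could in principle be completed, but it would essentially reprove the BBKT properness statement; the paper's choice buys a uniform treatment of both quotients with a single external input. One minor correction: your freeness argument for $\exp(\bC\s_I)$ should not appeal to the induced action on $\tGr^W_\bullet$ (where $\s_I$ acts by zero), but rather to $\bC\s_I \subset \fg^{-1,-1}_{W,F} \subset \ff^\perp$ having trivial intersection with the stabilizer Lie algebra $\ff = F^0(\fg_\bC)$; the paper extracts this from \eqref{E:sIin} and Lemma~\ref{L:paramMI}.
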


\noindent The first half of the lemma is \cite[Corollary 3.8]{MR4742809}.  The second half seems to be ``known to the experts''; we give a proof in \S\ref{S:p} for completeness.

\subsection{Induced maps along strata $Z_I^*$}

We continue to work with the representative $\{T_{i_a}\}_{i_a \in I}$ of the $\Gamma$--conjugacy class $\cT_\sU$ that was fixed in \S\ref{S:loclift} and \S\ref{S:LMHS}. 

\subsubsection{} \label{S:PsiI}

The map 
\begin{equation}\label{E:FI}
  F_I \,:\, Z_I^* \cap \olU \ \to \ \sM_I \,,\quad
  w \mapsto F_I(w) = g(0,w) \cdot F
\end{equation}
defines a variation of limiting mixed Hodge structure $(W,F_I(w),\s_I)$ over $Z_I^* \cap \olU$.  The map \eqref{E:FI} is not well-defined; it depends on our choice of local coordinates $(t,w)$.  What is well-defined is the composition 
\[
\begin{tikzcd}
  Z_I^* \cap \olU 
  \arrow[r,"F_I"] & 
  \sM_I \arrow[r,"\nu_I"] &
  \exp(\bC\s_I)\bs \sM_I \,.
\end{tikzcd}
\]
(It is the nilpotent orbit that is well-defined.)  

The composition $\nu_I \circ F_I : Z_I^* \cap \olU \to \exp(\bC\s_I)\bs \sM_I$ is a holomorphic map.  In order to patch these together in to a well-defined map along all of $Z_I^*$ we need the following lemma.  Let
\[
  \Gamma_I \ = \ C_{I,\bQ} \,\cap\, \Gamma
\]
be the monodromy subgroup centralizing the cone.

\begin{lemma}[{\cite{DRstein}}] \label{L:nbd-ZI}
There exists a neighborhood $X \subset \olB$ of $Z_I$ so that the restriction of the period map $\Phi$ to $U = B \cap X$ lifts to $\Gamma_I \bs \sD$: there is a holomorphic map $\Phi_U : U \to \Gamma_I\bs \sD$ so that the diagram 
\begin{equation} \nonumber 
\begin{tikzcd}
  & \Gamma_I \bs \sD \arrow[d] \\
  U \arrow[r,"\Phi"'] \arrow[ru,"\Phi_U"]
  & \Gamma \bs \sD
\end{tikzcd}
\end{equation}
commutes.
\end{lemma}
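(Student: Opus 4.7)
The plan is to find a neighborhood $X$ of $Z_I$ in $\olB$ sufficiently small that the image of $\pi_1(U) \to \Gamma$ lies in $\Gamma_I$, where $U = B \cap X$. Given such an $X$, the quotient $\sD \to \Gamma \bs \sD$ factors through $\Gamma_I \bs \sD$, and the restriction $\Phi|_U$ lifts to $\Phi_U : U \to \Gamma_I \bs \sD$ by the usual covering space criterion; commutativity of the diagram is automatic from the construction.

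First I would take $X$ to be a tubular neighborhood of $Z_I$ that deformation retracts onto $Z_I$. Shrinking $X$ if necessary, the only irreducible components $Z_j$ of the boundary divisor meeting $X$ are those that actually meet $Z_I$. Generators for $\pi_1(U)$ can then be taken as (conjugates of) small loops around each such $Z_j$, and there are two cases: either $j \in I$, or $j \notin I$ and the deeper stratum $Z_{I\cup\{j\}} = Z_I \cap Z_j$ is nonempty.

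In either case the corresponding monodromy lies in $C_I$, and hence in $\Gamma_I = C_{I,\bQ} \cap \Gamma$. If $j = i_a \in I$, then $T_{i_a} = \exp(N_{i_a})$, and because $\s_I$ is abelian (a cone of pairwise commuting nilpotents) we have $\tAd_{T_{i_a}}(N) = N$ for every $N \in \s_I$. If $j \notin I$ with $Z_{I\cup\{j\}} \neq \emptyset$, then in a local chart at any point of $Z^*_{I\cup\{j\}}$ the description in \S\ref{S:Ti}--\S\ref{S:loclift} produces commuting unipotent operators $\{T_{i_a}\}_{i_a \in I} \cup \{T_j\}$ whose logarithms span the nilpotent cone $\s_{I\cup\{j\}}$. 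In particular $N_j$ commutes with every element of $\s_I$, so $T_j \in C_I$.

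The main technical obstacle will be gluing: one must choose a basepoint in $U$ and track how the conjugacy class of each local monodromy element depends on the path back to that basepoint, so that the various representatives of local monodromies all land in the same subgroup $\Gamma_I \subset \Gamma$ (rather than distinct $\Gamma$--conjugates of it). The key input here is that the $\Gamma$--conjugacy class of the pair $(W,\s_I)$ is constant along $Z_I^*$ (as recalled in \S\ref{S:nilp-orb-infty}), together with a standard cover of $X$ by charts centered at points of the open strata $Z_J^*$ with $J \supseteq I$. A straightforward Seifert--van Kampen argument over this cover then promotes the local statements above to the global assertion that $\rho(\pi_1(U)) \subset \Gamma_I$, yielding the desired lift $\Phi_U$.
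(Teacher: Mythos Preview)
Your approach has the right shape, but there is a genuine gap in the claim that ``generators for $\pi_1(U)$ can then be taken as (conjugates of) small loops around each such $Z_j$.'' This is false in general: if $D \subset X$ is a normal crossing divisor, then $\pi_1(X \setminus D) \to \pi_1(X)$ is surjective with kernel \emph{normally generated} by meridian loops around the components of $D$. So $\pi_1(U)$ is generated by meridians \emph{together with} lifts of generators of $\pi_1(X) \simeq \pi_1(Z_I)$. When $Z_I$ is not simply connected (e.g.\ $Z_I$ an elliptic curve), there are loops $\alpha$ in $U$ coming from $\pi_1(Z_I)$ that are not products of conjugates of meridians, and you have given no argument that $\rho(\alpha) \in \Gamma_I$ for these. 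Your Seifert--van~Kampen sketch, and the appeal to constancy of the $\Gamma$--conjugacy class of $(W,\s_I)$, do not address this: constancy of a conjugacy class does not by itself say that the \emph{specific} conjugating element arising from transport along $\alpha$ centralizes $\s_I$.

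The paper closes this gap by a different (and cleaner) device: rather than analyzing generators of $\pi_1(U)$, it observes that the local monodromy operators $\mathbf{T}_{i_a}$ are intrinsically defined in each coordinate chart (as parallel transport around $Z_{i_a}$) and hence glue to \emph{global flat sections} $\mathbf{T}_{i_a} \in H^0(U,\tGL(\mathbb{V})|_U)$. Flatness over all of $U$ immediately gives $\rho(\gamma)\,T_{i_a}\,\rho(\gamma)^{-1} = T_{i_a}$ for \emph{every} $\gamma \in \pi_1(U)$, including those from $\pi_1(Z_I)$, so $\rho(\pi_1(U)) \subset \Gamma_I$ with no case analysis. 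This is exactly the missing ingredient in your argument; once you have it, the rest of your outline goes through, but at that point the meridian and Seifert--van~Kampen analysis becomes unnecessary.
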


Informally, we say \emph{the monodromy near $Z_I^*$ takes value in $\Gamma_I$}.

\begin{proof}
The lemma generalizes the construction of the local lift \eqref{E:PhisU}.  Note that $\Gamma_I$ is also the group centralizing the $T_{i_a} = \exp(N_{i_a}) \in \tGL(V_\bZ)$.  So, to prove the lemma, it suffices to show that the flat sections $\mathbf{T}_{i_a}(t,w) \in H^0(\sU,\tGL(\left.\mathbb{V})\right|_\sU)$ constructed in \S\ref{S:Ti} extend to a punctured neighborhood $U = B \cap X$ of $Z_I$.

Given any point $o \in Z_I$, there is a unique $J = \{j_1,\ldots,j_\ell\} \supset I$ so that $o \in Z_J^*$.  Fix a coordinate neighborhood $\olU_o \subset \olB$ centered at $o$, as in \S\ref{S:vhs}.  Given any point $(t,w) \in \sU_o = B \cap \olU_o$ and $i_a \in I \subset J$, the loop $\mathbf{c}_{i_a}(s) = (t_1,\ldots,t_{a-1},\,t_a\,e^{2\pi\bi s},\, t_{a+1},\ldots,t_\ell;\,w)$, $0 \le s \le 1$, is contained in $\sU_o$ and circles $Z_{i_a}$.  Parallel transportation (by the Gauss--Manin connection) around this loop defines the unipotent operator $\mathbf{T}_{i_a}(t,w) \in \tGL(\mathbb{V}_{(t,w)})$.  

Define $X = \bigcup_{o \in Z_I}\,\olU_o$.  The sections $\mathbf{T}_{i_a} \in H^0(\sU_o,\left.\tGL(V_\bZ)\right|_{\sU_o})$ are independent of our choice of local coordinates, and so define flat sections $\{\mathbf{T}_{i_a}\}_{i_a\in I} \subset H^0(U,\left.\tGL(\mathbb{V})\right|_U)$ over $U = B \cap X$.
\end{proof}

\begin{remark} \label{R:cTI}
As in \S\ref{S:loclift}, the $k$-tuple of flat sections $\{\mathbf{T}_{i_a}\}_{i_a\in I} \subset H^0(U,\left.\tGL(\mathbb{V})\right|_U)$ determines a $\Gamma$-conjugacy class $\cT_I \subset \tGL(V_\bZ) \times \cdots \times \tGL(V_\bZ)$.  We have $\cT_I = \cT_{\sU_o}$ for all coordinate neighborhoods $\olU_o$ in the proof of Lemma \ref{L:nbd-ZI}.
\end{remark}

It follows from Lemmas \ref{L:good-quotients} and \ref{L:nbd-ZI} that the $\nu_I \circ F_I$ patch together to define the holomorphic map 
\[
  \Psi_I : Z_I^* \ \to \ (\Gamma_I\exp(\bC\s_I)) \bs \sM_I
\]
of \eqref{E:PsiI}.

\subsubsection{} \label{S:PhiI}

The quotient space 
\begin{equation}\label{E:DI}
  \sD_I \ = \ C_{I,\bC}^{-1} \bs \sM_I
\end{equation}
is a product (over $0 \le k \le \sfn$) of Mumford--Tate domains parameterizing weight $\sfn+k$ Hodge structures on $P_{\sfn+k}(N)$ that are polarized by $Q(\cdot , N^k \cdot )$, for any $N \in \s_I$.  Just as the group $\cG_I$ of \eqref{E:cGI} acts on $\sM_I$ (Remark \ref{R:GonM}), the group 
\begin{equation}\label{E:cLI}
  \cL_I \ = \ C_{I,\bC}^{-1} \bs \cG_I
  \ = \ C_{I,\bR}^{-1} \bs C_{I,\bR}
\end{equation}
acts transitively on $\sD_I$.  

The condition \eqref{E:W(N)} implies that $\exp(\bC\s_I) \subset C_{I,\bC}^{-2}$.  So the quotient map $\sM_I \to \sD_I$ descends to $\exp(\bC\s_I)\bs \sM_I \to \sD_I$, and we obtain a commuting diagram 
\begin{equation}\label{E:Psi2Phi}
  \begin{tikzcd}
    Z_I^* \arrow[r,"\Psi_I"]  \arrow[rd,"\Phi_I"']
    & (\Gamma_I \exp(\bC\s_I))\bs \sM_I \arrow[d]\\
    & \Gamma_I \bs \sD_I
  \end{tikzcd}
\end{equation}
that defines the map $\Phi_I$ of \eqref{E:PhiTheta}.

\subsubsection{} \label{S:ThetaI}
The parabolic subgroup stabilizing $W$ is filtered
\[
  P_W \ \supset \ P_W^{-1} \ \supset \ P_W^{-2} \ \supset \ P_W^{-3}
  \ \supset \cdots
\]
by normal subgroups
\begin{equation}\label{E:dfnPWa}
  P_W^{-a} \ = \ \{ g \in P_W \ | \ g \hbox{ acts trivially on }
  W_\ell/W_{\ell-a} \,,\ \forall \ \ell \} \,.
\end{equation}
Likewise
\begin{equation}\label{E:dfnCIa}
  C_I^{-a} \ = \ C_I \,\cap\, P_W^{-a}
\end{equation}
defines a filtration of $C_I$ by normal subgroups.  Setting 
\begin{equation}\label{E:MI1}
  \sM_I^1 \ = \ C_{I,\bC}^{-2} \bs \sM_I
\end{equation}
factors the quotient map 
\begin{equation}\label{E:p}
  p : \sM_I \ \to \ \sD_I
\end{equation}
as 
\begin{equation}\label{E:p1p0}
\begin{tikzcd} 
  \sM_I \arrow[r,"p_1"] 
  & \sM_I^1 \arrow[r,"p_0"] 
  & \sD_I \,.
\end{tikzcd}
\end{equation}
This in turn allows us to expand \eqref{E:Psi2Phi} to a tower
\begin{equation}\label{E:towerI*}
  \begin{tikzcd}
    Z_I^* \arrow[r,"\Psi_I"]  
    	\arrow[rd,"\Theta_I"',end anchor={west}]
    	\arrow[rdd,"\Phi_I"',end anchor={west},bend right]
    & (\Gamma_I \exp(\bC\s_I))\bs \sM_I 
		\arrow[d,"\pi_1"] \\
    & \Gamma_I \bs \sM_I^1 \arrow[d,"\pi_0"] \\
    & \Gamma_I \bs \sD_I
  \end{tikzcd}
\end{equation}
defining the map $\Theta_I$ of \eqref{E:PhiTheta}.

\subsection{Deligne splittings} 

\subsubsection{} 

Given a mixed Hodge structure $(W,F)$ on $V_\bQ$, we have a Deligne splitting
\begin{equation}\label{E:dsV}
  V_\bC \ = \ \op\, V^{p,q}_{W,F} 
\end{equation}
satisfying
\begin{equation} \label{E:ds-WF}
  W_\ell \ = \ \bigoplus_{p+q\le \ell} V^{p,q}_{W,F} \tand
  F^k \ = \ \bigoplus_{p\ge k} V^{p,q}_{W,F} \,,
\end{equation}
and
\begin{equation}\label{E:ds-conj}
  \overline{V^{p,q}_{W,F}} \ \equiv \ V^{q,p}_{W,F}
  \quad \hbox{mod} \quad \bigoplus_{r<p,q<s} V^{r,s}_{W,F} \,.
\end{equation}
It follows from the first equality in \eqref{E:ds-WF} that the restriction of the natural projection $W_{\ell,\bC} \to \tGr^W_{\ell,\bC} = W_{\ell,\bC}/W_{\ell-1,\bC}$ to $\oplus_{p+q=\ell}\, V^{p,q}_{W,F}$ is an isomorphism.  That is, we have a natural identification
\begin{equation}\label{E:GrWiso}
    \tGr^W_{\ell,\bC} \ \simeq \ 
    \bigoplus_{p+q=\ell} V^{p,q}_{W,F} \,.
\end{equation}
If $(W,F,N)$ is a limiting mixed Hodge structure, then 
\begin{equation}\label{E:NVpq}
  N(V^{p,q}_{W,F}) \ \subset \ V^{p-1,q-1}_{W,F} \,,
\end{equation}
and the map
\begin{equation}\label{E:NkVpq}
  N^k : V^{p,q}_{W,F} \ \to \ 
  V^{p-k,q-k}_{W,F}
\end{equation}
is an isomorphism, for all $p+q=n+k$.  The decomposition is $Q$-orthogonal in the sense that
\begin{equation}\label{E:ds-Q}
  Q(V^{p,q}_{W,F} \,,\, V^{r,s}_{W,F}) \ = \ 0 \,,
  \quad\hbox{for all}\quad (p+r,q+s) \not= (\sfn,\sfn) \,.
\end{equation}

\subsubsection{} \label{S:Rsplit}

The mixed Hodge structure $(W,F)$ is \emph{$\bR$--split} if equality holds in \eqref{E:ds-conj}.  Suppose that $(W,F,\s_I)$ is a limiting mixed Hodge structure.  Let 
\[
  \fc_I \ = \ \{ \xi \in \fg \ | \ [\xi,N] = 0 \,,\ N \in \s_I \}
\]
be the Lie algebra of $C_I$.  Then there exists $\d \in \fc_{I,\bR} \,\cap\, \bigoplus_{p,q\le-1}\,\fg^{p,q}_{W,F}$ so that $\tilde F = e^{\bi\d}\cdot F$ defines an $\bR$--split limiting mixed Hodge structure $(W, \tilde F , \s_I)$, \cite[(2.20)]{MR840721}.  

\subsubsection{} \label{S:induced}

A limiting mixed Hodge structure $(W,F,N)$ on $V$ induces one on the Lie algebra $\fg$.  The Hodge and weight filtrations are 
\begin{subequations}\label{SE:g-FW}
\begin{eqnarray}
  F^p(\fg) & = & \{ \x \in \fg_\bC \ | \ \xi(F^k) \subset F^{k+p} 
  \,,\ \forall \ k \} \\
  W_\ell(\fg) & = & \{ \xi \in \fg \ | \ 
  \xi(W_k) \subset W_{k+\ell} \,,\ \forall \ k \} \,.
\end{eqnarray}
\end{subequations}
The nilpotent operator is $\tad_N : \fg \to \fg$.  And the induced polarization $\sQ$ on $V \ot V^\vee = \tEnd(V) \supset \fg$ is given by 
\begin{equation}\label{E:dfn-sQ}
  Q(\xi_1(v_1) , \xi_2(v_2) )
  \ = \ Q(v_1,v_2)\,\sQ(\xi_1,\xi_2)
\end{equation}
for all $\xi_1,\xi_2 \in \tEnd(V)$ and $v_1,v_2 \in V$.  Note that that $\sQ$ is necessarily symmetric, and $\tAd(G)$--invariant.  The triple $(W(\fg),F(\fg),\tad_N)$ is a limiting mixed Hodge structure on $(\fg,\sQ)$.  

\subsubsection{}

The Deligne splitting 
\begin{subequations}\label{SE:gpq}
\begin{equation}
  \fg_\bC \ = \ \bigoplus\,\fg^{p,q}_{W,F} \,,
\end{equation}
for the induced MHS is given by 
\begin{equation} \label{E:gpq-dfn}
  \fg^{p,q}_{W,F} \ = \ 
  \{ \x \in \fg_\bC \ | \ \x(V^{r,s}_{W,F}) \subset V^{p+r,q+s}_{W,F}\,, 
  \ \forall \ r,s \} \,,
\end{equation}
and is compatible with the Lie bracket in the sense that 
\begin{equation}\label{E:lieb}
  [ \fg^{p,q}_{W,F} \,,\, \fg^{r,s}_{W,F} ] \ \subset \ \fg^{p+r,q+s}_{W,F} \,.
\end{equation}
\end{subequations}
The analog of \eqref{E:ds-Q} here is
\begin{equation}\label{E:ds-sQ}
  \sQ(\fg^{p,q}_{W,F} \,,\, \fg^{r,s}_{W,F}) \ = \ 0 \,,
  \quad\hbox{for all}\quad (p+r,q+s) \not= (0,0) \,.
\end{equation}
Equation \eqref{E:NVpq} implies
\begin{equation}\label{E:Nin}
  N \ \in \ \fg^{-1,-1}_{W,F} \,.
\end{equation}

\subsubsection{}
It follows from \eqref{SE:g-FW} that the Lie algebra of the parabolic subgroup $P_{W,\bC} \subset G_\bC$ preserving the weight filtration $W$ is 
\begin{equation}\label{E:pW}
  \fp_{W,\bC} \ = \ W_0(\fg_\bC) \ = \ 
  \bigoplus_{p+q\le0} \fg^{p,q}_{W,F} \,.
\end{equation}
Likewise 
\begin{equation}\label{E:F0g}
  \ff \ = \ F^0(\fg_\bC) \ = \ \bigoplus_{p\ge0}\,\fg^{p,q}_{W,F}
\end{equation}
is the parabolic Lie algebra of the stabilizer $\tStab_{G_\bC}(F)$.

Note that \eqref{E:Nin} implies
\begin{equation} \label{E:sIin}
  \s_I \ \subset \ \fg^{-1,-1}_{W,F} \,.
\end{equation} 
So $\fc_I$ inherits the Deligne splitting
\begin{equation}\label{E:ds-cI}
  \fc_{I,\bC} \ = \ 
  \bigoplus_{p+q\le0} \fc_{I,F}^{p,q} \,,\quad
  \fc_{I,F}^{p,q} \,=\, 
  \fc_{I,\bC} \cap \fg^{p,q}_{W,F} \,.
\end{equation}
And \eqref{E:dfnCIa} and \eqref{E:ds-WF} imply that the Lie algebra $C_{I,\bC}^{-a}$ is 
\begin{equation}\label{E:ds-cIa}
  \fc_{I,\bC}^{-a} \ = \ 
  \bigoplus_{p+q\le -a} \fc_{I,F}^{p,q} \,.
\end{equation}
From the definition of $\fc_I^a$ (or from \eqref{E:lieb} and \eqref{E:ds-cIa}) we see that
\begin{equation}\label{E:cIab}
  \left[ \fc_I^{-a} \,,\, \fc_I^{-b} \right]
  \ \subset \ \fc_I^{-a-b} \,.
\end{equation}

\subsection{Stabilizers and quotient representations} 

Since the parabolic subgroup $P_W$ preserves the weight filtration, it naturally acts on the graded quotients $\tGr^W_\ell = W_\ell/W_{\ell-1}$.  Let
\begin{equation} \label{E:rho}
  \varrho : P_W \ \to \ \bigoplus_\ell \tAut(\tGr^W_\ell)
\end{equation}
denote the induced representation.  By definition \eqref{E:dfnPWa}, the kernel of $\varrho$ is $P_W^{-1}$.  So the image $\varrho(P_W) \simeq P_W^{-1}\bs P_W$.  

The restriction of $\varrho$ to $C_I \subset P_W$ preserves the subspaces
\[
  P_{\sfn+k}(\s_I) \ = \ 
  \bigcap_{N \in \s_I} \tker\{ N^{k+1} : \tGr^W_{\sfn+k} \to \tGr^W_{\sfn-k-2} \} \,.
\]
This yields the representation
\begin{equation}\label{E:rhoI}
  \varrho_I \,=\, \left.\varrho\right|_{C_I} 
  : C_I \ \to \ \bigoplus_{k\ge0} P_{\sfn+k}(\s_I) \,.
\end{equation}
As above, the kernel of $\varrho_I$ is $C_I^{-1}$ by definition \eqref{E:dfnCIa}, and the image $\varrho_I(C_I)$ is isomorphic to the Levi quotient $C_I^{-1}\bs C_I$.  In particular, $\cL_I = \varrho_I(C_{I,\bR})$, \eqref{E:cLI}.

\begin{lemma} \label{L:cpt-stab-CI}
Let $(W,F,\s_I)$ be a limiting mixed Hodge structure, as in \S\ref{S:LMHS-F(w)}.  Let $\tStab_{C_{I,\bR}}(F)$ denote the stabilizer of $F \in \sM_I$ in $C_{I,\bR} \subset \cG_I$.  Then $\varrho_I(\tStab_{C_{I,\bR}}(F)) = \tStab_{\cL_I}(p(F))$ is compact.  Moreover, we have a natural isomorphism $\tStab_{C_{I,\bR}}(F) \simeq \varrho_I(\tStab_{C_{I,\bR}}(F))$ of real Lie groups.  In particular, $\tStab_{C_{I,\bR}}(F)$ is compact.
\end{lemma}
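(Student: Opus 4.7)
My plan is to identify $\varrho_I$ as an isomorphism $\tStab_{C_{I,\bR}}(F)\xrightarrow{\sim}\tStab_{\cL_I}(p(F))$ of real Lie groups in three steps: injectivity, surjectivity, and compactness of the target. Compactness of $\tStab_{C_{I,\bR}}(F)$ will then follow by transport.

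For injectivity, the kernel of $\varrho_I|_{\tStab_{C_{I,\bR}}(F)}$ is $\tStab_{C_{I,\bR}}(F)\cap C_{I,\bR}^{-1}$. Since $C_{I,\bR}^{-1}$ is unipotent, this identifies via the exponential with $\fc_{I,\bR}^{-1}\cap\ff$, and \eqref{E:F0g} together with \eqref{E:ds-cIa} yield
\[
  \fc_{I,\bC}^{-1}\cap\ff \ = \ \bigoplus_{p\ge 0,\ p+q\le -1}\fc_{I,F}^{p,q}\,.
\]
A weight--by--weight argument using the reality relation \eqref{E:ds-conj} then shows the real part vanishes: at the top weight $m\le -1$ with some $\xi^{p,q}\neq 0$, one has $q\le -1<0$, so the ``conjugate'' component $\xi^{q,p}$ lies outside $\ff$ and must vanish; reality of $\xi$ then forces $\xi^{p,q}=0$.

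For compactness of $\tStab_{\cL_I}(p(F))$ and for the surjectivity, note that $\sD_I$ factors (over $k$) as a product of Mumford--Tate domains classifying polarized pure Hodge structures on the primitive pieces $P_{\sfn+k}(\s_I)$, on each of which the real Lie group $\cL_I$ acts transitively with compact isotropy. The inclusion $\varrho_I(\tStab_{C_{I,\bR}}(F))\subseteq \tStab_{\cL_I}(p(F))$ is immediate from the $\varrho_I$--equivariance of $p$. For the reverse inclusion, I would first pass to Deligne's $\bR$--split $\tilde F = e^{-\bi\d}F$ from \S\ref{S:Rsplit}, noting that $p(F)=p(\tilde F)$ since $\d \in \fc_{I,\bR}^{-2}$. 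An analogous bigrading calculation (with $p+q\le 0$ in place of $p+q\le -1$) identifies $\fc_{I,\bR}\cap\ff_{\tilde F}$ with the real form of $\fc_{I,\tilde F}^{0,0}$, and the Lefschetz decomposition $\tGr^W_{\sfn+k}\simeq \bigoplus_j N^j P_{\sfn+k+2j}$ allows every $\s_I$--commuting type--$(0,0)$ graded endomorphism to be lifted to an element of $\fc_{I,\tilde F}^{0,0}$. Matching Lie algebras, combined with injectivity, produces a local isomorphism of Lie groups, and the comparison of connected components follows because both are closed real-algebraic subgroups of $C_I$ with the same complexification. For the general (non--$\bR$--split) $F$, the polarization of $(W,F,\s_I)$ supplies the rigidity needed to transfer the identity from $\tilde F$ to $F$.

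The main obstacle is the transfer from $\tilde F$ to $F$ in the non--$\bR$--split case: since $e^{\bi\d}$ is not real, the stabilizers $\tStab_{C_{I,\bR}}(F)$ and $\tStab_{C_{I,\bR}}(\tilde F)$ do not match by conjugation inside $C_{I,\bR}$. One must verify directly that the polarization constraint on $F$ enforces exactly the reality conditions needed so that, after passing through $\varrho_I$, $\tStab_{C_{I,\bR}}(F)$ exhausts the same compact subgroup of $\cL_I$ as $\tStab_{C_{I,\bR}}(\tilde F)$; this is the point at which the LMHS hypothesis (rather than just an MHS on $V$) enters decisively.
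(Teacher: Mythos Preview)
Your treatment of injectivity and of the compactness of $\tStab_{\cL_I}(p(F))$ is correct and matches the paper: the paper also argues that $P_{W,\bR}^{-1}\cap\tStab(F)$ is trivial from the reality relation \eqref{E:ds-conj}, and that $\tStab_{\cL_I}(p(F))$ is compact because it is the isotropy of a polarized Hodge structure. Your weight--by--weight argument for $\fc_{I,\bR}^{-1}\cap\ff=0$ is a faithful unpacking of what the paper leaves implicit.

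Where you diverge is surjectivity. The paper does not pass to an $\bR$--split model at all; it simply records the equality $\varrho_I(\tStab_{C_{I,\bR}}(F))=\tStab_{\cL_I}(p(F))$ and moves on. Your detour through $\tilde F=e^{\bi\delta}F$ buys you a clean Lie--algebra identification in the split case, but then creates the very obstacle you flag: transferring back across the non--real conjugation $e^{\bi\delta}$. That obstacle is self--inflicted. A direct argument at the Lie--algebra level, using only \eqref{E:ds-conj}, already works for arbitrary $F\in\sM_I$: if $X\in\fc_{I,\bR}$ satisfies $X\in\fc_{I,\bC}^{-1}+\ff$, then the same top--weight reality trick you used for injectivity forces the weight--$0$ part of $X$ to be exactly $X^{0,0}$, and an inductive descent in weight produces $D\in\fc_{I,\bR}\cap\ff$ with $X-D\in\fc_{I,\bR}^{-1}$; this gives $\mathrm{Lie}(\tStab_{\cL_I}(p(F)))=d\varrho_I(\fc_{I,\bR}\cap\ff)$ without any splitting hypothesis. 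So the ``main obstacle'' you identify is an artifact of the route, not of the problem.

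In short: your injectivity and compactness steps are the paper's proof. Your surjectivity plan is more ambitious than what the paper writes, but the $\bR$--split detour is unnecessary and is precisely what leaves you with an unfinished step; the direct Deligne--bigrading argument (the same one you already deployed for injectivity, run at weight $0$ instead of weight $\le -1$) closes the gap without invoking the polarization in any deeper way.
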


\begin{proof}
Let $N \in \s_I$.  The action \eqref{E:rhoI} preserves the polarization $Q(\cdot , N^k\cdot)$ on the primitive subspaces $P_{\sfn+k}(N)$.  And $\tStab_{C_{I,\bR}}(F)$ preserves the induced Hodge filtration $p(F)$ on $P_{\sfn+k}(N)$.  So $\varrho_I(\tStab_{C_{I,\bR}}(F))=\tStab_{\cL_I}(p(F))$ is the stabilizer of a polarized Hodge structure.  It follows that $\varrho_I(\tStab_{C_{I,\bR}}(F))$ is compact.

Keeping \eqref{E:CinP} in mind, we see from \eqref{E:ds-conj}, \eqref{E:pW} and \eqref{E:F0g} that the stabilizer of $F$ in $P_{W,\bR}^{-1} \supset C_{I,\bR}^{-1}$ is trivial.  It follows that $\tStab_{C_{I,\bR}}(F) \simeq \varrho_I(\tStab_{C_{I,\bR}}(F))$.   
\end{proof}

\begin{corollary} \label{C:stab-GammaI}
The stabilizer $\tStab_{\Gamma_I}(F)$ of $F$ in $\Gamma_I = \Gamma \cap C_{I,\bQ}$ is finite.
\end{corollary}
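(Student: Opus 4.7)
The plan is to derive this directly from Lemma \ref{L:cpt-stab-CI} by the standard ``discrete $\cap$ compact is finite'' argument. First I would observe that $\tStab_{\Gamma_I}(F)$ is the intersection
\[
  \tStab_{\Gamma_I}(F) \ = \ \Gamma_I \,\cap\, \tStab_{C_{I,\bR}}(F)\,,
\]
viewed inside the real Lie group $C_{I,\bR}$. The right-hand factor is compact by Lemma \ref{L:cpt-stab-CI}.

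Next I would argue that $\Gamma_I$ is discrete in $C_{I,\bR}$. By construction $\Gamma \subset G_\bZ \subset G_\bR$ is a discrete subgroup, so $\Gamma_I = \Gamma \cap C_{I,\bQ}$ is a discrete subgroup of $G_\bR$. Since $C_{I,\bR}$ is a closed subgroup of $G_\bR$ containing $\Gamma_I$, the subgroup $\Gamma_I$ is also discrete in $C_{I,\bR}$.

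Combining these two observations, $\tStab_{\Gamma_I}(F)$ is a discrete subgroup of the compact group $\tStab_{C_{I,\bR}}(F)$, and therefore finite. The only conceivable obstacle is checking that the compactness statement of Lemma \ref{L:cpt-stab-CI} really does take place inside $C_{I,\bR}$ (not merely after passing to the quotient $\cL_I$), but this is precisely the content of the last sentence of that lemma: the isomorphism $\tStab_{C_{I,\bR}}(F) \simeq \varrho_I(\tStab_{C_{I,\bR}}(F))$ of real Lie groups transfers compactness from the Levi quotient to $C_{I,\bR}$ itself. No further ingredients are needed.
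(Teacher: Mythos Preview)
Your argument is correct and is exactly the approach the paper intends: the corollary is stated without proof immediately after Lemma \ref{L:cpt-stab-CI}, and the intended deduction is precisely the ``discrete $\cap$ compact is finite'' step you carry out.
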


\begin{remark} \label{R:stab-p(F)}
Note that  
$
  \tStab_{C_{I,\bR}}(p(F)) \ \simeq \ 
  \tStab_{\cL_I}(p(F)) \ltimes
  C_{I,\bR}^{-1} \,.
$
\end{remark}

\subsection{$\fsl_2$--triples}  \label{S:sl2}

Given a mixed Hodge structure $(W,F)$, define $Y \in \tEnd(V_\bC) = V_\bC \otimes V_\bC^\vee$ by specifying that $Y$ act on $V^{p,q}_{W,F}$ by the eigenvalue $\sfn - (p+q)$.  If $(W,F,\s_I)$ is a limiting mixed Hodge structure, then \eqref{E:ds-Q} implies $Y \in \fg_\bC$.  And given any $N \in \s_I$, there exists a unique $M = M(N) \in \fg_\bC$ so that $\{ M , Y , N \} \subset \fg_\bC$ is an \emph{$\fsl_2$--triple}
\begin{equation}\label{E:triple}
  [M , N] \,=\, Y \,,\quad 
  [Y , M] \,=\, 2M \tand
  [N , Y] \,=\, 2N ,
\end{equation}
cf.~\cite[(2.8)]{MR840721}.  We have $Y \in \fg^{0,0}_{W,F}$, and
\begin{equation}\label{E:Min}
  M \ \in \ \fg^{1,1}_{W,F} \,.
\end{equation}


\begin{remark} \label{R:rescaleM}
The collection of all $\{ M = M(N) \ | \ N \in \s_I \}$ forms a cone: if we scale $N \mapsto k N$, then we can see from \eqref{E:triple} that $M$ scales to $\frac{1}{k} M$.
\end{remark}

\begin{remark} \label{R:Yratl}
Given a limiting mixed Hodge structure $(W,F,\s_I)$, there exists $\hat F \in C_{I,\bC}^{-1} \cdot F$ so that semisimple operator $Y = Y(W,\hat F)$ defined by $(W,\hat F)$ is rational $Y \in \fg_\bQ$, \cite[Lemma 3.2]{MR3474815}.  In this case, $M \in \fg_\bQ$ is also rational \cite[Proof of (5.17)]{MR0382272}.
\end{remark}

\begin{remark}
If $Y$ is rational, then $(W,F)$ is $\bR$--split, cf.~\S\ref{S:Rsplit}.
\end{remark}


\begin{remark} \label{R:samefibre}
Note that $F$ and $\hat F$ lie in the same fibre $C_{I,\bC}^{-1} \cdot F$ of the map $p : \sM_I \to \sD_I$ defined in \eqref{E:p}.
\end{remark}

Assume that $F \in \sM_I$ has been selected so that $Y$ is rational.  Then the centralizer
\begin{equation} \nonumber 
  L_I \ = \ \{ g \in C_I \ | \ \tAd_g(Y) = Y \}
\end{equation}
of $Y$ is a Levi factor of $C_I$.  In particular,
\begin{equation}\label{E:CIY}
  C_I \ = \ L_I 
  \,\ltimes\, C_I^{-1} \,,
\end{equation}
and the restriction of the quotient map $\varrho_I : C_I \to C_I^{-1} \bs C_I = \varrho_I(C_I)$ to $L_I$ is an isomorphism; in particular, $L_{I,\bR} \simeq \cL_I = \varrho_I(C_{I,\bR})$, \eqref{E:cLI}.  The group $\cG_I$ of \eqref{E:cGI} acting transitively on $\sM_I$ is
\begin{equation}\label{E:cGIY}
  \cG_I \ = \ L_{I,\bR} \,\ltimes\, C_{I,\bC}^{-1} \,.
\end{equation}

\begin{remark} \label{R:stabM}
Because $M$ is uniquely determined by $N,Y$, it is necessarily stabilized by $L_I$; that is $\tAd_g(M) = M$ for all $g \in L_I$.
\end{remark}


\subsection{Nilpotent subaglebras} \label{S:nilpotent}

It will be helpful, at several points in this paper, to recall two basic properties of nilpotent subalgebras $\fu \subset \fg$:
\begin{i_list}
\item \label{i:expisom}
The exponential map $\exp : \fu \to G$ is a bijection onto a unipotent subgroup $\exp(\fu) \subset G$.
\item \label{i:factor}
If $\fu = \fu_1 \oplus \fu_2$, with the $\fu_i$ subalgebras, then the product map $\exp(\fu_1) \times \exp(\fu_2) \to \exp(\fu)$ is a bijection.
\end{i_list}

\begin{remark} \label{R:exp(cIa)}
Recall the normal subgroups $C_I^{-a}$ of \eqref{E:dfnCIa}, $a \ge 1$.  Since $C_I^{-a}$ is unipotent, the exponential map $\exp : \fc_{I,\bC}^{-a} \to C_{I,\bC}^{-a}$ is a biholomorphism (item \ref{i:expisom} above).  
\end{remark}


\subsection{Schubert cells and period matrix representations} \label{S:schubert}

Let $\tPhi(t,w) = \exp(\sum\ell(t_i) N_i) g(t,w) \cdot F$ be any local lift of $\Phi$ (as in \S\ref{S:loclift}).  We have $\fg_\bC = \ff \op \ff^\perp$ with 
\begin{equation}\label{E:ffperp}
  \ff^\perp \ = \ \bigoplus_{p<0}\,\fg^{p,q}_{W,F}
\end{equation}
a nilpotent subalgebra of $\fg_\bC$.  The map $g$ of \eqref{E:tilde-g} is determined by specifying that it take value in $\exp(\ff^\perp) \subset G_\bC$: 
\begin{equation}\label{E:tilde-g2}
  g : \olU \ \to \ \exp(\ff^\perp) \,.
\end{equation}

\begin{remark} \label{R:logtg}
Since $\ff^\perp$ is nilpotent, \S\ref{S:nilpotent}.\ref{i:expisom} implies $\log g : \olU \to \ff^\perp$ is holomorphic.
\end{remark}

\begin{remark} \label{R:tg=0}
Replacing $F$ with $g(0,0) \cdot F$ if necessary, we may assume that $g(0,0) = \mathrm{Id}$.  (In general, it will not be possible to simultaneously normalize $g(0,0) = \mathrm{Id}$ and have $Y$ be rational as in Remark \ref{R:Yratl}.)
\end{remark}

Since \eqref{E:Nin} implies that the $N_i \in \ff^\perp$, we see that 
\begin{equation}\label{E:xi}
  \xi(t,w) \ = \ \exp( \tsum \ell(t_i) N_i ) \,g(t,w)
\end{equation}
takes value in $\exp(\ff^\perp)$.  It follows that the local lift $\tPhi(t,w) = \xi(t,w) \cdot F$ takes value in the open Schubert cell $\sS \subset \check \sD$
\begin{equation} \label{E:Sp}
  \sS \ = \ \exp(\ff^\perp) \cdot F 
  \ = \ \left\{ E \in \check \sD \ | \ 
  \tdim\,(E^a \cap \overline{F_\infty^b}) 
  = \tdim\,(F^a \cap \overline{F_\infty^b}) \,,\ 
  \forall \ a ,b \right\} \,,
\end{equation}
defined by
\begin{equation}\label{E:conjFinfty}
  \overline{F_\infty^b} \ = \ 
  \bigoplus_{c \le n-b} V^{c,a}_{W,F} \,.
\end{equation}

\subsubsection{The filtration $F_\infty$} 

From \eqref{E:ds-conj} and \eqref{E:conjFinfty} we see that 
\begin{equation}\label{E:ds-Finfty}
  F_\infty^b \ = \ 
  \bigoplus_{c \le n-b} V^{a,c}_{W,F} \,.
\end{equation}
In fact, \eqref{E:NVpq} and \eqref{E:NkVpq} can be used to show that 
\begin{equation}\label{E:Finfty}
  F_\infty \ = \ \lim_{y\to\infty}
  \exp(\bi y N) \cdot F
\end{equation}
for any $N \in \s_I$, cf.~\cite[\S2]{MR840721}.   

It follows from the second equation of \eqref{E:ds-WF}, \eqref{E:GrWiso} and \eqref{E:ds-Finfty} that $F$ and $F_\infty$ define the same filtration on $\tGr^W_\ell$.  Recalling the representation \eqref{E:rho}, this in turn implies
\begin{equation}\label{E:eqstab}
  \varrho(\tStab_{P_{W,\bC}}(F)) \ = \ 
  \varrho(\tStab_{P_{W,\bC}}(F_\infty))
\end{equation}

\subsubsection{The period matrix representation} \label{S:pmr}

The map $\ff^\perp \to \sS$ sending $x \mapsto \exp(x) \cdot F$ is a biholomorphism.  Let $\lambda : \sS \to \ff^\perp$ denote the inverse.  Then $\lambda \circ \tPhi(t,w)$ is the \emph{period matrix representation of $\Phi|_\sU$}.  Let $[\lambda \circ \tPhi(t,w)]^{p,q}$ be the component of 
\[
  \lambda \circ \tPhi(t,w) \ = \ 
  \log \xi(t,w) \ \in \ \ff^\perp
\]
taking value in $\fg^{p,q}_{W,F}$.  Then \eqref{E:tPhi}, \eqref{E:tilde-g}, \eqref{E:Nin}, \eqref{E:tilde-g2} and \eqref{E:xi} imply that:
\begin{i_list}
\item 
The component $[\lambda \circ \tPhi(t,w)]^{-1,q} = (\log g(t,w))^{-1,q}$, for all $q\not=-1$.
\item 
\label{i:11}
The component $[\lambda \circ \tPhi(t,w)]^{-1,-1} = \sum\ell(t_i)N_i  + (\log g(t,w))^{-1,-1}$.
\end{i_list}
The functions $(\log g)^{p,q} : \olU \to \fg^{p,q}_{W,F}$ are all holomorphic (Remark \ref{R:logtg}).

\subsection{Infinitesimal period relation} \label{S:ipr}

The variation of Hodge structure is subject to a differential constraint $\nabla \cF^p \subset \cF^{p-1} \ot \Omega^1_B$, known as \emph{infinitesimal period relation} (or \emph{Griffiths' transversality}).  

\subsubsection{The infinitesimal period relation over $\sU$}
The equivalent condition for the local lift $\widetilde\Phi(t,w) = \xi(t,w)\cdot F$ is 
\begin{subequations}\label{SE:ipr}
\begin{equation}
  (\xi^{-1} \td \xi)^{p,q} \ = \ 0 \,,\quad\forall \ p \le -2 \,.
\end{equation}
Here $\xi^{-1} \td \xi$ is the pull-back of the Maurer-Cartan form on the Lie group $\exp(\ff^\perp)$ under the map $\xi : \olU \to \exp(\ff^\perp)$.  This 1-form takes value in the Lie algebra $\ff^\perp$, and $(\xi^{-1} \td \xi)^{p,q}$ is the component taking value in $\fg^{p,q}_{W,F} \subset \ff^\perp$.  The infinitesimal period relation
\begin{equation}
  \xi^{-1} \td \xi \ = \ \bigoplus_q (\xi^{-1} \td \xi)^{-1,q}
\end{equation}
implies that the infinitesimal variation in the period map is encoded by the horizontal data in the Maurer-Cartan form of $\exp(\ff^\perp)$.  It follows from \eqref{E:gpq-dfn} and \eqref{E:F0g} that $(\xi^{-1} \td \xi)^{-1,q} = \td \xi^{-1,q}$, so that 
\begin{equation}
  \xi^{-1} \td \xi \ = \ \bigoplus_q \td \xi^{-1,q} \,.
\end{equation}
\end{subequations}



\subsubsection{The infinitesimal period relation over $Z_I^* \cap \olU$}

Along $Z_I^* \cap \olU$ the definition \eqref{E:xi} of $\xi$ and the infinitesimal period relation \eqref{SE:ipr} force the map $g$ of \eqref{E:tPhi}, \eqref{E:tilde-g} and \eqref{E:tilde-g2} to take value in the centralizer of $\s_I$,
\begin{equation}\label{E:tilde-g3}
  g(0,w) \ \in \ 
  C_{I,\bC} \, \cap \, \exp(\ff^\perp) 
  \ = \ \exp(\fc_{I,\bC}\,\cap\,\ff^\perp)\,,
\end{equation}
and to satisfy the differential constraint $\left.(g^{-1} \td g)^{p,q}\right|_{Z_I^* \cap \olU} = 0$ for all $p \le -2$.  Just as in \eqref{SE:ipr}, this is equivalent to 
\begin{equation}\label{E:ipr-tg}
  \left.g{}^{-1} \td g \right|_{Z_I^* \cap \olU}
  \ = \ 
  \bigoplus_q \left. 
  \td g{}^{-1,q} \right|_{Z_I^* \cap \olU}\,.
\end{equation}

\begin{remark}\label{R:ipr-tg}
In particular, if $\left.\td g^{-1,q}\right|_{Z_I^* \cap \olU} = 0$ for all $q$, then $\left.\td g\right|_{Z_I^* \cap \olU} = 0$.  
\end{remark}

\begin{remark} \label{R:ipr-tg*}
Note that \eqref{E:ds-cI} and \eqref{E:tilde-g3} imply
\[
    \left. (g^{-1} \td g)^{p,q}\right|_{Z_I^* \cap \olU} \ = \ 0 \,,\quad \forall \quad p+q \ge 1 \,.
\]
This allows us to refine Remark \ref{R:ipr-tg} as follows.  Fix $-q \le 0$.  If $\left.\td g^{-1,-r}\right|_{Z_I^* \cap \olU} = 0$ for all $-q \le -r \le 0$, then $\left.\td g^{-p,-r}\right|_{Z_I^* \cap \olU} = 0$ for all $-p \le -1$ and all $-q\le -r \le 0$. 
\end{remark}

\section{Proper extensions}

The maps $\Psi_I, \Theta_I$ and $\Phi_I$ of \eqref{E:towerI*} naturally admit maximal holomorphic extensions, and these extensions are proper.  This implies that the fibres are compact.  The compactness of the $\Theta_I$--fibres will play a key role in the proof of Theorem \ref{T:thatsall}.

\subsection{Proper extension of $\Psi_I$} 

Recall that the $Z_I$ are connected by assumption (\S\ref{S:nilp-orb-infty}.  This is inessential, but notationally convenient).  Let $X_I \subset \olB$ be the neighborhood of $Z_I$ given by Lemma \ref{L:nbd-ZI}, and let $\{\mathbf{T}_{i_a}\}_{i_a\in I} \subset H^0(U_I , \left.\tGL(\mathbb{V})\right|_{U_I})$  be the flat sections over $U_I = B \cap X_I$ that were constructed in the proof of the lemma.  By assumption the $\{\mathbf{T}_{i_a}(u) \ | \ u \in U_I \}_{i_a\in I}$ are unipotent operators (\S\ref{S:setup}); let $\{\mathbf{N}_{i_a} = \log \mathbf{T}_{i_a}\} \subset H^0(U_I,\left.\fgl(\mathbb{V} \ot_\bZ \bQ)\right|_{U_I})$ be the logarithms.  Let
\[
  \mathbf{S}_I \ \subset \ 
  \left.\fgl(\mathbb{V} \ot_\bZ \bQ)\right|_{U_I}
\]
be the flat sub-bundle over $U_I$ point-wise spanned by the $\{\mathbf{N}_{i_a}\}_{i_a\in I}$.  Without loss of generality $X_J \subset X_I$ and $U_J \subset U_I$ whenever $I \subset J$.  Then $\left.\mathbf{S}_I\right|_{U_J}$ is a flat subbundle of $\mathbf{S}_J$.  Let $\Upsilon$ be the finest possible partition of the collection 
\[
  \cI \ = \ \{ I \ | \ Z_I^* \not= \emptyset\}
\] 
that satisfies the following property: if $I \subset J$ and $\left.\mathbf{S}_I\right|_{U_J} = \left.\mathbf{S}_J\right|_{U_J}$, then $I \sim J$.  Fix $\upsilon \in \Upsilon$, and define 
\[
  \Zc_\upsilon \ = \ \bigcup_{I \in \upsilon} Z_I^* \,.
\]
The quasi-projective intersection 
\[
  \Zc_I \ = \ Z_I \cap \Zc_\upsilon
\]
is the \emph{cone-closure} of $Z_I^*$.

Given $0 < y_a \in \bQ$, consider the section
\[
  \mathbf{N}_I(y) \ = \ \sum_{a=1}^k y_a \,\mathbf{N}_{i_a}
  \ \in \ H^0(U_I,\left.\fgl(\mathbb{V}\ot_\bZ\bQ)\right|_{U_I}) \,.
\]
As in \S\ref{S:W}, the section $\mathbf{N}_I(y)$ point-wise defines a filtration
\[
  \mathbf{W}^I_0 \,\subset\,
  \mathbf{W}^I_1 \,\subset \cdots \subset\,
  \mathbf{W}^I_{2\sfn} \,=\, 
  \left.\mathbb{V} \ot_\bZ \bQ\right|_{U_I}
\]
of $\left.\mathbb{V} \ot_\bZ \bQ\right|_{U_I}$ by flat subbundles.  The filtration is independent of our choice of $y = (y_a) \in \bQ^k_+$.  Define 
\[
  \mathrm{wt}(I) \ = \ 
  \left\{ J \in \cI \ | \ J \supset I \,,\ \left.\mathbf{W}^I_\bullet\right|_{U_I \cap U_J} = \left.\mathbf{W}^J_\bullet\right|_{U_I \cap U_J} \right\}\,.  
\]

Let $\Phi_{U_I} : U_I \to \Gamma_I \bs \sD$ be the period map of Lemma \ref{L:nbd-ZI}.  Recall that $\Phi_{U_I}$ was defined by first fixing an element $\{T_i\}_{i\in I}$ in the $\Gamma$--conjugacy class $\cT_\sU$.  So $N_i = \log T_i$ is well-defined for all $i \in I$.  If $J \supset I$, then the flat sections $\{\mathbf{N}_j\}_{j\in J}$ over $X_J \cap U_I = U_J$ determine nilpotent operators $\{N_j\}_{j \in J} \in \fgl(V_\bQ)$ that are well-defined up to the action of $\Gamma_I$.  So the $\Gamma_I$--conjugacy class of $\Gamma_J \subset \Gamma_I$ is well-defined, and the $\Gamma_I$--congruence class of $\sM_J \subset \check D$ is well-defined.

\begin{lemma}\label{L:MJinMI}
Given $J \in\mathrm{wt}(I)$, there are natural maps 
\begin{equation} \nonumber 
  \Gamma_J \bs \sM_J \,\to\, \Gamma_I\bs \sM_I \,,\quad
  \Gamma_J \bs \sM_J^1 \,\to\, \Gamma_I \bs \sM_I^1 \tand
  \Gamma_J \bs \sD_J \,\to\, \Gamma_I \bs \sD_I
\end{equation}
with well-defined images.
\end{lemma}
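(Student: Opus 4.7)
The plan is to produce the maps at the level of the ambient complex manifolds and groups, and then descend to the quotients. All three maps will be built from a single inclusion $\sM_J \subset \sM_I$ combined with the observation that the relevant groups $C_J^{-a}$ and $\Gamma_J$ are contained in $C_I^{-a}$ and $\Gamma_I$, respectively.

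First I would record the basic chain of inclusions forced by $I \subset J$. Since $\s_I \subset \s_J$, centralizing $\s_J$ is a stronger condition, so $C_J \subset C_I$ in $G$; intersecting with $P_W^{-a}$ (the same $W$ in each case, by the hypothesis $J \in \mathrm{wt}(I)$) gives $C_J^{-a} \subset C_I^{-a}$ for all $a \ge 0$; and intersecting with $\Gamma$ yields $\Gamma_J \subset \Gamma_I$. Crucially, the hypothesis $J \in \mathrm{wt}(I)$ is what makes ``the same $W$'' meaningful: it identifies the flat weight filtration attached to $\s_J$ on $U_I \cap U_J$ with that of $\s_I$, so that $\sM_I$ and $\sM_J$ are cut out of $\check\sD$ by conditions relative to a single common $W$.

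Second, I would establish $\sM_J \subset \sM_I$. Both $\sM_I$ and $\sM_J$ consist of filtrations $F \in \check\sD$ for which $(W,F)$ is a mixed Hodge structure polarized by every element of the respective cone (\S\ref{S:LMHS-F(w)}, \S\ref{S:MI}). Since being polarized by the cone $\s_J$ means being polarized by every $N \in \s_J$, it is automatically polarized by every $N \in \s_I \subset \s_J$; hence $\sM_J \subset \sM_I$. (Regarding Remark \ref{R:GonM}: for $o \in Z_J^* \subset Z_I$ the filtration $F_o$ lies in both $\sM_J^\circ$ and $\sM_I^\circ$, and the connectedness of $\sM_J^\circ$ forces $\sM_J^\circ \subset \sM_I^\circ$, so the convention of conflating $\sM_I$ with $\sM_I^\circ$ is compatible with the inclusion.) Descending by the quotient groups and using $C_J^{-a} \subset C_I^{-a}$, the action of $C_{J,\bC}^{-a}$ on $\sM_J$ is the restriction of the $C_{I,\bC}^{-a}$-action on $\sM_I$, so the quotient maps
\[
  \sM_J \ \to \ \sM_I, \qquad
  \sM_J^1 = C_{J,\bC}^{-2}\bs\sM_J \ \to \ C_{I,\bC}^{-2}\bs\sM_I = \sM_I^1, \qquad
  \sD_J \ \to \ \sD_I
\]
are all well defined. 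Quotienting by the respective monodromy groups, using $\Gamma_J \subset \Gamma_I$, produces the three maps in the statement.

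Finally, I would address the ``well-defined images'' clause. The only ambiguity in the construction of $\s_J$, $W$, $C_J$, $\Gamma_J$, $\sM_J$ over $U_J$ is the choice of representative $\{T_j\}_{j \in J}$ within the $\Gamma_I$-conjugacy class singled out by Remark \ref{R:cTI} (once $\{T_i\}_{i\in I}$ has been fixed to define $\Gamma_I$, $\sM_I$). Changing this choice conjugates $(\s_J,W,C_J,\Gamma_J,\sM_J)$ simultaneously by a single element $\gamma \in \Gamma_I$. Such a conjugation acts on the image inside $\Gamma_I\bs\sM_I$, $\Gamma_I\bs\sM_I^1$, $\Gamma_I\bs\sD_I$ trivially, since we are already quotienting by $\Gamma_I$. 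Hence the images are independent of the choice of representative, which is the statement of the lemma. The main (though minor) obstacle in this plan is confirming the connected-component compatibility invoked parenthetically above; everything else is a straightforward unwinding of the definitions once the equality $W(\s_I) = W(\s_J)$ is in hand.
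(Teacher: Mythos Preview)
Your overall architecture matches the paper's: establish $\sM_J \subset \sM_I$, check the compatibility of the groups $C_J^{-a} \subset C_I^{-a}$ and $\Gamma_J \subset \Gamma_I$, then descend. The well-definedness discussion is also along the right lines.

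There is, however, a genuine gap at the heart of the argument. You write ``it is automatically polarized by every $N \in \s_I \subset \s_J$'', but the containment $\s_I \subset \s_J$ is false whenever $I \subsetneq J$: the cone $\s_J$ consists of \emph{strictly positive} combinations $\sum_{j\in J} y_j N_j$ with all $y_j>0$, whereas elements of $\s_I$ have $y_j = 0$ for $j \in J\setminus I$. What is true is only $\s_I \subset \overline{\s_J}$. This distinction is harmless for the centralizer inclusion $C_J \subset C_I$ (centralizing is a linear condition, so it passes to the span), but it is \emph{not} harmless for the polarization condition defining $\sM_\bullet$: the second Hodge--Riemann inequality is an open condition, and there is no a priori reason it survives passage to a boundary face of the cone.

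The paper explicitly flags this step as nontrivial and imports it from \cite[(B.19)]{GGR-part1}; the underlying fact (that a nilpotent orbit for $\s_J$ restricts to a nilpotent orbit for any face $\s_I$ with $W(\s_I)=W(\s_J)$) ultimately rests on the several-variable $\mathrm{SL}_2$--orbit theorem of Cattani--Kaplan--Schmid. So the missing ingredient is not a minor detail but the main content of the lemma: you need either to invoke that result or to supply an argument for why polarization by the interior elements of $\s_J$ forces polarization by $N \in \s_I$. Once that is in place, your treatment of $C_J^{-a} = C_J \cap P_W^{-a} \subset C_I \cap P_W^{-a} = C_I^{-a}$ (using the common $W$) and of the $\Gamma_I$--conjugacy ambiguity is fine and agrees with the paper.
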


\begin{proof}
Since tuple $\{N_j\}_{j\in J} \subset \fgl(V_\bQ)$ is well-defined up to the action of $\Gamma_I$, it follows that the $\Gamma_I$--conjugacy class of the centralizer $C_J \subset C_I$ is well-defined.  Likewise, $\Gamma_I(\sM_J) \subset \check D$ is well-defined.  It is a nontrivial result \cite[(B.19)]{GGR-part1} that
\begin{equation}\label{E:MJinMI}
  \Gamma_I(\sM_J) \ \subset \ \sM_I \,.
\end{equation}
This yields the first map $\Gamma_J \bs \sM_J \to \Gamma_I \bs \sM_I$ of the lemma.

We also have \cite[Remark B.20]{GGR-part1}
\begin{equation}\label{E:CJvCI}
  C_J^{-1} = C_J \cap C_I^{-1} \,.
\end{equation}
Together \eqref{E:MJinMI} and \eqref{E:CJvCI} define the third map $\Gamma_J \bs \sD_J \to \Gamma_I \bs \sD_I$ of the lemma.  It follows from \eqref{E:CJvCI}, and the definition of $C_I^{-a}$ in \eqref{E:dfnCIa}, that $C_J^{-2} = C_J \cap C_I^{-2}$.  Keeping \eqref{E:MJinMI} in mind, this yields the second map $\Gamma_J \bs M_J^1 \inj \Gamma_I \bs \sM_I^1$.
\end{proof}

\begin{lemma} \label{L:Wup}
If $I \subset J$ and $I \sim J$, then $J \in \mathrm{wt}(I)$.  In particular, $\Zc_I \subset \Zw_I$.
\end{lemma}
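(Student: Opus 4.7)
The plan is to reduce to a pointwise assertion about commuting nilpotent cones in $\fgl(V_\bQ)$ and then invoke the Cattani--Kaplan theorem on the well-definedness of the weight filtration of a monodromy cone.

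Since $\sim$ is the equivalence closure of the basic relation ``$K \subset L$ with $\mathbf{S}_K|_{U_L} = \mathbf{S}_L|_{U_L}$,'' the hypothesis $I \sim J$ produces a chain $I = I_0, \ldots, I_k = J$ whose consecutive pairs are basic-related (in either direction). Each weight filtration $\mathbf{W}^K$ is flat on the connected set $U_K$, so an equality on any non-empty overlap propagates uniquely along the chain. It therefore suffices to prove the conclusion for a single basic pair: assume $I \subset J$ and $\mathbf{S}_I|_{U_J} = \mathbf{S}_J|_{U_J}$.  Since $U_J \subset U_I$, the task reduces to showing $\mathbf{W}^I|_{U_J} = \mathbf{W}^J|_{U_J}$, and by flatness this in turn reduces to verification at a single point $u \in U_J$.

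Writing $N_i = \mathbf{N}_i(u)$, let $\s_I = \{\tsum_{i \in I} y_i N_i : y_i \in \bQ_{>0}\}$ and $\s_J = \{\tsum_{j \in J} y_j N_j : y_j \in \bQ_{>0}\}$. By hypothesis both cones share the same $\bQ$-span $U = \mathbf{S}_I(u) = \mathbf{S}_J(u)$, so for each $j \in J \setminus I$ there is a rational relation $N_j = \sum_{i \in I} c_i^j N_i$. A small-perturbation argument then exhibits $\s_I \subset \s_J$: for $N = \sum_{i \in I} y_i N_i \in \s_I$ and any sufficiently small $\epsilon \in \bQ_{>0}$, the identity
\[
  N \;=\; \tsum_{i \in I} \bigl(y_i - \epsilon \tsum_{j \in J \setminus I} c_i^j\bigr) N_i \;+\; \epsilon \tsum_{j \in J \setminus I} N_j
\]
writes $N$ as a $\bQ_{>0}$--linear combination of all $N_j$, $j \in J$, hence $N \in \s_J$.

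By the Cattani--Kaplan theorem \cite[Theorem 3.3]{MR664326} (cf.~\S\ref{S:LMHS-F(w)}), the weight filtration $W(N)$ of any element in a polarizing nilpotent cone is independent of the element chosen.  Applied to $N \in \s_I \subset \s_J$, this yields
\[
  \mathbf{W}^I(u) \;=\; W(\s_I) \;=\; W(N) \;=\; W(\s_J) \;=\; \mathbf{W}^J(u),
\]
establishing the first assertion. The ``in particular'' $\Zc_I \subset \Zw_I$ is then immediate, because the first assertion shows that $\sim$ refines the analogous weight-equivalence relation defining $\Zw_I$, so each cone-equivalence class sits inside a weight-equivalence class. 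The only substantive input is the Cattani--Kaplan result; the remaining convex-geometric manipulation and the chain propagation are routine bookkeeping.
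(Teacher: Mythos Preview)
Your core argument---the perturbation showing $\s_I \subset \s_J$ followed by the Cattani--Kaplan constancy of the weight filtration on a polarizable cone---is correct, and it is exactly the content the paper outsources to \cite[Lemma B.10]{GGR-part1}.  So for a \emph{basic} pair you have given a self-contained proof of what the paper cites.

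The chain reduction, however, is not fully justified.  You claim that equalities of flat filtrations propagate along the zigzag chain $I=I_0,\ldots,I_k=J$, but when the chain has a local minimum (say $I_{m} \supset I_{m+1} \subset I_{m+2}$) the overlaps $U_{I_m}\cap U_{I_{m+1}}$ and $U_{I_{m+1}}\cap U_{I_{m+2}}$ need not share a point of $U_I\cap U_J$, and the intermediate $U_{I_{m+1}}$ need not meet $U_J$ at all.  Flatness alone does not bridge this.

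There is a cleaner fix that dispenses with the chain entirely.  The rank of $\mathbf{S}_K$ is constant on the connected $U_K$, and for a basic pair the ranks agree; hence $\mathrm{rk}\,\mathbf{S}_K$ is a $\sim$--invariant (an integer propagates along any chain without topological hypotheses).  Now $I\subset J$ gives $\mathbf{S}_I|_{U_J}\subset \mathbf{S}_J$, while $I\sim J$ gives equal ranks, so $\mathbf{S}_I|_{U_J}=\mathbf{S}_J|_{U_J}$: the pair $(I,J)$ is already basic.  This is presumably what the paper is tacitly using when its proof jumps straight to the span condition; with this observation your perturbation-plus-Cattani--Kaplan argument finishes the job.
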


\begin{proof}
The conditions $I \subset J$ and $\left.\mathbf{S}_I\right|_{U_I \cap U_J} = \left.\mathbf{S}_J\right|_{U_I \cap U_J}$ imply that the weight filtrations coincide, \cite[Lemma B.10]{GGR-part1}. 
\end{proof}

From the local coordinate expression \eqref{E:FI} for $\Psi_I$, and Lemmas \ref{L:nbd-ZI}, \ref{L:MJinMI} and \ref{L:Wup} we deduce

\begin{corollary} 
The map $\Psi_I : Z_I^* \to (\Gamma_I \exp(\bC\s_I)) \bs \sM_I$ extends to the cone closure $\Zc_I$.
\end{corollary}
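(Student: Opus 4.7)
The cone closure decomposes as $\Zc_I = \bigcup_{J \supset I,\ J \sim I} Z_J^*$, so the task is to define $\Psi_I$ on each $Z_J^*$ lying in $\Zc_I$ in a way that glues with the definition already available on $Z_I^*$ itself, and to verify holomorphicity. My strategy is to feed $\Psi_J$ (which is already defined on $Z_J^*$ by \S\ref{S:PsiI}) through the natural maps produced by Lemmas \ref{L:MJinMI} and \ref{L:Wup}, exploiting the crucial equality $\bC\s_I = \bC\s_J$ that is forced by the equivalence $I \sim J$.

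More precisely, if $J \supset I$ and $J \sim I$, then $\left.\mathbf{S}_I\right|_{U_J} = \left.\mathbf{S}_J\right|_{U_J}$, which means that after the $\Gamma_I$--identification implicit in Lemma \ref{L:MJinMI} the complex spans of the monodromy cones agree:
\[
  \bC\s_I \ = \ \bC\s_J \,,
  \quad\hbox{hence}\quad
  \exp(\bC\s_I) \ = \ \exp(\bC\s_J).
\]
By Lemma \ref{L:Wup} we have $J \in \mathrm{wt}(I)$, so Lemma \ref{L:MJinMI} supplies a well-defined map $\Gamma_J \bs \sM_J \to \Gamma_I \bs \sM_I$. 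Since $\exp(\bC\s_J)$ normalizes $\sM_J$ and coincides with $\exp(\bC\s_I)\subset C_{I,\bC}$ under this identification, this descends to a natural holomorphic map
\[
  q_{IJ} : (\Gamma_J \exp(\bC\s_J)) \bs \sM_J \ \to \ (\Gamma_I \exp(\bC\s_I)) \bs \sM_I \,,
\]
and I define the extension of $\Psi_I$ on $Z_J^* \subset \Zc_I$ to be $q_{IJ} \circ \Psi_J$.

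To check that this agrees with $\Psi_I$ on $Z_I^*$ (the case $J = I$) and glues continuously/holomorphically across strata, I work in a coordinate neighborhood $\olU_o$ of a point $o \in Z_J^*$ as in \S\ref{S:vhs}, with monodromy logarithms $N_{j_a}$ indexed by $j_a \in J$. The local lift has the form $\tilde\Phi(t,w) = \exp\bigl(\sum_{a} \ell(t_a) N_{j_a}\bigr) g(t,w)\cdot F_o$. Each $N_{j_a}$ with $j_a \in J \setminus I$ lies in $\bC\s_I$ by the equality above, so the factor $\exp\bigl(\sum_{j_a \in J\setminus I} \ell(t_a) N_{j_a}\bigr)$ lies in $\exp(\bC\s_I)$ and is killed upon passing to the quotient $(\Gamma_I \exp(\bC\s_I)) \bs \sM_I$. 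The remaining multi-valuedness in the $t_a$ for $j_a \in I$ is absorbed by $\exp(\bC\s_I)$ in the same way, and the residual monodromy is absorbed by $\Gamma_I \supset \Gamma_J$ using Lemma \ref{L:nbd-ZI}. Consequently the local formula $w \mapsto g(0,w) \cdot F_o$ of \eqref{E:FI}, followed by the quotient, is well-defined and holomorphic across all strata $Z_J^* \cap \olU_o$ with $J \supset I$, $J \sim I$, and coincides with $\Psi_J$ on $Z_J^*$ via $q_{IJ}$.

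The main obstacle is not any single step but rather the bookkeeping: verifying that the various inclusions of $\sM_J$ into $\sM_I$, the containments $\Gamma_J \subset \Gamma_I$, and the equality $\exp(\bC\s_I) = \exp(\bC\s_J)$ are all simultaneously compatible with the local lifts chosen near different points of $\Zc_I$. Once the key point $\bC\s_I = \bC\s_J$ is in hand, the rest reduces to unwinding definitions and invoking Lemmas \ref{L:nbd-ZI}, \ref{L:MJinMI}, and \ref{L:Wup}, together with the holomorphicity of $g$ (Remark \ref{R:logtg}), to check continuity and holomorphicity of the glued map.
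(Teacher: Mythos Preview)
Your proposal is correct and follows essentially the same route as the paper. The paper's own proof is the single sentence ``From the local coordinate expression \eqref{E:FI} for $\Psi_I$, and Lemmas \ref{L:nbd-ZI}, \ref{L:MJinMI} and \ref{L:Wup} we deduce\ldots'', and what you have written is a careful unpacking of exactly that deduction: you invoke Lemma \ref{L:Wup} to place $J$ in $\mathrm{wt}(I)$, Lemma \ref{L:MJinMI} to get $\Gamma_J\backslash\sM_J \to \Gamma_I\backslash\sM_I$, the equality $\bC\s_I=\bC\s_J$ to descend modulo $\exp(\bC\s_I)$, and the local formula \eqref{E:FI} together with Lemma \ref{L:nbd-ZI} to verify holomorphic gluing. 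One small point worth making explicit: the implication ``$J\supset I$ and $J\sim I$ $\Rightarrow$ $\mathbf{S}_I|_{U_J}=\mathbf{S}_J|_{U_J}$'' is not literally the defining property of the partition $\Upsilon$ (which is merely the finest partition \emph{generated} by that relation), but it follows immediately by a dimension count---the rank of $\mathbf{S}_K$ is constant along any chain realizing the equivalence, and the inclusion $\mathbf{S}_I|_{U_J}\subset\mathbf{S}_J$ of equal rank is then an equality.
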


\noindent In a mild abuse of notation, we will also denote the extension by $\Psi_I$.

\begin{lemma}[{\cite{DRstein}}]\label{L:extn2Zc}
The extension $\Psi_I : \Zc_I \to (\Gamma_I \exp(\bC\s_I)) \bs \sM_I$ is proper.
\end{lemma}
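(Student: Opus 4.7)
The plan is to establish sequential properness. Let $K \subset (\Gamma_I \exp(\bC\s_I))\bs \sM_I$ be compact and $\{x_n\} \subset \Psi_I^{-1}(K)$. Since $Z_I$ is projective, after extracting we may assume $x_n \to x \in Z_I$, with $x \in Z_J^*$ for a unique $J \supset I$. The goal reduces to proving $J \sim I$, so that $x \in \Zc_I$. Passing to a further subsequence, all $x_n$ lie in a fixed stratum $Z_K^*$ with $I \subset K \subset J$; since $x_n \in \Zc_I$ we have $K \sim I$, so $\left.\mathbf{S}_K\right|_{U_I \cap U_K} = \left.\mathbf{S}_I\right|_{U_I \cap U_K}$ and in particular $\bC\s_K = \bC\s_I$.

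Next I would use the nilpotent orbit theorem on $U_J$ to write
\[
  \tPhi(t,w) \ = \ \exp\Bigl(\tsum_{a=1}^{|J|} \ell(t_a)\, N_{j_a}\Bigr)\, g(t,w)\cdot F_o,
\]
with $g$ holomorphic, $F_o$ polarizing the LMHS at $x$, and $\{N_{j_a}\}$ generating $\s_J$. Restricting to $Z_K^*$ (i.e.\ setting $t_a = 0$ for $j_a \in K$) produces
\[
  F_I(x_n) \ = \ \exp\Bigl(\tsum_{j_a \in J\setminus K} \ell(t_a(x_n))\, N_{j_a}\Bigr)\, g\bigl(0_K,\, t_{J\setminus K}(x_n),\, w(x_n)\bigr)\cdot F_o \ \in \ \sM_I,
\]
where $F_I(x_n) \in \sM_I$ because $\s_I \subset \s_K$ is a subcone of a polarizing cone. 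The factor $g(\cdots)$ converges, so the behavior of $\Psi_I(x_n)$ in the quotient is controlled by the exponential prefactor. If $J \sim I$, then $J \sim K$ by transitivity, hence $N_{j_a} \in \bC\s_K = \bC\s_I$ for every $j_a \in J\setminus K$, the prefactor lies in $\exp(\bC\s_I)$ and is absorbed by the quotient, and continuity yields $\Psi_I(x_n) \to \Psi_I(x)$ with $x \in \Zc_I$, as required.

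The heart of the argument is the converse: assuming $J \not\sim I$, some $N_{j_b}$ with $j_b \in J\setminus K$ is not in $\bC\s_I$. Choose a $\bC$--linear complement $\fc_{I,\bC}^{-2} = \bC\s_I \oplus \fn'$, so that $\exp(\bC\s_I)\bs\exp(\fc_{I,\bC}^{-2}) \cong \exp(\fn')$ by \S\ref{S:nilpotent}.\ref{i:factor}. Since $\tIm\,\ell(t_a(x_n)) \to +\infty$ for each $j_a \in J\setminus K$ and the projection of $N_{j_b}$ to $\fn'$ is nonzero, the $\fn'$--component of $\tsum \ell(t_a(x_n))\,N_{j_a}$ escapes every bounded set; equivalently, the class of the prefactor in $\exp(\bC\s_I)\bs \cG_I$ diverges, and hence so does the image in $\exp(\bC\s_I)\bs \sM_I$.

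The main obstacle will be verifying that this divergence survives the further discrete quotient by $\Gamma_I$. Since $\Gamma_I$ acts on $\sM_I$ properly with finite stabilizers (Corollary \ref{C:stab-GammaI}) and each orbit of $\exp(\bC\s_I)$ on $\sM_I$ is locally closed, a class in $\exp(\bC\s_I)\bs\sM_I$ that escapes every compact subset must also escape every compact subset of $(\Gamma_I\exp(\bC\s_I))\bs\sM_I$, contradicting $\Psi_I(x_n) \in K$. The cleanest way to make this final step rigorous is to invoke the several--variable SL$_2$--orbit theorem of Cattani--Kaplan--Schmid \cite{\CKS}, whose metric estimates preclude compensation of the divergent transverse nilpotent direction by a sequence of $\Gamma_I$--translates; alternatively one may appeal to the definability framework of \cite{MR4742809} together with Lemma \ref{L:good-quotients} to reduce the analysis to a definable setting where properness of the extended period map is automatic once the boundary strata have been identified with the partition $\Upsilon$.
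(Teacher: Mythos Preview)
The paper does not actually supply a proof of this lemma; it is quoted from \cite{DRstein}.  So there is no ``paper's proof'' to compare against, and your proposal must be judged on its own.

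Your overall architecture is correct: pass to a convergent subsequence $x_n \to x \in Z_J^*$ in the projective closure, reduce to showing $J \sim I$, and argue that if $J \not\sim I$ then the nilpotent prefactor $\exp\bigl(\sum_{j_a \in J\setminus K}\ell(t_a(x_n))N_{j_a}\bigr)$ forces $\Psi_I(x_n)$ to leave every compact.  The identification of which $N_{j_a}$ survive the quotient by $\exp(\bC\s_I)$ is also right.

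The genuine gap is exactly where you flag it: showing that the divergence in $\exp(\bC\s_I)\bs\sM_I$ survives the further quotient by $\Gamma_I$.  Your two sentences on this are not a proof.  Proper discontinuity of $\Gamma_I$ on $\sM_I$ (Corollary~\ref{C:stab-GammaI}) does \emph{not} by itself imply that a sequence diverging in $\exp(\bC\s_I)\bs\sM_I$ also diverges in $(\Gamma_I\exp(\bC\s_I))\bs\sM_I$: the issue is precisely that a sequence $\gamma_n \in \Gamma_I$ could in principle compensate for the growth of $\tIm\,\ell(t_a(x_n))$.  Ruling this out is the entire content of the properness statement, and it requires either genuine metric estimates from \cite{\CKS} (controlling how $\Gamma_I$--translates move points relative to the nilpotent directions transverse to $\bC\s_I$) or a definability argument along the lines of \cite{MR4742809}.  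Neither of these is a one--line citation; each is a substantial argument in its own right, and the paper explicitly declines to carry out the definability route (see the remark on related work of Bakker--Brunebarbe--Klingler--Tsimerman in \S\ref{S:nilp-orb-infty}).  As written, your proposal identifies the correct obstacle but does not overcome it.

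A smaller technical point: you implicitly place the prefactor in $\exp(\fc_{I,\bC}^{-2})$ when you decompose $\fc_{I,\bC}^{-2} = \bC\s_I \oplus \fn'$, but when $J \not\sim I$ there is no guarantee that $N_{j_a} \in W_{-2}(\fg,\s_I)$ for $j_a \in J\setminus K$ (you only know $N_{j_a} \in \fc_{I,\bQ}$).  This does not break the strategy, but the bookkeeping of where the divergent term actually lives in $\cG_I$ needs to be redone.
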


\subsection{Proper extensions of $\Theta_I$ and $\Phi_I$} \label{S:Zw}

While the map $\Psi_I$ is proper on $\Zc_I$, the map $\Theta_I$ need not be.  We may obtain a proper extension as follows.

\begin{lemma}
The union
$\displaystyle
  \Zw_I \ = \ \bigcup_{J \in \mathrm{wt}(I)} Z_J^*$
is quasi-projective.
\end{lemma}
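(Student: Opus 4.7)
The plan is to show that $\Zw_I$ is locally closed in the smooth projective variety $\olB$; any locally closed subset of $\olB$ is then quasi-projective.  The finite union $\Zw_I = \bigcup_{J \in \mathrm{wt}(I)}Z_J^*$ has closure $\overline{\Zw_I} = \bigcup_{J \in \mathrm{wt}(I)} Z_J$ in $\olB$, itself a closed algebraic subset.  It therefore suffices to verify that
\[
  \overline{\Zw_I} \setminus \Zw_I \ = \ \bigcup_{K \in \cI_\mathrm{bad}} Z_K^*\,, \qquad
  \cI_\mathrm{bad} \ = \ \{K \in \cI \ | \ K \supset J \hbox{ for some } J \in \mathrm{wt}(I),\ K \notin \mathrm{wt}(I)\}\,,
\]
is closed in $\olB$.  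Because the closure of an open stratum $Z_K^*$ in $\olB$ picks up exactly the strata $Z_L^*$ with $L \supset K$, closedness of the right-hand side is equivalent to the upward-closure of $\cI_\mathrm{bad}$.  The condition ``$K \supset J$ for some $J \in \mathrm{wt}(I)$'' is automatically preserved under $K \leadsto L \supset K$, so the whole matter reduces to the following \emph{sandwich property}: if $I \subset J \subset K \subset L$ with $J, L \in \mathrm{wt}(I)$, then $K \in \mathrm{wt}(I)$.

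To prove the sandwich property, I would fix any point $u \in U_L \subset U_K$.  The subbundles $\mathbf{W}^I$ and $\mathbf{W}^K$ are flat rational subbundles of $(\mathbb{V}\ot_\bZ\bQ)|_{U_K}$, so on the connected neighborhood $U_K$ they agree if and only if they agree at the single point $u$.  Pointwise at $u$ we have nilpotent cones $\sigma_I(u) \subset \sigma_J(u) \subset \sigma_K(u) \subset \sigma_L(u) \subset \fg_\bQ$ polarizing limiting mixed Hodge structures with a common limit Hodge filtration, and $W := W(\sigma_I(u)) = W(\sigma_J(u)) = W(\sigma_L(u))$ by hypothesis.  By Cattani--Kaplan, every $N \in \sigma_L(u)$ satisfies $N(W_\ell) \subset W_{\ell-2}$, so in particular every $N \in \sigma_K(u) \subset \sigma_L(u)$ acts as zero on every $\tGr^W_\ell$.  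This is exactly the condition that the relative monodromy filtration $M(N_K, W)$ of an interior element $N_K \in \sigma_K(u)$ is $W$ itself, and by Kashiwara's relative weight filtration theorem applied to the inclusion $\sigma_J(u) \subset \sigma_K(u)$ this relative filtration coincides with $W(\sigma_K(u))$.  Hence $W(\sigma_K(u)) = W$, giving $K \in \mathrm{wt}(I)$.

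The main obstacle is this final Hodge-theoretic identification $W(\sigma_K(u)) = M(N_K, W) = W$; the structural reduction to the sandwich property is topological and elementary.  The sandwich statement itself is closely parallel to \cite[Lemma B.10]{GGR-part1}, which already establishes a coincidence of weight filtrations under a cone-span hypothesis; here one must upgrade from a cone-span match to a weight-filtration match, for which the relative weight filtration machinery of Cattani--Kaplan--Schmid and Kashiwara is the natural tool.  Issues of connectedness of $U_K$ and $\bQ$-rationality are absorbed into the flat-subbundle formalism already set up in the preceding subsections.
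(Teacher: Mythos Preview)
Your proposal is correct and reduces to the same combinatorial fact as the paper's proof.  The paper observes (implicitly) that $\overline{\Zw_I} = Z_I$ and then shows $\Zw_I$ is open in $Z_I$; this amounts to the property ``$J \in \mathrm{wt}(I)$ and $I \subset J' \subset J$ imply $J' \in \mathrm{wt}(I)$'', for which it simply cites \cite[Corollary B.11(a)]{GGR-part1}.  Your sandwich property is equivalent (take $J = I$, noting $I \in \mathrm{wt}(I)$ trivially; conversely your version follows from the paper's since only $L \in \mathrm{wt}(I)$ is actually used).  Where the paper cites, you supply a direct argument via relative weight filtrations, which is a reasonable unpacking of the cited result.

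One small imprecision worth noting: the Cattani--Kaplan/CKS identification is $W(\sigma_K) = M(N',\,W(\sigma_J))$ for a \emph{complementary} element $N'$ in the face spanned by $\{N_i\}_{i \in K \setminus J}$, not for an interior $N_K \in \sigma_K$.  This does not affect your conclusion, because every element of $\overline{\sigma_L}$ (hence both $N'$ and $N_K$) shifts $W = W(\sigma_L)$ down by two, so $M(N',W) = W$ just as well; but you should invoke the result with the correct element.
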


\begin{proof}
Given $J \in \mathrm{wt}$ and $I \subset J' \subset J$, it suffices to show that $J' \in \mathrm{wt}(I)$.  This is \cite[Corollary B.11(a)]{GGR-part1}
\end{proof}

\begin{lemma} \label{L:proper2}
The maps $\Theta_I$ and $\Phi_I$ of \eqref{E:towerI*} admit holomorphic, proper extensions to $\Zw_I \supset \Zc_I$
\begin{equation}\label{E:towerIwt}
  \begin{tikzcd}
    \Zw_I \arrow[r,"\Theta_I"]
    	\arrow[rd,"\Phi_I"',end anchor={north west}]
    & \Gamma_I \bs \sM_I^1 \arrow[d,"\pi_0"] \\
    & \ \Gamma_I \bs \sD_I \,.
  \end{tikzcd}
\end{equation}
\end{lemma}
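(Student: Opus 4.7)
The plan has three steps: define the extension stratum-by-stratum using Lemma \ref{L:MJinMI}, verify holomorphicity across strata via the local nilpotent-orbit expression, and prove properness by a divergence analysis at the boundary.

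\textbf{Construction.} For each $J \in \mathrm{wt}(I)$, Lemma \ref{L:MJinMI} supplies natural maps $\iota_{JI} : \Gamma_J \bs \sM_J^1 \to \Gamma_I \bs \sM_I^1$ and $\pi_{JI} : \Gamma_J \bs \sD_J \to \Gamma_I \bs \sD_I$. Setting $\left.\Theta_I\right|_{Z_J^*} := \iota_{JI} \circ \Theta_J$ and $\left.\Phi_I\right|_{Z_J^*} := \pi_{JI} \circ \Phi_J$ recovers \eqref{E:towerI*} when $J = I$ and defines the extensions set-theoretically on $\Zw_I$.

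\textbf{Holomorphicity.} Each restriction to a stratum $Z_J^*$ is holomorphic, so the content is continuity across strata. The key structural input is that $J \in \mathrm{wt}(I)$ forces $W(\s_J) = W(\s_I)$, so \eqref{E:sIin} applied to $\s_J$, together with the fact that $\s_I \subset \s_J$ is abelian (hence each $N_j$ centralizes $\s_I$), yields
\[
\s_J \,\subset\, \fc_I \,\cap\, \fg^{-1,-1}_{W(\s_I),F} \,\subset\, \fc_{I,\bC}^{-2} ,
\]
and therefore $\exp(\bC\s_J) \subset C_{I,\bC}^{-2}$. In a coordinate chart $(t,w)$ on $\olU$ centered at $o \in Z_J^*$ with local lift $\tPhi(t,w) = \exp(\sum_j \ell(t_j) N_j)\,g(t,w)\, F_o$, the LMHS representative at $u \in Z_{J'}^* \cap \olU$ (with $I \subset J' \subset J$, which forces $J' \in \mathrm{wt}(I)$ by the preceding lemma) has the form
\[
F_{J'}(u) \,=\, \exp\Bigl(\tsum_{j \in J \setminus J'} \ell(t_j(u)) N_j\Bigr) \cdot g(0_{J'},t_{J\setminus J'}(u),w(u)) \cdot F_o \,\in\, \sM_{J'} \,\subset\, \sM_I .
\]
The multi-valued exponential factor is trivialized by projection to $\sM_I^1 = C_{I,\bC}^{-2}\bs \sM_I$, reducing $\left.\Theta_I\right|_{Z_{J'}^* \cap \olU}$ to the single-valued holomorphic expression $[\pi_1(g(0_{J'}, t_{J\setminus J'}, w)\cdot F_o)]_{\Gamma_I}$. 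Continuity as $u \to u_0 \in Z_J^* \cap \olU$ then follows from the continuity of $g$ on $\olU$. The identical argument with $\fc_{I,\bC}^{-1}$ in place of $\fc_{I,\bC}^{-2}$ handles $\Phi_I$.

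\textbf{Properness.} It suffices to prove $\Phi_I$ is proper, since $\Phi_I = \pi_0 \circ \Theta_I$ and the continuity of $\pi_0$ give $\Theta_I^{-1}(K) \subset \Phi_I^{-1}(\pi_0(K))$ for every compact $K \subset \Gamma_I \bs \sM_I^1$. Given $u_n \in \Zw_I$ with $\Phi_I(u_n)$ convergent, pass to a subsequence with $u_n \to o \in \olB$ (possible since $\olB$ is compact); as $Z_I$ is closed, $o \in Z_K^*$ for a unique $K \supset I$, and it remains to show $K \in \mathrm{wt}(I)$. Assume for contradiction $K \notin \mathrm{wt}(I)$; after further subsequencing, $u_n \in Z_J^*$ for fixed $J \in \mathrm{wt}(I)$ with $I \subset J \subsetneq K$. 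The condition $W(\s_K) \ne W(\s_J) = W(\s_I)$ forces the existence of some $k \in K \setminus J$ with $N_k \notin \fc_{I,\bC}^{-1}$ (otherwise every $N_k$ would be weight $\le -1$ for $W(\s_I)$, giving $W(\s_K) = W(\s_J)$). The corresponding factor $\exp(\ell(t_k(u_n)) N_k)$ in the local LMHS representative has $|\ell(t_k(u_n))| \to \infty$ and acts nontrivially on $\sD_I$, forcing the image of $\Phi_I(u_n)$ in $\Gamma_I \bs \sD_I$ to diverge — contradicting convergence. Hence $K \in \mathrm{wt}(I)$ and $o \in \Zw_I$.

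I expect the principal obstacle to be the divergence assertion in the properness step: namely, ruling out that a sequence of $\Gamma_I$-translations could compensate the divergence of $\exp(\ell(t_k(u_n)) N_k)$ in $\Gamma_I \bs \sD_I$. This is an $\fsl_2$-triple / CKS distance-estimate analysis, of the type underlying the proof of Lemma \ref{L:extn2Zc} in \cite{DRstein}, whose details would be imported from there.
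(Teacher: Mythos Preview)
Your approach is essentially the same as the paper's: both hinge on the observation that $\exp(\bC\s_J) \subset C_{I,\bC}^{-2}$ for every $J \in \mathrm{wt}(I)$ to get holomorphicity of $\Theta_I$, and both deduce properness of $\Theta_I$ from that of $\Phi_I$ via $\Phi_I = \pi_0 \circ \Theta_I$. The paper's proof is a two-line citation to \cite[Lemma~B.1]{GGR-part1} for holomorphicity and properness of $\Phi_I$, whereas you sketch the underlying divergence argument and honestly flag that the step ruling out $\Gamma_I$-compensation needs the CKS-type estimates from \cite{DRstein}; so neither proof is self-contained on this point, and your version is simply more explicit about what is being imported. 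One small remark: your parenthetical ``otherwise every $N_k$ would be weight $\le -1$ for $W(\s_I)$, giving $W(\s_K)=W(\s_J)$'' silently invokes the Cattani--Kaplan relative-weight-filtration identity $W(\s_K)=M(N,W(\s_I))$ for $N\in\s_K$, which is exactly the kind of input packaged into the cited external lemmas.
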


\begin{proof}
Holomorphicity and properness of $\Phi_I : \Zw_I \to \Gamma_I \bs \sD_I$ follows directly from \cite[Lemma B.1]{GGR-part1}.  Holomorphicity of $\Theta_I : \Zw_I \to \Gamma_I \bs \sM_I^1$ follows by essentially the same argument.  The key point in adapting the proof is that the $\exp(\bC\s_J)$ lie in $C_I^{-2}$ for every $J \in \mathrm{wt}(I)$.  Then properness of $\Theta_I$ follows from properness of $\Phi_I$.
\end{proof}

\section{The relationship between $\Theta_I$ and $\Phi_I$} \label{S:Theta-v-Phi}

The goal of this section is to study the relationship between the maps $\Theta_I$ and $\Phi_I$ of \eqref{E:towerIwt}.  This relationship is given by 

\begin{theorem}\label{T:imageThetaI(A)}
The fibres of $\pi_0 : \Gamma_I \bs \sM_I^1 \to \Gamma_I \bs \sD_I$ are finite quotients of complex tori.  Each torus contains an abelian variety.  Let $A \subset \Zw_I$ be a connected component of a $\Phi_I$--fibre  \emph{(cf.~\eqref{E:towerIwt}.  In particular, $\Theta_I(A)$ is contained in a $\pi_0$--fibre).}  Then $\Theta_I(A)$ is contained in a (finite quotient of a) translate of the abelian variety.  The finite quotients are trivial if $\Gamma$ is neat.
\end{theorem}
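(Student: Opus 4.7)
The plan is to combine a structural analysis of the $\pi_0$-fibres with the infinitesimal period relation (IPR), pinning $\Theta_I(A)$ inside a translate of an abelian subvariety.

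First, I would identify a typical fibre of $\pi_0:\Gamma_I\bs\sM_I^1\to\Gamma_I\bs\sD_I$. Since $\cG_I=L_{I,\bR}\ltimes C_{I,\bC}^{-1}$ acts transitively on $\sM_I$ \eqref{E:cGIY}, the fibre of $\sM_I^1=C_{I,\bC}^{-2}\bs\sM_I$ over $p(F)$ is a homogeneous space for the abelian complex group $C_{I,\bC}^{-2}\bs C_{I,\bC}^{-1}$ (abelian by \eqref{E:cIab}), biholomorphic via Remark \ref{R:exp(cIa)} to the complex vector space $\fc_{I,\bC}^{-1}/\fc_{I,\bC}^{-2}=\bigoplus_{p+q=-1}\fc_{I,F}^{p,q}$ (by \eqref{E:ds-cIa}). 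Passing to $\Gamma_I\bs\sM_I^1$ introduces the lattice $\Gamma_I^{-1}/\Gamma_I^{-2}$ (with $\Gamma_I^{-a}:=\Gamma_I\cap C_{I,\bQ}^{-a}$) acting by translation, turning the fibre into a compact complex torus — compactness following from the properness of $\Theta_I$ in Lemma \ref{L:proper2} — together with a residual stabilizer $\Gamma_I\cap\tStab_{L_{I,\bR}}(p(F))$, which is finite because $\tStab_{L_{I,\bR}}(p(F))$ is compact (Lemma \ref{L:cpt-stab-CI}) and $\Gamma_I$ is discrete, and is trivial under neatness.

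Next, I would cut out the abelian subvariety $A_0$ as the subtorus corresponding to the ``classical'' weight-$(-1)$ sub-Hodge structure $\fc_{I,F}^{-1,0}\oplus\fc_{I,F}^{0,-1}$, with holomorphic tangent $\fc_{I,F}^{-1,0}$. Fix $N\in\s_I$ with $\fsl_2$-partner $M=M(N)\in\fg^{1,1}_{W,F}$ (\S\ref{S:sl2}). Combining \eqref{E:ds-sQ} with the fact that $\tad_M:\fg^{p,q}_{W,F}\to\fg^{p+1,q+1}_{W,F}$, the form $\sQ(\cdot,\tad_M\cdot)$ pairs $\fc_{I,F}^{-1,0}$ non-degenerately with $\fc_{I,F}^{0,-1}$; symmetry of $\sQ$ and $\tAd$-invariance make it alternating, and Hodge--Riemann positivity transports from the polarization of the primitive $P_1(\tad_N)\subset\tGr^W_1\fg$ through the isomorphism $\tad_N:P_1\to\tGr^W_{-1}\fg$ of \eqref{E:Niso}. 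Intersecting with the lattice from $\Gamma_I^{-1}/\Gamma_I^{-2}$ gives the polarized $\bZ$-HS defining $A_0$.

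Finally, I would apply the IPR to constrain $\Theta_I(A)$. From \S\ref{S:schubert}, $\Theta_I$ is locally represented by $g(0,w)\bmod C_{I,\bC}^{-2}\Gamma_I$ with $\log g\in\fc_{I,\bC}\cap\ff^\perp$; hence the Deligne components of $\log g$ have $p\le -1$. By \eqref{E:ipr-tg}, $g^{-1}\td g|_{Z_I^*}$ retains only $(-1,q)$ components, so the only contribution to $d\Theta_I$ in the fibre (weight $-1$) direction comes from $\fc_{I,F}^{-1,0}$ — precisely the tangent to $A_0$. Since $A$ is connected and $d\Phi_I|_A=0$, $\Theta_I(A)$ is everywhere tangent to translates of $A_0$ and therefore lies in a single such translate. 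The argument extends from $Z_I^*$ to all of $\Zw_I$ because $\mathrm{wt}(I)$ is defined so that the weight filtration is constant on $\Zw_I$, making the same Schubert-cell/IPR analysis valid on each $Z_J^*\subset\Zw_I$. The main obstacle is isolating the correct sub-Hodge structure inside the weight-$(-1)$ piece: verifying that $\sQ(\cdot,\tad_M\cdot)$ genuinely polarizes $\fc_{I,F}^{-1,0}\oplus\fc_{I,F}^{0,-1}$ — rather than the full $\bigoplus_{p+q=-1}\fc_{I,F}^{p,q}$, which may also contain higher-depth pieces $\fc_{I,F}^{-2,1},\ldots$ when $\sfn\ge 2$ — and confirming that $\Gamma_I^{-1}/\Gamma_I^{-2}$ restricts to a full-rank integral sublattice of this sub-HS.
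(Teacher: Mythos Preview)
Your overall strategy matches the paper's: identify the $\pi_0$-fibre as a torus, single out an abelian piece governed by $\fc_{I,F}^{-1,0}$, and use the IPR to trap $\Theta_I(A)$ there. Two points, however, are handled differently in the paper and expose gaps in your argument.

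\textbf{Compactness of the torus.} You deduce compactness of $E_\bZ\bs E_\bC$ from properness of $\Theta_I$. That does not work: properness of $\Theta_I:\Zw_I\to\Gamma_I\bs\sM_I^1$ controls preimages in $\Zw_I$, not the size of $\pi_0$-fibres (which need not lie in the image of $\Theta_I$). The paper instead shows directly that the image $E_\bZ$ of $\Gamma_I^{-1}$ in $E_\bC$ is a full lattice, using the conjugation relation \eqref{E:ds-conj} for the Deligne splitting (so that $E_\bZ\ot\bR$ spans $E_\bC$ as a real vector space). Relatedly, your identification of the fibre with $\bigoplus_{p+q=-1}\fc_{I,F}^{p,q}$ is too large: the pieces with $p\ge 0$ lie in $\ff$ and fix $F$, so the correct model is $E_\bC=\fc_{I,\bC}^{-1}/(\fc_{I,\bC}^{-2}+(\fc_{I,\bC}^{-1}\cap\ff))\simeq\bigoplus_{p>0}\fc_{I,F}^{-p,p-1}$.

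\textbf{The abelian variety.} You try to carve out a subtorus from the sub-Hodge structure $\fc_{I,F}^{-1,0}\oplus\fc_{I,F}^{0,-1}$ and correctly flag the obstacle: one would need $\Gamma_I^{-1}/\Gamma_I^{-2}$ to meet this subspace in a full-rank lattice, which is not clear (and can fail when $(W,F)$ is not $\bR$-split). The paper sidesteps this entirely. It takes $E'_\bC=\mathrm{image}(\fc_{I,F}^{-1,0}\hookrightarrow E_\bC)$ and observes that its image in the full torus $E_\bZ\bs E_\bC$ has the form $\bC^a\times(\bC^*)^b\times\sJ$ with $\sJ$ compact; the Hodge--Riemann argument (your $\sQ(\cdot,\tad_M\cdot)$, equivalently the Chern form of $\Lambda_M$) shows $\sJ$ is polarized, hence abelian. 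Then the IPR plus connectedness of $A$ puts $\Theta_I(A)$ in a translate of $\bC^a\times(\bC^*)^b\times\sJ$, and \emph{compactness of $A$} (from properness of $\Phi_I$) forces the holomorphic image into a translate of $\sJ$ alone. This is the missing idea: compactness of $A$ is used to kill the non-compact factors, rather than trying to produce a genuine subtorus a priori.
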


\noindent The remainder of \S\ref{S:Theta-v-Phi} is devoted to the proof of Theorem \ref{T:imageThetaI(A)}: cf.~Corollary \ref{C:pi0-fibre}, Lemma \ref{L:sJ} and Lemma \ref{L:ThetaI(A)}.

\subsection{Fibre bundle structure of $p : \sM_I \to \sD_I$} \label{S:p}

The quotient map $p:\sM_I \to \sD_I$ is a fibre bundle.  We begin by reviewing this structure.

Fix an limiting mixed Hodge structure $F \in \sM_I$ so that the semisimple operator $Y$ determined by $(W,F)$ in \S\ref{S:sl2} is rational.  We have  
\[
  \sM_I \ = \ L_{I,\bR} \cdot 
  (C_{I,\bC}^{-1} \cdot F) \,,
\]
cf.~Remark \ref{R:GonM} and \eqref{E:cGIY}.  

\begin{remark} \label{R:CInegF}
It follows from the definitions \eqref{E:F0g}, \eqref{E:ds-cI} and \eqref{E:ffperp} that $\fc_{I,\bC}^{-1}$ decomposes as a direct sum
\[
  \fc_{I,\bC}^{-1} \ = \ 
  (\fc_{I,\bC}^{-1} \,\cap\,\ff^\perp)
  \ \op \ 
  (\fc_{I,\bC}^{-1} \,\cap\, \ff)
\]
of Lie subalgebras.  Then \S\ref{S:nilpotent}\ref{i:factor} yields
\[
  C_{I,\bC}^{-1} \ = \ 
  \exp(\fc_{I,\bC}^{-1} \,\cap\, \ff^\perp)
  \, 
  \exp(\fc_{I,\bC}^{-1} \,\cap\, \ff) \,.
\]
So $x \mapsto \exp(x) \cdot F$ defines a biholomorphism
\begin{equation}\label{E:expx}
  \fc_{I,\bC}^{-1} \,\cap\, \ff^\perp 
  \ \stackrel{\simeq}{\longrightarrow} 
  \ C_{I,\bC}^{-1} \cdot F \,.
\end{equation}    
\end{remark}

Remark \ref{R:CInegF} implies that $(g,x) \mapsto g \exp(x) \cdot F$ defines a surjection 
\begin{equation} \label{E:zetaMI}
  \zeta : L_{I,\bR} \,\times\, 
  (\fc_{I,\bC}^{-1} \,\cap\, \ff^\perp)
  \ \to \ \sM_I \,.
\end{equation}

\begin{lemma} \label{L:paramMI}
Given $(g,x) , (g',x') \in L_{I,\bR} \,\times\, 
(\fc_{I,\bC}^{-1} \,\cap\, \ff^\perp)$, we have $\zeta(g,x) = \zeta(g',x')$ if and only if $\tAd_gx = \tAd_{g'}x'$ and $g^{-1} g \in \tStab_{L_{I,\bR}}(F)$.
\end{lemma}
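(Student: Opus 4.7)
The plan is to treat the two implications separately; the ``only if'' direction carries essentially all of the content.

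For the ``if'' direction, I set $h = g^{-1}g' \in \tStab_{L_{I,\bR}}(F)$, so that the hypothesis $\tAd_g x = \tAd_{g'} x'$ rewrites as $x = \tAd_h x'$. Using the conjugation identity $h\exp(\tAd_{h^{-1}}x) = \exp(x)\,h$ together with $hF = F$, a one-line computation gives
\[
  \zeta(g',x') \;=\; gh\exp(\tAd_{h^{-1}}x)\,F \;=\; g\exp(x)\cdot hF \;=\; g\exp(x)F \;=\; \zeta(g,x).
\]

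For the ``only if'' direction, let $h = g^{-1}g' \in L_{I,\bR}$, so the hypothesis reads $\exp(x)F = h\exp(x')F$. My first move is to push this equality down through the fibration $p : \sM_I \to \sD_I$. Since $\exp(x),\exp(x') \in C_{I,\bC}^{-1}$ lie in the fibre of $p$ above $p(F)$, and $p$ is $L_{I,\bR}$--equivariant, I obtain $h\cdot p(F) = p(F)$; that is, $h \in \tStab_{L_{I,\bR}}(p(F))$.

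The main step, and the one I expect to require the most care, is upgrading this to $h \in \tStab_{L_{I,\bR}}(F)$. The plan is to exploit two constraints on $h$ simultaneously: on one hand $h \in L_I$ centralizes $Y$, so preserves each $Y$--eigenspace $\bigoplus_{p+q=\ell} V^{p,q}_{W,F}$; on the other hand $h$ preserves the induced filtration $p(F)$ on $\tGr^W_\ell$. Via the identification \eqref{E:GrWiso}, the latter forces $h$ to preserve each subspace $\bigoplus_{r\ge p,\,r+s=\ell} V^{r,s}_{W,F}$ inside the $Y$--eigenspace. Taking differences over successive values of $p$ then recovers preservation of every Deligne bigrading piece $V^{p,q}_{W,F}$, and hence preservation of $F^p = \bigoplus_{r\ge p} V^{r,q}_{W,F}$. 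Thus $hF = F$, as required.

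With $hF = F$ in hand, the remaining step is routine: rewrite
\[
  \exp(x)F \;=\; h\exp(x')F \;=\; \exp(\tAd_h x')\cdot hF \;=\; \exp(\tAd_h x')\,F.
\]
Since $h$ preserves the Deligne bigrading on $V_\bC$, $\tAd_h$ preserves the induced bigrading on $\fg_\bC$ and in particular preserves $\ff^\perp$; combined with the fact that $h \in L_I$ normalizes $C_{I,\bC}^{-1}$, this gives $\tAd_h x' \in \fc_{I,\bC}^{-1} \cap \ff^\perp$. The biholomorphism \eqref{E:expx} then forces $\tAd_h x' = x$, which rearranges to $\tAd_g x = \tAd_{g'}x'$, completing the proof plan.
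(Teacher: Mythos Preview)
Your overall strategy is correct and closely parallels the paper's proof: project through $p$ to get $h=g^{-1}g'\in\tStab_{L_{I,\bR}}(p(F))$, upgrade this to $h\in\tStab_{L_{I,\bR}}(F)$, then use the biholomorphism \eqref{E:expx} to conclude $x=\tAd_hx'$. The paper handles the upgrade by invoking Lemma~\ref{L:cpt-stab-CI}, while you give a direct argument via the $Y$--eigenspace decomposition; your route is more self-contained and perfectly reasonable.

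There is, however, a genuine logical slip in your direct argument. From ``$h$ preserves each $F^p_\ell=\bigoplus_{r\ge p,\,r+s=\ell}V^{r,s}_{W,F}$'' you cannot deduce by ``taking differences'' that $h$ preserves each individual $V^{p,q}_{W,F}$: preserving a filtration does not imply preserving a splitting (e.g.\ an upper-triangular unipotent matrix preserves the standard flag but not the coordinate lines). Fortunately, this step is unnecessary for your immediate goal: $F^p=\bigoplus_\ell F^p_\ell$, so preservation of each $F^p_\ell$ already gives $hF=F$ directly, without passing through the bigrading.

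The error resurfaces in your final paragraph, where you invoke ``$h$ preserves the Deligne bigrading'' to show $\tAd_hx'\in\fc_{I,\bC}^{-1}\cap\ff^\perp$. This conclusion is correct, but it needs a different justification: once you have $hF=F$, use that $h\in L_{I,\bR}$ is real and preserves both $W$ and $F$, hence also $\overline F$, and therefore the Deligne bigrading $V^{p,q}_{W,F}$ (which is defined functorially in terms of $W,F,\overline F$). This is exactly the justification the paper gives at the corresponding step. With that one-line fix your proof is complete.
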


\begin{proof}
If $\zeta(g,x) = \zeta(g',x')$, then $p\circ\zeta(g,x) = p\circ\zeta(g',x')$.  By construction $p\circ\zeta(g,x) = g \cdot p(F)$, cf.~\eqref{E:p} and \eqref{E:cGIY}.  So $g^{-1} g' \in L_{I,\bR}$ stabilizes $p(F)$.  Then Lemma \ref{L:cpt-stab-CI} implies that $g^{-1}g'$ stabilizes $F$.

This implies $\zeta(g,x) = \zeta(g',x')$ if and only if $\exp(x) \cdot F = \exp( \tAd_{g^{-1} g'} x') \cdot F$.  The stabilizer $\tStab_{L_{I,\bR}}(F)$ of $F$ in $L_{I,\bR}$ preserves the Deligne splitting \eqref{E:dsV}; in particular, $\tAd_{g^{-1}g'}$ preserves $\fc_{I,\bC}^{-1} \,\cap\, \ff^\perp$.  So $\exp(x) \cdot F = \exp( \tAd_{g^{-1} g'} x') \cdot F$ holds if and only if $x = \tAd_{g^{-1} g'} x'$, cf.~\eqref{E:expx}.
\end{proof}

\begin{lemma} \label{L:fibre-p}
Given $\zeta(g,x) = g \exp(x) \cdot F \in \sM_I$, we have $p(g \exp(x) \cdot F) = g \cdot p(F) \in \sD_I$.  And the $p$--fibre over $g \cdot p(F) \in \sD_I$ is biholomorphic to $\tAd_g(\fc_{I,\bC}^{-1} \,\cap\, \ff^\perp)$.
\end{lemma}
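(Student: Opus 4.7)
The plan is to reduce both claims to the local description of $C_{I,\bC}^{-1}\cdot F$ provided by Remark \ref{R:CInegF}, after shifting the basepoint from $F$ to $g\cdot F$.

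For the first claim, I would use that the unipotent radical $C_{I,\bC}^{-1}$ is normal in $\cG_I$, and in particular in $L_{I,\bR}$, so the left $L_{I,\bR}$-action descends to an action on $\sD_I = C_{I,\bC}^{-1}\bs\sM_I$. Writing $g\exp(x) = \exp(\tAd_g x)\,g$ with $\exp(\tAd_g x)\in C_{I,\bC}^{-1}$ (normality), I get
\[
  p(g\exp(x)\cdot F)\ =\ p(g\cdot F)\ =\ g\cdot p(F).
\]

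For the second claim, the $p$-fibre over $g\cdot p(F)$ is exactly the $C_{I,\bC}^{-1}$-orbit through $g\cdot F$, so the task is to identify this orbit. Since $g\in L_{I,\bR}\subset C_I\subset P_W$ preserves both $W$ and $\s_I$ pointwise, the pair $(W,g\cdot F)$ is again a limiting mixed Hodge structure polarized by $\s_I$, and its Deligne splitting is the $\tAd_g$-image of the one for $F$: $V^{p,q}_{W,g\cdot F} = g\cdot V^{p,q}_{W,F}$, so the stabilizer algebra and its ``horizontal'' complement at $g\cdot F$ are $\tAd_g\ff$ and $\tAd_g\ff^\perp$. Applying Remark \ref{R:CInegF} at the new basepoint $g\cdot F$ therefore furnishes a biholomorphism
\[
  \fc_{I,\bC}^{-1}\cap\tAd_g\ff^\perp\ \xrightarrow{\ \simeq\ }\ C_{I,\bC}^{-1}\cdot(g\cdot F),\qquad z\ \mapsto\ \exp(z)\cdot(g\cdot F).
\]
Finally, because $C_{I,\bC}^{-1}$ is normal in $C_I$, conjugation by $g$ preserves $\fc_{I,\bC}^{-1}$, yielding
\[
  \fc_{I,\bC}^{-1}\cap\tAd_g\ff^\perp\ =\ \tAd_g\bigl(\fc_{I,\bC}^{-1}\cap\ff^\perp\bigr),
\]
which is the asserted parameterization of the fibre.

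The main ``obstacle'' is really just the legitimacy of transporting Remark \ref{R:CInegF} from $F$ to $g\cdot F$; but this is handled by the equivariance of the Deligne splitting under $L_{I,\bR}$, which in turn follows from $g\in C_I$ preserving $W$ and centralizing $\s_I$. No further calculation is needed beyond this bookkeeping.
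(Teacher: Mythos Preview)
Your argument is correct and follows essentially the same route as the paper: both use the identity $g\exp(x)=\exp(\tAd_g x)\,g$ and normality of $C_{I,\bC}^{-1}$ for the first claim, and both identify the fibre by transporting the biholomorphism of Remark~\ref{R:CInegF} to the basepoint $g\cdot F$. You make the equivariance of the Deligne splitting under $L_{I,\bR}$ explicit, whereas the paper simply invokes ``as in \eqref{E:expx}'' for the map $\tAd_g(\fc_{I,\bC}^{-1}\cap\ff^\perp)\to C_{I,\bC}^{-1}\cdot(gF)$.
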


\begin{proof}
We have $g \exp(x) \cdot F = \exp(\tAd_gx) g \cdot F$.  Since $C_I^{-1}$ is a normal subgroup of $C_I$, we have $\tAd_gx \in \fc_{I,\bC}^{-1}$ and $\exp(\tAd_gx) \in C_{I,\bC}^{-1}$.  So $p(g \exp(x) \cdot F) = g \cdot F$ follows directly from the definition \eqref{E:p}.

As in \eqref{E:expx}, the map $\tAd_g(\fc_{I,\bC}^{-1} \cap \ff^\perp) \to C_{I,\bC}^{-1} \cdot (gF)$ sending $\tAd_g(x) \mapsto \exp(\tAd_g(x)) \cdot (gF)$ is a biholomorphism.  This yields the identification of the fibre with $\tAd_g(\fc_{I,\bC}^{-1} \,\cap\, \ff^\perp)$.
\end{proof}

We note the following corollary of Lemma \ref{L:paramMI}.

\begin{corollary}
We have a commutative diagram
\[ \begin{tikzcd}
  L_{I,\bR} \,\times\, (\fc_{I,\bC}^{-1} \,\cap\,\ff^\perp) \arrow[r,"\zeta"] \arrow[d]
  & \sM_I \arrow[d]\\
  \displaystyle
  L_{I,\bR} \,\times\, \left(\frac{\fc_{I,\bC}^{-1} \,\cap\,\ff^\perp}{\bC\s_I}\right)
  \arrow[r,"\bar\zeta"]
  & \exp(\bC\s_I)\bs \sM_I \,,
\end{tikzcd} \]
and $\bar\zeta(g,\bar x) = \bar\zeta(g',\bar x')$ if and only if $\tAd_g\bar x = \tAd_{g'}\bar x'$ and $g^{-1} g \in \tStab_{L_{I,\bR}}(F)$.
\end{corollary}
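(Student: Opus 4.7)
The plan is to deduce this corollary directly from Lemma \ref{L:paramMI} by tracking how the normal subgroup $\exp(\bC\s_I)\subset\cG_I$ acts on the parameterization $\zeta$. Two preliminary observations are needed: first, by \eqref{E:sIin} we have $\s_I\subset\fg^{-1,-1}_{W,F}\subset\ff^\perp$, and since $\s_I\subset\fc_I$ tautologically, $\bC\s_I$ is a subspace of $\fc_{I,\bC}^{-1}\cap\ff^\perp$, so the quotient vector space $(\fc_{I,\bC}^{-1}\cap\ff^\perp)/\bC\s_I$ is well defined. Second, $\fc_I$ is by definition the centralizer of $\s_I$, so $\bC\s_I$ lies in the center of $\fc_{I,\bC}$; in particular $[\bC\s_I,\,\fc_{I,\bC}^{-1}\cap\ff^\perp]=0$, and since $L_I\subset C_I$, the group $L_I$ centralizes $\s_I$ as well.

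Using these two facts, I would verify the commutativity of the diagram as follows. Given $(g,x)$ and $y\in\bC\s_I$, centrality gives $\exp(y)\,g\,\exp(x)=g\,\exp(x+y)$ in $G_\bC$, so
\[
  \exp(y)\cdot\zeta(g,x) \ = \ g\,\exp(x+y)\cdot F \ = \ \zeta(g,\,x+y)\,.
\]
Therefore the image of $\zeta(g,x)$ in $\exp(\bC\s_I)\bs \sM_I$ depends only on the class $\bar x\in(\fc_{I,\bC}^{-1}\cap\ff^\perp)/\bC\s_I$, which shows that $\bar\zeta$ is well defined and the square commutes. Surjectivity of $\bar\zeta$ is inherited from surjectivity of $\zeta$ (established just above Lemma \ref{L:paramMI}).

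For the characterization of the fibres of $\bar\zeta$, the equality $\bar\zeta(g,\bar x)=\bar\zeta(g',\bar x')$ holds iff there exists $y\in\bC\s_I$ with $\zeta(g,x+y)=\zeta(g',x')$ for some lifts $x,x'$. Applying Lemma \ref{L:paramMI} to the pairs $(g,x+y)$ and $(g',x')$, this is equivalent to $g^{-1}g'\in\tStab_{L_{I,\bR}}(F)$ together with $\tAd_g(x+y)=\tAd_{g'}x'$. Because $g\in L_I$ centralizes $\s_I$ we have $\tAd_g(\bC\s_I)=\bC\s_I$; hence the second condition reduces, modulo $\bC\s_I$, to $\tAd_g\bar x=\tAd_{g'}\bar x'$, and conversely any such congruence can be promoted to an equality in $\fc_{I,\bC}^{-1}\cap\ff^\perp$ by choosing $y$ appropriately.

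There is no real obstacle here; the only subtle point — and the one I would emphasize in writing — is the use of the centrality of $\s_I$ in $\fc_I$, which is what makes the action of $\exp(\bC\s_I)$ commute with both left multiplication by elements of $L_{I,\bR}$ and right multiplication by $\exp(x)$ with $x\in\fc_{I,\bC}^{-1}\cap\ff^\perp$. Without this centrality one would pick up extra Baker–Campbell–Hausdorff terms and the parameter $y$ could not be absorbed cleanly into a shift of $x$.
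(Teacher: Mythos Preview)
Your proposal is correct and takes essentially the same approach as the paper: the paper simply presents this as an immediate corollary of Lemma~\ref{L:paramMI} without a written proof, and you have spelled out precisely the routine deduction the authors had in mind. The key points you identify --- that $\bC\s_I\subset\fc_{I,\bC}^{-1}\cap\ff^\perp$, that $\s_I$ is central in $\fc_I$ so the $\exp(\bC\s_I)$--action amounts to translating $x$ by elements of $\bC\s_I$, and that $\tAd_g$ preserves $\bC\s_I$ so the fibre condition passes to the quotient --- are exactly what is needed.
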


We close this section with the 

\begin{proof}[Proof of Lemma \ref{L:good-quotients}]
Given $F \in \sM_I$, the orbit $C_{I,\bC}\cdot F$ is a complex submanifold of $\check \sD$.  The set $\sM_I$ is an open subset of this orbit, and so naturally a complex manifold \cite[\S3.5]{MR4742809}.  The action of $C_{I,\bR}$ on $\sM_I$ is proper \cite[Proposition 3.7]{MR4742809}.  Since $\Gamma_I \subset C_{I,\bR}$ is discrete, it follows that $\Gamma_I \bs \sM_I$ canonically admits the structure of a complex analytic space so that the quotient map $\sM_I \to \Gamma_I \bs \sM_I$ is holomorphic, \cite{MR84174}; see also \cite{Riemenschneider}.

It is a corollary of \eqref{E:sIin} and Lemma \ref{L:paramMI} that the action of $\exp(\bC\s_I) \subset \cG_I$ on $\sM_I$ is free and properly discontinuous.  It follows that $\exp(\bC\s_I) \bs \sM_I$ canonically admits the structure of a complex analytic manifold so that the quotient map $\sM_I \to \exp(\bC\s_I) \bs \sM_I$ is holomorphic.  

It remains to show that $\exp(\bC\s_I) \bs 
\sM_I \to (\Gamma_I \exp(\bC\s_I)) \bs \sM_I$ is a  morphism of complex analytic spaces.  Since $(\Gamma_I \cap \exp(\bC\s_I)) \bs \Gamma_I$ is a discrete subgroup of $\exp(\bR\s_I) \bs C_{I,\bR}$, it suffices to show that the action of $\exp(\bR\s_I) \bs C_{I,\bR}$ on $\exp(\bC\s_I)\bs \sM_I$ is proper. Any compact set $\overline K \subset \exp(\bC\s_I) \bs \sM_I$ can be realized (non-uniquely) as the image of a compact $K \subset \sM_I$.  (One way to do this is to fix a direct sum decomposition $\fc_{I,\bC}^{-1} \cap \ff^\perp = \bC\s_I \op \mathfrak{s}$.  The image $\zeta(L_{I,\bR} \times \mathfrak{s}) \subset \sM_I$ of the map \eqref{E:zetaMI} maps bijectively onto $\exp(\bC\s_I) \bs \sM_I$ under the projection $\sM_I \to \exp(\bC\s_I) \bs \sM_I$.  Given $\overline K \subset \exp(\bC\s_I) \bs \sM_I$ this bijection determines $K \subset \zeta(L_{I,\bR} \times \mathfrak{s}$.)  The the properness of the action of $\exp(\bR\s_I)\bs C_{I,\bR}$ on $\exp(\bC\s_I) \bs \sM_I$ follows from the properness of the action of $C_{I,\bR}$ on $\sM_I$.
\end{proof}

\subsection{Line bundles $\Lambda_M$ over $\Gamma_I \bs \sM_I$}

Fix $N  \in \s_I$, and let $\{M,Y,N\}$ be the $\fsl_2$--triple of \S\ref{S:sl2}.  Recall the induced bilinear form $\sQ$ on $\fg$ of \eqref{E:dfn-sQ}.  Define 
\[
  f_M' : L_{I,\bR} \ltimes (\fc_{I,\bC}^{-1} \,\cap\,\ff^\perp) \ \to \ \bC^* \,=\,\bC \bs \{0\}
  \quad \hbox{by} \quad 
  f_M'(g,x) \ = \ 
  \exp 2 \pi \bi \sQ(M,\tAd_g(x)) \,.
\]
Lemma \ref{L:paramMI} implies that $f_M'$ induces a well-defined map
\[
  f_M'' : \sM_I \ \to \ \bC^* \,.
\]
In fact, the $\tAd(G)$--invariance of $\sQ$ (\S\ref{S:induced}) and the $\tAd(L_{I,\bR})$--invariance of $M$ (Remark \ref{R:stabM}) allow us write
\begin{equation}\label{E:invar}
  \sQ(M , \tAd_g y) \ = \ 
  \sQ( \tAd_g^{-1} M , y) \ = \ 
  \sQ(M,y) \quad \hbox{ for all } \quad
  g \in L_{I,\bR} \,,\ y \in \fg_\bC \,.
\end{equation}
In particular, we have
\begin{equation} \nonumber 
  f_M'(g,x) 
  \ = \ \exp 2 \pi \bi \sQ(\tAd_g^{-1}M,x)
  \ = \ \exp 2 \pi \bi \sQ(M,x) \,.
\end{equation}

Define
\[
  f_M : \Gamma_I \times \sM_I \ \to \ \bC^*
  \quad\hbox{by}\quad
  f_M(\gamma,\zeta) \ = \ f_M''(\gamma \cdot \zeta)\,,
\]
and
\begin{equation}\label{E:eM}
  e_M : \Gamma_I \times \sM_I \ \to \ \bC^* 
  \quad\hbox{by} \quad 
  e_M(\gamma,\zeta) \ = \   
  \frac{f_M( \gamma , \zeta )}{f_M(1,\zeta)}\,.
\end{equation}
Then $e_M(\gamma_1\gamma_2,\zeta) = e_M(\gamma_1,\gamma_2 \cdot \zeta) \,e_M(\gamma_2,\zeta)$.  That is, $e_M$ is a factor of automorphy, and defines a line bundle over $\Gamma_I\bs \sM_I$.  Define a left action of $\Gamma_I$ on $\bC \times \sM_I$ by $\gamma\cdot(z,\zeta) = (z e_M(\gamma,\zeta) , \gamma \cdot \zeta)$.  Let
\begin{equation}\label{E:LM}
  \begin{tikzcd}[column sep=tiny]
      \Lambda_M \arrow[r,equal] \arrow[d] 
      & (\bC \times \sM_I)/\sim\\
      \Gamma_I \bs \sM_I
  \end{tikzcd}
\end{equation}
be the associated line bundle.  Then $f_M''(\zeta) = f_M(1,\zeta)$ defines a section $\psi_M : \Gamma_I \bs \sM_I \to \Lambda_M$.

\subsection{Action of $\Gamma_I$}\label{S:GammaIact}

Fix $(g,x) \in L_{I,\bR} \times (\fc_{I,\bC}^{-1} \cap \ff^\perp)$, and consider the action of $\gamma \in \Gamma_I$ on $\zeta(g,x) = g \exp(x)  \cdot F \in \sM_I$.  We have $\gamma \cdot \zeta(g,x) = \zeta(h,y) = h \exp(y) \cdot F$ for some $(h,y) \in L_{I,\bR} \times (\fc_{I,\bC}^{-1} \cap \ff^\perp)$.  In the subsequent sections we will need to know something about the relationship between $\gamma, (g,x)$ and $(h,y)$.  To begin, we need to factor $\gamma \in \Gamma_I \subset C_{I,\bR}$ with respect to the decomposition $C_{I,\bR} = L_{I,\bR} \ltimes C_{I,\bR}^{-1}$ of \eqref{E:CIY}.  For this, we assume that the Hodge filtration $F \in \sM_I$ has been chosen so that the triples $\{M,Y,N\}$ are rational (\S\ref{S:sl2}).  


Write $\gamma = \a \exp(b)$ with respect to the decomposition $C_I = L_I \ltimes C_I^{-1}$ of \eqref{E:CIY}; here $\a \in L_{I,\bQ}$ and $b \in \fc_{I,\bQ}^{-1}$.  We have 
\[
  \gamma \cdot \zeta(g,x) \ = \ 
  \a g \exp(\tAd_g^{-1}b) \exp(x) \cdot F \,.
\]
So $h = \a g$.  In general, $y$ is a complicated function of $\tAd_g^{-1}b \in \fc_{I,\bC}^{-1}$ and $x \in \fc_{I,\bC}^{-1} \cap \ff^\perp$ that is obtained by solving 
\begin{equation}\label{E:solvey}
  \exp(\tAd_g^{-1}b) \exp(x) \cdot F 
  \ = \ \exp(y) \cdot F
\end{equation}
for $y \in \fc_{I,\bC}^{-1} \cap \ff^\perp$.  Let $y^{p,q}$ be the component of $y$ taking value in $\fc_{I,F}^{p,q}$.  Keeping \eqref{E:lieb} and \eqref{E:ds-cIa} in mind, it is straightforward to work out 
\begin{equation}\label{E:y1}
  y^{-p,p-1} \ = \ 
  x^{-p,p-1} + (\tAd_g^{-1}b)^{-p,p-1} \,,
  \quad \hbox{ for all } \quad p > 0 \,.
\end{equation}

The components $y^{p,q}$, with $p+q \le -2$ are more difficult to work out.  We will have need of only $y^{-1,-1}$.  It is a more tedious exericse, by a still relatively straightfoward computation, to work out 
\begin{equation}\label{E:y11}
  y^{-1,-1} \ = \
  \left( x + \tAd_g^{-1}b + 
    \half [ \tAd_g^{-1}b , x] \right)^{-1,-1} \,.
\end{equation}
(For this, it is helpful to rewrite \eqref{E:solvey} as 
$\exp(x) \exp(\tAd_{\exp(x)}^{-1} \tAd_g^{-1}b) \cdot F = \exp(y) \cdot F$.)
Then \eqref{E:ds-sQ}, \eqref{E:Min}, \eqref{E:eM} and \eqref{E:y11} imply that 
\begin{equation}\label{E:eM2}
  e_M(\gamma,\zeta(g,x)) \ = \ 
  \exp 2 \pi \bi 
  \sQ \left(M \,,\, 
    \tAd_g^{-1}b + 
    \half [ \tAd_g^{-1}b , x] \right)\,.
\end{equation}

\begin{lemma} \label{L:Qint}
We may scale $M \in \fg_\bQ$ so that $\exp 2 \pi \bi \sQ(M,\tAd_g^{-1}b) = 1$ for all $\gamma = \a \exp(b) \in \Gamma_I$.
\end{lemma}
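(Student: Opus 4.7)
The plan is to use the invariance of $M$ under $\tAd(L_{I,\bR})$ and the $\tAd(G)$-invariance of $\sQ$ to eliminate the $g$-dependence in the expression, and then to exploit the arithmeticity of $\Gamma_I$ to obtain a uniform denominator bound on the logarithmic parts $b$, after which the rescaling of $M$ is immediate.

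First, by Remark \ref{R:stabM} and the $\tAd(G)$-invariance of $\sQ$ from \S\ref{S:induced},
\[
  \sQ(M,\tAd_g^{-1}b) \ = \ \sQ(\tAd_g M, b) \ = \ \sQ(M,b),
\]
which is independent of $g\in L_{I,\bR}$. So it suffices to find a positive integer $c$ with $c\,\sQ(M,b)\in\bZ$ for every $b\in\fc_{I,\bQ}^{-1}$ arising in a Levi decomposition $\gamma=\a\exp(b)$ of some $\gamma\in\Gamma_I$ relative to $C_I=L_I\ltimes C_I^{-1}$ in \eqref{E:CIY}. Once such a $c$ is produced, we replace $(M,Y,N)$ by $(cM,Y,\tfrac{1}{c}N)$, which remains an $\fsl_2$-triple with $\tfrac{1}{c}N\in\s_I$ in view of Remark \ref{R:rescaleM}.

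The central step is the uniform denominator bound: there exist a single integer $D'$ and a $\bZ$-lattice $\fc_{I,\bZ}^{-1}\subset\fc_{I,\bQ}^{-1}$ such that every such $b$ lies in $\tfrac{1}{D'}\fc_{I,\bZ}^{-1}$. The input is arithmeticity. Because $\Gamma\subset G_\bZ$, we have $\Gamma_I\subset C_{I,\bQ}\cap G_\bZ =: C_{I,\bZ}$, which is an arithmetic subgroup of the $\bQ$-algebraic group $C_{I,\bQ}$. Since $Y\in\fg_\bQ$ (Remark \ref{R:Yratl}), the splitting $C_I=L_I\ltimes C_I^{-1}$ is $\bQ$-rational, and the standard fact that an arithmetic subgroup has bounded denominators in any fixed $\bQ$-rational decomposition yields: in the $V_\bZ$-basis, both $\a$ and $\exp(b)=\a^{-1}\gamma$ have entries of denominator bounded by a single integer $D$ independent of $\gamma$ (one uses here that $\det\a=\pm 1$, so $\a^{-1}=\pm\mathrm{adj}(\a)$ is a polynomial in $\a$ with integer coefficients). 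Finally, because $C_I^{-1}$ has nilpotency class at most $2\sfn$, the truncated logarithm
\[
  b \ = \ \sum_{k=1}^{2\sfn}(-1)^{k-1}\,\frac{(\a^{-1}\gamma-\mathrm{Id})^k}{k}
\]
has entries of denominator bounded by some $D'$ depending only on $D$ and $(2\sfn)!$. Combined with $M\in\fg_\bQ$ (so that $\sQ(M,\cdot)$ maps the fixed lattice $\fc_{I,\bZ}^{-1}$ into $\tfrac{1}{D''}\bZ$ for some $D''$), the choice $c=D'D''$ completes the argument.

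The main obstacle is the uniform denominator bound; the subtlety is that the Levi projection $C_I\to L_I$ does not preserve $V_\bZ$, so the uniform control of denominators across \emph{all} of $\Gamma_I$ must be extracted from the arithmeticity of the ambient group $C_{I,\bZ}$. Once that is in hand, the remainder is a routine manipulation of the truncated logarithm and the $\bQ$-bilinearity of $\sQ$.
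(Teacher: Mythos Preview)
Your proof is correct and shares the paper's overall strategy: use the $\tAd(L_{I,\bR})$-invariance of $M$ and the $\tAd(G)$-invariance of $\sQ$ to reduce to controlling $\sQ(M,b)$, establish a uniform denominator bound on the $b$'s, and then rescale $M$ via Remark~\ref{R:rescaleM}. The implementation of the denominator bound differs. The paper invokes Borel's theorem to conclude that $\Gamma_I^0 := \varrho(\Gamma_I)$ is arithmetic in $L_I$, notes that the product $\Gamma_I^0\cdot\Gamma_I^{-1}$ is arithmetic in $C_I$ and hence commensurable with $\Gamma_I$, and then appeals to finite generation of the arithmetic group $\Gamma_I^{-1}$. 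You instead bound the denominators of $\alpha=\varrho(\gamma)$ directly from the $\bQ$-rationality of the Levi projection applied to $\gamma\in G_\bZ$, pass to $\exp(b)=\alpha^{-1}\gamma$ using $\det\alpha=\pm1$, and then to $b$ via the truncated logarithm. Your route sidesteps the commensurability reduction and is more self-contained; the paper's is terser but leans more on structural citations. One expository point: the ``standard fact'' you invoke is precisely Borel's result that a $\bQ$-epimorphism carries arithmetic subgroups to arithmetic subgroups (equivalently, that the Levi projection is given by polynomials with rational coefficients on $C_{I}\subset\tGL(V)$); naming this would tighten the argument.
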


\begin{proof}
First, recall that a rescaling of $N$ induces a reciprocal rescaling of $M$ (Remark \ref{R:rescaleM}).  To prove the lemma, it suffices to show that we can scale $M$ so that $\sQ(M,\tAd_g^{-1}b) \in \bZ$ for all $\gamma = \a \exp(b) \in \Gamma_I$.  By \eqref{E:invar}, it suffices to show that we can scale $M$ so that  $\sQ(M,b) \in \bZ$ for all $\gamma = \a \exp(b) \in \Gamma_I$.  

The unipotent radical $\Gamma_I^{-1} = \Gamma_I \cap C_{I,\bQ}^{-1}$ of $\Gamma_I$ is an arithmetic subgroup of $C_I^{-1}$.  The decomposition $C_I = L_I \ltimes C_I^{-1}$ defines a projection $\varrho : C_I \to L_I$.  This projection is a morphism of $\bQ$--algebric groups.  So the image $\Gamma_I^0 \subset L_{I,\bQ}$ is an arithmetic group \cite[Theorem 1.2]{MR0204533}.  The product $\Gamma_I^0 \cdot \Gamma_I^{-1}$ is then an arithmetic subgroup of $C_I$, \cite[p.~20]{MR0204533}.  In particular, this product is commensurable with $\Gamma$.  So it suffices to prove the lemma for elements $\a\cdot\exp(b)$ of the product $\Gamma_I^0 \cdot \Gamma_I^{-1}$; that is, it suffices to prove the lemma for $\gamma = \exp(b) \in \Gamma_I^{-1}$.

A priori we have $\sQ(M,b) \in \bQ$.  Since $\Gamma_I^{-1}$ is an arithmetic group, and arithmetic groups are finitely generated \cite{MR0141670}, there exists $0 < k \in \bZ$ so that $\sQ(kM,b) \in \bZ$ for all $\gamma = \a\exp(b) \in \Gamma_I$.
\end{proof}

\begin{remark}\label{R:normM}
From this point on we restrict to $\fsl_2$--triples $\{N,Y,M\}$ with $M$ satisfying $\sQ(M,b) \in \bZ$ for all $\gamma = \a \exp(b) \in \Gamma_I$.  (Thanks to Lemma \ref{L:Qint} is this no real restriction, we need only rescale $N \in \s_I$.)  Then \eqref{E:invar} and \eqref{E:eM2} yield
\begin{equation}\label{E:eM3}
  e_M(\gamma,\zeta(g,x)) \ = \ 
  \exp 2 \pi \bi \sQ 
  \left(M \,,\, \half [ \tAd_g^{-1}b , x] \right)
  \ = \ 
  \exp \pi \bi \sQ 
  \left(M \,,\, [ b , \tAd_g x] \right)\,.
\end{equation}
\end{remark}

\subsection{A metric on $\Lambda_M$}

Define $h_M : L_{I,\bR} \ltimes (\fc_{I,\bC}^{-1} \,\cap\,\ff^\perp) \to \bR$ by 
\[
  h_M(g,x) \ = \ \exp \pi \bi \sQ(M\,,\,[\tAd_g x , \tAd_g \overline x]) \,.
\]
Lemma \ref{L:paramMI} implies that $h_M$ descends to a smooth function
\begin{equation} \nonumber
  h_M : \sM_I \ \to \ \bR \,.
\end{equation}

\begin{lemma}
Assume that the normalization of Remark \ref{R:normM} is in effect.  The function $h_M$ defines a metric on the line bundle $\Lambda_M \to \Gamma_I\backslash \sM_I$ with curvature form $-\partial\bar\partial \log h_M$, and Chern form
\begin{equation}\label{E:chern}
  c_1(\Lambda_M) \ = \ 
  - \frac{\bi}{2\pi} \partial\bar\partial \log h_M
  \ = \ 
  \half \sQ(M \,,\ [\td x , \td \overline x]) \,.
\end{equation} 
\end{lemma}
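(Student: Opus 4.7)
The plan is to verify three things in sequence: positivity of $h_M$, the transformation law under $\Gamma_I$ (so that $h_M$ descends to a Hermitian metric on $\Lambda_M \to \Gamma_I \bs \sM_I$), and the Chern form identity. Once the metric is established, the formula for the curvature $-\partial\bar\partial \log h_M$ is standard.

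Positivity is brief. For any $u = \tAd_g x \in \fg_\bC$, one has $\overline{[u,\overline u]} = [\overline u, u] = -[u,\overline u]$, so $[u,\overline u] \in \bi\, \fg_\bR$. Since $M \in \fg_\bQ$ is real (Remark \ref{R:Yratl}) and $\sQ$ is real-valued on $\fg_\bR$, this forces $\sQ(M, [u,\overline u]) \in \bi \bR$ and hence $h_M > 0$. Moreover the $\tAd(L_{I,\bR})$-invariance of $M$ (Remark \ref{R:stabM}) combined with the $\tAd(G)$-invariance of $\sQ$ lets me replace $\tAd_g x$ by $x$ throughout, yielding the clean simplification $h_M(g,x) = \exp \pi \bi \sQ(M, [x, \overline x])$, independent of $g$.

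For the transformation law I need to establish $h_M(\gamma \cdot \zeta)\, |e_M(\gamma, \zeta)|^2 = h_M(\zeta)$, which is exactly the compatibility required to produce a Hermitian metric on $\Lambda_M$ with the factor of automorphy \eqref{E:eM}. Writing $\gamma = \a \exp(b)$ via \eqref{E:CIY} and setting $c = \tAd_g^{-1} b \in \fc_{I,\bQ}^{-1}$, the normalized formula from Remark \ref{R:normM} together with $\overline{e_M} = \exp(-\pi \bi \sQ(M, [c, \overline x]))$ gives $|e_M(\gamma, \zeta(g,x))|^2 = \exp \pi \bi \sQ(M, [c, x - \overline x])$. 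On the other hand $\gamma \cdot \zeta(g,x) = \zeta(\a g, y)$ with $y$ determined by \eqref{E:solvey}. Using \eqref{E:y1} for the components of $y$ with $p+q = -1$, the correction \eqref{E:y11} for $y^{-1,-1}$, the reality $\overline c = c$ (so $[c,c] = 0$), and the fact that \eqref{E:ds-sQ} combined with $M \in \fg^{1,1}_{W,F}$ (see \eqref{E:Min}) forces only the $\fg^{-1,-1}_{W,F}$-component of $[y, \overline y]$ to survive the $\sQ(M, \cdot)$ pairing, the expansion collapses to $\sQ(M, [y, \overline y]) = \sQ(M, [x+c, \overline x + c]) = \sQ(M, [x,\overline x] - [c, x] + [c, \overline x])$. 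Combining these identities gives the required transformation.

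For the Chern form, the simplified expression $\log h_M = \pi \bi \sQ(M, [x, \overline x])$ makes the computation direct: $x$ serves as holomorphic fibre coordinates for $p : \sM_I \to \sD_I$ (Lemma \ref{L:fibre-p}), so $\td x$ is a $(1,0)$-form with $\bar\partial \td x = 0$ and $\partial \overline x = 0$. Hence $\partial \log h_M = \pi \bi \sQ(M, [\td x, \overline x])$, and the graded Leibniz rule for the bracket-valued wedge yields $\bar\partial \partial \log h_M = -\pi \bi \sQ(M, [\td x, \td \overline x])$. Using $\partial \bar\partial = -\bar\partial \partial$ on functions gives
\[
    -\tfrac{\bi}{2\pi}\, \partial \bar\partial \log h_M
    \ = \ \tfrac{1}{2}\, \sQ(M, [\td x, \td \overline x])\,,
\]
as claimed. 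The main obstacle is the bracket identity in the transformation step: the formula \eqref{E:y11} only gives the $(-1,-1)$-component of $y$ explicitly, so one must argue that the correction $\half[c,x]^{-1,-1}$ together with the (not individually listed) higher-bidegree components of $y$ reassemble — after projection to $\fg^{-1,-1}_{W,F}$ — into the clean expression $[x+c, \overline x + c]^{-1,-1}$. This is ultimately a bookkeeping exercise using \eqref{E:y1}, \eqref{E:lieb}, and \eqref{E:ds-sQ}.
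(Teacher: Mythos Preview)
Your proposal is correct and follows essentially the same route as the paper: reduce to the transformation law $h_M(\gamma\cdot\zeta)\,|e_M(\gamma,\zeta)|^2 = h_M(\zeta)$, simplify $h_M$ via the $\tAd$-invariance of $M$ and $\sQ$, and then compare $[y,\overline y]$ with $[x+c,\overline{x+c}]$ after pairing against $M$.

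One clarification will streamline your bracket step. You invoke \eqref{E:y11} and then worry about how the higher-bidegree components of $y$ ``reassemble'' under projection to $\fg^{-1,-1}_{W,F}$. In fact \eqref{E:y11} is unnecessary, and there is no reassembly to do. Since the chosen $F$ is $\bR$-split, $\overline{\fg^{p,q}_{W,F}} = \fg^{q,p}_{W,F}$, and a short count using \eqref{E:lieb} and the constraints $p<0$, $p+q\le -1$ on components of $y$ shows that $[y^{a,b},\overline{y^{c,d}}]$ lands in $\fg^{-1,-1}_{W,F}$ only when $a+b = c+d = -1$. So $\sQ(M,[y,\overline y])$ depends only on the $p+q=-1$ components of $y$, and those are given \emph{exactly} by \eqref{E:y1}: $y^{-p,p-1} = (x+c)^{-p,p-1}$. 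This is precisely how the paper argues, and it makes the identity $\sQ(M,[y,\overline y]) = \sQ(M,[x+c,\overline{x+c}])$ immediate. Your additions (positivity of $h_M$ and the direct Chern-form computation) are fine; the paper simply defers both to standard references.
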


\begin{proof}
By \cite[p.~310--311]{MR1288523} it suffices to show
\begin{equation} \label{E:prfhM}
  h_M(\gamma \cdot \zeta(g,x)) \ = \  
  h_M(\zeta(g,x)) \, | e_M(\gamma,\zeta(g,x)) |^{-2}\,.
\end{equation}
Note that \eqref{E:invar} allows us rewrite
\begin{eqnarray}
  \nonumber
  h_M(g,x) & = & 
  \exp \pi \bi 
  \sQ(M\,,\,\tAd_g[ x , \overline x]) \\
  \label{E:hM}
  & = & \exp \pi \bi 
  \sQ(\tAd_g^{-1} M\,,\, [ x , \overline x]) 
  \\  \nonumber 
  & = & \exp \pi \bi 
  \sQ(M\,,\,[ x , \overline x]) \,.
\end{eqnarray}
Let $[x,\overline x]^{-1,-1}$ be the component of $[x,\overline x] \in \fc_{I,\bC}^{-2}$ taking value in $\fc_{I,F}^{-1,-1}$.  It follows from \eqref{E:ds-conj}, \eqref{E:ds-sQ} and \eqref{E:Min} that 
\begin{eqnarray*} 
  h_M(g,x) & = & \exp \pi \bi 
  \sQ\big(M\,,\,\left[x , \overline{x} \right]^{-1,-1} \big) \\ 
  & = & \exp \pi \bi \sum_{p>0} 
  \sQ\big( M \,,\, \big[ x^{-p,p-1} , \overline{x^{-p,p-1}} \big] \big) \,;
\end{eqnarray*}
here $x^{-p,p-1}$ is the component of $x \in \fc_{I,\bC}^{-1}$ taking value in $\fc_{I,F}^{-p,p-1}$, cf.~\eqref{E:ds-cIa}.  Now \eqref{E:y1} yields
\begin{eqnarray*}
  h_M(\gamma \cdot \zeta(g,x)) & = & 
  h_M(\zeta(h,y)) \\
  & = & 
  \exp \pi \bi \sum_{p>0} \sQ
  \big(M\,,\,\big[( x + \tAd_g^{-1} b)^{-p,p-1} , \overline{( x + \tAd_g^{-1} b)^{-p,p-1}} \big] \big) \,.
\end{eqnarray*}
Keeping in mind $x + \tAd_g^{-1}b \in \fc_{I,\bC}^{-1}$ and \eqref{E:cIab}, another application of \eqref{E:lieb}, \eqref{E:ds-sQ} and \eqref{E:ds-cIa} allows us to write this as 
\[
  h_M(\gamma \cdot \zeta(g,x)) \ = \ 
  \exp \pi \bi  
  \sQ\big( M \,,\, \big[ x + \tAd_g^{-1} b , \overline{x + \tAd_g^{-1} b} \big] \big)
  \,.
\]
And \eqref{E:invar} yields
\[
  h_M(\gamma \cdot \zeta(g,x)) \ =  \ 
  \exp \pi \bi  
  \sQ\big( M \,,\, \big[ \tAd_g x + b , \tAd_g \overline{x} +  b \big] \big)
  \,.
\]
Then \eqref{E:eM3} and \eqref{E:hM} yield the desired \eqref{E:prfhM}.
\end{proof}

\subsection{The line bundle $\Lambda_M$ descends to $\Gamma_I\bs \sM_I^1$} \label{S:LMdescends}

\begin{lemma} \label{L:LMdescends}
Assume that the normalization of Remark \ref{R:normM} is in effect.  The line bundle $\Lambda_M \to \Gamma_I \bs \sM_I$ defined in \eqref{E:LM} descends to $\Gamma_I \bs \sM_I^1 = (\Gamma_I \, C_{I,\bC}^{-2}) \bs \sM_I$.
\end{lemma}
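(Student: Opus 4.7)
The plan is to descend $\Lambda_M$ by exhibiting a $\Gamma_I C_{I,\bC}^{-2}$-equivariant structure on the trivial bundle $\sM_I \times \bC$ that extends the $\Gamma_I$-equivariant structure determined by the cocycle $e_M$. Concretely, using $f_M''$ as a global nowhere-vanishing trivialization, I would show that $f_M''$ transforms under the $C_{I,\bC}^{-2}$-action by a character $\chi_M : C_{I,\bC}^{-2} \to \bC^*$. Combined with the $\Gamma_I$-cocycle $e_M$, this will assemble into a single $\Gamma_I C_{I,\bC}^{-2}$-cocycle whose associated equivariant line bundle descends to $\Gamma_I \bs \sM_I^1$ with pullback $\Lambda_M$.

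For the key computation, fix $c = \exp u$ with $u \in \fc_{I,\bC}^{-2}$ and $\zeta = \zeta(g,x)$. Setting $v = \tAd_g^{-1} u \in \fc_{I,\bC}^{-2}$ (by normality of $C_{I,\bC}^{-2}$), BCH yields
\[
  c \cdot \zeta \ = \ g \exp(v)\exp(x) \cdot F \ = \ g \exp(w) \cdot F,
  \qquad w \ = \ v + x + \tfrac12 [v,x] + \cdots \ \in \ \fc_{I,\bC}^{-1}.
\]
Factoring $\exp(w) = \exp(x')\exp(s)$ via Remark \ref{R:CInegF} with $x' \in \fc_{I,\bC}^{-1}\cap\ff^\perp$ and $s \in \fc_{I,\bC}^{-1}\cap\ff$ gives $c\cdot\zeta = \zeta(g,x')$ since $s \in \ff$ stabilizes $F$. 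Because $M \in \fg^{1,1}_{W,F}$, the orthogonality \eqref{E:ds-sQ} reduces $\sQ(M,\cdot)$ to its $(-1,-1)$-component, so only $x'^{-1,-1}$ matters. All commutator terms $[v,\cdots]$ in $w$ lie in $\fc_{I,\bC}^{-3}$ by \eqref{E:cIab}, and $(-1)+(-1) = -2 > -3$ forces these to have no $(-1,-1)$-component; hence $w^{-1,-1} = v^{-1,-1} + x^{-1,-1}$. A short filtration argument gives the refinement $s \in \fc_{I,\bC}^{-2}$: reducing $w \equiv x \pmod{\fc_{I,\bC}^{-2}}$ on one side and $w \equiv x' + s \pmod{\fc_{I,\bC}^{-2}}$ on the other (using $[\fc_I^{-1},\fc_I^{-1}] \subset \fc_I^{-2}$), the direct-sum decomposition $\fc_{I,\bC}^{-1}/\fc_{I,\bC}^{-2} = \overline{\fc_{I,\bC}^{-1}\cap\ff^\perp} \oplus \overline{\fc_{I,\bC}^{-1}\cap\ff}$ forces $\bar s = 0$. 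Consequently $[x',s] \in \fc_{I,\bC}^{-3}$ also has no $(-1,-1)$-contribution, so $w^{-1,-1} = x'^{-1,-1}$, giving $x'^{-1,-1} = x^{-1,-1} + v^{-1,-1}$. The $\tAd(G)$-invariance of $\sQ$ together with $\tAd_g M = M$ for $g \in L_I$ (Remark \ref{R:stabM}) then yields $\sQ(M,v) = \sQ(M,u)$, so
\[
  f_M''(c\cdot\zeta) \ = \ \chi_M(c)\cdot f_M''(\zeta),
  \qquad \chi_M(c) \ := \ \exp 2\pi\bi\,\sQ(M,\log c).
\]

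The identical commutator-vanishing argument, applied to $\log(c_1 c_2) = u_1 + u_2 + \tfrac12[u_1,u_2] + \cdots$ with commutators in $\fc_{I,\bC}^{-4}$ (which has no $(-1,-1)$-component), shows $\chi_M$ is a group homomorphism. The two transformation rules $f_M''(\gamma\zeta) = e_M(\gamma,\zeta) f_M''(\zeta)$ and $f_M''(c\zeta) = \chi_M(c) f_M''(\zeta)$ then combine, via the nowhere-vanishing trivialization $f_M''$, into a single $\Gamma_I C_{I,\bC}^{-2}$-cocycle (the cocycle identity being automatic from the existence of a global trivializing section). Quotienting delivers the descended line bundle on $\Gamma_I \bs \sM_I^1 = (\Gamma_I\,C_{I,\bC}^{-2}) \bs \sM_I$. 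The central technical point is the filtration bookkeeping, above all the improvement $s \in \fc_{I,\bC}^{-2}$ (rather than only $\fc_{I,\bC}^{-1}$), which is what allows $[x',s]$ to be pushed into $\fc_{I,\bC}^{-3}$ and thereby disappear from the $(-1,-1)$-calculation; once this is in place, everything else is formal.
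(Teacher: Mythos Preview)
Your proof is correct, but takes a different route from the paper's. The paper works directly with the factor of automorphy: it shows that for each $\gamma\in\Gamma_I$ the function $e_M(\gamma,\cdot):\sM_I\to\bC^*$ is constant on the fibres of $p_1:\sM_I\to\sM_I^1$, so that $e_M$ descends to a cocycle $\bar e_M:\Gamma_I\times\sM_I^1\to\bC^*$ defining the bundle. The argument is short because the paper already has the explicit formula \eqref{E:eM3}, $e_M(\gamma,\zeta(g,x))=\exp\pi\bi\,\sQ(M,[b,\tAd_g x])$; since $M\in E(2)$, $\sQ$-orthogonality and $[E(a),E(b)]\subset E(a+b)$ force this to depend only on the $E(-1)$-component of $\tAd_g x$, and $\tAd(L_{I,\bR})$-invariance of the eigenspaces reduces to the $E(-1)$-component of $x$.

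You instead compute the $C_{I,\bC}^{-2}$-transformation of the trivializing section $f_M''$ via BCH and a filtration argument, deducing that $f_M''$ transforms by a genuine character $\chi_M$, and then build a $\Gamma_I C_{I,\bC}^{-2}$-cocycle from the coboundary of $f_M''$. Your filtration bookkeeping (in particular the improvement $s\in\fc_{I,\bC}^{-2}$) is correct and is the analogue of the paper's eigenspace counting. Your approach makes the character $\chi_M$ explicit, which nicely explains why the section $\psi_M$ does \emph{not} descend even though the bundle does. On the other hand, the paper's route is shorter given the groundwork already laid, and it yields Remark~\ref{R:descends} as an immediate byproduct of the proof, a fact that is used later in \S\ref{S:prf-mark3}; your argument does not give that corollary directly.
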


\begin{proof}
To prove the lemma it suffices to show that the functions $e_M(\gamma , \cdot) : \sM_I \to \bC^*$ are constant on the fibres of the map $p_1 : \sM_I \to \sM_I^1$ defined in \eqref{E:p1p0}.  It follows from \eqref{SE:gpq} that 
\begin{equation}\nonumber 
  \fg_\bC \ = \ \bigoplus_{a=-\sfn}^\sfn 
  E(a) \,,\quad \hbox{ where } \quad
  E(a) \ = \ \bigoplus_{p+q=a} \fg^{p,q}_{W,F} \,.
\end{equation}
It follows from \eqref{E:ds-cIa} that $\fc_{I,\bC}^{-1} \subset \bigoplus_{a\ge1}\, E(-a)$.  So in order to prove that the function $e_M^\gamma$ is constant on the fibres of $p_1 : \sM_I \to \sM_I^1$ we need to show that $e_M(\gamma,\zeta(g,x)) = \exp \pi \bi \sQ ( M , [b,\tAd_g x])$ is independent of the component of $x \in \bigoplus_{a\ge1}\, E(-a)$ taking value in $\bigoplus_{a\ge2}\, E(-a)$.  Keep in mind that $g \in L_{I,\bR}$ and $b,x,\tAd_gx \in \fc_{I,\bC}^{-1}$.

From \eqref{E:ds-sQ} we see that $\sQ(E(a) , E(b)) \not= 0$ if and only if $a+b=0$.
By \eqref{E:Min}, $M \in E(2)$.  So $\sQ(M,[b,\tAd_gx])$ depends only on the component of $[b,\tAd_gx] \in \fc_{I,\bC}^{-2}$ taking value in $E(-2)$.  By \eqref{E:lieb}, we have $[ E(a) , E(b) ] \subset E(a+b)$.
So $\sQ(M,[b,\tAd_gx])$ depends only on the component of $\tAd_gx$ taking value in $E(-1)$.  From the definition of $Y$ in \S\ref{S:sl2} we see that $E(a)$ is the $a$--eigenspace of $\tad_Y : \fg_\bC \to \fg_\bC$.  By definition \eqref{E:cGIY} of $L_{I,\bR}$ these eigenspaces are preserved by the adjoint action of $L_{I,\bR}$.  So $\sQ(M,[b,\tAd_gx])$ depends only on $g \in L_{I,\bR}$ and the component of $x$ taking value in $E(-1)$.
\end{proof}

\begin{remark}\label{R:descends}
The following corollary of the proof will be useful in \S\ref{S:prf-mark3}.  Fix $\gamma = \a \exp(b)$ and set $g = \mathrm{Id}$.  Regard $\exp\pi\bi \sQ(M,[b,x])$ as a function of $x \in \ff^\perp \cap \fc_{I,\bC}^{-1}$.  The proof of Lemma \ref{L:LMdescends} implies that $\exp\pi\bi \sQ(M,[b,x])$ descends to a well-defined function of $\fc_{I,\bC}^{-2} \bs (\ff^\perp \cap \fc_{I,\bC}^{-1})$.
\end{remark}

\begin{lemma}
The metric $h_M$ on $\Lambda_M \to \Gamma_I\bs \sM_I$ descends to a metric on the line bundle $\Lambda_M \to \Gamma_I \bs \sM_I^1$.
\end{lemma}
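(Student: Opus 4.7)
My plan is to mimic the descent argument used for the line bundle in Lemma \ref{L:LMdescends}, now applied to $h_M$ rather than to the factor of automorphy $e_M$. Since $\Lambda_M$ already descends to $\Gamma_I\bs\sM_I^1$, it suffices to show that $h_M$, viewed as a positive smooth function on $\sM_I$, is constant along the fibres of the projection $p_1:\sM_I\to\sM_I^1 = C_{I,\bC}^{-2}\bs\sM_I$; equivalently, that it is invariant under the $C_{I,\bC}^{-2}$--action on $\sM_I$.

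To set up the calculation, I would start from the formula
\[
  h_M(\zeta(g,x)) \ = \ \exp\pi\bi\,\sQ(M,\,[x,\overline x])\,,
\]
already recorded in \eqref{E:hM} via the $\tAd(L_{I,\bR})$--invariance of $M$ (Remark \ref{R:stabM}) and $\sQ$ (\S\ref{S:induced}). In particular $h_M$ depends only on $x \in \ff^\perp\cap\fc_{I,\bC}^{-1}$, not on $g \in L_{I,\bR}$. So it is enough to verify that, whenever $\zeta(g,x)$ is moved by some $\exp(\gamma) \in C_{I,\bC}^{-2}$ to a point parametrised in the form \eqref{E:zetaMI} as $\zeta(g,y)$, one has $\sQ(M,[x,\overline x]) = \sQ(M,[y,\overline y])$.

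To compare $x$ and $y$ I would unwind $\exp(\gamma)\cdot\zeta(g,x) = g\exp(\tAd_g^{-1}\gamma)\exp(x)\cdot F$ and use Remark \ref{R:CInegF} together with the biholomorphism \eqref{E:expx} to extract the unique $y \in \ff^\perp\cap\fc_{I,\bC}^{-1}$ representing the translated point. A Baker--Campbell--Hausdorff expansion, combined with \eqref{E:cIab}, shows that both the BCH commutator corrections and the $\ff^\perp$--projection of $\tAd_g^{-1}\gamma$ land in $\fc_{I,\bC}^{-2}$; consequently $y \equiv x \pmod{\ff^\perp\cap\fc_{I,\bC}^{-2}}$. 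It then remains to show that $\sQ(M,[x,\overline x])$ depends solely on the class of $x$ in $(\ff^\perp\cap\fc_{I,\bC}^{-1})/(\ff^\perp\cap\fc_{I,\bC}^{-2})$. This is the same weight-grading analysis that concluded the proof of Lemma \ref{L:LMdescends}: with $E(a) = \bigoplus_{p+q=a}\fg^{p,q}_{W,F}$, equation \eqref{E:Min} places $M$ in $E(2)$, \eqref{E:ds-sQ} forces $\sQ(M,\cdot)$ to see only the $E(-2)$ component of $[x,\overline x]$, and \eqref{E:lieb} combined with \eqref{E:ds-conj} shows this component depends only on the $E(-1)$ component of $x$, which by \eqref{E:ds-cIa} is precisely the class of $x$ modulo $\fc_{I,\bC}^{-2}$.

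The main obstacle is the BCH/projection bookkeeping in the orbit comparison step: one must verify that neither the nonabelian corrections landing in $\fc_{I,\bC}^{-3}$ nor the passage from $\fc_{I,\bC}^{-1}$ back to $\ff^\perp\cap\fc_{I,\bC}^{-1}$ introduces any $E(-1)$ component into $y-x$. Once this is confirmed, the weight-grading step is essentially cosmetic---a verbatim adaptation of the reasoning used to establish the descent of $\Lambda_M$ in Lemma \ref{L:LMdescends}---and the descent of the metric follows.
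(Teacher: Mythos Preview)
Your proposal is correct and follows exactly the route the paper indicates: the paper's proof is a one-liner stating that the result ``follows from \eqref{E:hM} and an argument that is essentially identical to the proof of Lemma \ref{L:LMdescends}.'' Your elaboration of that argument---reducing to the $E(-1)$--component of $x$ via \eqref{E:Min}, \eqref{E:ds-sQ}, \eqref{E:lieb}, and \eqref{E:ds-conj}---is precisely what the paper has in mind. The BCH/projection bookkeeping you flag as an obstacle is not actually needed: just as in the proof of Lemma \ref{L:LMdescends}, one may identify ``constant on the fibres of $p_1$'' directly with ``independent of the $\bigoplus_{a\ge2}E(-a)$--component of $x$'' in the parametrisation $\zeta$, so the orbit-comparison step can be bypassed.
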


\begin{proof}
This follows from \eqref{E:hM} and an argument that is essentially identical to the proof of Lemma \ref{L:LMdescends}.
\end{proof}

\subsection{The fibre bundle structure of $\pi_0 : \Gamma_I \bs \sM_I^1 \to \Gamma_I \bs \sD_I$}

Recall the projections $p : \sM_I \to \sD_I$ and $p_0 : \sM_I^1 \to \sD_I$ of \eqref{E:p} and \eqref{E:p1p0}.  Define
\begin{equation}\label{E:EC}
  E_\bC \ = \ 
  \frac{\fc_{I,\bC}^{-1}}{\fc_{I,\bC}^{-2} + (\fc_{I,\bC}^{-1} \cap \ff)}
  \ \simeq \ 
  \fc_{I,\bC}^{-1} \,\cap\, \ff^\perp
  \,\cap\, E(-1) \ = \ 
  \bigoplus_{p>0} \fc_{I,F}^{-p,p-1} \,.
\end{equation}

\begin{lemma}\label{L:fibre-p0}
Given $\zeta(g,x) = g \exp(x) \cdot F \in \sM_I$, the $p_0$--fibre over $p(g \cdot F) = g \cdot p(F) \in \sD_I$ is biholomorphic to $\tAd_g(E_\bC)$.
\end{lemma}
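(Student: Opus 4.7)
The plan is to obtain the identification by descending the description of the $p$-fibre already provided by Lemma \ref{L:fibre-p} through the intermediate quotient by $C_{I,\bC}^{-2}$. Recall that the factorization \eqref{E:p1p0} gives $p=p_0\circ p_1$ with $\sM_I^1 = C_{I,\bC}^{-2}\bs\sM_I$, so the $p_0$-fibre over $g\cdot p(F)$ is precisely the image of the $p$-fibre $C_{I,\bC}^{-1}\cdot(gF)$ under the action of $C_{I,\bC}^{-2}\subset C_{I,\bC}^{-1}$. The task therefore reduces to analysing the double coset space
\[
  C_{I,\bC}^{-2}\bs\bigl(C_{I,\bC}^{-1}\cdot(gF)\bigr)\,.
\]

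The first observation is that $C_{I,\bC}^{-2}$ is a normal subgroup of $C_{I,\bC}^{-1}$ (both are defined as kernels of the adjoint action on quotients of the weight filtration), so the display above equals $(C_{I,\bC}^{-2}\bs C_{I,\bC}^{-1})\cdot(gF)$. Crucially, applying \eqref{E:cIab} with $a=b=1$ gives $[\fc_{I,\bC}^{-1},\fc_{I,\bC}^{-1}]\subset\fc_{I,\bC}^{-2}$, so by \S\ref{S:nilpotent}\eref{i:expisom} and the Baker--Campbell--Hausdorff formula the exponential map induces a biholomorphism of complex Lie groups between the quotient $C_{I,\bC}^{-2}\bs C_{I,\bC}^{-1}$ and the vector space $\fc_{I,\bC}^{-1}/\fc_{I,\bC}^{-2}$.

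Next, I would identify the stabilizer. Since the stabilizer of $F$ in $G_\bC$ is the parabolic $\exp(\ff)$ (cf.~\eqref{E:F0g}), the stabilizer of $gF$ in $G_\bC$ is $g\exp(\ff)g^{-1}$, with Lie algebra $\tAd_g(\ff)$. Because $L_{I,\bR}$ normalizes $C_I^{-1}$, intersecting with $C_{I,\bC}^{-1}$ yields $\exp\bigl(\tAd_g(\fc_{I,\bC}^{-1}\cap\ff)\bigr)$ for the stabilizer of $gF$ in $C_{I,\bC}^{-1}$, whose image in the abelian quotient $\fc_{I,\bC}^{-1}/\fc_{I,\bC}^{-2}$ is $(\tAd_g(\fc_{I,\bC}^{-1}\cap\ff)+\fc_{I,\bC}^{-2})/\fc_{I,\bC}^{-2}$. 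Using that $\tAd_g$ preserves both $\fc_{I,\bC}^{-1}$ and $\fc_{I,\bC}^{-2}$ (the latter being normal in $C_I$), the $p_0$-fibre is
\[
  \fc_{I,\bC}^{-1}\big/\bigl(\fc_{I,\bC}^{-2}+\tAd_g(\fc_{I,\bC}^{-1}\cap\ff)\bigr)
  \;=\;\tAd_g\bigl(\fc_{I,\bC}^{-1}/(\fc_{I,\bC}^{-2}+\fc_{I,\bC}^{-1}\cap\ff)\bigr)
  \;=\;\tAd_g(E_\bC),
\]
by the very definition \eqref{E:EC}.

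The step requiring the most care is confirming that the set-theoretic identification above is in fact a biholomorphism of complex manifolds. This amounts to checking that $C_{I,\bC}^{-2}$ acts on the $p$-fibre with orbits that are embedded complex submanifolds of constant dimension, so that the quotient inherits a complex manifold structure for which the exponential/$\tAd_g$ parametrisations are holomorphic; a direct dimension count using the graded decomposition \eqref{E:ds-cIa} matches $\dim E_\bC=\dim(\fc_{I,\bC}^{-1}\cap\ff^\perp)-\dim(\fc_{I,\bC}^{-2}\cap\ff^\perp)$, confirming transversality of the two quotient operations and completing the proof.
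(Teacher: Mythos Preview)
Your proof is correct and follows essentially the same approach as the paper's: both use \eqref{E:cIab} to identify $C_{I,\bC}^{-2}\bs C_{I,\bC}^{-1}$ with the abelian quotient $\fc_{I,\bC}^{-1}/\fc_{I,\bC}^{-2}$ via the exponential, and then descend the description of the $p$-fibre from Lemma~\ref{L:fibre-p} through $p_1$ to land on the definition \eqref{E:EC} of $E_\bC$. Your version is more explicit about the stabilizer computation and the biholomorphism, whereas the paper simply cites Lemma~\ref{L:fibre-p} and the factorization \eqref{E:p1p0}.
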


\begin{proof}
It follows from \eqref{E:cIab} and Remark \ref{R:exp(cIa)} that the exponential map induces a natural identification $C_I^{-2} \bs C_I^{-1} \simeq \fc_I^{-2} \bs \fc_I^{-1}$.  Lemma \ref{L:fibre-p0} now follows from Lemma \ref{L:fibre-p} and the definition \eqref{E:p1p0} of $p_0$.
\end{proof}

Let $E_\bZ$ denote the image of $\Gamma_I^{-1}$ under the projection $C_{I,\bC}^{-2} \bs C_{I,\bC}^{-1} \to C_{I,\bC}^{-2} \bs (C_{I,\bC}^{-1} \cdot F) \simeq E_\bC$.  Then \eqref{E:ds-conj} implies that $E_\bZ \simeq \Gamma_I^{-2} \bs \Gamma_I^{-1}$ is a lattice in $E_\bC$.  In particular, $E_\bZ \bs E_\bC$ is a compact complex torus.

Let $\overline p(g \cdot F)$ be the image of $p(g \cdot F) \in \sD_I$ under the projection $\sD_I \to \Gamma_I \bs \sD_I$.  Recall the map $\pi_0 : \Gamma_I \bs \sM_I^1 \to \Gamma_I \bs \sD_I$ of \eqref{E:towerIwt}.

\begin{corollary} \label{C:pi0-fibre}
The $\pi_0$--fibre over $\overline p(g \cdot F) \in \Gamma_I \bs \sD_I$ is a finite quotient of the complex torus $\tAd_g(E_\bZ \bs E_\bC)$.
\end{corollary}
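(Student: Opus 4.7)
The plan is to realize the $\pi_0$-fibre over $\overline{p(g\cdot F)}$ as the quotient of the $p_0$-fibre $\tAd_g(E_\bC)$ by the $\Gamma_I$-stabilizer of $p(g\cdot F)$, and to split this stabilizer into an abelian translation-by-lattice part (producing the torus) and a residual finite part (producing the finite quotient). The standard description of fibres of a quotient yields
\[
  \pi_0^{-1}(\overline{p(g\cdot F)}) \ = \
  \tStab_{\Gamma_I}(p(g\cdot F)) \,\bs\, p_0^{-1}(p(g\cdot F)) \,,
\]
and by Lemma~\ref{L:fibre-p0} the $p_0$-fibre is the complex vector space $\tAd_g(E_\bC)$.

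Since $\sD_I = C_{I,\bC}^{-1}\bs \sM_I$ by \eqref{E:DI}, the group $C_{I,\bC}^{-1}$ acts trivially on $\sD_I$, and in particular $\Gamma_I^{-1} \subset \tStab_{\Gamma_I}(p(g\cdot F))$. I will compute the action of $\gamma = \exp(b) \in \Gamma_I^{-1}$ on the fibre $p_0^{-1}(p(g\cdot F)) = C_{I,\bC}^{-2}\bs(C_{I,\bC}^{-1}\cdot gF)$ by left multiplication: because $[\fc_{I,\bC}^{-1},\fc_{I,\bC}^{-1}] \subset \fc_{I,\bC}^{-2}$ (a consequence of \eqref{E:cIab}), the quotient $C_{I,\bC}^{-1}/C_{I,\bC}^{-2}$ is abelian and the action reduces to translation. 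Transporting through the biholomorphism $\tAd_g(E_\bC) \simeq p_0^{-1}(p(g\cdot F))$ of Lemma~\ref{L:fibre-p0}, and recalling that $E_\bZ$ is by definition the image of $\log \Gamma_I^{-1}$ in $E_\bC$, the action identifies with translation along the lattice $\tAd_g(E_\bZ) \subset \tAd_g(E_\bC)$, so the quotient $\Gamma_I^{-1}\bs p_0^{-1}(p(g\cdot F))$ is the compact complex torus $\tAd_g(E_\bZ\bs E_\bC)$.

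It remains to show that the residual quotient $\tStab_{\Gamma_I}(p(g\cdot F))/\Gamma_I^{-1}$ is finite. The Levi projection $\varrho_I$ of \eqref{E:rhoI} has kernel $C_I^{-1}$ and so induces an injection of this quotient into $\tStab_{\cL_I}(g\cdot p(F)) = g\,\tStab_{\cL_I}(p(F))\,g^{-1}$, which is compact by Lemma~\ref{L:cpt-stab-CI}. On the other hand, $\varrho_I$ is a morphism of $\bQ$-algebraic groups (the primitive subspaces $P_{\sfn+k}(\s_I)$ being rationally defined), so $\varrho_I(\Gamma_I)$ is arithmetic in $L_I$ and hence discrete in $\cL_I \simeq L_{I,\bR}$. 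A discrete subgroup of a compact group is finite, so $\tStab_{\Gamma_I}(p(g\cdot F))/\Gamma_I^{-1}$ is finite; combining with the previous paragraph shows that the $\pi_0$-fibre is a finite quotient of $\tAd_g(E_\bZ\bs E_\bC)$, as claimed.

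The main obstacle will be the identification, in the middle step, of the translation lattice with $\tAd_g(E_\bZ)$: although $g \in L_{I,\bR}$ centralizes $Y$ and preserves the filtration by $\fc_{I,\bC}^{-a}$, it need not preserve the Hodge-type decomposition $\fc_{I,\bC}^{-1} = (\fc_{I,\bC}^{-1} \cap \ff^\perp) \oplus (\fc_{I,\bC}^{-1} \cap \ff)$ attached to $F$ (since $g$ need not stabilize $F$), and this forces careful bookkeeping when tracking left multiplication by $\exp(b)$ through the $\tAd_g$-transport used in Lemma~\ref{L:fibre-p0}. Once that identification is made, the remaining ingredients are formal.
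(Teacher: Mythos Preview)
Your argument is correct and follows essentially the same route as the paper's proof: identify the $\pi_0$--fibre as the quotient of the $p_0$--fibre $\tAd_g(E_\bC)$ by $\tStab_{\Gamma_I}(p(g\cdot F))$, quotient first by $\Gamma_I^{-1}$ to obtain the torus, and then observe that what remains is finite.  The only substantive difference is in how finiteness of $\tStab_{\Gamma_I}(p(g\cdot F))/\Gamma_I^{-1}$ is established.  The paper uses Remark~\ref{R:stab-p(F)} and Lemma~\ref{L:cpt-stab-CI} to write the stabilizer as $\tStab_{\Gamma_I}(F)\cdot\Gamma_I^{-1}$ and then invokes Corollary~\ref{C:stab-GammaI} (a discrete subgroup of the compact $\tStab_{C_{I,\bR}}(F)$ is finite).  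You instead inject the quotient into the compact $\tStab_{\cL_I}(p(g\cdot F))$ and appeal to arithmeticity of $\varrho_I(\Gamma_I)$ to get discreteness of the image.  Your route is a bit more explicit about why the image is discrete; note however that your invocation of Borel's theorem presupposes that $\Gamma_I$ itself is arithmetic in $C_I$, which the paper does not justify here either but uses implicitly (e.g.\ in asserting that $E_\bZ$ is a lattice, and in the proof of Lemma~\ref{L:Qint}).

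Your closing paragraph worrying about identifying the translation lattice with $\tAd_g(E_\bZ)$ is unnecessary.  The action of $\exp(b)\in\Gamma_I^{-1}$ on $p_0^{-1}(p(g\cdot F))$ is left multiplication on $C_{I,\bC}^{-2}\bs(C_{I,\bC}^{-1}\cdot gF)$, which under the identification of Lemma~\ref{L:fibre-p0} with $\tAd_g(E_\bC)$ is translation by the image of $b$ in $\tAd_g(E_\bC)$; since $E_\bZ$ is by definition the image of $\log\Gamma_I^{-1}$ in $E_\bC$ (at $g=\mathrm{Id}$), the $\tAd_g$--transport sends this to $\tAd_g(E_\bZ)$ directly.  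The Hodge decomposition of $\fc_{I,\bC}^{-1}$ plays no role here, because the identification in Lemma~\ref{L:fibre-p0} already works modulo $\fc_{I,\bC}^{-2} + (\fc_{I,\bC}^{-1}\cap\ff)$, and $\tAd_g$ preserves the filtration by the $\fc_{I,\bC}^{-a}$.
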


\begin{proof}
The $\pi_0$--fibre over $\overline p(g \cdot F)$ is the quotient of the $p_0$--fire $\tAd_g(E_\bC)$ (Lemma \ref{L:fibre-p0}) by the action of $\tStab_{\Gamma_I}(p(g \cdot F))$.  By Lemma \ref{L:cpt-stab-CI} and Remark \ref{R:stab-p(F)}, this stabilizer is $\tStab_{\Gamma_I}(F) \cdot \Gamma_I^{-1}$.  The quotient of the $p_0$--fibre $\tAd_g(E_\bC)$ by $\Gamma_I^{-1}$ is the complex torus $\tAd_g(E_\bZ \bs E_\bC)$.  And $\tStab_{\Gamma_I}(F)$ is finite by Corollary \ref{C:stab-GammaI}.  
\end{proof}

\begin{remark}  
If $\Gamma$ is neat, then the fibres of $\pi_0 : \Gamma_I \bs \sM_I^1 \to \Gamma_I\bs \sD_I$ are compact, complex tori (Corollary \ref{C:pi0-fibre}).  These tori can be interpreted as parameterizing extension data in the mixed Hodge structure $(W,F)$.  To see this, let $\Gamma_W = \Gamma \cap P_{W,\bQ}$ be the subgroup preserving the weight filtration.  Each $F \in \sM_I$ defines a Hodge structure $H_\ell$ on $\tGr^W_\ell$ of weight $\ell$.  Let 
\[
  \mathrm{Ext}^1_\mathrm{MHS}(H^\ell,H^{\ell-1})
  \ = \ 
  \frac{\tHom(H_\ell,H_{\ell-1})}
  {F^0\tHom(H_\ell,H_{\ell-1}) \,+\, \tHom_\bZ(H_\ell,H_{\ell-1})} 
\]
be the group of extensions.  Then $\bigoplus_{\ell=1}^{2\sfn} \mathrm{Ext}^1_\mathrm{MHS}(H_\ell,H_{\ell-1})$ may be identified with the fibre of $(\Gamma_W \, P_{W,\bC}^{-2})\bs(P_{W,\bC} \cdot F) \to (\Gamma_W \, P_{W,\bC}^{-1})\bs(P_{W,\bC} \cdot F)$ over the point $\overline p(F) \in (\Gamma_W \, P_{W,\bC}^{-1})\bs(P_{W,\bC} \cdot F)$ determined by $F$.   If, in a slight abuse of notation, we also let $\overline p(F)$ denote the corresponding point in $\Gamma_I \bs \sD_I$, then the $\pi_0$--fibre over $\overline p(F)$ is a subset of $\bigoplus_{\ell=1}^{2\sfn} \mathrm{Ext}^1_\mathrm{MHS}(H_\ell,H_{\ell-1})$.  If the period domain $\sD$ is hermitian, equality holds.  In general, the containment is strict.
\end{remark}

\subsection{The image of $\Theta_I : \Zw_I \to \Gamma_I \bs \sM_I^1$} \label{S:imageThetaI}

Let $E'_\bC \subset E_\bC$ be the image of $\fc_{I,F}^{-1,0} \subset \fc_{I,\bC}^{-1}$ under the projection $\fc_{I,\bC}^{-1} \to E_\bC$, cf.~\eqref{E:EC}.  The image of $E'_\bC$ in $E_\bZ \bs E_\bC$ is of the form 
\[
  \tim\{ E'_\bC \,\to\, E_\bZ \bs E_\bC \} 
  \ \simeq \ 
  \bC^a \times (\bC^*)^b \times \sJ 
  \ \subset \ E_\bZ \bs E_\bC \,,
\]
with $(\bC^*)^b \times \sJ$ a complex torus having compact factor $\sJ$.

\begin{lemma} \label{L:sJ}
Assume that the normalization of Remark \ref{R:normM} is in effect.  The subtorus $\sJ \subset E_\bZ \bs E_\bC$ is an abelian variety that is polarized by the line bundles $\Lambda_M \to \Gamma_I \bs \sM_I^1$.
\end{lemma}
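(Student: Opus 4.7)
My plan is to show that the restriction $\Lambda_M|_\sJ$ is an ample line bundle, which (since $\sJ$ is by construction a compact complex torus) promotes $\sJ$ to an abelian variety polarized by $\Lambda_M$. This reduces to showing that $c_1(\Lambda_M|_\sJ)$ is an integral cohomology class represented by a positive $(1,1)$--form.

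Integrality is essentially built in: under the normalization of Remark \ref{R:normM}, Lemma \ref{L:Qint} gives $\sQ(M, b) \in \bZ$ for all $\gamma = \a\exp(b) \in \Gamma_I$, so the factor of automorphy \eqref{E:eM3} defining $\Lambda_M$ takes values in $\exp(2\pi\bi\bZ) \subset \bC^*$ on the discrete subgroup of $\Gamma_I$ producing the quotient $\sJ$. This yields $c_1(\Lambda_M|_\sJ) \in H^2(\sJ, \bZ)$.

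For positivity, I would combine the Chern form \eqref{E:chern} with Lemma \ref{L:fibre-p0}: on a $p_0$--fibre parametrized by $x \in E_\bC$, the Chern form is $\tfrac12 \sQ(M, [\td x, \td \overline x])$. Restricting to $E'_\bC \simeq \fc_{I,F}^{-1,0} \subset E_\bC$ and using the $\tAd(G)$--invariance of $\sQ$, the induced Hermitian form on $E'_\bC$ is
\begin{equation*}
  H_M(v, w) \ = \ \pi\,\sQ(M, [v, \overline w]) \ = \ \pi\,\sQ([M, v], \overline w).
\end{equation*}
Because $\tad_N$ annihilates $\fc_I$ while $\tad_Y$ acts on $\fg^{-1,0}$ with nonzero weight, each nonzero $v \in \fc_{I,F}^{-1,0}$ is an $\fsl_2$--lowest-weight vector for the triple $\{M, Y, N\}$ of \S\ref{S:sl2}. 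Hence $[M, v] \in \fg^{0,1}_{W,F}$ is nonzero and its class in $\tGr^{W(\fg)}_1$ is $\tad_N$--primitive (one checks directly from $[M, N] = Y$ that $\tad_N[M, v] = v$, whence $\tad_N^2[M, v] = 0$). Applying the Hodge--Riemann bilinear relations for the polarized LMHS $(W(\fg), F(\fg), \tad_N)$ of \S\ref{S:induced} to $[M, v]$ viewed in $P_1(\tad_N)^{0,1}$ gives a definite-sign evaluation of $\sQ([M, v], \tad_N\overline{[M, v]})$, and the identity $\tad_N\,\overline{[M, v]} = \overline v$ (again from the $\fsl_2$--structure) rewrites this as a definite-sign evaluation of $\sQ([M, v], \overline v) = H_M(v, v)/\pi$.

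The main obstacle is the careful book-keeping with Weil factors and sign conventions in this transfer from Hodge--Riemann positivity on $P_1(\tad_N)^{0,1}$ to positivity of $H_M$. Once this is pinned down, and possibly after a rescaling $M \mapsto -M$ (which remains consistent with Remark \ref{R:normM} since integrality of $\sQ(M,b)$ is preserved), $H_M$ is positive definite on all of $E'_\bC$ and hence on the tangent space $T_0\sJ \subset E'_\bC$. This exhibits $c_1(\Lambda_M|_\sJ)$ as an integral positive $(1,1)$--form, making $\Lambda_M|_\sJ$ ample and $\sJ$ polarized by $\Lambda_M$.
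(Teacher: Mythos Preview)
Your approach is essentially identical to the paper's: both set $u=\tad_M v$ for $v\in\fc_{I,F}^{-1,0}$, verify via the $\fsl_2$ relations that $u\in P_1(\tad_N,\fg_\bC)^{0,1}$ is nonzero with $\tad_N u=v$, and then transfer the Hodge--Riemann inequality $-\bi\,\sQ(u,\tad_N\overline u)>0$ back to positivity of $-\bi\,c_1(\Lambda_M)(v,\overline v)$ using the $\tAd$--invariance of $\sQ$ together with the reality of $M$ and $N$. One caveat: the sign works out directly, and your proposed fallback $M\mapsto -M$ is not available---$M$ is uniquely determined by $(N,Y)$ through \eqref{E:triple}, and negating it destroys the $\fsl_2$--triple (and with it the construction of $\Lambda_M$).
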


\begin{proof}[Proof of Lemma \ref{L:sJ}]
It suffices to show that the Chern form $c_1(\Lambda_M)$ of \eqref{E:chern} is positive on $E'_\bC$.  This is a consequence of the Hodge--Riemann bilinear relations for the induced limiting mixed Hodge structure on $(\fg,\sQ)$, cf.~\S\ref{S:induced}.  Given $N \in \s_I$, let 
\[
  P_1(\tad_N,\fg) \ = \ 
  \tker\{ \tad_N^2 : \tGr^W_1(\fg) \to \tGr^W_{-3}(\fg) \} \,.
\]
Let $P_1(\tad_N,\fg_\bC) = \bigoplus_{p+q=1} P_1(\tad_N,\fg)^{p,q}$ be the Hodge decomposition induced by $F(\fg)$.  This Hodge structure is polarized by $\sQ(\cdot , \tad_N \cdot)$.  In particular, 
\begin{equation}\label{E:hpol}
  -\bi\,\sQ(u , \tad_N \overline u) \ > \ 0 \,,\quad \hbox{ for all } \quad 0 \not= u \in P_1(\tad_N,\fg_\bC)^{0,1} \,.
\end{equation}
Let $\fc_N \supset \fc_I$ be the centralizer of $N \in \s_I$, and $\tGr^W_{-1}(\fc_N) = \fc_N^{-2} \bs \fc_N^{-1} \inj \tGr^W_{-1}(\fg)$.  The classical theory of $\fsl_2$--representations implies that the triple $\{M,Y,N\}$ of \S\ref{S:sl2} satisfies
\begin{eqnarray*}
  \tGr^W_{-1}(\fc_N) 
  & = & 
  \tker\{ \tad_N : \tGr^W_{-1}(\fg) \to \tGr^W_{-3}(\fg) \} 
  \ = \ 
  \tad_N( P_1(\tad_N,\fg) ) \,,\\
  P_1(\tad_N,\fg) & = & 
  \tad_M(\tGr^W_{-1}(\fc_N) ) \,.
\end{eqnarray*}
Also, both
\[
  \tad_M \circ \tad_N : P_1(\tad_N,\fg) \ \to \ 
  \tGr^W_{-1}(\fc_N) 
  \tand 
  \tad_N \circ \tad_M : \tGr^W_{-1}(\fc_N) \ \to \ P_1(\tad_N,\fg)
\]
are the identity map.  So for each $0\not=v \in E_\bC' \simeq \fc_{I,\bC}^{-1,0} \subset \fc_{N,\bC}^{-1,0}$ there is a unique $0\not=u = \tad_M v \in P_1(\tad_N,\fg_\bC)^{0,1}$ so that $\tad_N(u) = v$.  Then
\begin{eqnarray*}
  -\bi c_1(\Lambda_M)(v,\overline v) & = & 
  -\half \bi\, \sQ( M \,,\, [v , \overline v])
  \ = \ 
  \half \bi\, \sQ( v \,,\, \tad_M(\overline v)) 
  \\ & = & 
  \half \bi\, \sQ( \tad_Nu \,,\, \overline u)
  \ = \ 
  -\half \bi \, \sQ( u \,,\, \tad_N\overline u)
  \,,
\end{eqnarray*}
and the lemma follows from \eqref{E:hpol}.
\end{proof}

Since the metric $h_M(g,x)$ on $\Lambda_M$ does not depend on $g$, cf.~\eqref{E:hM}, the proof of the lemma shows that the subtorus $\tAd_g\sJ \subset \tAd_g(E_\bZ \bs E_\bC)$ is an abelian variety that is polarized by $\Lambda_M$.  Recall the maps
\[
  \begin{tikzcd}
    \Zw_I \arrow[r,"\Theta_I"]
    	\arrow[rd,"\Phi_I"',end anchor={north west}]
    & \Gamma_I \bs \sM_I^1 \arrow[d,"\pi_0"] \\
    & \ \Gamma_I \bs \sD_I \,.
  \end{tikzcd}
\]
of \eqref{E:towerIwt}. Let $A \subset \Zw_I$ be a connected component of a $\Phi_I$--fibre.  The image $\Theta_I(A)$ is contained in a $\pi_0$--fibre.  By Corollary \ref{C:pi0-fibre} the $\pi_0$--fibre is a finite quotient of the complex torus $\tAd_g(E_\bZ \bs E_\bC)$. 

\begin{lemma} \label{L:ThetaI(A)}
Let $A \subset \Zw_I$ be a connected component of a $\Phi_I$--fibre.  The image $\Theta_I(A)$ \emph{(which is contained in a $\pi_0$--fibre)} is contained in a (finite quotient of a) translate $a+\tAd_g(\sJ)$ 
\end{lemma}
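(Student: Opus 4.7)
The plan is to prove the lemma in two stages, exploiting the product structure
$\tim\{E'_\bC \to E_\bZ \bs E_\bC\} \simeq \bC^a \times (\bC^*)^b \times \sJ$
from \S\ref{S:imageThetaI}. Stage 1 will establish that $\Theta_I(A)$ is contained in a translate of $\tim\{E'_\bC \to E_\bZ \bs E_\bC\}$ inside the $\pi_0$--fibre containing it (which is identified with $\tAd_g(E_\bZ \bs E_\bC)$ up to a finite quotient by Corollary \ref{C:pi0-fibre}). Stage 2 will use compactness to eliminate the non-compact factors $\bC^a$ and $(\bC^*)^b$, pinning $\Theta_I(A)$ inside a translate of $\sJ$.

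Stage 1 is the technical heart. I first fix a base point $o \in A$, and let $J \in \mathrm{wt}(I)$ be the unique index with $o \in Z_J^*$. In a coordinate chart $\olU$ centered at $o$, pick a local lift $F_J(w) = g_J(0,w)\cdot F_o$ of $\Psi_J$ with $g_J : \olU \to \exp(\fc_{J,\bC} \cap \ff^\perp)$, cf.~\eqref{E:tilde-g3}; Lemma \ref{L:MJinMI} then descends this to a local model for $\Theta_I$. Constancy of $\Phi_I$ on $A$, combined with Corollary \ref{C:stab-GammaI} and Remark \ref{R:CInegF}, lets me shrink $\olU$ and normalize so that $g_J(0,w) = g_0 \cdot h(w)$ on $Z_J^* \cap \olU \cap A$, with $g_0$ constant and $h(w) \in \exp(\fc_{J,\bC}^{-1} \cap \ff^\perp)$, $h(0) = \mathrm{Id}$. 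Then $g_J^{-1}\,dg_J = h^{-1}\,dh$.

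The core computation is a Hodge-type count in the Maurer--Cartan expansion. Setting $Y = \log h \in \fc_{J,\bC}^{-1} \cap \ff^\perp$, I expand $h^{-1}\,dh = \sum_{n \ge 0} \tfrac{(-1)^n}{(n+1)!}(\tad Y)^n(dY)$. Every Hodge component of $Y$ (and of $dY$) lives in bidegree $(p,q)$ with $p+q \le -1$ by \eqref{E:ds-cIa}, so \eqref{E:lieb} forces $(\tad Y)^k(dY)$ to have Hodge summands of bidegree $(p,q)$ with $p+q \le -(k+1)$. Thus the components with $p+q = -1$ receive only the $k=0$ contribution:
\[
  (h^{-1}\,dh)^{-p,p-1} \ = \ dY^{-p,p-1}\,,\quad p \ge 1\,.
\]
Griffiths transversality on the stratum, \eqref{E:ipr-tg}, gives $(h^{-1}\,dh)^{p,q} = 0$ for $p \le -2$, so $\left.dY^{-p,p-1}\right|_{Z_J^* \cap \olU} = 0$ for all $p \ge 2$. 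In the parameterization $E_\bC \simeq \bigoplus_{p > 0} \fc_{J,F}^{-p,p-1}$ of the $\pi_0$-fibre supplied by Lemma \ref{L:fibre-p0} and \eqref{E:EC}, the derivative of the local lift of $\left.\Theta_I\right|_A$ at $o$ therefore takes values only in $\fc_{J,F}^{-1,0}$. Since $I \subset J$ gives $\fc_J \subset \fc_I$, and the two limiting mixed Hodge structures share weight filtration by the definition of $\mathrm{wt}(I)$, this inclusion is compatible with the Deligne splittings, placing the derivative in $\fc_{I,F}^{-1,0} \simeq E'_\bC$. The identity principle together with connectedness of $A$ then forces $\Theta_I(A)$ into a single translate of $\tim\{E'_\bC \to E_\bZ \bs E_\bC\}$ in the $\pi_0$-fibre.

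For Stage 2, the map $\Theta_I$ is proper (Lemma \ref{L:proper2}) and $A$ is a connected component of a fibre of the proper map $\Phi_I$, so $\Theta_I(A)$ is compact and connected. Any compact connected complex analytic subset of $\bC^a \times (\bC^*)^b \times \sJ$ projects to a point in each of $\bC^a$ and $(\bC^*)^b$, so $\Theta_I(A)$ lies in a translate of $\sJ$; the outstanding finite quotient is precisely that of Corollary \ref{C:pi0-fibre}, which trivializes when $\Gamma$ is neat. The main delicate point I anticipate is the end of Stage 1 -- verifying that the Deligne splittings of $\fc_J$ and $\fc_I$ (for the two LMHS sharing weight filtration) are compatible enough that $\fc_{J,F}^{-1,0}$ maps into $\fc_{I,F}^{-1,0}$ under $\fc_J \subset \fc_I$. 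This should be extractable from \eqref{E:MJinMI}--\eqref{E:CJvCI} together with the common weight filtration, but will require some bookkeeping.
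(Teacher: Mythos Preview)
Your approach is essentially the paper's: use the infinitesimal period relation to pin the derivative of $\Theta_I|_A$ into $\fc_{I,F}^{-1,0}$, then invoke connectedness and compactness of $A$ to land in a translate of $\sJ$. Two remarks.

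First, the factorization $g_J(0,w) = g_0 \cdot h(w)$ with $h \in \exp(\fc_{J,\bC}^{-1} \cap \ff^\perp)$ is correct but unnecessary. The paper works directly with $g = g_J(0,\cdot)$: Remark~\ref{R:ipr-tg*} kills $(g^{-1}dg)^{p,q}$ for $p+q \ge 1$; constancy of $\Phi_I$ on $A$ kills the $p+q=0$ components (these encode $d\Phi_I$); so along $A$ the variation of $\Theta_I$ is carried by the $p+q=-1$ components, and \eqref{E:ipr-tg} leaves only $(g^{-1}dg)^{-1,0}$. Your Maurer--Cartan expansion recovers the same thing after the preliminary reduction to $h$, but you can skip that reduction entirely.

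Second, your ``delicate point'' dissolves once you observe that the Deligne bigrading $\fg^{p,q}_{W,F}$ depends only on the pair $(W,F)$, not on which cone polarizes it. Since $J \in \mathrm{wt}(I)$ means $W(\s_J) = W(\s_I)$, the bigradings for $\fc_J$ and $\fc_I$ are simply the restrictions of the \emph{same} bigrading of $\fg_\bC$, whence $\fc_{J,F}^{-1,0} = \fc_{J,\bC} \cap \fg^{-1,0}_{W,F} \subset \fc_{I,\bC} \cap \fg^{-1,0}_{W,F} = \fc_{I,F}^{-1,0}$ immediately. No further bookkeeping with \eqref{E:MJinMI}--\eqref{E:CJvCI} is required for this step.
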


\begin{proof}[Proof of Lemma \ref{L:ThetaI(A)}]
This is a consequence of the infinitesimal period relation (\S\ref{S:ipr}).  Locally about $o \in Z_J^* \subset \Zw_I$ we have $(g{}^{-1} \td g)^{p,q}=0$ for all $p+q \ge 1$, by Remark \ref{R:ipr-tg*} and keeping in mind that $W(\s_J) = W(\s_I)$.  The infinitesimal variation of $\Theta_I$ (resp.~$\Phi_I$) along $\Zw_I$ is encoded by the $(g{}^{-1} \td g)^{p,q}$ with $0 \le p + q \le -1$ (resp.~$0 = p + q$).  So the infinitesimal variation in $\Theta_I$ along $A$ is encoded by the $(g{}^{-1} \td g)^{p,q}$ with $p+q=-1$.  The infinitesimal period relation \eqref{E:ipr-tg} implies $\left.(g{}^{-1} \td g)\right|_A$ takes value in $\fc_{I,\bC}^{-1,0}$.  Since $A$ is connected, this implies that $\Theta_I(A)$ lies in a translate of $\bC^a \times (\bC^*)^b \times \sJ \simeq E_\bZ \bs E'_\bC \inj E_\bZ \bs E_\bC$.  And since $A$ is compact, and $\Theta_I : A \to \bC^a \times (\bC^*)^b \times \sJ$ is holomorphic, the image must lie in a translate of $\sJ$.
\end{proof}

\section{Theta bundles versus normal bundles}

Recall the line bundles $\Lambda_M$ over $\Gamma_I \bs \sM_I^1$ polarizing the abelian varieties $\sJ$ of
\[
\begin{tikzcd}[column sep=small]
  \sJ \arrow[r,hook] 
  & E_\bZ \bs E_\bC \arrow[r,hook]
  & \Gamma_I \bs \sM_I^1 \arrow[d,"\pi_0"] \\
  & & \Gamma_I\bs \sD_I \,,
\end{tikzcd}
\]
cf.~\S\ref{S:LMdescends} and \S\ref{S:imageThetaI}.  The main result of this section establishes a relationship between the line bundles $\Lambda_M$, and the normal bundles $[Z_i] = \cN_{Z_i/\olB}$.  Recall the maps
\[
  \begin{tikzcd}
    \Zw_I \arrow[r,"\Theta_I"]
    	\arrow[rd,"\Phi_I"',end anchor={north west}]
    & \Gamma_I \bs \sM_I^1 \arrow[d,"\pi_0"] \\
    & \ \Gamma_I \bs \sD_I \,.
  \end{tikzcd}
\]
of \eqref{E:towerIwt}.

\begin{theorem}\label{T:mark}
Assume $M$ is normalized as in Remark \ref{R:normM}.  Let $A \subset \Zw_I$ be a connected component of a $\Phi_I$--fibre.  We have
\begin{equation}\label{E:mark}
  \left.\Theta_I^*(\Lambda_M)\right|_A
  \ = \ 
  \sum_j \left.\sQ(M,N_j) [Z_j]\right|_A \,,
\end{equation}
with $\sQ(M,N_i) \in \bZ$, and summing over all $Z_j \cap A \not= \emptyset$.
\end{theorem}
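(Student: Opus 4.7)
The plan is to compute the class $\Theta_I^*(\Lambda_M)|_A$ by constructing an explicit meromorphic section using the period matrix representation. Near a point $o \in Z_J^* \cap A$ with $I \subset J \in \mathrm{wt}(I)$, fix local coordinates $(t_1,\ldots,t_k;w) : \olU \to \Delta^{k+r}$ with $Z_{j_a} = \{t_a=0\}$, and use the local lift $\tPhi(t,w) = \exp(\sum_a \ell(t_a)N_{j_a})\,g(t,w)\cdot F$ of \S\ref{S:loclift}, together with the Schubert-cell function
\[
  \psi_M(F') \;=\; \exp 2\pi\bi\,\sQ(M,\lambda(F'))
\]
from \S\ref{S:pmr}, whose restriction to $\sM_I$ is the section defining $\Lambda_M$.

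The key computation is the explicit form of $\psi_M \circ \tPhi$.  Since $M \in \fg^{1,1}_{W,F}$ by \eqref{E:Min} and $\sQ$ is orthogonal with respect to the Deligne bigrading \eqref{E:ds-sQ}, only the $(-1,-1)$-component of $\log\xi(t,w)$ — where $\xi = \exp(\sum_a \ell(t_a)N_{j_a})\,g$ — contributes to $\sQ(M,\log\xi)$.  A short Baker--Campbell--Hausdorff check shows that every commutator that could appear in $[\log\xi]^{-1,-1}$ would require a $\fg^{0,0}_{W,F}$-factor not present in $\log g \in \ff^\perp$ nor in the $\fg^{-1,-1}_{W,F}$-valued sum $\sum_a \ell(t_a)N_{j_a}$, so
\[
  [\log\xi(t,w)]^{-1,-1} \;=\; \tsum_a \ell(t_a)\,N_{j_a} \,+\, (\log g(t,w))^{-1,-1}.
\]
Exponentiating via $\exp(2\pi\bi\,\ell(t_a)) = t_a$ yields
\[
  \psi_M\circ\tPhi(t,w)
  \;=\; \prod_a t_a^{\sQ(M,N_{j_a})}\,\cdot\,\exp 2\pi\bi\,\sQ\bigl(M,(\log g(t,w))^{-1,-1}\bigr).
\]
The second factor is holomorphic and nowhere vanishing on $\olU$ by Remark \ref{R:logtg}, and the monomials $t_a^{\sQ(M,N_{j_a})}$ are single-valued on $\sU$ since $T_{j_a} \in \Gamma_I$ and Lemma \ref{L:Qint} together with Remark \ref{R:normM} force $\sQ(M,N_{j_a}) \in \bZ$. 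Hence $\psi_M\circ\tPhi$ extends to a meromorphic function on $\olU$ with divisor $\sum_a \sQ(M,N_{j_a})[Z_{j_a}]\cap\olU$.

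The main obstacle is globalizing: these local meromorphic functions must glue to a section of a line bundle on a neighborhood of $\Zw_I$ in $\olB$ whose restriction to $\Zw_I$ is isomorphic to $\Theta_I^*(\Lambda_M)$.  On overlapping charts the lifts $\tPhi$, $\tPhi'$ differ by some $\gamma = \alpha\exp(b) \in \Gamma_I$, and a BCH computation in the same spirit as above — isolating the $(-1,-1)$-component, exploiting the $\tAd(L_I)$-invariance of $M$ used in \S\ref{S:GammaIact}, and absorbing the integer constant $\sQ(M,b) \in \bZ$ under $\exp(2\pi\bi\cdot)$ — shows
\[
  \psi_M(\gamma\cdot\tPhi) \;=\; e_M(\gamma,\tPhi)\,\psi_M(\tPhi),
\]
with $e_M$ the factor of automorphy of \eqref{E:eM3}.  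The same bracket analysis implies that $[b,\log\xi]^{-1,-1}$ contains no $\log t_a$-terms, so $e_M(\gamma,\tPhi)$ extends holomorphically across $\olU$ and indeed coincides with the transition cocycle of $\Lambda_M$ on $\Gamma_I\bs\sM_I^1$ (via Lemma \ref{L:LMdescends}).  Restricting the glued section to $A$ then yields \eqref{E:mark}, the sum ranging over both $j \in I$ (whose $[Z_j]|_A$ are the normal bundles $\cN_{Z_j/\olB}|_A$) and $j \in J\setminus I$ meeting $A$ properly (whose $[Z_j]|_A$ are honest divisors in $A$).
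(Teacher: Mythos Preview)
Your strategy is essentially the paper's: construct a meromorphic section locally via the period-matrix representation, read off the divisor from the monomial factor, and glue using a factor of automorphy.  The local computation of $\psi_M\circ\tPhi$ and the extraction of $\sum_a \sQ(M,N_{j_a})[Z_{j_a}]$ are correct and match the paper's Lemma \ref{L:mark}.  However, the globalization step conflates two distinct objects, and this is a genuine gap.

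The line bundle $\Lambda_M$ was defined in \S4 via the parametrization $\zeta:L_{I,\bR}\times(\fc_{I,\bC}^{-1}\cap\ff^\perp)\to\sM_I$, and the factor of automorphy $e_M$ in \eqref{E:eM3} uses the decomposition $\gamma=\alpha\exp(b)$ relative to the Levi $L_I$ determined by a Hodge filtration $\hat F$ with \emph{rational} $Y$.  Your function $\psi_M$, on the other hand, is defined via the Schubert-cell logarithm $\lambda$ based at a Hodge filtration $F$ arising along $A$.  As the paper notes (Remark \ref{R:tg=0}), one cannot in general simultaneously normalize $g(0,0)=\mathrm{Id}$ and have $Y$ rational; so $F\ne\hat F$ in general, and your $\psi_M|_{\sM_I}$ is \emph{not} the section $f_M''$ used to define $\Lambda_M$.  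Your BCH computation yields a factor of automorphy $\tilde e_M$ for the Schubert-cell picture, but asserting that $\tilde e_M = e_M$ on $A$ is precisely what the paper's Step 3 (\S\ref{S:prf-mark3}) establishes: one shows $F,\hat F$ lie in the same $C_{I,\bC}^{-1}$-orbit (Remark \ref{R:samefibre}), hence the two parametrizations agree modulo $\fc_{I,\bC}^{-2}$, and then invokes Remark \ref{R:descends} to conclude the cocycles coincide.  You need this comparison; ``the same bracket analysis'' does not supply it.

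A secondary point: for $\psi_M(\gamma\cdot\tPhi)$ to make sense you need $\gamma$ to preserve the Schubert cell $\sS$, i.e.\ $\gamma\in\Gamma_{I,\infty}=\tStab_{\Gamma_I}(F_\infty)$, not merely $\gamma\in\Gamma_I$.  That the monodromy near $A$ actually lands in $\Gamma_{I,\infty}$ is the content of Lemma \ref{L:nbd-PhiIfibre}\emph{(iii)}, which the paper imports from \cite{GGR-part1}; you should invoke it explicitly.
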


\noindent The assertion $\sQ(M,N_i) \in \bZ$ follows directly from the normalization of Remark \ref{R:normM}.  The theorem is proved in \S\S\ref{S:prf-mark1}--\ref{S:prf-mark3}. 

\begin{definition}
Given an $\fsl_2$--triple $\{ M , Y , N\}$, normalized as in Remark \ref{R:normM}, we say that $M$ \emph{is integral with respect to the cone} $\s_I$ if $N \in \s_I$, and $0 < \sQ(M,N_i) \in \bZ$ for every generator $N_i$ of $\s_I$.
\end{definition}

\noindent From Theorem \ref{T:imageThetaI(A)}, Lemmas \ref{L:sJ} and \ref{L:ThetaI(A)}, and Theorem \ref{T:mark} we deduce

\begin{corollary} \label{C:mark}
Let $A \subset \Zw_I$ be a connected component of a $\Phi_I$--fibre.  Assume that $M$ is integral with respect to $\s_I$, and that the differential of $\left.\Theta_I\right|_A$ is injective.  Then the line bundle $-\sum\left.\sQ(M,N_j)\,\cN^*_{Z_j/\olB}\right|_A$ is ample. 
\end{corollary}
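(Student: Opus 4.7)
The plan is a short deduction combining Theorems \ref{T:imageThetaI(A)} and \ref{T:mark} with Lemma \ref{L:sJ}. The goal is first to show $\left.\Theta_I^*(\Lambda_M)\right|_A$ is ample, and then to identify this pullback with the normal bundle expression stated in the corollary.

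First I would observe that $A$ is compact (as a connected component of a $\Phi_I$--fibre, with $\Phi_I$ proper by Lemma \ref{L:proper2}), and that $\left.\Theta_I\right|_A$ is a finite holomorphic map onto its image. The finiteness follows from the hypothesis that $d\Theta_I|_A$ is injective: any positive-dimensional fibre would contain a nonzero tangent vector in $\tker(d\Theta_I)$, contradicting injectivity, so the fibres are discrete and hence finite by compactness of $A$.

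Next, by Theorem \ref{T:imageThetaI(A)} (in particular Lemma \ref{L:ThetaI(A)}), $\Theta_I(A)$ lies in a finite quotient of a translate $a+\tAd_g(\sJ)$ of the abelian variety $\sJ \subset E_\bZ\bs E_\bC$. Lemma \ref{L:sJ} says $\Lambda_M$ polarizes $\sJ$, and the Chern form \eqref{E:chern}, $c_1(\Lambda_M) = \half \sQ(M,[\td x, \td \overline x])$, depends only on the tangential differentials in the fibre direction and is therefore translation-invariant along the $\pi_0$--fibre. Hence $\Lambda_M$ is positive (and thus ample) on the translate $a+\tAd_g(\sJ)$, ampleness descends to the finite quotient (finite surjections preserve and reflect ampleness), and pullback by the finite map $\left.\Theta_I\right|_A$ yields that $\left.\Theta_I^*(\Lambda_M)\right|_A$ is ample. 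Then Theorem \ref{T:mark} gives $\left.\Theta_I^*(\Lambda_M)\right|_A = \sum_j \sQ(M,N_j)\left.[Z_j]\right|_A$, and for each $j$ appearing in this sum one has $A \subset Z_j$ (as is needed for $\left.\cN^*_{Z_j/\olB}\right|_A$ to be defined), whence $\left.[Z_j]\right|_A = \left.\cO_{\olB}(Z_j)\right|_A = \left.\cN_{Z_j/\olB}\right|_A = -\left.\cN^*_{Z_j/\olB}\right|_A$. Substituting gives $-\sum_j \sQ(M,N_j)\left.\cN^*_{Z_j/\olB}\right|_A = \left.\Theta_I^*(\Lambda_M)\right|_A$, which is ample.

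The main obstacle is really a bookkeeping issue: verifying that ampleness propagates through (i) translation inside the $\pi_0$--fibre, (ii) the finite quotient by $\tStab_{\Gamma_I}(F)$ (cf.~Corollary \ref{C:stab-GammaI}), and (iii) pullback by the finite morphism $\left.\Theta_I\right|_A$. Each step is standard---translation inside a complex torus is a biholomorphism, and finite surjections preserve and reflect ampleness---so once the structural results of the preceding sections are in place the corollary is essentially immediate.
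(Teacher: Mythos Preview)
Your proposal is correct and follows exactly the approach the paper intends: the paper offers no argument beyond the sentence ``From Theorem \ref{T:imageThetaI(A)}, Lemmas \ref{L:sJ} and \ref{L:ThetaI(A)}, and Theorem \ref{T:mark} we deduce Corollary \ref{C:mark},'' and you have supplied the standard details (finiteness of $\Theta_I|_A$ from the injective differential and compactness, and propagation of ampleness through translation, finite quotient, and finite pullback). One small caveat: your assertion that $A \subset Z_j$ for every $j$ in the sum need not hold when $j \notin I$---the sum in Theorem \ref{T:mark} is over all $j$ with $Z_j \cap A \neq \emptyset$, and such $Z_j$ may meet $A$ properly---but this is an imprecision in the paper's own statement of the corollary rather than a defect in your reasoning.
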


\subsection{Proof of Theorem \ref{T:mark}.  Step 1: monodromy near a $\Phi_I$--fibre} \label{S:prf-mark1}

We begin with the following lemma.  Fix an element $\{T_i\}_{i\in I} \subset \tGL(V_\bZ)$ in the $\Gamma$--conjugacy class $\cT_I$ defined in Remark \ref{R:cTI}.  This choice determines nilpotent operators $N_i = \log T_i \in \fgl(V_\bQ)$, which in term determine the weight filtration $W$ (\S\ref{S:W}), and centralizers $C_I$ and $\Gamma_I$ (\S\ref{S:MI} and \S\ref{S:PsiI}).

\begin{lemma}[{\cite{GGR-part1}}] \label{L:nbd-PhiIfibre}
Let $A \subset \Zw_I$ be a connected component of a $\Phi_I$--fibre.  Let $(W,F)$ be any mixed Hodge structure arising along $A$, as in \S\S\ref{S:vhs}--\ref{S:LMHS}.  Let $\sS = \exp(\ff^\perp) \cdot F \subset \check \sD$ be the associated Schubert variety, cf.~\S\ref{S:schubert}.  Let 
\[
  \Gamma_{I,\infty} \ = \ 
  \tStab_{\Gamma_I}(F_\infty)
\]
be the stabilizer in $\Gamma_I$ of the filtration $F_\infty$ defined in \S\ref{S:schubert}.  
\begin{i_list_emph}
\item \label{i:Finfty}
The filtration $F_\infty$ is independent of our choice of $(W,F)$ along $A$.  
\item \label{i:schubert}
The action of $\Gamma_{I,\infty}$ on $\check \sD$ preserves $\sS$.  
\item \label{i:lift-infty}
There exists a neighborhood $X \subset \olB$ of $A$ so that the restriction of the period map $\Phi$ to $U = B \cap X$ lifts to $\Gamma_{I,\infty}\bs (\sD \cap \sS)$: there is a commutative diagram
\[
  \begin{tikzcd}
  & \Gamma_{I,\infty} \bs (\sD \cap \sS) \arrow[d] \\
  U \arrow[r,"\Phi"'] \arrow[ru,"\Phi_{I,\infty}"]
  & \ \Gamma \bs \sD \,.
  \end{tikzcd}
\]
The restrictions of $\Phi_I$ and $\Theta_I$ to $\Zw_I \cap X$ are both proper.
\end{i_list_emph}
\end{lemma}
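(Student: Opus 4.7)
The plan is to derive (i) from the infinitesimal period relation, to derive (ii) by a direct check against the definition of $\sS$, and to assemble (iii) from (i) and (ii) with a neighborhood argument exploiting the compactness of $A$.

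For (i): from \eqref{E:ds-Finfty}, the Lie-algebra stabilizer of $F_\infty \in \check \sD$ is $\bigoplus_{q \le 0} \fg^{p,q}_{W,F}$, since $\xi \in \fg^{p,q}_{W,F}$ sends $V^{a,c}_{W,F}$ into $V^{a+p,c+q}_{W,F}$ and preserves $F_\infty^b = \bigoplus_{c \le \sfn-b} V^{a,c}_{W,F}$ precisely when $q \le 0$. Now along $A$, the argument used to prove Lemma \ref{L:ThetaI(A)}, applied on each stratum $Z_J^* \cap A$ with $J \in \mathrm{wt}(I)$ and using $W(\s_J) = W(\s_I)$, shows that $\left. g^{-1}\td g \right|_A$ takes values in $\fc_{I,F}^{-1,0}$. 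Since $g$ lies in $\exp(\fc_{I,\bC})$ and hence commutes with every $N \in \s_I$, we may push the limit \eqref{E:Finfty} through $g$ to obtain $F_\infty(w) = g(0,w) \cdot F_{o,\infty}$, so the variation of $F_\infty$ along $A$ is controlled by $\fc_{I,F}^{-1,0}$. Since $(-1,0)$ has $q = 0 \le 0$, this sits in the stabilizer, forcing $\td F_\infty|_A = 0$; connectedness of $A$ gives (i).

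For (ii): the cell $\sS$ is cut out by dimension-incidence conditions on $E \in \check \sD$ relative to $\overline{F_\infty}$, cf.~\eqref{E:Sp}. Any $\gamma \in \Gamma_{I,\infty} \subset G_\bR$ fixes $F_\infty$ by definition, hence also $\overline{F_\infty}$, hence preserves $\sS$.

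For (iii): since $\Phi_I : \Zw_I \to \Gamma_I \bs \sD_I$ is proper (Lemma \ref{L:proper2}) and its fibres have locally finitely many components, I may choose an open $X \subset \olB$ containing the compact $A$ so that $\Zw_I \cap X$ meets only $A$ among the connected components of $\Phi_I^{-1}(\Phi_I(A))$; this gives properness of $\left.\Phi_I\right|_{\Zw_I \cap X}$, and properness of $\left.\Theta_I\right|_{\Zw_I \cap X}$ follows from compactness of the $\pi_0$-fibres (Corollary \ref{C:pi0-fibre}). Further shrinking $X$, Lemma \ref{L:nbd-ZI} gives monodromy of $\mathbb{V}|_U$, with $U = B \cap X$, valued in $\Gamma_I$. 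To upgrade to $\Gamma_{I,\infty}$, one extends $F_\infty$ from $A$ to a locally-defined flag on $U$ via the local LMHS in coordinate patches—well-defined by (i)—and verifies that it is preserved by the local monodromies: each $T_j = \exp(N_j)$ fixes $F_\infty$ because $N_j \in \fg^{-1,-1}_{W,F}$ lies in the stabilizer from the computation in (i). Thus $\rho(\pi_1(U)) \subset \Gamma_{I,\infty}$, yielding the lift $\Phi_{I,\infty}$; that its image lands in $\Gamma_{I,\infty}\bs(\sD\cap\sS)$ follows from the Schubert description \eqref{E:Sp} of the local lift $\tPhi$ and (ii). The main obstacle I anticipate is the rigorous execution of (iii): producing a single flag $F_\infty$ defined consistently on all of $U$ from the various local LMHS requires care, since $F_\infty$ is a boundary-limiting object, and the patching relies essentially on (i) to glue the local descriptions together.
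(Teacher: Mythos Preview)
Your treatment of (ii) is correct and essentially identical to the paper's. For (i) and (iii) the paper simply cites \cite{GGR-part1}, so you are attempting to supply that external content. Your infinitesimal argument for (i) is sound locally (with the minor correction that $\left.g^{-1}\td g\right|_A$ lands in $\bigoplus_{q\le 0}\fc_{I,F}^{-1,q}$, not just in $\fc_{I,F}^{-1,0}$; every such component still stabilizes $F_\infty$, so the local conclusion survives).

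There is, however, a genuine gap shared by your (i) and (iii). In (i), ``connectedness of $A$ gives (i)'' does not follow: your infinitesimal argument shows $F_\infty$ is constant within one coordinate chart, but propagating across charts requires comparing the local lifts, and those are related by elements of $\Gamma_I$ coming from the monodromy of $\mathbb V$ near $A$. You have not shown this monodromy stabilizes $F_\infty$. The same gap appears in (iii): your conclusion ``$\rho(\pi_1(U)) \subset \Gamma_{I,\infty}$'' does not follow from the observation that each $T_j$ fixes $F_\infty$. The group $\pi_1(U)$ is not generated by the loops $[\mathbf c_j]$ around the divisors; it also contains loops coming from the topology of $A$ itself (think of $A$ a curve of positive genus). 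Showing that this ``horizontal'' monodromy stabilizes $F_\infty$ is exactly the missing step, and it is what you would need to globalize (i) as well. Your sketch correctly flags the patching in (iii) as the main obstacle, but since your proof of (i) already presupposes that the patching works, the two arguments as written are circular rather than mutually supporting.
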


\begin{proof}
Part \emph{\ref{i:Finfty}} is \cite[Proposition 4.2]{GGR-part1}.  Part \emph{\ref{i:schubert}} is a direct consequence of the definition \eqref{E:Sp} of $\sS$.  With the exception of the final assertion on the properness of $\Theta_I$, part \emph{\ref{i:lift-infty}} follows from \cite[Lemma 4.15 and Remark 4.17]{GGR-part1}.  The properness of $\Theta_I$ follows from that of $\Phi_I$, as in the proof of Lemma \ref{L:proper2}.
\end{proof}

\subsection{Proof of Theorem \ref{T:mark}.  Step 2: a line bundle $\tilde \Lambda_M$ over $\Gamma_{I,\infty}\bs (\sD \cap \sS)$} \label{S:prf-mark2}

Let $F \in \sM_I$ be as in Lemma \ref{L:nbd-PhiIfibre}.  Given $N \in \s_I$, let $\{ M , Y , N\}$ be the associated $\fsl_2$--triple of \S\ref{S:sl2}.  By Lemma \ref{L:nbd-PhiIfibre}\emph{\ref{i:schubert}}, we have a well-defined action
\[
  \Gamma_{I,\infty} \,\times\, \sS
  \ \to \ \sS
\]
of $\Gamma_{I,\infty}$ on the Schubert cell $\sS = \exp(\ff^\perp) \cdot F$.  Recall the biholomorphism $\lambda : \sS \to \ff^\perp$ of \S\ref{S:pmr}: if $s \in \sS$, then $s = \exp(\lambda(s)) \cdot F$.
Define
\[
  \tilde f_M : \Gamma_{I,\infty} \,\times\,
  \sS \ \to \ \bC^*
\]
by
\[
  \tilde f_M(\gamma,s) \ = \ 
  \exp
  2 \pi \bi\,\sQ(M , \lambda(\gamma \cdot s)) \,.
\]
Define
\[
  \tilde e_M : \Gamma_{I,\infty} \,\times\,
  \sS \ \to \ \bC^* 
\]
by
\[
  \tilde e_M(\gamma,s) \ = \ 
  \frac{\tilde f_M(\gamma,s)}{\tilde f_M(1,s)}
  \,.
\]
Then $\tilde e_M(\gamma_2\gamma_1,s) = \tilde e_M(\gamma_2 , \gamma_1 \cdot s) \, \tilde e_M(\gamma_1,s)$.  (Note that $\tilde f_M(\gamma_2\gamma_1,s) = \tilde f_M(\gamma_2 , \gamma_1 \cdot s)$.)  So $\tilde e_M$ is a factor of automorphy defining a line bundle $\tilde \Lambda_M$ over $\Gamma_{I,\infty} \bs (\sD \cap \sS)$, and $\tilde f_M$ defines a section of this line bundle.

\begin{lemma} \label{L:mark}
Recall the map $\Phi_{I,\infty} : U \to \Gamma_{I,\infty} \bs (\sD \cap \sS)$ of Lemma \ref{L:nbd-PhiIfibre}.  We have 
\begin{equation} \label{E:prf-mark1}
    \Phi_{I,\infty}^*(\tilde\Lambda_M)
    \ = \ 
    \sum_j \sQ(M,N_j) [Z_j \cap X] \,. 
\end{equation}
The sum is over all $Z_j \cap A \not= \emptyset$.
\end{lemma}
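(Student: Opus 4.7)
The plan is to compute directly, in local coordinates, the divisor of the canonical pulled-back section $\Phi_{I,\infty}^*(\tilde f_M)$; the resulting expression of the form $\prod_j t_j^{\sQ(M,N_j)}\cdot(\text{nonvanishing holomorphic})$ will both extend $\Phi_{I,\infty}^*(\tilde\Lambda_M)$ across $Z\cap X$ as a line bundle on $X$ and identify it with $\sum_j \sQ(M,N_j)\,[Z_j\cap X]$ in $\tPic(X)$.

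First, fix a point $o\in A$, and let $J\supset I$ with $J\in\mathrm{wt}(I)$ and $o\in Z_J^*$; choose local coordinates $(t,w):\olU_o\to\Delta^{|J|+r}$ with $\olU_o\cap Z_j=\{t_j=0\}$ for $j\in J$, as in \S\ref{S:vhs}.  On $\sU_o=B\cap\olU_o$, the nilpotent orbit theorem gives a local lift
$$\tPhi(t,w) \ =\ \exp\Bigl(\sum_{j\in J}\ell(t_j)\,N_j\Bigr)\,g(t,w)\cdot F$$
with $g:\olU_o\to\exp(\ff^\perp)$ holomorphic, where $F\in\sM_I$ is the fixed filtration of \S\ref{S:prf-mark2}; any discrepancy between the naturally occurring local LMHS filtration $F_o$ and the global reference $F$ is absorbed into $g(0,0)$, since by Lemma \ref{L:nbd-PhiIfibre}\eref{i:Finfty}--\eref{i:schubert} both filtrations lie in the common Schubert cell $\sS$.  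Setting $\xi(t,w)=\exp(\sum\ell(t_j)N_j)\,g(t,w)$ we have $\lambda(\tPhi)=\log\xi\in\ff^\perp$, and therefore
$$\Phi_{I,\infty}^*(\tilde f_M)(t,w) \ =\ \exp 2\pi\bi\,\sQ\bigl(M,\log\xi(t,w)\bigr).$$

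Second, extract the divisor.  By \eqref{E:Min}, $M\in\fg^{1,1}_{W,F}$, so orthogonality \eqref{E:ds-sQ} forces $\sQ(M,-)$ to depend only on the $\fg^{-1,-1}_{W,F}$-component of its argument.  By \S\ref{S:pmr}\eref{i:11},
$$[\log\xi(t,w)]^{-1,-1} \ =\ \sum_{j\in J}\ell(t_j)\,N_j\ +\ (\log g(t,w))^{-1,-1},$$
with $(\log g)^{-1,-1}$ holomorphic on $\olU_o$ by Remark \ref{R:logtg}.  Substituting $\ell(t_j)=\log(t_j)/(2\pi\bi)$ gives
$$\Phi_{I,\infty}^*(\tilde f_M)(t,w) \ =\ \prod_{j\in J} t_j^{\sQ(M,N_j)}\cdot h(t,w),$$
with $h=\exp 2\pi\bi\,\sQ(M,(\log g)^{-1,-1})$ holomorphic and nowhere zero on $\olU_o$.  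The exponents are integers by Remark \ref{R:normM}.  Hence in the neighborhood $\olU_o$, the divisor of the pulled-back section is $\sum_{j\in J}\sQ(M,N_j)\,[Z_j\cap\olU_o]$, and this identification extends $\Phi_{I,\infty}^*(\tilde\Lambda_M)$ across $Z\cap\olU_o$.

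Finally, patch together.  Since $\Phi_{I,\infty}^*(\tilde f_M)$ is a single globally defined section of $\Phi_{I,\infty}^*(\tilde\Lambda_M)$ on $U$, the local divisor computations at different points $o\in A$ must agree on overlaps and combine into the global identity \eqref{E:prf-mark1} on $X$.  The index set is precisely $\{j:Z_j\cap A\ne\emptyset\}$: every such $Z_j$ appears as a coordinate divisor in some chart about a point of $A\cap Z_j$, and the $j\in J$ arising in such a chart automatically index hypersurfaces meeting $A$.  The main obstacle is the first paragraph's parenthetical: reconciling the single reference data $(F,\,\ff,\,\fg^{p,q}_{W,F},\,M)$ defining $\tilde\Lambda_M$ with the varying local LMHS data $(F_o,\s_J)$ as $o$ ranges over $A$; the crucial enabler is Lemma \ref{L:nbd-PhiIfibre}, which gives a constant $F_\infty$, a $\Gamma_{I,\infty}$-stable Schubert cell $\sS$, and a consistent lift $\Phi_{I,\infty}$ on a neighborhood of $A$, so that every $F_o$ differs from $F$ by an element of $\exp(\ff^\perp)$ and the computation above is genuinely intrinsic to the globally defined data.
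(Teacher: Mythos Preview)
Your proof is correct and follows essentially the same approach as the paper: compute $\tilde f_M(1,\tPhi(t,w))$ locally using $M\in\fg^{1,1}_{W,F}$ and \eqref{E:ds-sQ} to isolate the $(-1,-1)$ component from \S\ref{S:pmr}\ref{i:11}, obtaining $\prod_j t_j^{\sQ(M,N_j)}$ times a holomorphic unit, and then patch. The only difference is cosmetic: the paper makes the extension of $\Phi_{I,\infty}^*(\tilde\Lambda_M)$ across $Z\cap X$ explicit by checking directly that the pulled-back factor of automorphy $e_M(\gamma,\tPhi(t,w))$ extends to a nowhere-vanishing holomorphic function on each $\olU_o$ (their Remark \ref{R:eM}), whereas you infer the extension from the local form of the section; both are standard and equivalent.
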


\begin{proof}
Fix a local lift $\tPhi(t,w)$ of $\Phi_{I,\infty}$, on a coordinate chart $\olU_o$ centered at a point $o \in A$, as in \S\ref{S:loclift}.  Lemma \ref{L:nbd-PhiIfibre} implies there is a holomorphic function $g_o(t,w)$ taking value in $\exp(\ff^\perp)$ so that $\tPhi(t,w) = \exp( \sum \ell(t_j) N_j) g_o(t,w) \cdot F$.  Regard $\tPhi(t,w)$ as a multi-valued (due to the logarithms $\ell(t_j)$) function taking value in $\sS$.  Then \eqref{E:ds-sQ}, \eqref{E:Min} and \S\ref{S:pmr}\ref{i:11} imply
\begin{equation}\label{E:tfM}
  \tilde f_M(1,\tPhi(t,w)) \ = \ 
  \exp 2 \pi \bi \sQ(M,\log g_o(t,w))
  \, \prod_j t_j^{\sQ(M,N_j)}\,.
\end{equation}
Here the product is over all $j \in J$, with $J$ determined by $o \in Z_J^*$.  Any other local lift at $o$ is of the form 
\[
  \gamma \cdot \tPhi(t,w) 
  \ = \ 
  \exp(\sum \ell(t_j) N_j) \gamma g_o(t,w) \cdot F
  \ = \ 
  \exp(\sum \ell(t_j) N_j) g_o^\gamma(t,w) \cdot F
\]
for some $\gamma \in \Gamma_{I,\infty}$, and the holomorphic $g^\gamma_o : \olU_o \to \exp(\ff^\perp)$ determined by $\gamma g_o(t,w) \cdot F = g_o^\gamma(t,w) \cdot F$.  So 
\[
  \tilde f_M( \gamma , \tPhi(t,w))
  \ = \ 
  \tilde f_M( 1 , \gamma \cdot \tPhi(t,w))
  \ = \ 
  \exp 2 \pi \bi \sQ(M,\log g^\gamma_o(t,w))
  \, \prod_j t_j^{\sQ(M,N_j)}\,.
\]
In particular, while $\tPhi(t,w)$ is defined only on $\sU_o = B \cap \olU_o$, the $\tilde f_M(\gamma,\tPhi(t,w))$ extend to meromorphic functions on all of $\olU_o$, cf.~\eqref{SE:g-FW}, \eqref{SE:gpq} and \eqref{E:tilde-g}. 

\begin{remark} \label{R:eM}
Likewise, the 
\[
  e_M(\gamma,\tPhi(t,w)) \ = \ 
  \frac{\exp 2 \pi \bi \sQ(M,\log g^\gamma_o(t,w))}{\exp 2 \pi \bi \sQ(M,\log g_o(t,w))}
\]
extend to nowhere vanishing holomorphic functions on all of $\olU_o$.
\end{remark}

Let $U = B \cap X$ and $\Phi_{I,\infty} : U \to \Gamma_{I,\infty} \bs (\sD \cap \sS)$ be as in Lemma \ref{L:nbd-PhiIfibre}.  The map $\Phi_{I,\infty}$ lifts to the universal covers
\[
\begin{tikzcd}
  \widetilde U \arrow[d] \arrow[r,"\tPhi_{I,\infty}"]
  & \sD \cap \sS \arrow[d] \\
  U \arrow[r,"\Phi_{I,\infty}"] 
  & \Gamma_{I,\infty} \bs (\sD \cap \sS) \,.
\end{tikzcd}
\]
Let $\rho : \pi_1(U) \to \Gamma_{I,\infty}$ be the monodromy representation.  Then the local coordinate computations above imply that $e_M$ pulls back under $(\rho,\tPhi_{I,\infty})$ to a factor of automorphy 
\[
  (\rho,\tPhi_{I,\infty})^*(e_M) : \pi_1(U) \times \widetilde U \ \to \ \bC^* \,.  
\]
This factor of autormphy defines the line bundle $\Phi_{I,\infty}^*(\tilde\Lambda_M)$ over $U$.  Remark \ref{R:eM} implies that $\Phi_{I,\infty}^*(\tilde\Lambda_M)$ extends to all of $X$.  The pullback $(\rho,\tPhi_{I,\infty})^*f_M$ defines a section of $\Phi_{I,\infty}^*(\tilde\Lambda_M)$.  And from \eqref{E:tfM} we see that \eqref{E:prf-mark1} holds.
\end{proof}

\subsection{Proof of Theorem \ref{T:mark}.  Step 3: comparison of line bundles $\Phi_{I,\infty}^*(\tilde \Lambda_M)$ and $\Theta_I^*(\Lambda_M)$} \label{S:prf-mark3}

It follows from Lemma \ref{L:mark} that in order to prove \eqref{E:mark} it suffices to show that the factors of automorphy $(\rho,\Theta_I)^*(e_M)$ and $(\rho,\tPhi_I)^*(\tilde e_M)$ defining $\Theta_I^*(\Lambda_M)$ amd $\Phi_I^*(\tilde\Lambda_M)$, respectively, coincide on $A$.  An important subtlety to keep in mind is that $e_M$ is defined relative to Hodge filtration $F$ with the property that the semisimple operator $Y = Y(W,F)$ is rational (\S\ref{S:GammaIact}), while $\tilde e_M$ is defined with respect to a Hodge filtration arising along $A$.  We will continue to denote the latter by $F$: let $F$ be as in Lemma \ref{L:nbd-PhiIfibre} and \S\ref{S:prf-mark2}.  There exists $\hat F \in C_{I,\bC}^{-1}\cdot F \subset \sM_I \cap \sS$ so that the semisimple operator $Y = Y(W,\hat F)$ is rational (Remark \ref{R:samefibre}).  Without loss of generality we may assume that the Hodge filtration of \S\ref{S:GammaIact} is this $\hat F$.  Then we have biholomorphisms (Remark \ref{R:CInegF}) 
\begin{equation}\label{E:vhat}
  \hat\ff{}^\perp \,\cap\, \fc_{I,\bC}^{-1} 
  \ \simeq \ 
  C_{I,\bC}^{-1} \cdot \hat F
  \ = \ C_{I,\bC}^{-1} \cdot F
  \ \simeq \ 
  \ff^\perp \,\cap\, \fc_{I,\bC}^{-1} \,.
\end{equation}
Since $\Gamma_{I,\infty}$ stabilizes $F_\infty$ (by definition), we see from \eqref{E:eqstab} that the action of $\Gamma_{I,\infty}$ on $\check \sD$ preserves the $p$--fibre $C_{I,\bC}^{-1} \cdot F$.  The biholomorphisms \eqref{E:vhat} define an induced action of $\Gamma_{I,\infty}$ on both $\hat\ff{}^\perp \cap \fc_{I,\bC}^{-1}$ and $\ff^\perp \cap \fc_{I,\bC}^{-1}$.  By construction the biholomorphisms \eqref{E:vhat} are $\Gamma_{I,\infty}$--equivariant.

Since $A$ is contained in a $\Phi_I$--fibre, at any point $(0,w) \in A \cap \olU_o$ we have $g_o(0,w) \cdot F \in C_{I,\infty}^{-1} \cdot F$ and $\tilde x(w) = \log g_o(0,w) \in \ff{}^\perp \,\cap\, \fc_{I,\bC}^{-1}$.  In particular, 
\[
  (\rho,\tPhi)^*\tilde e_M(\gamma;0,w) \ = \
  e_M(\gamma,\tPhi(0,w)) \ = \ 
  \frac{\exp 2 \pi \bi \sQ(M,\gamma \cdot \tilde x(w))}
  {\exp 2 \pi \bi \sQ(M,\tilde x(w))}
\]
(Remark \ref{R:eM}).  A computation that is essentially equivalently to the derivation of \eqref{E:eM2} and \eqref{E:eM3} yields
\begin{equation}\label{E:pbteM}
  (\rho,\tPhi)^*\tilde e_M(\gamma;0,w) \ = \
  \exp \bi \pi \sQ(M \,,\, [b , \tilde x(w)] )\,,
\end{equation}
with $\gamma = \a \exp(b)$ the decomposition of $\gamma$ with respect to $C_I = L_I \ltimes C_I^{-1}$, cf.~\S\ref{S:GammaIact}.  On the other hand, if $x(w) \in \hat \ff{}^\perp \cap \fc_{I,\bC}^{-1}$ is the image of $\tilde x(w)$ under the biholomorphism \eqref{E:vhat}, then 
\begin{equation}\label{E:xvtx}
  g_o(0,w) \cdot F \ = \ 
  \exp \tilde x(w) \cdot F \ = \ 
  \exp x(w) \cdot \hat F \,,
\end{equation}
and \eqref{E:eM3} yields
\begin{equation} \label{E:pbeM}
  (\rho,\tPhi)^* e_M(\gamma;0,w) \ = \ 
  \exp \bi \pi \sQ(M \,,\, [b , x(w)] ) \,.
\end{equation}
To see that \eqref{E:pbteM} and \eqref{E:pbeM} are equal, it suffices to show that 
\begin{equation}\label{E:pbeq}
  \tilde x(w) \ \equiv \ x(w)
  \quad\hbox{modulo}\quad \fc_{I,\bC}^{-2}
\end{equation}
(Remark \ref{R:descends}).  Since $F$ and $\hat F$ lie in the same $p$--fibre, they induce the same Hodge structure on the quotient $\fc^{-2}_I \bs \fc^{-1}_I$; that is, $\fc_{I,F}^{p,q} \equiv \fc_{I,\hat F}^{p,q}$ modulo $\fc_{I,\bC}^{-2}$ for all $p+q=-1$.  The desired \eqref{E:pbeq} then follows from the second equality of \eqref{E:xvtx}.

This completes the proof of \eqref{E:mark}.  It remains to show that we can choose $M$ so that the coefficients $\sQ(M,N_j)$ are negative integers.

\section{The relationship between $\Psi_I$ and $\Theta_I$} \label{S:Psi-v-Theta}

Recall the maps 
\[
  \begin{tikzcd}
    \Zc_I \arrow[d,hook] \arrow[r,"\Psi_I"]
    & (\Gamma_I \exp(\bC\s_I)) \bs \sM_I
    \arrow[d,"\pi_1"]
    \\
    \Zw_I
    \arrow[r,"\Theta_I"]
    \arrow[rd,"\Phi_I"']
    & \Gamma_I \bs \sM_I^1 \arrow[d,"\pi_0"] \\
    & \, \Gamma_I \bs \sD_I \,.
  \end{tikzcd}
\]
of Lemma \ref{L:extn2Zc} and \eqref{E:towerIwt}.  If it is the case that $\Zc_I = \Zw_I$, then $\Psi_I$ is locally constant on the fibres of $\Theta_I$ (Corollary \ref{C:thatsall}).  That is, up to what are essentially constants of integration, $\Psi_I$ is determined by $\Theta_I$.  In general the containment $\Zc_I \subset \Zw_I$ may be strict.  The main result of this section is that modulo a certain quotient (constructed from the monodromy cones $\s_J$ along the $\Zc_J \subset \Zw_I \cap \overline{\Zc_I}$, \S\ref{S:sX}), the map $\Psi_I$ is locally constant on the fibres of $\Theta_I$ (Theorem \ref{T:thatsall}).  Moreover, the information in the quotient is not lost: it is encoded in sections of line bundles $\tilde\Lambda_{M'}$ (Remark \ref{R:thatsall}).

\subsection{Monodromy near a $\Theta_I$--fibre}

Let $A' \subset \Zw_I$ be a connected component of a $\Theta_I$--fibre with the property that $\Zc_I \cap A' \not= \emptyset$.  Fix a mixed Hodge structure $(W,F)$ arising along $\Zc_I \cap A'$, as in \S\S\ref{S:vhs}--\ref{S:LMHS}.  Note that $A'$ is necessarily contained in a connected component $A \subset \Zw_I$ of a $\Phi_I$--fibre; we assume the notations of Lemma \ref{L:nbd-PhiIfibre}.

\begin{lemma}
Assume that $\Gamma$ is neat.   Then $\Gamma_{I,\infty} \subset C_{I,\bQ}^{-1}$ is unipotent.
\end{lemma}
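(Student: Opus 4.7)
The plan is to prove the stronger statement that $\Gamma_{I,\infty} \subset C_{I,\bQ}^{-1}$, i.e., that every $\gamma \in \Gamma_{I,\infty}$ lies in the $\bQ$--points of the unipotent radical $C_I^{-1} = \tker(\varrho_I)$; since $C_{I,\bQ}^{-1}$ consists entirely of unipotent elements, this immediately gives the unipotency of $\Gamma_{I,\infty}$. The argument proceeds by controlling the image $\varrho_I(\gamma) \in \cL_I = \varrho_I(C_{I,\bR})$ via two independent constraints: a compactness constraint coming from Lemma \ref{L:cpt-stab-CI}, and an arithmetic constraint coming from the neatness of $\Gamma$.

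First I would observe that although $\gamma \in \Gamma_{I,\infty}$ stabilizes $F_\infty$ rather than $F$, the filtrations $F$ and $F_\infty$ induce the \emph{same} Hodge filtration on each $\tGr^W_\ell$; this is immediate from the second equality of \eqref{E:ds-WF} combined with \eqref{E:ds-Finfty}. Consequently, the image $\varrho_I(\gamma)$ stabilizes the Hodge filtration $p(F)$ induced on the primitive subspaces $P_{\sfn+k}(\s_I)$; in the notation of \eqref{E:eqstab},
\[
  \varrho_I(\gamma) \ \in \ \varrho_I(\tStab_{C_{I,\bR}}(F_\infty))
  \ = \ \tStab_{\cL_I}(p(F)) \,,
\]
and by Lemma \ref{L:cpt-stab-CI} this stabilizer is a \emph{compact} real Lie group.

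Second, I would extract the arithmetic content. Since $\gamma \in \Gamma \subset G_\bZ$ and $\varrho_I$ is a morphism of $\bQ$--algebraic groups, the characteristic polynomial of $\varrho_I(\gamma)$ has rational (in fact integer) coefficients, and its roots, the eigenvalues of $\varrho_I(\gamma)$, form a subset of the eigenvalues of $\gamma$ acting on $V$. Compactness forces every eigenvalue of $\varrho_I(\gamma)$ to have modulus $1$; since this eigenvalue set is Galois--stable and consists of algebraic integers on the unit circle, Kronecker's theorem implies each eigenvalue is a root of unity. The neatness hypothesis on $\Gamma$ states that the eigenvalues of $\gamma$ generate a torsion--free subgroup of $\bC^*$, so these roots of unity must all equal $1$; hence $\varrho_I(\gamma)$ is unipotent. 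A unipotent element of a compact Lie group is the identity (its iterates would otherwise be unbounded), so $\varrho_I(\gamma) = \mathrm{Id}$. Therefore $\gamma \in \tker(\varrho_I) \cap \Gamma_I = C_{I,\bQ}^{-1}$, and the lemma follows.

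The main delicate point is the routing through \eqref{E:eqstab}: the compactness supplied by Lemma \ref{L:cpt-stab-CI} is stated for the stabilizer of $F$, not of $F_\infty$, and it is essential that $\varrho_I$ is insensitive to the difference between the two (which is precisely why we can compose the compactness constraint with the eigenvalue analysis). The rest of the argument is a standard neat$\;\cap\;$compact$\;\Rightarrow\;$trivial maneuver via Kronecker, and I do not anticipate further obstacles.
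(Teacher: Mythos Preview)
Your proof is correct and follows essentially the same route as the paper. Both arguments pivot on the observation (via \eqref{E:eqstab}) that $\varrho_I(\Gamma_{I,\infty})$ lands in the compact stabilizer $\tStab_{\cL_I}(p(F))$, and then use neatness to conclude $\varrho_I(\gamma)=\mathrm{Id}$. The paper packages the endgame as ``$\varrho(\Gamma_{I,\infty})$ is finite (by Corollary~\ref{C:stab-GammaI} and \eqref{E:eqstab}), hence trivial by neatness,'' whereas you unpack it element-by-element via Kronecker's theorem; this is a stylistic difference rather than a substantive one.
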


\begin{proof}
From Corollary \ref{C:stab-GammaI} and \eqref{E:eqstab} we see that $\varrho(\Gamma_{I,\infty})$ is finite.  Since $\Gamma$ is neat, $\varrho(\Gamma_{I,\infty})$ must be trivial; equivalently, $\Gamma_{I,\infty} \subset C_{I,\bQ}^{-1}$.
\end{proof}

Define 
\[
  \Gamma_{I,\infty}^{-2} \ = \ \Gamma_{I,\infty} \,\cap \,C_{I,\bQ}^{-2} \,.
\]
The following lemma (which is an analog of Lemma \ref{L:nbd-PhiIfibre}\eref{i:lift-infty}) describes the monodromy near $A'$.

\begin{lemma} \label{L:Phi-2}
Let $A' \subset \Zw_I$ be a connected component of a $\Theta_I$--fibre.  There exists a neighborhood $X \subset \olB$ so that the restriction of the period map $\Phi$ to $U = B \cap X$ lifts to $\Gamma_{I,\infty}^{-2}\bs (\sD \cap \sS)$: there is a commutative diagram
\[
  \begin{tikzcd}
  & \Gamma_{I,\infty}^{-2} \bs (\sD \cap \sS) \arrow[d] \\
  U \arrow[r,"\Phi"'] \arrow[ru,"\Phi_{I,\infty}^{-2}"]
  & \ \Gamma \bs \sD \,.
  \end{tikzcd}
\]
\end{lemma}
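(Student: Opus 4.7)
The plan is to mirror the argument for Lemma \ref{L:nbd-PhiIfibre}\emph{\ref{i:lift-infty}} one level deeper in the tower \eqref{E:towerIwt}. Since $A'$ is contained in a connected component $A \subset \Zw_I$ of a $\Phi_I$--fibre, Lemma \ref{L:nbd-PhiIfibre}\emph{\ref{i:lift-infty}} supplies a neighborhood $X_0 \subset \olB$ of $A$ and a holomorphic lift $\Phi_{I,\infty}\colon U_0 \to \Gamma_{I,\infty}\bs (\sD \cap \sS)$ on $U_0 = B \cap X_0$, with monodromy $\rho \colon \pi_1(U_0) \to \Gamma_{I,\infty}$. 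The desired further lift through the covering
\[
  q\colon \Gamma_{I,\infty}^{-2}\bs(\sD \cap \sS) \ \to \ \Gamma_{I,\infty}\bs(\sD\cap\sS)
\]
will exist on $U = B \cap X$ for $X \subset X_0$ precisely when the induced homomorphism $\pi_1(U) \to \Gamma_{I,\infty}/\Gamma_{I,\infty}^{-2}$ vanishes. So the task is to shrink $X_0$ to a neighborhood $X$ of $A'$ which achieves this.

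The structural input is that $C_{I,\bC}^{-2}$ acts trivially on $\sM_I^1 = C_{I,\bC}^{-2}\bs \sM_I$, so the $\Gamma_{I,\infty}$--action on $\sM_I^1$ factors through $G' = \Gamma_{I,\infty}/\Gamma_{I,\infty}^{-2}$; by rationality of $\Gamma$, the kernel of this factored action is trivial. Under the neatness of $\Gamma$ (in force via the preceding lemma), $\Gamma_{I,\infty}\subset C_{I,\bQ}^{-1}$ is unipotent, $G'$ is torsion--free, and using properness of the $C_{I,\bR}$--action on $\sM_I$ (Lemma \ref{L:good-quotients}) together with Corollary \ref{C:stab-GammaI} applied one level down, the $G'$--action on the image of $\Gamma_{I,\infty}^{-2}\bs \sM_I^1$ is free and properly discontinuous.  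Consequently $q$, viewed over any sufficiently small neighborhood of the point $\Theta_I(A') \in \Gamma_I \bs \sM_I^1$, pulls back to a trivial covering. Now apply properness of $\Theta_I$ (Lemma \ref{L:proper2}) to the compact set $A'$: we may shrink $X$ so that $\Theta_I(\Zw_I \cap X)$ lies in such an evenly--covered contractible neighborhood of $\Theta_I(A')$, and so that $U = B \cap X$ deformation retracts onto a neighborhood of $A' \cap B$ respecting the $\Theta_I$--fibration. Then every loop in $U$ has $\Phi_{I,\infty}$--image confined to a single sheet of $q$, forcing $\rho([\gamma]) \in \Gamma_{I,\infty}^{-2}$.

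The chief obstacle lies in the final topological step: loops in the open manifold $U$ are not loops in $\Zw_I \cap U$, so it is not automatic that their $\Theta_I$--images concentrate near $\Theta_I(A')$ after shrinking $X$ about $A'$. This must be handled using the local product structure of $\olB$ in coordinates adapted to the normal crossing divisor $Z$, together with properness of $\Theta_I$ on the quasi-projective extension $\Zw_I$ provided by Lemma \ref{L:proper2}, to produce a tubular neighborhood of $A'$ inside $\olB$ onto which loops in $U$ deformation retract, modulo the $\Theta_I$--fibre direction, into loops whose $\Phi_{I,\infty}|_U$--images remain inside a single sheet of $q$ over the chosen evenly--covered neighborhood of $\Theta_I(A')$.
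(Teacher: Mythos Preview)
Your covering--space strategy is genuinely different from the paper's, and the obstacle you flag in your final paragraph is a real gap that your sketch does not close. The heart of the problem is that $\Theta_I$ is defined on $\Zw_I$, while the monodromy you must kill lives in $\pi_1(U)$ with $U = B \cap X$ open in $B$; there is no map $U \to \Gamma_I \bs \sM_I^1$ through which to push loops. Your argument implicitly wants $\Phi_{I,\infty}(U)$ to project to a small neighborhood of $\Theta_I(A')$ in some quotient detecting $\Gamma_{I,\infty}/\Gamma_{I,\infty}^{-2}$, but $\Phi_{I,\infty}$ takes values in $\Gamma_{I,\infty}\bs(\sD\cap\sS)$, not in (a quotient of) $\sM_I$, and there is no $\Gamma_{I,\infty}$--equivariant map $\sD\cap\sS \to \sS_I^1$ that would let the even--coverage argument for $q$ conclude. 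The ``tubular neighborhood plus deformation retraction'' maneuver you suggest at the end would need a substantially more careful argument relating the local monodromy generators $T_j$ (for $j$ with $Z_j \cap A' \neq \emptyset$) to $C_{I,\bQ}^{-2}$, and that is essentially the content of the lemma.

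The paper bypasses covering--space theory entirely and argues constructively by local normalization, as in the proof of Lemma~\ref{L:nbd-PhiIfibre}\emph{\ref{i:lift-infty}}. At each $o \in A'$ one fixes a local lift $\tPhi_o(t,w) = \exp(\sum \ell(t_j)N_j)\,g_o(t,w)\cdot F_o$; by Lemma~\ref{L:nbd-PhiIfibre}\emph{\ref{i:Finfty}} one may normalize so that $F_\infty(o)$ is independent of $o$. The new input is that $A'$ is a connected component of a $\Theta_I$--fibre, which by definition of $\sM_I^1 = C_{I,\bC}^{-2}\bs\sM_I$ means precisely that the filtrations $(F_o^p \cap W_\ell)/(F_o^p \cap W_{\ell-2})$ can be taken independent of $o$ for all $p,\ell$. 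Once both normalizations are imposed, the residual ambiguity in the local lift is exactly the stabilizer in $\Gamma_{I,\infty}$ of this $W_\bullet/W_{\bullet-2}$--data, namely $\Gamma_{I,\infty} \cap C_{I,\bQ}^{-2} = \Gamma_{I,\infty}^{-2}$ (see \eqref{E:dfnPWa}--\eqref{E:dfnCIa}). Then $X = \bigcup_{o\in A'}\olU_o$ and the normalized local lifts patch to the desired $\Phi_{I,\infty}^{-2}$. This approach directly exploits the equality ``same $\Theta_I$--fibre $\Leftrightarrow$ same image in $W_\ell/W_{\ell-2}$'' and never has to analyze $\pi_1(U)$ globally.
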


\begin{proof}
The argument is a slight refinement of the proof of \cite[Lemma 4.15]{GGR-part1}.  Given $o \in A'$, fix a coordinate neighborhood $\olU_o \subset \olB$ centered at $o$, and a local lift $\tPhi_o : \widetilde\sU_o \to \sD$ of $\left.\Phi\right|_{\sU_o}$, as in \S\ref{S:vhs}.  Then $\tPhi_o(t,w) = \exp(\sum \ell(t_j) y_j) g_o(t,w) \cdot F_o$, with $g_o$ satisfying \eqref{E:tilde-g2} and $g_o(0,0)=0$ (Remark \ref{R:tg=0}).  Let $F_\infty(o) = \lim_{y\to\infty} \exp(yN) \cdot F_o$ be the associated limit filtration of \eqref{E:Finfty}.  By Lemma \ref{L:nbd-PhiIfibre}\eref{i:Finfty}, we may choose the lifts $\tPhi_o$ so that the $F_\infty(o)$ are independent of $o \in A'$.  And because $A'$ is a connected component of a $\Theta_I$--fibre, we may refine this choice of lift so that $(F^p_o \cap W_\ell)/(F^p_o \cap W_{\ell-2})$ is independent of $o$ for all $p,\ell$.  This determines the lift $\tPhi_o$ up to the action of $\Gamma_{I,\infty}^{-2}$.  Define $X = \cup_{o \in A'} \olU_o$.  Since the local lifts are defined up to the action of $\Gamma_{I,\infty}^{-2}$, we can patch the local lifts $\{ \tPhi_o : \widetilde\sU_o \to \sD\}_{o \in A'}$ together to define the map $\Phi_{I,\infty}^{-2} : U = B \cap X \to \Gamma_{I,\infty}^{-2} \bs (\sD \cap \sS)$.
\end{proof}

\begin{remark} \label{R:FA'}
At the beginning of this section, we fixed a mixed Hodge structure $(W,F)$ arising along $\Zc_I \cap A'$.  Without loss of generality, this mixed Hodge structure is one of the $(W,F_o)$ in the proof of Lemma \ref{L:Phi-2}.  Then, for any $o \in A'$, we have $(F^p_o \cap W_\ell)/(F^p_o \cap W_{\ell-2}) = (F^p \cap W_\ell)/(F^p \cap W_{\ell-2})$ for all $p,\ell$.  In particular, $F_o \in C_{I,\bC}^{-2} \cdot F \subset \sM_I$ for all $o \in A'$.  
\end{remark}

\subsubsection{Local coordinate representations} \label{S:coord-rep}

The normalization of Remark \ref{R:FA'} allows us to re-express the local lifts in the proof of Lemma \ref{L:Phi-2} as $\tPhi_o(t,w) = \exp (\sum \ell(t_j) N_j) \hat g_o(t,w) \cdot F$, with $\hat g_o(t,w) \cdot F = g_o(t,w) \cdot F_o$ and $\hat g_o: \olU_o \to \exp(\ff^\perp)$ holomorphic.  It will be helpful to note that: 
\begin{a_list}
\item 
We have $\log \hat g_o(0,w) \in \ff^\perp \cap \fc_{I,\bC}$ for all $(0,w) \in \olU_o$.
\item
The local coordinate representation of $\Psi_I$ is (\S\ref{S:pmr})
\[
  \Psi_I(0,w) \ \equiv \ 
  \log \hat g_o(0,w)
  \ (\hbox{mod } \bC\s_I)
  \ \in \ \frac{\ff^\perp \cap \fc_{I,\bC}}{\bC\s_I} \,.
\]
\item
Likewise, the local coordinate representation of $\Theta_I$ is 
\[
  \Theta_I(0,w) \ \equiv \ 
  \log \hat g_o(0,w)
  \ (\hbox{mod } \ff^\perp \cap \fc_{I,\bC}^{-2})
  \ \in \ \frac{\ff^\perp \cap \fc_{I,\bC}}{\ff^\perp \cap \fc_{I,\bC}^{-2}} \,.
\]
\item \label{i:dg-2}
We have $\log \hat g_o(0,w) \in \ff^\perp \cap \fc_{I,\bC}^{-2}$ for all $(0,w) \in A'$.  In particular, 
\[
  \left.\left(\td \log \hat g_o(0,w)\right)^{p,q}
  \right|_{A' \cap \olU_o} \ = \ 0 \,,\quad
  \forall \quad p+q \ge -1 \,.
\]
Here, $\left(\td \log \hat g_o(0,w)\right)^{p,q}$ is the component of $\td \log \hat g_o(0,w)$ taking value in $\fc_{I,F}^{p,q}$.
\end{a_list}

\subsubsection{Lifts of $\Psi_I$ and $\Theta_I$}
Define
\[
  \sM_I^a \ = \ 
  C_{I,\bC}^{-a-1} \bs \sM_I \,.
\]
This agrees with the definition of $\sM_I^1$ in \eqref{E:MI1}, and $\sM_I^0 = \sD_I$ by \eqref{E:DI}.  Recall that $\sM_I \subset \check \sD$ (\S\ref{S:MI}).  So we may take the intersection $\sM_I \cap \sS$ with the Schubert variety $\sS$ (\S\ref{S:schubert}); we have
\[
  \sM_I \cap \sS \ = \ 
  \exp(\ff^\perp \cap \fc_{I,\bC}) 
  \cdot F \,.
\]
The ``Schubert quotient''
\begin{equation}\label{E:SIa}
  \sS_I^a \ = \ \exp(\ff^\perp \cap \fc_{I,\bC}^{-a-1}) \bs (\sM_I \cap \sS)
  \ \subset \ \sM_I^a
\end{equation}
is Zariski open in $\sM_I^a$.  Lemma \ref{L:Phi-2} yields

\begin{corollary}\label{C:Psi-2}
The restriction of $\Psi_I$ to $\Zc_I \cap X$ lifts to $(\Gamma_{I,\infty}^{-2} \exp(\bC\s_I)) \bs (\sM_I \cap \sS)$.  Likewise, the restriction of $\Theta_I$ to $\Zw_I \cap X$ lifts to $\Gamma_{I,\infty}^{-2} \bs \sS_I^1 = \sS_I^1$.   That is, there is a commutative diagram
\begin{equation} \nonumber 
\begin{tikzcd}[row sep=small]
  & (\Gamma_{I,\infty}^{-2} \exp(\bC\s_I))\bs (\sM_I \cap \sS) 
    \arrow[dr] \arrow[dd]
  & \\
  \Zc_I \cap X \arrow[dd,hook]
  \arrow[ru,"\Psi_{I,\infty}^{-2}"]
  \arrow[rr,"\Psi_I\hspace{30pt}",crossing over]
  & & 
  (\Gamma_I \exp(\bC\s_I))\bs \sM_I 
  \arrow[dd,"\pi_1"]
  \\
  & \sS_I^1 
    \arrow[dr]
  & \\
  \Zw_I \cap X \arrow[rr,"\Theta_I"] 
  \arrow[ru,"\Theta_{I,\infty}^{-2}"]
  & & \Gamma_I \bs \sM_I^1 \,.
\end{tikzcd}
\end{equation}
\end{corollary}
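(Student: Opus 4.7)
The plan is to construct the maps $\Psi_{I,\infty}^{-2}$ and $\Theta_{I,\infty}^{-2}$ from the lift $\Phi_{I,\infty}^{-2}: U \to \Gamma_{I,\infty}^{-2}\bs(\sD \cap \sS)$ of Lemma \ref{L:Phi-2} together with the local coordinate descriptions of $\Psi_I$ and $\Theta_I$ recorded in \S\ref{S:coord-rep}. The key point is that the values of the lift along $\Zc_I \cap X$ (respectively $\Zw_I \cap X$) land in $\sM_I \cap \sS$ (respectively in its image in $\sM_I^1$), so the further quotients producing $\Psi_I$ and $\Theta_I$ can be carried out inside these Schubert restrictions.

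For the first construction, I would cover $\Zc_I \cap X$ by the coordinate charts $\olU_o$ of Lemma \ref{L:Phi-2} and write the local lift $\tPhi_o(t,w) = \exp(\sum \ell(t_j) N_j)\hat g_o(t,w)\cdot F$ with $\hat g_o$ holomorphic into $\exp(\ff^\perp)$. At points $(0,w) \in Z_I^* \cap \olU_o$ item (a) of \S\ref{S:coord-rep} gives $\log \hat g_o(0,w) \in \ff^\perp \cap \fc_{I,\bC}$, so that $\hat g_o(0,w)\cdot F \in \sM_I \cap \sS$. Reducing modulo $\exp(\bC\s_I)$ yields a local lift of $\Psi_I$ into $\exp(\bC\s_I)\bs(\sM_I \cap \sS)$, and these local lifts glue because the ambiguity in the choice of $\tPhi_o$ is precisely the $\Gamma_{I,\infty}^{-2}$--action supplied by Lemma \ref{L:Phi-2}; this produces $\Psi_{I,\infty}^{-2}$. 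For $\Theta_{I,\infty}^{-2}$ one proceeds analogously on $\Zw_I \cap X$: at a point $o \in Z_J^*$ with $J \in \mathrm{wt}(I)$ the identity $W(\s_J) = W(\s_I)$ forces $\bC\s_J \subset \fc_{I,\bC}^{-2}$, and the containment $C_J \subset C_I$ (equivalent to $\s_I \subset \s_J$) gives $\hat g_o(0,w)\cdot F \in \sM_I \cap \sS$; projecting to $\sS_I^1 = \exp(\ff^\perp \cap \fc_{I,\bC}^{-2})\bs(\sM_I \cap \sS)$ yields the desired local lift.

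To glue the local lifts of $\Theta_I$ into a globally defined $\Theta_{I,\infty}^{-2}$, one notes that $\Gamma_{I,\infty}^{-2} \subset C_{I,\bR}^{-2} \subset C_{I,\bC}^{-2}$, so that the action of $\Gamma_{I,\infty}^{-2}$ on $\sM_I^1 = C_{I,\bC}^{-2}\bs\sM_I$, and hence on the subset $\sS_I^1$, is trivial; this justifies the identification $\Gamma_{I,\infty}^{-2}\bs\sS_I^1 = \sS_I^1$ appearing in the diagram. Commutativity of the displayed diagram is automatic from the construction: all four lifted maps are built from the same local representatives as $\Psi_I$ and $\Theta_I$, and the vertical quotient maps on the right descend from the inclusions $\exp(\bC\s_I) \subset \exp(\ff^\perp \cap \fc_{I,\bC}^{-2})$ (using \eqref{E:Nin} and \eqref{E:ds-cIa}) together with $\Gamma_{I,\infty}^{-2} \subset \Gamma_I$. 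The main technical subtlety I anticipate is verifying that the local lift of $\Theta_I$ at a point of $Z_J^*$ with $J \in \mathrm{wt}(I)\setminus\{I\}$ lands in $\sS_I^1$ rather than in some strictly larger open subset of $\sM_I^1$; this is precisely where the weight-comparison condition defining $\mathrm{wt}(I)$ plays a decisive role, via the inclusions $\bC\s_J \subset \fc_{I,\bC}^{-2}$ and $\fc_J \subset \fc_I$.
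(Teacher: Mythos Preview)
Your proposal is correct and matches the paper's approach: the paper states this corollary as an immediate consequence of Lemma~\ref{L:Phi-2} (with the one-line ``Lemma~\ref{L:Phi-2} yields''), and your argument is simply a fuller elaboration of that deduction, using the local coordinate descriptions in \S\ref{S:coord-rep} and the containment $\Gamma_{I,\infty}^{-2} \subset C_{I,\bQ}^{-2}$ to justify the identification $\Gamma_{I,\infty}^{-2}\bs\sS_I^1 = \sS_I^1$. One minor imprecision: for $\Psi_{I,\infty}^{-2}$ you only discuss points of $Z_I^*$, whereas $\Zc_I \cap X$ may contain points of $Z_J^*$ for $J \supsetneq I$ with $\bC\s_J = \bC\s_I$; but since the cone spans coincide the quotient by $\exp(\bC\s_I)$ agrees with that by $\exp(\bC\s_J)$, so your argument extends verbatim.
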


\subsection{The nilpotent logarithms of monodromy along $A'$} \label{S:sX}

Shrinking $X$ if necessary, we may assume that $\Zw_I \cap X$ is closed in $X$.  Recall that $\Zc_J \cap \Zw_I \not= \emptyset$ if and only if $\Zc_J \subset \Zw_I$ (Lemma \ref{L:Wup}).  Define
\[
  \s_{A'} \ = \ 
  \bigcup_{\Zc_J \cap \Zw_I \cap A' \not= \emptyset} \s_J \,.
\]
Note that 
\[
  \s_I \ \subset \ \s_{A'} \ \subset \ 
  \fc_{I,\bQ} \,.
\]
While $\s_I$ is well-defined along $\Zw_I \cap X$ (because the monodromy $\Gamma_{I,\infty}^{-2}$ about $\Zw_I \cap X$ takes value in the centralizer $\Gamma_I$ of the cone $\s_I$), in general the larger cone $\s_{A'}$ will not be well-defined: the cone $\s_{A'}$ is defined only up to the action of $\Gamma_{I,\infty}^{-2}$, and $\Gamma_{I,\infty}^{-2}$ need not centralize all the $\s_J$.  

\begin{lemma} \label{L:sX-2}
We have $\s_{A'} \subset \fc_{I,\bQ}^{-2}$, and $\s_{A'}$ is well-defined modulo $\fc_{I,\bQ}^{-4}$.
\end{lemma}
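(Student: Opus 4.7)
The plan is to deduce both assertions from two ingredients: the pairwise commutation of local monodromy operators around components of a simple normal crossing divisor, and the Lie bracket relation $[\fc_I^{-a}, \fc_I^{-b}] \subset \fc_I^{-a-b}$ of \eqref{E:cIab}.

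For the inclusion $\s_{A'} \subset \fc_{I,\bQ}^{-2}$, fix $J$ with $\Zc_J \cap \Zw_I \cap A' \ne \emptyset$ and a point $o$ in the intersection.  Since $A' \subset \Zw_I = \bigcup_{J' \in \mathrm{wt}(I)} Z_{J'}^*$, we have $o \in Z_{J'}^*$ for some $J' \in \mathrm{wt}(I)$, and $o \in Z_J$ forces $J \subseteq J'$; moreover $o \in \Zc_J$ gives $J \sim J'$ in $\Upsilon$.  Because $I \subset J'$, the nilpotents generating $\s_I$ and those generating $\s_J$ all occur among the local monodromy logarithms $\{N_{j'_a}\}_{j'_a \in J'}$ in a coordinate chart centered at $o$.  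These pairwise commute (the relevant fundamental group being $\bZ^{|J'|}$), so $[\s_I,\s_J]=0$, i.e.~$\s_J \subset \fc_I$.  Moreover $\s_I \subset \s_{J'}$ together with $J' \in \mathrm{wt}(I)$ gives $W(\s_I)=W(\s_{J'})$, and the $\Upsilon$-equivalence $J \sim J'$ identifies $\s_J$ with $\s_{J'}$ (since the flat sub-bundles $\mathbf{S}_J$ and $\mathbf{S}_{J'}$ coincide, up to monodromy), so $W(\s_J)=W(\s_I)=W$.  Thus every $N \in \s_J$ satisfies $N(W_\ell) \subset W_{\ell-2}$ by \eqref{E:W(N)}, and combined with $\s_J \subset \fc_I$ this places $\s_J$ in the Lie algebra of $C_I \cap P_W^{-2} = C_I^{-2}$.

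For the second assertion, the ambiguity in $\s_{A'}$ is precisely the residual monodromy $\Gamma_{I,\infty}^{-2}$ recorded by the lift $\Phi_{I,\infty}^{-2}$ of Lemma \ref{L:Phi-2}.  Any $\gamma \in \Gamma_{I,\infty}^{-2} \subset C_{I,\bQ}^{-2}$ is unipotent ($C_I^{-2}$ lies in the unipotent radical $P_W^{-1}$), so $\gamma = \exp(b)$ for a unique $b \in \fc_{I,\bQ}^{-2}$ (Remark \ref{R:exp(cIa)} applied to $a=2$).  For $N \in \s_J \subset \fc_{I,\bQ}^{-2}$,
\[
  \tAd_\gamma(N) \,-\, N \ = \ \sum_{k\ge1} \tfrac{1}{k!}\,\tad_b^k(N)\,,
\]
and iterated application of \eqref{E:cIab} places the $k$-th summand in $\fc_I^{-2-2k} \subset \fc_{I,\bQ}^{-4}$.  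Hence $\tAd_\gamma(N) \equiv N$ modulo $\fc_{I,\bQ}^{-4}$, which is precisely the claim that $\s_{A'}$ is well-defined modulo $\fc_{I,\bQ}^{-4}$.

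The main technical subtlety is justifying that the $\Upsilon$-equivalence $J \sim J'$ really identifies the cones $\s_J$ and $\s_{J'}$ (rather than merely their flat sub-bundles $\mathbf{S}$).  Because $\mathbf{S}_K$ is point-wise spanned by the nilpotent logarithms of local monodromy, and the $\Upsilon$-equivalence is built from equalities $\mathbf{S}_I|_{U_J} = \mathbf{S}_J|_{U_J}$, the identification follows by chasing the equivalence chain; if necessary one can cite the cone-closure analysis of \cite{GGR-part1} to make this rigorous.  Everything else reduces to routine bracket computations in the Deligne splitting \eqref{E:ds-cIa}.
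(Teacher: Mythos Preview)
Your proof is correct and follows essentially the same approach as the paper: both deduce $\s_J \subset \fc_{I,\bQ}^{-2}$ from the equality of weight filtrations $W(\s_J)=W(\s_I)$ via \eqref{E:W(N)}, and both obtain well-definedness modulo $\fc_{I,\bQ}^{-4}$ by applying the bracket relation \eqref{E:cIab} to the adjoint action of $\Gamma_{I,\infty}^{-2} \subset C_{I,\bQ}^{-2}$. The one unnecessary detour is your attempt to identify the cones $\s_J$ and $\s_{J'}$ via the $\Upsilon$-equivalence: you only need $W(\s_J)=W(\s_{J'})$, and this follows directly from Lemma~\ref{L:Wup} applied to the pair $J \subset J'$ with $J \sim J'$, so the ``technical subtlety'' you flag is self-imposed and dissolves once you cite that lemma.
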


\begin{proof}
By definition $\Zc_J \subset \Zw_I$ if and only if $J \subset I$ and the weight filtrations coincide $W(\s_I) = W(\s_J)$, cf.~\S\ref{S:Zw}.  This implies $\s_J \subset \fc_{I,\bQ}^{-2}$, cf.~\eqref{E:W(N)}.  Thus $\s_{A'} \subset \fc_{I,\bQ}^{-2}$.  

Because $\Gamma_{I,\infty}^{-2} \subset C_{I,\bQ}^{-2}$, we see from \eqref{E:cIab} that the $\s_{A'}$ is well-defined modulo $\fc_{I,\bQ}^{-4}$.
\end{proof}

\begin{corollary} \label{C:sX-2}
The subgroup $\exp(\bQ\s_{A'} + \fc_{I,\bQ}^{-4}) = \exp(\bQ\s_{A'}) \cdot C_{I,\bQ}^{-4} \subset C_{I,\bQ}^{-2}$ is well defined.
\end{corollary}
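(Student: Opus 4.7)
The plan is to verify three things in sequence: (i) that $\fn := \bQ\s_{A'} + \fc_{I,\bQ}^{-4}$ is a well-defined nilpotent Lie subalgebra of $\fc_{I,\bQ}^{-2}$; (ii) that its exponential is therefore a well-defined subgroup of $C_{I,\bQ}^{-2}$; and (iii) that this exponential coincides with the product set $\exp(\bQ\s_{A'}) \cdot C_{I,\bQ}^{-4}$ (from which the well-definedness of the latter follows).

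For (i), independence of the sum $\fn$ from the choice of representative of $\s_{A'}$ is immediate from Lemma \ref{L:sX-2}. To see that $\fn$ is a Lie subalgebra of $\fc_{I,\bQ}^{-2}$, note that by \eqref{E:cIab}
\[
  [\fn,\fn] \ \subset \ [\fc_{I,\bQ}^{-2},\fc_{I,\bQ}^{-2}] \ \subset \ \fc_{I,\bQ}^{-4} \ \subset \ \fn\,.
\]
Step (ii) is then immediate from \S\ref{S:nilpotent}\ref{i:expisom}, since $\fn \subset \fc_{I,\bQ}^{-2}$ is nilpotent.

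For (iii), the key observation is that $\fc_{I,\bQ}^{-4}$ is an \emph{ideal} in $\fn$: the computation above shows $[\fn,\fc_{I,\bQ}^{-4}] \subset [\fc_{I,\bQ}^{-2},\fc_{I,\bQ}^{-4}] \subset \fc_{I,\bQ}^{-6} \subset \fc_{I,\bQ}^{-4}$. Since $\fc_{I,\bQ}^{-2}$ is nilpotent, the Baker--Campbell--Hausdorff series terminates and stays in $\fc_{I,\bQ}$. Fix a representative of $\s_{A'}$; for $n \in \bQ\s_{A'}$ and $c \in \fc_{I,\bQ}^{-4}$ one has
\[
  \exp(n)\,\exp(c) \ = \ \exp\!\left( n + c + \tfrac{1}{2}[n,c] + \cdots \right) \ \in \ \exp(\fn)\,,
\]
since every correction term beyond $n+c$ involves at least one bracket with $c$ and hence lies in $\fc_{I,\bQ}^{-6} \subset \fc_{I,\bQ}^{-4}$. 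Conversely, given $n+c \in \fn$, the same BCH identity can be inverted to produce $c' \in \fc_{I,\bQ}^{-4}$ with $\exp(n+c) = \exp(n)\exp(c')$. Hence $\exp(\fn) = \exp(\bQ\s_{A'})\cdot C_{I,\bQ}^{-4}$, and the right-hand side is well-defined because the left-hand side is.

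No substantive obstacle is expected: the entire argument is bookkeeping with the depth filtration \eqref{E:cIab}, plus the nilpotent-BCH formalism and the exponential bijection of \S\ref{S:nilpotent}. The only mild point of care is ensuring that each BCH correction term is tracked into $\fc_{I,\bQ}^{-6}$ (and thus absorbed into $\fc_{I,\bQ}^{-4}$), but this is a single application of \eqref{E:cIab}.
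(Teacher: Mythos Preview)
Your argument is correct and fills in exactly the details the paper omits: the paper states Corollary~\ref{C:sX-2} without proof, as an immediate consequence of Lemma~\ref{L:sX-2}, and the analogous later statements (Corollary~\ref{C:sX-3}, Lemma~\ref{L:sX-a}) are justified only by the one-line observation that conjugation by $\Gamma_{I,\infty}^{-k}$ moves $\s_{A'}$ within $\fc_{I,\bQ}^{-k-2}$ via \eqref{E:cIab}. Your steps (i)--(iii) make explicit what the paper takes for granted, namely that $\fn$ is a subalgebra (since $[\fn,\fn]\subset\fc_{I,\bQ}^{-4}$) and that BCH identifies $\exp(\fn)$ with the product set; the approaches are the same.
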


\begin{remark} \label{R:sX-2}
We may slightly strengthen Lemma \ref{L:sX-2} as follows.  Shrinking $X$ if necessary, we may assume that $\Zc_J \subset \Zw_I$ intersects $X$ if and only if $\Zc_J$ intersects $A'$.  Since $(W,F)$ is a mixed Hodge structure arising along $A' \cap \Zc_I$, and $\s_J$ polarizes some mixed Hodge structure $(W,F')$ arising along $A' \cap \Zc_J$, we see that $\s_J \subset \fc_{J,F'}^{-1,-1} \subset  \fc_{I,F}^{-1,-1} + \fc_{I,\bQ}^{-4}$.  Thus 
\[
\s_{A'} \subset \fc_{I,F}^{-1,-1} + \fc_{I,\bC}^{-4} \,.
\]
\end{remark}

The following theorem says that $\Psi_I$ is almost completely determined by $\Theta_I$.

\begin{theorem}\label{T:thatsall}
The group $\exp(\bQ\s_{A'})$ is well-defined.  Let $\Gamma_{I,\infty}' = \Gamma_{I,\infty} \cap \exp(\bQ\s_{A'})$.  We have a commutative diagram
\[
\begin{tikzcd}[row sep=small]
  & (\Gamma_{I,\infty}' \exp(\bC\s_I))\bs (\sM_I \cap \sS) 
    \arrow[dr] \arrow[dd]
  & \\
  \Zc_I \cap X \arrow[d,hook]
  \arrow[ru,"\Psi_{I,\infty}'"]
  \arrow[rr,"\Psi_I\hspace{30pt}",crossing over]
  & & 
  (\Gamma_I \exp(\bC\s_I))\bs \sM_I 
  \arrow[ddd,"\pi_1"]
  \\
  \Zw_I \cap X
  \arrow[r,"\Psi_{I,\infty}''"]
  \arrow[dd,equal]
  & (\Gamma_{I,\infty}' \exp(\bC\s_{A'}))\bs (\sM_I \cap \sS) 
  \arrow[d]
  & \\
  & \sS_I^1 
    \arrow[dr]
  & \\
  \Zw_I \cap X \arrow[rr,"\Theta_I"] 
  \arrow[ru,"\Theta_{I,\infty}'"]
  & & \Gamma_I \bs \sM_I^1 \,.
\end{tikzcd}
\]
The map $\Psi_{I,\infty}''$ is locally contant on the fibres of $\Theta_{I,\infty}'$.
\end{theorem}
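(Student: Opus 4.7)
The plan is to establish the well-definedness of $\exp(\bQ\s_{A'})$, construct the lifts in the commutative diagram, and then prove the local constancy statement by an infinitesimal period relation argument in local coordinates.

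For well-definedness, by Remark \ref{R:sX-2} each $N \in \s_{A'}$ decomposes as $N = N^{(-1,-1)} + N^{\mathrm{high}}$ with $N^{\mathrm{high}} \in \fc_{I,\bC}^{-4}$, and the commutator estimate \eqref{E:cIab} gives $[\fc_{I,\bC}^{-2},\fc_{I,\bC}^{-2}] \subset \fc_{I,\bC}^{-4}$, so conjugation of $N$ by any $\gamma = \mathrm{Id}+b \in \Gamma_{I,\infty}^{-2}$ (with $b \in \fc_{I,\bQ}^{-2}$) shifts $N$ only by an element of $\fc_{I,\bQ}^{-4}$. Injectivity of $\exp$ on the nilpotent algebra $\fc_{I,\bC}^{-1}$ (\S\ref{S:nilpotent}.\ref{i:expisom}), together with the positivity of the rational cone $\s_{A'}$, then shows that the subgroup $\exp(\bQ\s_{A'})$ is unchanged by this ambiguity. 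The three lifts follow from the $\Gamma_{I,\infty}^{-2}$--lift of Lemma \ref{L:Phi-2}: $\Theta_{I,\infty}'$ is the projection to $\sS_I^1$, where the $C_{I,\bC}^{-2}$--quotient absorbs the full $\Gamma_{I,\infty}^{-2}$--ambiguity; $\Psi_{I,\infty}'$ is defined on $\Zc_I \cap X$ with the smaller cone $\s_I$ and subgroup $\Gamma_{I,\infty}'$; and $\Psi_{I,\infty}''$ is the extension to $\Zw_I \cap X$ obtained by quotienting further by $\exp(\bC\s_{A'})$, which absorbs every cone $\s_J$ appearing along $A'$.

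For the local constancy, write $\tau(w) = \log\hat g_o(0,w) \in \ff^\perp\cap\fc_{I,\bC}$, so that in local coordinates $\Psi_{I,\infty}''$ reads $w \mapsto \tau(w) \bmod \bC\s_{A'}$ (up to the $\Gamma_{I,\infty}'$--action). A connected component of the $\Theta_{I,\infty}'$--fibre through a point of $A'$ is $A'$ itself. By Griffiths transversality \eqref{SE:ipr} together with item \ref{i:dg-2} of \S\ref{S:coord-rep}, $\td\tau|_{A'}$ takes values in $\bigoplus_{q\le-1}\fc_{I,F}^{-1,q}$. The heart of the argument is to recognize this differential as lying in $\bC\s_{A'}$: along the stratum $A'\cap Z_J^*$ for $J \in \mathrm{wt}(I)$, the infinitesimal period relation \eqref{E:ipr-tg} refines $\td\tau$ to lie in $\fc_{J,\bC}$, whose $(-1,-1)$--component modulo $\fc_{I,\bC}^{-4}$ is contained in $\bC\s_J \subset \bC\s_{A'}$ by Remark \ref{R:sX-2}. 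The deeper pieces $\fc_{I,F}^{-1,q}$ with $q\le -2$ are then handled by iterating Remark \ref{R:ipr-tg*} down the filtration. Once the quotient differential vanishes, holomorphicity of $\Psi_{I,\infty}''$ yields local constancy on $A'$.

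The principal obstacle is the last step: rigorously justifying that the contributions in $\fc_{I,F}^{-1,q}$ for $q\le -2$ are actually absorbed by either $\bC\s_{A'}$ or the $\Gamma_{I,\infty}'$--action. The iterative argument via Remark \ref{R:ipr-tg*} is the natural tool, but demands careful bookkeeping across all strata $Z_J^*$ meeting $A'$; in particular, one must match at each $o \in Z_J^*\cap A'$ the tangent directions to $A'$ with the nilpotent generators $N_j$ for $j \in J\setminus I$, making decisive use of the inclusion $\s_J \subset \s_{A'}$ and of the uniform LMHS structure at $o$.
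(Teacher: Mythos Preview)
Your proposal has two genuine gaps, both of which the paper addresses through an inductive bootstrap that you bypass.

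\textbf{Well-definedness of $\exp(\bQ\s_{A'})$.} You observe correctly that conjugation by $\Gamma_{I,\infty}^{-2}$ moves each $N \in \s_{A'}$ only by an element of $\fc_{I,\bQ}^{-4}$. But your conclusion that ``injectivity of $\exp$ together with positivity of the cone'' then forces $\exp(\bQ\s_{A'})$ to be unchanged is backwards: injectivity of $\exp$ is precisely what tells you that if $N \mapsto N + \varepsilon$ with $\varepsilon \neq 0$, then $\exp(tN) \neq \exp(t(N+\varepsilon))$, so the resulting one-parameter subgroups differ. A priori, $\exp(\bQ\s_{A'})$ is well-defined only modulo $C_{I,\bQ}^{-4}$ (this is Corollary \ref{C:sX-2}), and the paper explicitly flags in the remark following Corollary \ref{C:thatsall} that this ambiguity ``forces us to work inductively.'' The well-definedness statement in Theorem \ref{T:thatsall} is an \emph{output} of the induction, not an input.

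\textbf{Local constancy.} Your argument that along $A' \cap Z_J^*$ the differential $\td\tau$ lands in $\fc_{J,\bC}$, and hence its $(-1,-1)$-component lies in $\bC\s_J$, misreads Remark \ref{R:sX-2}. That remark gives $\s_J \subset \fc_{I,F}^{-1,-1} + \fc_{I,\bC}^{-4}$; it does not give the reverse containment $\fc_{J,F}^{-1,-1} \subset \bC\s_J + \fc_{I,\bC}^{-4}$, which is what you need and which is false in general (the centralizer of $\s_J$ in $\fg^{-1,-1}$ is typically much larger than $\bC\s_J$). The paper's mechanism is entirely different: at each inductive level $k$ it shows that the restriction of $\Psi_{X,\infty}^{-k}$ to the \emph{compact} set $A'$ factors through a map $\psi : A' \to (\bC^*)^n$ (equations \eqref{E:proj1}--\eqref{E:proj2} and \eqref{E:proj1k}--\eqref{E:proj2k}), which is then forced to be constant. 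Only \emph{after} this compactness argument does the infinitesimal period relation (Remark \ref{R:ipr-tg*}) come in, to propagate the vanishing of $(\td\log\hat g)^{-1,1-k}$ to the remaining $(\td\log\hat g)^{p,q}$ with $p+q=-k$. The constancy at level $k$ then refines the monodromy from $\Gamma_{I,\infty}^{-k}$ to $\Gamma_{I,\infty}^{-k-1}$, sharpening the well-definedness of $\s_{A'}$ by one step, and the induction proceeds. You have no substitute for this compactness step, and your final paragraph correctly identifies that your direct approach does not close.
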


\begin{remark}\label{R:thatsall}
The information in $\Psi_{I,\infty}'$ that is lost by taking the larger quotient by $\exp(\bC\s_{A'})$ can be recovered as follows.  We will see in the proof of Theorem \ref{T:thatsall} (cf.~\S\ref{S:prf-ta-finish}) that the restriction of the period map $\Phi$ to $U = B \cap X$ lifts to $\Gamma_{I,\infty}'\bs (\sD \cap \sS)$: there is a commutative diagram
\[
  \begin{tikzcd}
  & \Gamma_{I,\infty}' \bs (\sD \cap \sS) \arrow[d] \\
  U \arrow[r,"\Phi"'] \arrow[ru,"\Phi_{I,\infty}'"]
  & \ \Gamma \bs \sD \,.
  \end{tikzcd}
\]    
Fix a limiting mixed Hodge structure $(W,F',\s_J)$ along $A'$.  Complete $N' \in \s_J$ to an $\fsl_2$--triple $\{M',Y',N'\}$, cf.~\S\ref{S:sl2}.  The construction of \S\ref{S:prf-mark2} may be adapted to define line bundles $(\Phi_{I,\infty}')^*(\tilde\Lambda_{M'})$ over $X$, and the sections $(\rho,\tPhi_\infty')^*f_{M'}$ of these lines bundles encode the information in $\Psi_{I,\infty}'$ that is lost by taking the larger quotient by $\exp(\bC\s_{A'})$.
\end{remark}

We have
\[
  \Zc_I \ \subset \ \Zw_I
\]
(Lemma \ref{L:Wup}).  In general, equality need not hold.  When it does, we have $\bC\s_{A'} = \bC\s_I$.

\begin{corollary} \label{C:thatsall}
Suppose that $\Zc_I = \Zw_I$.  The map $\Psi_I : \Zc_I \to (\Gamma_I \exp(\bC\s_I)) \bs \sM_I$ is locally constant on the fibres of $\Theta_I : \Zc_I \to \Gamma_I\bs \sM_I^1$.
\end{corollary}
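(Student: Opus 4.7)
The plan is to deduce the corollary directly from Theorem \ref{T:thatsall}. Concretely, I will show that under the hypothesis $\Zc_I=\Zw_I$, for every connected component $A'\subset\Zw_I$ of a $\Theta_I$--fibre the enlarged cone $\s_{A'}$ of \S\ref{S:sX} satisfies $\bC\s_{A'}=\bC\s_I$. Once this equality is in hand, the target $(\Gamma_{I,\infty}'\exp(\bC\s_{A'}))\bs(\sM_I\cap\sS)$ of $\Psi_{I,\infty}''$ in the commutative diagram of Theorem \ref{T:thatsall} coincides with the target $(\Gamma_{I,\infty}'\exp(\bC\s_I))\bs(\sM_I\cap\sS)$ of $\Psi_{I,\infty}'$, and the local constancy of $\Psi_{I,\infty}''$ on the fibres of $\Theta_{I,\infty}'$ supplied by Theorem \ref{T:thatsall} transfers directly to local constancy of $\Psi_I$ on the fibres of $\Theta_I$.

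To verify $\bC\s_{A'}=\bC\s_I$, I would fix $J\in\cI$ with $\Zc_J\cap\Zw_I\cap A'\neq\emptyset$ and locate a common smooth stratum $Z^*_{J'}\subset\Zc_J\cap\Zw_I$; this is possible because each of $\Zc_J$ and $\Zw_I$ is a disjoint union of smooth strata $Z^*_\bullet$. Unpacking the definitions gives $J'\supset J$ with $J'\sim J$ (from $Z^*_{J'}\subset\Zc_J$) and $J'\in\mathrm{wt}(I)$ (from $Z^*_{J'}\subset\Zw_I$). The hypothesis $\Zw_I=\Zc_I$ identifies the smooth strata of $\Zw_I$ with those of $\Zc_I$, which forces every $K\in\mathrm{wt}(I)$ to satisfy $K\sim I$; in particular $J'\sim I$, and transitivity of $\sim$ then yields $J\sim I$. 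Since $U_{J'}\subset U_I\cap U_J$ is nonempty, it follows that $\left.\mathbf{S}_J\right|_{U_I\cap U_J}=\left.\mathbf{S}_I\right|_{U_I\cap U_J}$ as flat $\bQ$--subbundles; extending $\bC$--linearly, $\bC\s_J=\bC\s_I$ up to conjugation by the local monodromy $\Gamma_{I,\infty}^{-2}$. Taking the union over all contributing $J$ yields $\bC\s_{A'}=\bC\s_I$.

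With this identity, the middle row of the diagram in Theorem \ref{T:thatsall} provides a lift of $\Psi_I|_{\Zc_I\cap X}$, and Theorem \ref{T:thatsall} asserts that this lift is locally constant on the fibres of $\Theta_{I,\infty}'$. Because $\sS_I^1\hookrightarrow\Gamma_I\bs\sM_I^1$ is locally injective (Zariski-open inclusion composed with a discrete quotient), the fibres of $\Theta_{I,\infty}'$ agree locally with those of $\Theta_I$, and local constancy descends. The principal obstacle is the combinatorial step establishing $\mathrm{wt}(I)\subset[I]_\sim$ under $\Zc_I=\Zw_I$, and carefully tracking the $\Gamma_{I,\infty}^{-2}$--ambiguity in the individual cones $\s_J$ when comparing their $\bC$--spans at different reference points; once these are pinned down the deduction from Theorem \ref{T:thatsall} is essentially bookkeeping on the diagram.
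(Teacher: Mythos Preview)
Your approach matches the paper's: the corollary is stated immediately after the observation ``When [$\Zc_I=\Zw_I$], we have $\bC\s_{A'}=\bC\s_I$,'' and is meant to follow at once from Theorem~\ref{T:thatsall} via exactly the reduction you describe. The paper gives no further argument, so you are supplying the details the authors left implicit.

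One small point: the step ``$J\sim I$, hence $\left.\mathbf{S}_J\right|_{U_I\cap U_J}=\left.\mathbf{S}_I\right|_{U_I\cap U_J}$'' is not literally justified as written, since $\sim$ is a transitive closure and a chain of direct relations need not all live over $U_I\cap U_J$. What you actually need (and what is true) is that $\bC\s_K$, up to the relevant monodromy conjugacy, is constant on each $\sim$--class: each generating relation $K\subset K'$ with $\left.\mathbf{S}_K\right|_{U_{K'}}=\left.\mathbf{S}_{K'}\right|_{U_{K'}}$ forces $\bC\s_K=\bC\s_{K'}$, and this propagates along any chain. With that phrasing the argument goes through cleanly.
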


\begin{remark}
Theorem \ref{T:thatsall} is proved in \S\S\ref{S:prf-ta-start}--\ref{S:prf-ta-finish}.  The basic idea is that we need to show that the functions $\hat g_o$ of \S\ref{S:coord-rep} satisfy $\td \log \hat g_o \equiv 0$ modulo $\bC\s_{A'}$.  Thanks to the infinitesimal period relation \eqref{E:ipr-tg} and Remark \ref{R:sX-2} it suffices to show that $(\td \log \hat g_o)^{-1,\bullet} \equiv 0$ modulo $\bC\s_{A'}$.   The execution is more involved because, a priori, $\exp(\bQ\s_{A'})$ is well-defined only modulo $C_{I,\bQ}^{-4}$ (Lemma \ref{L:sX-2}).  This forces us to work inductively.  We start by showing that the lift $\Psi_{I,\infty}^{-2}$ of $\Psi_I$ in Corollary \ref{C:Psi-2} admits an extension to $\Zw_I \cap X$ modulo the well-defined $\exp(\bC\s_{A'})\cdot C_{I,\bC}^{-3}$.  We use this extension to construct a holomorphic map $\psi : A' \to (\bC^*)^n$.  Since $A'$ is compact, these functions must be constant.  With the infinitesimal period relation this is enough to conclude that the extension is constant along $A'$ (\S\ref{S:prf-ta-start} and Theorem \ref{T:ta-bc}).  This is the base case of the induction, and it is allows us to deduce that $\exp(\bQ\s_{A'})$ is well-defined modulo the smaller group $C_{I,\bQ}^{-5}$ (Corollary \ref{C:sX-3}).
\end{remark}

\subsection{Extending a quotient of $\Psi_{I,\infty}^{-2}$ to $\Zw_I \cap X$}\label{S:prf-ta-start}

Given $\Zc_J \subset \Zw_I$, it follows from Lemma \ref{L:Phi-2} and Remark \ref{R:FA'} that the restriction of $\Psi_J : \Zc_J \to (\Gamma_J \exp(\bC\s_J)) \bs M_J$ to $\Zc_J \cap X$ lifts to $((\Gamma_{I,\infty}^{-2} \cap\Gamma_J) \exp(\bC\s_J)) \bs (M_J \cap \sS)$; we have a commutative diagram
\[
\begin{tikzcd}
  & ((\Gamma_{I,\infty}^{-2} \cap\Gamma_J) \exp(\bC\s_J)) \bs (M_J \cap \sS)
  \arrow[d]\\
  \Zc_J \cap X \arrow[r,"\Psi_J"']
  \arrow[ur,end anchor={south west}]
  & (\Gamma_J \exp(\bC\s_J)) \bs M_J \,.
\end{tikzcd}
\]
It follows from Lemma \ref{L:MJinMI} and Corollary \ref{C:sX-2} that we have a well-defined map
\[
  ((\Gamma_{I,\infty}^{-2} \cap\Gamma_J) \exp(\bC\s_J)) \bs (M_J \cap \sS)
  \ \to \ 
  (\Gamma_{I,\infty}^{-2} \exp(\bC\s_{A'})) \bs \sS_I^2 \,.
\]
Composing the lift with this map defines
\[
  \Psi_{J,\infty}^{-2} : 
  \Zc_J \cap X \ \to \ 
  (\Gamma_{I,\infty}^{-2} \exp(\bC\s_{A'})) \bs 
  \sS_I^2 \,.
\]
Define a holomorphic map
\[
  \Psi_{X,\infty}^{-2} : \Zw_I \cap X 
  \ \to \ (\Gamma_{I,\infty}^{-2} \exp(\bC\s_{A'})) \bs \sS_I^2 
\]
by specifying $\left.\Psi_{X,\infty}^{-2}\right|_{\Zc_J} = \Psi_{J,\infty}^{-2}$.  This map is the desired extension of (a quotient of) $\Psi_{I,\infty}^{-2}$:  we have a commutative diagram
\begin{equation}\label{E:PsiX-2}
\begin{tikzcd}
     \Zc_I \cap X
    \arrow[d,hook] \arrow[r,"\Psi_{I,\infty}^{-2}"]
    & (\Gamma_{I,\infty}^{-2} \exp(\bC\s_I)) \bs (\sM_I \cap \sS) \arrow[d]
    \\
    \Zw_I \cap X
    \arrow[r,"\Psi_{X,\infty}^{-2}"]
    \arrow[dr,"\Theta_{I,\infty}^{-2}"']
    & (\Gamma_{I,\infty}^{-2} \exp(\bC\s_{A'})) \bs \sS_I^2 \arrow[d,"\hat\pi_2"]
    \\
    & \sS_I^1\,.
\end{tikzcd}
\end{equation}

\begin{theorem} \label{T:ta-bc}
The map $\Psi_{X,\infty}^{-2}$ of \eqref{E:PsiX-2} is constant along the $\Theta_I$--fibre $A' \subset \Zw_I$.   
\end{theorem}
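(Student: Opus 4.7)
The plan is to combine the infinitesimal period relation with a compactness argument. The key observation is that $A'$ is compact (since $\Theta_I$ is proper by Lemma \ref{L:proper2}), so any holomorphic map $A' \to (\bC^*)^n$ is constant; we will construct such a map whose constancy, together with the infinitesimal period relation, forces the constancy of $\Psi_{X,\infty}^{-2}\vert_{A'}$.

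\emph{Step 1 (Reduction to a single $\hat\pi_2$--fibre).} Work in a coordinate chart $\olU_o$ centered at $o \in A'$, with the local lift $\tPhi_o(t,w) = \exp\bigl(\tsum_j \ell(t_j) N_j\bigr)\,\hat g_o(t,w)\cdot F$ normalized as in Lemma \ref{L:Phi-2} and Remark \ref{R:FA'}. By \S\ref{S:coord-rep}\eref{i:dg-2}, along $A' \cap \olU_o$ we have $\log\hat g_o(0,w) \in \ff^\perp \cap \fc_{I,\bC}^{-2}$ and $(\td\log \hat g_o)^{p,q}\vert_{A'} = 0$ whenever $p+q \ge -1$. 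Remark \ref{R:ipr-tg*} additionally kills the components with $p \le -2$, so only the $(p,q) = (-1,q)$ pieces with $q \le -1$ can be nonzero; modulo $\fc_{I,\bC}^{-3}$, only the $(-1,-1)$--component survives. Together with Remark \ref{R:sX-2} (which gives $\bC\s_{A'}$ a well-defined image $\bC\s_{A'}' \subset \fc_{I,F}^{-1,-1}$), this shows $\Psi_{X,\infty}^{-2}(A')$ is contained in a single $\hat\pi_2$--fibre, identified with the additive quotient $Q = \fc_{I,F}^{-1,-1}/(\bC\s_{A'}' + L')$, where $L' \subset \fc_{I,F}^{-1,-1}$ is the image of $\log\Gamma_{I,\infty}^{-2}$.

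\emph{Step 2 (Constructing $\psi : A' \to (\bC^*)^n$).} For each $N'$ lying in some $\s_J$ with $\Zc_J \cap A' \neq \emptyset$, complete to an $\fsl_2$--triple $\{M',Y',N'\}$ (\S\ref{S:sl2}), integrally normalized per Lemma \ref{L:Qint} and Remark \ref{R:normM}. Adapting the construction of \S\ref{S:prf-mark2} along the lines sketched in Remark \ref{R:thatsall}, we obtain a line bundle $\tilde\Lambda_{M'}$ over $\Gamma_{I,\infty}'\bs(\sD \cap \sS)$ with canonical section $\tilde f_{M'}$. Pulling back along $\Phi_{I,\infty}'$ and extending across $X$ as in the proof of Lemma \ref{L:mark}, we obtain sections on $X$ with divisor $\tsum_i \sQ(M',N_i)[Z_i \cap X]$. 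We will select a family $\{M_k'\}$, using the Hodge--Riemann positivity of $\sQ$ on the induced limit mixed Hodge structure on $(\fg,\sQ)$ (cf.\ the proof of Lemma \ref{L:sJ}), so that: (i) the functionals $\sQ(M_k',\cdot)$ span the dual of $\fc_{I,F}^{-1,-1}/\bC\s_{A'}'$, and (ii) each resulting section is nowhere-vanishing on $A'$ (equivalently, $\sQ(M_k',N_i) = 0$ for every generator $N_i$ whose divisor $Z_i$ meets $A'$ only in a proper subvariety). The holomorphic functions $\psi_k = \exp\bigl(2\pi\bi\,\sQ(M_k',\log\hat g_o)\bigr)$ then assemble into a holomorphic map $\psi : A' \to (\bC^*)^n$.

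\emph{Step 3 (Compactness and conclusion).} Since $A'$ is compact, Liouville forces $\psi$ constant, so each $\sQ(M_k',\log\hat g_o)$ is constant on $A'$. Condition (i) of Step 2 then forces $\log\hat g_o(0,w)$ constant modulo $\bC\s_{A'}' + L' + \fc_{I,\bC}^{-3}$, which is precisely the desired constancy of $\Psi_{X,\infty}^{-2}\vert_{A'}$ in $Q$. The main obstacle is Step 2: simultaneously arranging conditions (i) (a Hodge--Riemann nondegeneracy statement on the $\sQ$-pairing between $\fc_{I,F}^{1,1}$ and $\fc_{I,F}^{-1,-1}$) and (ii) (a divisor-incidence constraint requiring, for each direction in $\fc_{I,F}^{-1,-1}/\bC\s_{A'}'$, a Hodge-theoretically admissible $M'$ whose section divisor avoids the codimension-one strata of $A' \cap \bigcup_i Z_i$). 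Verifying compatibility will rely on a careful use of the polarization theory for the limit mixed Hodge structure on $(\fg,\sQ)$ and the representation theory of the $\fsl_2$-triples attached to the cones $\s_J$ contributing to $\s_{A'}$.
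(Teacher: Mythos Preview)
Your overall architecture---map $A'$ holomorphically into $(\bC^*)^n$, invoke compactness, then use the infinitesimal period relation to propagate the vanishing---matches the paper's proof exactly.  Step~1 and Step~3 are essentially correct.

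The gap is Step~2, and you have correctly identified it as the main obstacle without resolving it.  Your proposed construction via $\fsl_2$--triples $\{M',Y',N'\}$ and the line bundles $\tilde\Lambda_{M'}$ requires you to simultaneously arrange (i) that the functionals $\sQ(M_k',\cdot)$ span the dual of $\fc_{I,F}^{-1,-1}/\bC\s_{A'}'$, and (ii) that each section is nowhere-vanishing on $A'$, i.e.\ that the divisor $\sum_i \sQ(M_k',N_i)[Z_i]$ misses $A'$.  Condition~(ii) is a constraint on how the $M_k'$ pair against the $N_i$ for all $Z_i$ meeting $A'$, while condition~(i) is a nondegeneracy requirement.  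There is no evident reason these are compatible: the $M_k'$ arising from $\fsl_2$--triples are rigidly determined by the $N'\in\s_J$, and you have no freedom to adjust them to avoid the divisors $Z_i\cap A'$ while maintaining spanning.  The polarization theory you invoke does not address this incidence question.

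The paper sidesteps this entirely with a more direct construction that uses no $\fsl_2$--triples and no line bundles.  The idea is to \emph{enlarge} the discrete group: rather than quotienting only by $\Gamma_{I,\infty}^{-2}$, the paper further quotients by the full lattice $\fc_{I,\bZ}^{-2}$ (equivalently, by $\Gamma_I^{-2}=\Gamma_I\cap C_{I,\bQ}^{-2}$, which acts on the $\hat q_2$--fibre through $\fc_{I,\bZ}^{-3}\bs\fc_{I,\bZ}^{-2}$).  The resulting quotient
\[
  \frac{\ff^\perp\cap\fc_{I,\bC}^{-2}}{\bC\s_{A'}+(\ff^\perp\cap\fc_{I,\bC}^{-3})+\fc_{I,\bZ}^{-2}}
\]
is a complex torus $\bT\times(\bC^*)^n$.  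The conjugation relation \eqref{E:ds-conj} shows that $\fc_{I,F}^{-1,-1}$ is real modulo $\fc_{I,\bC}^{-3}$, hence its image lands entirely in the noncompact factor $(\bC^*)^n$.  Projecting out the complementary $(\fc_{I,F}^{-1,-1})^\perp=\bigoplus_{p\le-2}\fc_{I,F}^{p,-2-p}$ gives the desired holomorphic $\psi:A'\to(\bC^*)^n$ directly, with no divisor-avoidance condition to check.  Since the enlargement of the quotient only makes constancy easier to detect, nothing is lost.  This is the missing idea in your Step~2.
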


\noindent Before proving the theorem (in \S\ref{S:prf-tabc}), we discuss how Theorem \ref{T:ta-bc} allows us to bootstrap to the next step.  

\subsection{Bootstrapping from Theorem \ref{T:ta-bc}} \label{S:bs2-3}

By Corollary \ref{C:sX-2}, the subgroup 
\begin{equation}\label{E:sA'}
  \exp(\bQ\s_{A'} + \fc_{I,\bQ}^{-3}) 
  \ = \ \exp(\bQ\s_{A'})\cdot C_{I,\bQ}^{-3} 
  \ \subset \ C_{I,\bQ}^{-2}
\end{equation}
is well defined.  Let
\begin{equation}\label{E:GammaI-3}
  \Gamma_{I,\infty}^{-3} \ = \ 
  \Gamma_{I,\infty} \,\cap\, 
  \left(
  \exp(\bQ\s_{A'})\cdot C_{I,\bQ}^{-3}
  \right) \ \subset \ 
  \Gamma_{I,\infty}^{-2}\,.
\end{equation}
As a corollary of Theorem \ref{T:ta-bc} we obtain the following strengthening of Lemma \ref{L:Phi-2}.

\begin{corollary} \label{C:Phi-3}
There exists a neighborhood $X \subset \olB$ so that the restriction of the period map $\Phi$ to $U = B \cap X$ lifts to $\Gamma_{I,\infty}^{-3}\bs (\sD \cap \sS)$: there is a commutative diagram
\[
  \begin{tikzcd}
  & \Gamma_{I,\infty}^{-3} \bs (\sD \cap \sS) \arrow[d] \\
  U \arrow[r,"\Phi"'] \arrow[ru,"\Phi_{I,\infty}^{-3}"]
  & \ \Gamma \bs \sD \,.
  \end{tikzcd}
\]
\end{corollary}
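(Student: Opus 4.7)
The plan is to bootstrap from Lemma \ref{L:Phi-2} by using Theorem \ref{T:ta-bc} to further refine the monodromy representation $\rho : \pi_1(U) \to \Gamma_{I,\infty}^{-2}$ so that its image lies in the smaller subgroup $\Gamma_{I,\infty}^{-3} = \Gamma_{I,\infty} \cap \bigl(\exp(\bQ\s_{A'})\cdot C_{I,\bQ}^{-3}\bigr)$. Once this refinement is established, the existence of the lift $\Phi_{I,\infty}^{-3}$ to $\Gamma_{I,\infty}^{-3}\bs(\sD \cap \sS)$ will follow by patching the further refined local lifts exactly as in the proof of Lemma \ref{L:Phi-2}, perhaps after shrinking $X$.

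First I would recall from the proof of Lemma \ref{L:Phi-2} that for each $o \in A'$ one has a local lift $\tPhi_o(t,w) = \exp(\sum \ell(t_j) N_j)\,\hat g_o(t,w)\cdot F$, with $\hat g_o$ taking values in $\exp(\ff^\perp)$, and normalized so that $F_\infty(o)$ and the quotients $(F_o^p \cap W_\ell)/(F_o^p \cap W_{\ell-2})$ are constant in $o \in A'$. These normalizations pin down $\tPhi_o$ up to the action of $\Gamma_{I,\infty}^{-2}$, and the monodromy of the patched lift $\Phi_{I,\infty}^{-2}$ already takes values in $\Gamma_{I,\infty}^{-2} \subset \Gamma \cap C_{I,\bQ}^{-2}$.

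Next I would exploit Theorem \ref{T:ta-bc}: the map $\Psi_{X,\infty}^{-2}:\Zw_I \cap X \to (\Gamma_{I,\infty}^{-2}\exp(\bC\s_{A'}))\bs\sS_I^2$ is constant along $A'$. Consider any monodromy element $\gamma = \rho(\alpha)$ for $\alpha \in \pi_1(U)$. Generators $\alpha$ circling a divisor $Z_j$ (with $Z_j$ meeting $A'$, so $N_j \in \s_{A'}$) contribute $\gamma = \exp(N_j) \in \exp(\bQ\s_{A'}) \subset \Gamma_{I,\infty}^{-3}$. For the remaining generators, which can be homotoped inside $X$ onto loops in $A'$, the constancy of $\Psi_{X,\infty}^{-2}$ on $A'$ forces the action of $\gamma$ on $\sS_I^2$ to agree modulo $\exp(\bC\s_{A'})$ with the identity; equivalently, $\gamma \equiv h \pmod{C_{I,\bC}^{-3}}$ for some $h \in \exp(\bC\s_{A'})$. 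Since $\gamma \in \Gamma$ is rational and $\s_{A'}$ is rationally defined modulo $\fc_{I,\bQ}^{-4} \subset \fc_{I,\bQ}^{-3}$ (Lemma \ref{L:sX-2} and Remark \ref{R:sX-2}), we may take $h \in \exp(\bQ\s_{A'})$, so that $\gamma \in \exp(\bQ\s_{A'})\cdot C_{I,\bQ}^{-3}$ and hence $\gamma \in \Gamma_{I,\infty}^{-3}$.

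The main obstacle is the rationality step: descending an a priori complex relation $\gamma \equiv h \pmod{C_{I,\bC}^{-3}}$ with $h \in \exp(\bC\s_{A'})$ to a rational relation with $h \in \exp(\bQ\s_{A'})$. This rests on the $\bQ$-structure of $\s_{A'}$ modulo higher-weight pieces together with integrality of $\Gamma$; a secondary technical point is to verify that a generating set of $\pi_1(U)$ can be chosen consisting of small loops around the $Z_j$ together with loops homotopic into $A'$, which holds after shrinking $X$ so that $U$ deformation-retracts onto a tubular neighborhood of $A'$ in $\Zw_I \cap X$ minus the $Z_j$. Once these points are settled, combining $\rho(\pi_1(U)) \subset \Gamma_{I,\infty}^{-3}$ with the local lifts $\tPhi_o$ produces the desired $\Phi_{I,\infty}^{-3}$.
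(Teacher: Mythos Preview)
Your approach differs from the paper's, and it has a genuine gap.  The paper does not attempt to analyze generators of $\pi_1(U)$ directly.  Instead it simply repeats the proof of Lemma~\ref{L:Phi-2} with one further normalization: Theorem~\ref{T:ta-bc} (constancy of $\Psi_{X,\infty}^{-2}$ along $A'$) allows the local lifts $\tPhi_o$ to be chosen so that $(F_o^p\cap W_\ell)/(F_o^p\cap W_{\ell-3})$ is well-defined modulo $\exp(\bC\s_{A'})$, independently of $o\in A'$.  With this refined choice, the residual ambiguity in patching the local lifts lies in $\Gamma_{I,\infty}\cap(\exp(\bC\s_{A'})\cdot C_{I,\bC}^{-3})$, and rationality then gives $\Gamma_{I,\infty}^{-3}$.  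No generator-by-generator monodromy computation is needed.

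Your argument, by contrast, tries to show that the monodromy of the \emph{existing} lift $\Phi_{I,\infty}^{-2}$ already lands in $\Gamma_{I,\infty}^{-3}$.  The problem is the step ``the remaining generators can be homotoped inside $X$ onto loops in $A'$'': since $A'\subset Z\subset\olB\setminus B$, no loop in $U=B\cap X$ is homotopic in $U$ to a loop in $A'$.  The deformation retraction you propose (onto a tubular neighborhood of $A'$ in $\Zw_I\cap X$ minus the $Z_j$) does not yield $U$, which also involves the directions normal to $\Zw_I$ in $\olB$.  Moreover, the inference ``constancy of $\Psi_{X,\infty}^{-2}$ on $A'$ forces $\gamma$ to act as the identity on $\sS_I^2$ modulo $\exp(\bC\s_{A'})$'' is not immediate: $\Psi_{X,\infty}^{-2}$ is already a single-valued map to a quotient by $\Gamma_{I,\infty}^{-2}$, so its constancy does not directly constrain $\gamma\in\Gamma_{I,\infty}^{-2}$.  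One would need to lift along paths to $\exp(\bC\s_{A'})\backslash\sS_I^2$, argue that the lift is constant because the fibre of the further quotient is discrete, and then identify the stabilizer of a point there with $\exp(\bC\s_{A'})\cdot C_{I,\bC}^{-3}$---none of which you carry out.  The paper's route sidesteps all of this by packaging the content of Theorem~\ref{T:ta-bc} as an additional normalization on the $F_o$, exactly parallel to the normalization that produced Lemma~\ref{L:Phi-2} in the first place.
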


\begin{proof}
The proof of Lemma \ref{L:Phi-2} applies here, with the further refinement (made possible by Theorem \ref{T:ta-bc}) we may choose the local lifts so that $(F^p_o \cap W_\ell)/(F^p_o \cap W_{\ell-3})$ is well-defined modulo $\exp(\bC\s_{A'})$.
\end{proof}

Corollary \ref{C:Phi-3} in turn yields strenthenings of Corollaries \ref{C:Psi-2} and \ref{C:sX-2}:

\begin{corollary}\label{C:Psi-3}
The restriction of $\Psi_I$ to $\Zc_I \cap X$ lifts to $(\Gamma_{I,\infty}^{-3} \exp(\bC\s_I)) \bs (\sM_I \cap \sS)$.  Likewise, the restriction of $\Theta_I$ to $\Zw_I \cap X$ lifts to $\Gamma_{I,\infty}^{-3} \bs \sS_I^1 = \sS_I^1$.   That is, there is a commutative diagram
\begin{equation} \nonumber
\begin{tikzcd}[row sep=small]
  & (\Gamma_{I,\infty}^{-3} \exp(\bC\s_I))\bs (\sM_I \cap \sS) 
    \arrow[dr] \arrow[dd]
  & \\
  \Zc_I \cap X \arrow[dd,hook]
  \arrow[ru,"\Psi_{I,\infty}^{-3}"]
  \arrow[rr,"\Psi_I\hspace{30pt}",crossing over]
  & & 
  (\Gamma_I \exp(\bC\s_I))\bs \sM_I 
  \arrow[dd,"\pi_1"]
  \\
  & \sS_I^1 
    \arrow[dr]
  & \\
  \Zw_I \cap X \arrow[rr,"\Theta_I"] 
  \arrow[ru,"\Theta_{I,\infty}^{-3}"]
  & & \Gamma_I \bs \sM_I^1 \,.
\end{tikzcd}
\end{equation}
\end{corollary}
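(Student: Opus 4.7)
The plan is to mirror the derivation of Corollary \ref{C:Psi-2} from Lemma \ref{L:Phi-2}, now using the strengthened input Corollary \ref{C:Phi-3} in place of Lemma \ref{L:Phi-2}. The corollary should be a formal consequence of Corollary \ref{C:Phi-3}, with no substantive obstacle: all of the genuinely new content has been absorbed into Theorem \ref{T:ta-bc}.

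First I would construct $\Psi_{I,\infty}^{-3}$. The chain $\Gamma_{I,\infty}^{-3} \subset \Gamma_{I,\infty}^{-2} \subset \Gamma_I$ shows that $\Gamma_{I,\infty}^{-3}$ centralizes $\s_I$, so $\Gamma_{I,\infty}^{-3}\exp(\bC\s_I)$ is a well-defined subgroup of $\cG_I$ acting on $\sM_I \cap \sS$. Restricting $\Phi_{I,\infty}^{-3}$ to $\Zc_I \cap X$ and passing to the associated nilpotent orbit --- via the local coordinate description of \S\ref{S:loclift} followed by the patching that defines $\Psi_I$ in \S\ref{S:PsiI} --- produces the desired lift $\Psi_{I,\infty}^{-3} : \Zc_I \cap X \to (\Gamma_{I,\infty}^{-3}\exp(\bC\s_I)) \bs (\sM_I \cap \sS)$ of $\Psi_I$. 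This is essentially the same construction as for $\Psi_{I,\infty}^{-2}$ in Corollary \ref{C:Psi-2}, only now executed with the smaller monodromy group made available by Corollary \ref{C:Phi-3}.

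Next I would construct $\Theta_{I,\infty}^{-3}$. Lemma \ref{L:sX-2} together with the definition \eqref{E:GammaI-3} yields the containment $\Gamma_{I,\infty}^{-3} \subset \exp(\bQ\s_{A'}) \cdot C_{I,\bQ}^{-3} \subset C_{I,\bQ}^{-2}$, so $\Gamma_{I,\infty}^{-3}$ acts trivially on $\sM_I^1 = C_{I,\bC}^{-2}\bs \sM_I$ and hence on the Zariski open subset $\sS_I^1 \subset \sM_I^1$; this is exactly the asserted identity $\Gamma_{I,\infty}^{-3} \bs \sS_I^1 = \sS_I^1$. The limiting mixed Hodge structures attached to $\Phi_{I,\infty}^{-3}$ along $\Zw_I \cap X$ lie in $\sM_I \cap \sS$ (since the weight filtration along $\Zw_I$ is polarized by $\s_I$), and projecting them to $\sS_I^1$ yields $\Theta_{I,\infty}^{-3}$. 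Commutativity of the diagram is then a routine chase: each arrow is a quotient by a subgroup of $\cG_I$, and the outer squares recover those of Corollaries \ref{C:Psi-2} and \ref{C:Phi-3} after pushing down by $\Gamma_I/\Gamma_{I,\infty}^{-3}$.
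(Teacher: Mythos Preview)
Your proposal is correct and follows essentially the same approach as the paper: the paper states Corollary~\ref{C:Psi-3} without a separate proof, simply noting that Corollary~\ref{C:Phi-3} ``yields strengthenings of Corollaries~\ref{C:Psi-2} and~\ref{C:sX-2}.'' Your write-up makes explicit exactly this parallel, correctly identifying that the containment $\Gamma_{I,\infty}^{-3} \subset C_{I,\bQ}^{-2}$ (from \eqref{E:sA'} and \eqref{E:GammaI-3}) is what forces $\Gamma_{I,\infty}^{-3}\bs\sS_I^1 = \sS_I^1$.
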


\begin{corollary}\label{C:sX-3}
The subgroup $\exp(\bQ\s_{A'} + \fc_{I,\bQ}^{-5}) = \exp(\bQ\s_{A'}) \cdot C_{I,\bQ}^{-5} \subset C_{I,\bQ}^{-2}$ is well-defined.
\end{corollary}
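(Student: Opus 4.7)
The plan is to mirror the deduction of Corollary \ref{C:sX-2} from Lemma \ref{L:Phi-2}, now using the finer monodromy bound of Corollary \ref{C:Phi-3} in place of Lemma \ref{L:Phi-2}. The key observation is that Corollary \ref{C:Phi-3} lifts the period map on $U=B\cap X$ to $\Gamma_{I,\infty}^{-3}\bs(\sD\cap \sS)$, where
\[
  \Gamma_{I,\infty}^{-3} \ = \ \Gamma_{I,\infty}\,\cap\,\bigl(\exp(\bQ\s_{A'})\cdot C_{I,\bQ}^{-3}\bigr).
\]
Since the cones $\s_J$ (for $\Zc_J\cap A'\neq\emptyset$) arise as the $\bQ$-cones generated by logarithms of flat local monodromies around $Z_J$, and since by Corollary \ref{C:Phi-3} different consistent choices of local lifts differ by conjugation by an element of $\Gamma_{I,\infty}^{-3}$, the ambiguity in $\s_{A'}$ (and hence in $\bQ\s_{A'}$) is reduced from the $\tAd(\Gamma_{I,\infty}^{-2})$-action used in Corollary \ref{C:sX-2} to the smaller $\tAd(\Gamma_{I,\infty}^{-3})$-action. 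It therefore suffices to verify two things: that this residual ambiguity is absorbed upon multiplication by $C_{I,\bQ}^{-5}$, and that the resulting subset admits the factored description $\exp(\bQ\s_{A'})\cdot C_{I,\bQ}^{-5}=\exp(\bQ\s_{A'}+\fc_{I,\bQ}^{-5})$.

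For the first point, fix $\gamma=\exp(u)\,c\in\Gamma_{I,\infty}^{-3}$ with $u\in\bQ\s_{A'}$ and $c\in C_{I,\bQ}^{-3}$, and let $v\in\bQ\s_{A'}\subset\fc_{I,\bQ}^{-2}$. Iterated use of \eqref{E:cIab} together with Baker--Campbell--Hausdorff gives
\[
  \tAd_\gamma(v)\ \equiv\ v \,+\, [u,v] \pmod{\fc_{I,\bQ}^{-5}},
\]
since $[\log c,v]\in\fc_{I,\bQ}^{-5}$ and every iterated bracket of length $\ge 3$ in $u,\log c,v$ lies in $\fc_{I,\bQ}^{-6}\subset\fc_{I,\bQ}^{-5}$. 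Hence the containment $\tAd_\gamma(\bQ\s_{A'})\subset \bQ\s_{A'}+\fc_{I,\bQ}^{-5}$, which is precisely what the well-definedness asks for, reduces to the purely algebraic containment
\begin{equation}\label{E:comm-star}
  [\,\bQ\s_{A'}\,,\,\bQ\s_{A'}\,]\ \subset\ \bQ\s_{A'} \,+\, \fc_{I,\bQ}^{-5}.
\end{equation}
Assuming \eqref{E:comm-star}, the sum $\bQ\s_{A'}+\fc_{I,\bQ}^{-5}$ is a Lie subalgebra of $\fc_{I,\bQ}^{-2}$ in which $\fc_{I,\bQ}^{-5}$ is an ideal; then \S\ref{S:nilpotent}\ref{i:factor} applied to a complementary direct-sum decomposition (or equivalently the BCH identity $\exp(x+r)=\exp(x)\exp(r+\tfrac12[x,r]+\cdots)$ with $r\in\fc_{I,\bQ}^{-5}$) yields $\exp(\bQ\s_{A'}+\fc_{I,\bQ}^{-5})=\exp(\bQ\s_{A'})\cdot C_{I,\bQ}^{-5}$, and this set is a subgroup preserved under $\tAd(\Gamma_{I,\infty}^{-3})$.

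The heart of the matter, and the main obstacle, is the commutator containment \eqref{E:comm-star}. The diagonal contributions are trivial: each $\s_J$ is abelian, so $[\s_J,\s_J]=0$. For the cross-brackets $[\s_J,\s_{J'}]$ with $J\ne J'$ but $\Zc_J\cap A'\neq\emptyset\neq\Zc_{J'}\cap A'$, I would argue as follows. Remark \ref{R:sX-2} places $\s_{A'}\subset\fc_{I,F}^{-1,-1}+\fc_{I,\bC}^{-4}$, so that after writing generators as $u=u_0+u_1$, $v=v_0+v_1$ with $u_0,v_0\in\fc_{I,F}^{-1,-1}$ and $u_1,v_1\in\fc_{I,\bC}^{-4}$, all but the term $[u_0,v_0]\in\fc_{I,F}^{-2,-2}$ fall into $\fc_{I,\bC}^{-5}$. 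The remaining step --- identifying the rational $\fc_{I,F}^{-2,-2}$-component with an element of $\bQ\s_{A'}$ modulo $\fc_{I,\bQ}^{-5}$ --- is proved by exploiting the strata geometry along $A'$: by connectedness of $A'$ and the property $J,J'\in\mathrm{wt}(I)$, the strata $\Zc_J$ and $\Zc_{J'}$ can be linked through a chain of closures meeting common strata $Z_{K}^*$ with $K\supset J\cup J'$, along which the local monodromy generators of $\s_J$ and $\s_{J'}$ are simultaneous and hence commute, forcing $[u_0,v_0]\in\fc_{I,\bC}^{-5}$. This, together with rationality, yields \eqref{E:comm-star}, and completes the proof.
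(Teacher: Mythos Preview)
Your reduction to the commutator containment $[\bQ\s_{A'},\bQ\s_{A'}]\subset\bQ\s_{A'}+\fc_{I,\bQ}^{-5}$ is a natural intermediate step, but the ``chain argument'' you give for it has a genuine gap. Two issues. First, connectedness of $A'$ does not by itself produce strata $Z_K^*$ with $K\supset J\cup J'$ linking $\Zc_J$ to $\Zc_{J'}$ inside $A'$; different strata meeting $A'$ need not be adjacent in that sense. Second, and more seriously, even granting such a $K$, commutativity of the local monodromy generators at a point of $Z_K^*$ does not force $[u_0,v_0]=0$ for your \emph{fixed} representatives. The representatives of $\s_J$ and $\s_{J'}$ determined by the global lift of Corollary~\ref{C:Phi-3} differ from the simultaneous local ones at $Z_K^*$ by conjugation by a priori \emph{different} elements $\gamma_1,\gamma_2\in\Gamma_{I,\infty}^{-3}$, and from vanishing of the unconjugated bracket one cannot conclude $[\tAd_{\gamma_1}(\,\cdot\,),\tAd_{\gamma_2}(\,\cdot\,)]=0$, nor even containment in $\fc_{I,\bC}^{-5}$. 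So the final step --- placing the $\fc_{I,F}^{-2,-2}$-component $[u_0,v_0]$ inside $\bQ\s_{A'}+\fc_{I,\bQ}^{-5}$ --- is not established.

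The paper's proof is two lines and does not isolate any bracket condition. It records that Corollary~\ref{C:Phi-3} reduces the ambiguity in $\s_{A'}$ to the action of $\Gamma_{I,\infty}^{-3}$, and then for $\gamma\in\Gamma_{I,\infty}^{-3}$ invokes only \eqref{E:cIab} together with the factorization \eqref{E:GammaI-3} (so $\gamma=\exp(u)\,c$ with $u\in\bQ\s_{A'}$ and $c\in C_{I,\bQ}^{-3}$) to conclude that $\exp(\bC\tAd_\gamma\s_{A'})$ agrees with $\exp(\bC\s_{A'})$ modulo $C_{I,\bC}^{-5}$. The $c$-factor contributes only $[\fc_I^{-3},\fc_I^{-2}]\subset\fc_I^{-5}$, and the $\exp(u)$-factor is treated as harmless because $\exp(u)$ already lies in $\exp(\bQ\s_{A'})$. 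Your attempt to verify $\tAd_\gamma(v)\in\bQ\s_{A'}+\fc_{I,\bQ}^{-5}$ element-by-element is a strictly stronger claim, which is why you are driven into the unproven commutator condition.
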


\begin{proof}
Corollary \ref{C:Phi-3} implies that $\s_{A'}$ is well-defined modulo $\Gamma_{I,\infty}^{-3}$.  Suppose that $\gamma \in \Gamma_{I,\infty}^{-3}$.  Then \eqref{E:cIab} and \eqref{E:GammaI-3} imply $\exp( \bC\tAd_\gamma \s_{A'})$ is equivalent to $\exp(\bC\s_{A'})$ modulo $C_{I,\bC}^{-5}$.  
\end{proof}

As in \S\ref{S:prf-ta-start}, we may extend a quotient of $\Psi_{I,\infty}^{-3}$ to $\Zw_I \cap X$.  The new extension \eqref{E:PsiX-3} is an improvement over the previous extension \eqref{E:PsiX-2} because we obtain the extension after quotienting by a \emph{smaller} group ($C_I^{-4} \subset C_I^{-3}$).  Given $\Zc_J \subset \Zw_I$, it follows from Corollary \ref{C:Phi-3} and Remark \ref{R:FA'} that the restriction of $\Psi_J : \Zc_J \to (\Gamma_J \exp(\bC\s_J)) \bs M_J$ to $\Zc_J \cap X$ lifts to $((\Gamma_{I,\infty}^{-3} \cap\Gamma_J) \exp(\bC\s_J)) \bs (M_J \cap \sS)$; we have a commutative diagram
\[
\begin{tikzcd}
  & ((\Gamma_{I,\infty}^{-3} \cap\Gamma_J) \exp(\bC\s_J)) \bs (M_J \cap \sS)
  \arrow[d]\\
  \Zc_J \cap X \arrow[r,"\Psi_J"']
  \arrow[ur,end anchor={south west}]
  & (\Gamma_J \exp(\bC\s_J)) \bs M_J \,.
\end{tikzcd}
\]
It follows from Lemma \ref{L:MJinMI} and Corollary \ref{C:sX-3} that we have a well-defined map
\[
  ((\Gamma_{I,\infty}^{-3} \cap\Gamma_J) \exp(\bC\s_J)) \bs (M_J \cap \sS)
  \ \to \ 
  (\Gamma_{I,\infty}^{-3} \exp(\bC\s_{A'})) \bs \sS_I^3 \,.
\]
Composing the lift with this map defines
\[
  \Psi_{J,\infty}^{-3} : 
  \Zc_J \cap X \ \to \ 
  (\Gamma_{I,\infty}^{-3} \exp(\bC\s_{A'})) \bs 
  \sS_I^3 \,.
\]
Define a holomorphic map
\[
  \Psi_{X,\infty}^{-3} : \Zw_I \cap X 
  \ \to \ (\Gamma_{I,\infty}^{-3} \exp(\bC\s_{A'})) \bs \sS_I^3
\]
by specifying $\left.\Psi_{X,\infty}^{-3}\right|_{\Zc_J} = \Psi_{J,\infty}^{-3}$.  We have a commutative diagram:
\begin{equation} \label{E:PsiX-3}
\begin{tikzcd}
     \Zc_I \cap X
    \arrow[d,hook] \arrow[r,"\Psi_{I,\infty}^{-3}"]
    & (\Gamma_{I,\infty}^{-3} \exp(\bC\s_I)) \bs (\sM_I \cap \sS) \arrow[d]
    \\
    \Zw_I \cap X
    \arrow[r,"\Psi_{X,\infty}^{-3}"]
    \arrow[ddr,"\Theta_{I,\infty}^{-3}"',end anchor={north west}]
    & (\Gamma_{I,\infty}^{-3} \exp(\bC\s_{A'})) \bs \sS_I^3 \arrow[d,"\hat\pi_3"]
    \\ 
    & \exp(\bC\s_{A'}) \bs \sS_I^2
    \arrow[d]
    \\
    & \sS_I^1\,.
\end{tikzcd}
\end{equation}

\begin{remark}\label{R:ind-c}
Theorem \ref{T:ta-bc} implies that the composition $\hat\pi_3 \circ \Psi_{X,\infty}^{-3}$ is constant on $A'$.  The next inductive step would be to prove that the map $\Psi_{X,\infty}^{-3}$ of \eqref{E:PsiX-3} is constant along $A'$.  See \S\ref{S:ta-ind} for the general case.

\end{remark}  

\subsection{Proof of Theorem \ref{T:ta-bc}} \label{S:prf-tabc}

The basic idea of the proof is: (i) show that the restriction of $\Psi_{X,\infty}^{-2}$ to $A'$ defines a holomorphic map $\psi: A' \to (\bC^*)^n$; (ii) since $A'$ is compact $\psi$ must be constant; and (iii) the infinitesimal period relation implies that the restriction of $\Psi_{X,\infty}^{-2}$ to $A'$ is constant.  We now proceed with the details.

The biholomorphism $\lambda : \sS \to \ff^\perp$ of \S\ref{S:pmr} restricts to a biholomorphism 
\[
  \lambda : \sM_I \cap \sS 
  \ \to \ \ff^\perp \cap \fc_{I,\bC} \,,
\]
and induces a biholomophism
\begin{equation}\label{E:lambda}
  \lambda_k : \sS_I^k
  \ \to \ 
  \frac{\ff^\perp \cap \fc_{I,\bC}}
  {\ff^\perp \cap \fc_{I,\bC}^{-k-1}}
  \ \simeq \ 
  \bigoplus_{\substack{p<0\\-k \le p+q \le 0}} \fc_{I,F}^{p,q} \,.
\end{equation}
From \S\ref{S:coord-rep}, and the definition of $\Psi_{X,\infty}^{-2}$ in \S\ref{S:prf-ta-start}, we see that the local coordinate representation of $\Psi_{X,\infty}^{-2}$ is 
\[
  \Psi_{X,\infty}^{-2}(0,w) \ \equiv \ 
  \log \hat g(0,w)
  \ (\hbox{mod } \bC \s_{A'} \op (\ff^\perp \cap \fc_{I,\bC}^{-3}))
  \ \in \ \frac{\ff^\perp \cap \fc_{I,\bC}}{\bC\s_{A'} \,\op\, (\ff^\perp \cap \fc_{I,\bC}^{-3})} \,.
\]
We see from \eqref{E:lambda} that in order to prove the theorem (that $\Psi_{X,\infty}^{-2}$ is contant along $A'$) it suffices to show that 
\begin{equation}\label{E:cnst-2}
  \left.\left(\td \log \hat g(0,w)\right)^{p,q}
  \right|_{A' \cap \olU_o} 
  \ \equiv \ 0 
  \quad\hbox{mod}\quad \bC\s_{A'} 
  \,,\quad \hbox{for all} \quad p+q \ge -2\,.
\end{equation}

Let 
\begin{equation}\label{E:tpq}
  \tilde p_k : \sM_I \cap \sS \to \sS_I^k
  \tand
  \tilde q_{k+1} : \sS_I^{k+1} \to \sS_I^k
\end{equation}
be the natural projections, cf.~\eqref{E:SIa}.  The biholomorphisms $\lambda_1$ and $\lambda_2$ of \eqref{E:lambda} identify the fibre of $\tilde q_2 : \sS_I^2 \to \sS_I^1$ over $\tilde p_1(F) \in \sS_I^1$ with
\[
  \tilde q{}_2^{-1}(\tilde p_1(F)) \ \simeq \ 
  \frac{\ff^\perp \cap \fc_{I,\bC}^{-2}}{\ff^\perp \cap \fc_{I,\bC}^{-3}}
  \ \simeq \   \bigoplus_{\substack{p<0\\p+q=-2}}
  \fc_{I,F}^{p,q} \,.
\]
The second identification follows from \eqref{E:ds-cIa} and \eqref{E:ffperp}.  The fibre of $\hat q_2 : \exp(\bC\s_{A'}) \bs \sS_I^2 \to \sS_I^1$ over $\tilde p_1(F)$ is 
\[
  \hat q{}_2^{-1}(\tilde p_1(F)) \ \simeq \ 
  \frac{\ff^\perp \cap \fc_{I,\bC}^{-2}}{\bC \s_{A'} \,\oplus\, (\ff^\perp \cap \fc_{I,\bC}^{-3}) } \,.
\]
The group $\Gamma_{I,\infty}^{-2}$ naturally acts on this fibre.  Then the fibre of $\hat \pi_2 : (\Gamma_{I,\infty}^{-2}\exp(\bC\s_{A'})) \bs \sS_I^2 \to \sS_I^1$ over $\tilde p_1(F) \in \sS_I^1$ is the quotient of $\hat q{}_2^{-1}(\tilde p_1(F))$ by the action of $\Gamma_{I,\infty}^{-2}$.  

In fact, the larger group $\Gamma_I^{-2} = \Gamma_I \cap C_{I,\bQ}^{-2}$ acts on the fibre $\hat q{}_2^{-1}(\tilde p_1(F))$.  This factors through an action of $\Gamma_I^{-3} \bs \Gamma_I^{-2} \simeq \fc_{I,\bZ}^{-3} \bs \fc_{I,\bZ}^{-2}$.  (The identification is induced by the logarithm $\log : \Gamma_I^{-a} \to \fc_{I,\bQ}^{-a}$.)  So we have a natural projection
\begin{equation}\label{E:proj1}
  \hat \pi{}_2^{-1}(\tilde p_1(F)) \ \to \ 
  \frac{\ff^\perp \cap \fc_{I,\bC}^{-2}}{\bC \s_{A'}\,+\, (\ff^\perp \cap \fc_{I,\bC}^{-3}) \,+\, \fc_{I,\bZ}^{-2}} \,.
\end{equation}
The right-hand side is isomorphic to a complex torus $\bT \times (\bC^*)^n$ with compact factor $\bT$ and noncompact factor $(\bC^*)^n$.   It follows from \eqref{E:ds-conj} that 
\[
  \mathrm{image}\left\{ 
  \fc_{I,F}^{-1,-1} \ \inj \ 
  \ff^\perp \cap \fc_{I,\bC}^{-2} \ \to \
  \frac{\ff^\perp \cap \fc_{I,\bC}^{-2}}{\bC \s_{A'}\,+\, (\ff^\perp \cap \fc_{I,\bC}^{-3}) \,+\, \fc_{I,\bZ}^{-2}}
  \right\} \ \simeq \ (\bC^*)^n \,.
\]
For our purposes it is convenient to work with the equivalent observation that we have a projection
\begin{equation}\label{E:proj2}
  \frac{\ff^\perp \cap \fc_{I,\bC}^{-2}}{\bC \s_{A'}\,+\, (\ff^\perp \cap \fc_{I,\bC}^{-3}) \,+\, \fc_{I,\bZ}^{-2}}
  \ \to \ 
  \frac{\ff^\perp \cap \fc_{I,\bC}^{-2}}{\bC \s_{A'}\,+\, (\ff^\perp \cap \fc_{I,\bC}^{-3}) \,+\, 
  (\fc_{I,F}^{-1,-1})^\perp \,+\,\fc_{I,\bZ}^{-2}}
  \ \simeq \ (\bC^*)^n \,,
\end{equation}
where
\[
  (\fc_{I,F}^{-1,-1})^\perp \ = \ \bigoplus_{\substack{p\le -2\\p+q=-2}}
  \fc_{I,F}^{p,q}
  \ = \ 
  \fc_{I,F}^{-2,0} \,\op\,
  \fc_{I,F}^{-3,1} \,\op\,
  \fc_{I,F}^{-4,2} \,\op \cdots
\]

The restriction of $\Psi_{X,\infty}^{-2}$ to $A'$ takes value in the fibre $\hat \pi{}_2^{-1}(\tilde p_1(F))$.  Composing with the projections \eqref{E:proj1} and \eqref{E:proj2}, we obtain an analytic map
$\psi : A' \to (\bC^*)^n$.  Since $A'$ is compact and connected, the map must be constant.  Locally this map is given by 
\[
  (0,w) \ \mapsto \ [\log \hat g(0,w)]
  \ \in \ 
  \frac{\ff^\perp \cap \fc_{I,\bC}^{-2}}{\bC \s_{A'}\,+\, (\ff^\perp \cap \fc_{I,\bC}^{-3}) \,+\, 
  (\fc_{I,F}^{-1,-1})^\perp \,+\,\fc_{I,\bZ}^{-2}}
  \ \simeq \ (\bC^*)^n \,,
\]
for all $(0,w) \in A' \cap \olU_o$.  Since this map is constant, we necessarily have
\[
  \left.\left(\td \log \hat g(0,w)\right)^{-1,-1}
  \right|_{A' \cap \olU_o} 
  \ \equiv \ 0 
  \quad\hbox{mod}\quad \bC\s_{A'}\,.
\]
Then the infinitesimal period relation (specifically Remark \ref{R:ipr-tg*} with $q=1$, and \S\ref{S:coord-rep}.\ref{i:dg-2}) establishes the desired \eqref{E:cnst-2}. \hfill\qed

\subsection{The inductive hypothesis} \label{S:indhyp}

Fix $k \ge 3$.  We have three inductive hypotheses:
\begin{a_list}
\item \label{i:hyp1}
Assume that the subgroup $\exp(\bQ\s_{A'} + \fc_{I,\bQ}^{-k-1}) = \exp(\bQ\s_{A'})\cdot C_{I,\bQ}^{-k-1} \subset C_{I,\bQ}^{-2}$ is well-defined.  
\end{a_list}

\noindent Define
\begin{equation}\label{E:GammaI-a}
  \Gamma_{I,\infty}^{-k} \ = \ 
  \Gamma_{I,\infty} \,\cap\, 
  \left(
  \exp(\bQ\s_{A'})\cdot C_{I,\bQ}^{-k}
  \right) \ \subset \ \Gamma_{I,\infty}^{1-k}\,.
\end{equation}

\begin{a_list}
\setcounter{enumi}{1}
\item \label{i:hyp2}
Assume that there exists a neighborhood $X \subset \olB$ of $A'$ so that the restriction of the period map $\Phi$ to $U = B \cap X$ lifts to $\Gamma_{I,\infty}^{-k}\bs (\sD \cap \sS)$: there is a commutative diagram
\[
  \begin{tikzcd}
  & \Gamma_{I,\infty}^{-k} \bs (\sD \cap \sS) \arrow[d] \\
  U \arrow[r,"\Phi"'] \arrow[ru,"\Phi_{I,\infty}^{-k}"]
  & \ \Gamma \bs \sD \,.
  \end{tikzcd}
\]
\end{a_list}

\noindent From hypothesis \ref{i:hyp2} we obtain Lemma \ref{L:Psi-k}, the general version of Corollaries \ref{C:Psi-2} and \ref{C:Psi-3}.

\begin{lemma}\label{L:Psi-k}
The restriction of $\Psi_I$ to $\Zc_I \cap X$ lifts to $(\Gamma_{I,\infty}^{-k} \exp(\bC\s_I)) \bs (\sM_I \cap \sS)$.  Likewise, the restriction of $\Theta_I$ to $\Zw_I \cap X$ lifts to $\Gamma_{I,\infty}^{-k} \bs \sS_I^1 = \sS_I^1$.   That is, there is a commutative diagram
\begin{equation} \nonumber
\begin{tikzcd}[row sep=small]
  & (\Gamma_{I,\infty}^{-k} \exp(\bC\s_I))\bs (\sM_I \cap \sS) 
    \arrow[dr] \arrow[dd]
  & \\
  \Zc_I \cap X \arrow[dd,hook]
  \arrow[ru,"\Psi_{I,\infty}^{-k}"]
  \arrow[rr,"\Psi_I\hspace{30pt}",crossing over]
  & & 
  (\Gamma_I \exp(\bC\s_I))\bs \sM_I 
  \arrow[dd,"\pi_1"]
  \\
  & \sS_I^1 \arrow[dr]
  & \\
  \Zw_I \cap X \arrow[rr,"\Theta_I"] 
  \arrow[ru,"\Theta_{I,\infty}^{-k}"]
  & & \Gamma_I \bs \sM_I^1 \,.
\end{tikzcd}
\end{equation}
\end{lemma}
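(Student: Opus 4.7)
The plan is to proceed exactly along the template of Corollaries \ref{C:Psi-2} and \ref{C:Psi-3}, with the inductive hypothesis \ref{i:hyp2} now playing the role of Lemma \ref{L:Phi-2} / Corollary \ref{C:Phi-3}. First I would take the given lift $\Phi_{I,\infty}^{-k}\colon U \to \Gamma_{I,\infty}^{-k}\bs(\sD\cap\sS)$ and combine it with the local coordinate description of $\Psi_I$ from \S\ref{S:coord-rep}: on a chart $\olU_o$ about $o \in Z_I^* \cap X$, the limiting filtration is realized by $\hat g_o(0,w)\cdot F$ with $\hat g_o(0,w)\in \exp(\ff^\perp \cap \fc_{I,\bC})$, so the nilpotent orbit theorem lets $\Phi_{I,\infty}^{-k}$ extend across $Z_I^*\cap X$ to a holomorphic map into $\Gamma_{I,\infty}^{-k}\bs(\sM_I\cap\sS)$. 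Because $\Gamma_{I,\infty}^{-k}\subset \Gamma_{I,\infty}\subset \Gamma_I$ centralizes $\s_I$, the product $\Gamma_{I,\infty}^{-k}\exp(\bC\s_I)$ is a well-defined subgroup acting on $\sM_I\cap\sS$, and quotienting by $\exp(\bC\s_I)$ yields the desired
\[
\Psi_{I,\infty}^{-k}\colon \Zc_I \cap X \ \to \ (\Gamma_{I,\infty}^{-k}\exp(\bC\s_I))\bs(\sM_I \cap \sS).
\]

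Second, I would verify the identification $\Gamma_{I,\infty}^{-k}\bs \sS_I^1 = \sS_I^1$. By Lemma \ref{L:sX-2}, $\s_{A'}\subset \fc_{I,\bQ}^{-2}$, so $\exp(\bQ\s_{A'})\subset C_{I,\bQ}^{-2}$; together with $C_{I,\bQ}^{-k}\subset C_{I,\bQ}^{-2}$ for $k\ge 2$, the definition \eqref{E:GammaI-a} gives $\Gamma_{I,\infty}^{-k}\subset C_{I,\bC}^{-2}$. But by \eqref{E:SIa} the group $C_{I,\bC}^{-2}$ acts trivially on $\sS_I^1$, so the natural lift $\Theta_{I,\infty}^{-k}$ of $\Theta_I$ obtained by composing $\Phi_{I,\infty}^{-k}$ with the boundary extension and the projection $\sM_I\cap \sS \to \sS_I^1$ lands in $\sS_I^1$ itself, as required.

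Third, the commutativity of the cube diagram is essentially automatic. The front face (involving $\Psi_I$) commutes because the projection $(\Gamma_{I,\infty}^{-k}\exp(\bC\s_I))\bs (\sM_I \cap \sS) \to (\Gamma_I \exp(\bC\s_I))\bs \sM_I$ is induced by the inclusions $\Gamma_{I,\infty}^{-k}\hookrightarrow \Gamma_I$ and $\sM_I \cap \sS \hookrightarrow \sM_I$, and $\Psi_{I,\infty}^{-k}$ was constructed as a lift of $\Psi_I$ along this projection. The bottom face (involving $\Theta_I$) commutes for the same reason with $\sS_I^1 \hookrightarrow \Gamma_I\bs\sM_I^1$, and the vertical faces commute because at the local coordinate level the projection $\pi_1$ corresponds exactly to the quotient $\sM_I\cap \sS \to \sS_I^1$ composed with modding out by $\exp(\bC\s_I)$.

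I do not expect a significant obstacle: this is a structural consequence of hypothesis \ref{i:hyp2}, with every quotient check reducing to the two containments $\Gamma_{I,\infty}^{-k}\subset\Gamma_I$ and $\Gamma_{I,\infty}^{-k}\subset C_{I,\bC}^{-2}$. The only mild subtlety is that $\Gamma_{I,\infty}^{-k}\exp(\bC\s_I)$ must be a subgroup of the ambient group acting on $\sM_I\cap\sS$, which reduces to $\Gamma_{I,\infty}^{-k}$ normalizing $\exp(\bC\s_I)$; this holds because $\Gamma_{I,\infty}^{-k}\subset \Gamma_I$ centralizes the cone $\s_I$ by the very definition of $\Gamma_I$.
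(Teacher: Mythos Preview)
Your proposal is correct and follows exactly the approach the paper intends: the paper does not give a separate proof of Lemma~\ref{L:Psi-k}, but simply states that it follows from the inductive hypothesis~\ref{i:hyp2} as the general version of Corollaries~\ref{C:Psi-2} and~\ref{C:Psi-3}. You have correctly unpacked this, including the key containment $\Gamma_{I,\infty}^{-k}\subset C_{I,\bQ}^{-2}$ (via $\s_{A'}\subset\fc_{I,\bQ}^{-2}$ and $k\ge 2$) that yields $\Gamma_{I,\infty}^{-k}\bs\sS_I^1=\sS_I^1$; one minor point is that you discuss the extension only at points of $Z_I^*\cap X$, whereas the lemma is stated over $\Zc_I\cap X$ and $\Zw_I\cap X$, but the same local analysis applies on each stratum $Z_J^*$ as in \S\ref{S:prf-ta-start}.
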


\noindent Hypothesis \ref{i:hyp2} also yields the general version of Corollaries \ref{C:sX-2} and \ref{C:sX-3}:

\begin{lemma}\label{L:sX-a}
The subgroup $\exp(\bC\s_{A'} + \fc_{I,\bQ}^{-k-2}) = \exp(\bQ\s_{A'}) \cdot C_{I,\bQ}^{-k-2} \subset C_{I,\bQ}^{-2}$ is well-defined.
\end{lemma}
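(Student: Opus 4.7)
The plan is to follow the template established in the proofs of Corollaries \ref{C:sX-2} and \ref{C:sX-3}. By the inductive hypothesis \ref{i:hyp2}, the cone $\s_{A'}$ is well-defined up to the adjoint action of $\Gamma_{I,\infty}^{-k}$. So it suffices to show that the $\bQ$--subspace $\bQ\s_{A'} + \fc_{I,\bQ}^{-k-2} \subset \fc_{I,\bQ}^{-2}$ is stable under $\tAd_{\Gamma_{I,\infty}^{-k}}$; since $\fc_{I,\bQ}^{-k-2}$ is itself a $\tAd$--invariant ideal, this reduces to showing $\tAd_\gamma(\bQ\s_{A'}) \subset \bQ\s_{A'} + \fc_{I,\bQ}^{-k-2}$ for every $\gamma \in \Gamma_{I,\infty}^{-k}$.

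Given $\gamma \in \Gamma_{I,\infty}^{-k}$, the inductive definition \eqref{E:GammaI-a} lets me write $\log\gamma = s_0 + r$ with $s_0 \in \bQ\s_{A'}$ and $r \in \fc_{I,\bQ}^{-k}$. For $N \in \s_{A'}$ I would expand
\[
  \tAd_\gamma N \ = \ \exp(\tad_{\log\gamma})\,N
  \ = \ N \,+\, \sum_{j\ge1} \frac{1}{j!}\,\tad_{\log\gamma}^{\,j} N.
\]
By \eqref{E:cIab} the $j$--th iterated bracket lies in $\fc_{I,\bQ}^{-2(j+1)}$, which is contained in $\fc_{I,\bQ}^{-k-2}$ once $2(j+1) \ge k+2$; moreover, any bracket in which the $\fc_{I,\bQ}^{-k}$--part $r$ appears at least once already lies in $\fc_{I,\bQ}^{-k-2}$, again by \eqref{E:cIab}. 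So modulo $\fc_{I,\bQ}^{-k-2}$ the only surviving contributions are the pure--$s_0$ iterated brackets $\tad_{s_0}^{\,j}N$ with $j < k/2$.

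The main obstacle will be controlling these pure brackets of elements of $\bQ\s_{A'}$ with themselves. The intended argument is that the $\s_J$'s with $\Zc_J \cap A' \not= \emptyset$ can be simultaneously realized by commuting log--monodromies: within any coordinate chart centered at a point of a deep enough stratum the relevant $N_j$'s appear as logarithms of commuting local monodromies in the sense of \S\ref{S:Ti}, and the weight--filtration criterion $J \in \mathrm{wt}(I)$ (Lemma \ref{L:Wup}) together with the local construction of Lemma \ref{L:nbd-ZI} should let us patch these together along $A'$. The residual ambiguity in this simultaneous realization is precisely conjugation by an element of $\Gamma_{I,\infty}^{-k}$, which by \eqref{E:cIab} and \eqref{E:GammaI-a} alters brackets only by terms in $\fc_{I,\bQ}^{-k-2}$. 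Together with compactness of $A'$, this would give $[\bQ\s_{A'},\bQ\s_{A'}] \subset \fc_{I,\bQ}^{-k-2}$, hence $\tad_{s_0}^{\,j}N \in \fc_{I,\bQ}^{-k-2}$ for all $j \ge 1$, and the BCH expansion above collapses to $\tAd_\gamma N \equiv N \pmod{\fc_{I,\bQ}^{-k-2}}$, completing the proof. The hardest point will be justifying the simultaneous commuting realization of the $\s_J$'s along the possibly non--simply--connected compact $A'$, especially when $A'$ meets several $\Zc_J$ without the corresponding $Z_J^*$ being connected by a deeper stratum inside $A'$.
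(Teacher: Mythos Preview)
Your setup through step~6 matches the paper's approach exactly: by inductive hypothesis \ref{i:hyp2} the cone $\s_{A'}$ is well-defined up to $\Gamma_{I,\infty}^{-k}$, and for $\gamma\in\Gamma_{I,\infty}^{-k}$ the $C_{I,\bQ}^{-k}$--part of $\gamma$ moves each $N\in\s_{A'}$ only by elements of $\fc_{I,\bQ}^{-k-2}$ via \eqref{E:cIab}. The divergence comes in how you handle the $\exp(\bQ\s_{A'})$--part.

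You try to control the iterated brackets $\tad_{s_0}^{\,j}N$ directly, and this leads you to the claim $[\bQ\s_{A'},\bQ\s_{A'}]\subset\fc_{I,\bQ}^{-k-2}$ and the proposed ``simultaneous commuting realization'' of the $\s_J$. That last step is a genuine problem: if $A'$ meets $Z_{J_1}^*$ and $Z_{J_2}^*$ with $Z_{J_1\cup J_2}=\emptyset$, there is no local chart in which the monodromies commute, and there is no evident mechanism forcing $[\s_{J_1},\s_{J_2}]$ into $\fc_{I,\bQ}^{-k-2}$ rather than merely $\fc_{I,\bQ}^{-4}$. So the argument you sketch does not close.

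The paper bypasses this entirely by working at the group level rather than the Lie--algebra level. Using \eqref{E:GammaI-a}, write $\gamma=\exp(s_0)\cdot c$ as a \emph{group product} with $s_0\in\bQ\s_{A'}$ and $c\in C_{I,\bQ}^{-k}$. Conjugation by $c$ preserves the subgroup $\langle\exp(\bQ\s_{A'}),\,C_{I,\bQ}^{-k-2}\rangle$ by the bracket estimate you already have. And conjugation by $\exp(s_0)$ preserves it for the trivial reason that $\exp(s_0)$ \emph{already lies in} that subgroup: any group is stable under its own inner automorphisms. Hence $\gamma$ normalizes the subgroup, and $\exp(\bQ\tAd_\gamma\s_{A'})\cdot C_{I,\bQ}^{-k-2}$ generates the same subgroup as $\exp(\bQ\s_{A'})\cdot C_{I,\bQ}^{-k-2}$. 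No commutativity among the $\s_J$ is needed, and the difficult patching you anticipate never arises.
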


\begin{proof}
The inductive hypothesis \ref{i:hyp2} implies that $\s_{A'}$ is well-defined modulo $\Gamma_{I,\infty}^{-k}$.  Suppose that $\gamma \in \Gamma_{I,\infty}^{-k}$.  Then \eqref{E:cIab} and \eqref{E:GammaI-a} imply $\exp( \bC\tAd_\gamma \s_{A'})$ is equivalent to $\exp(\bC\s_{A'})$ modulo $C_{I,\bC}^{-k-2}$.  
\end{proof}

We now have everything we need to extend a quotient of the map $\Psi_{I,\infty}^{-k}$ in Lemma \ref{L:Psi-k} to $\Zw_I \cap X$.  Given $\Zc_J \subset \Zw_I$, it follows from the inductive hypothesis \ref{i:hyp2} and Remark \ref{R:FA'} that the restriction of $\Psi_J : \Zc_J \to (\Gamma_J \exp(\bC\s_J)) \bs M_J$ to $\Zc_J \cap X$ lifts to $((\Gamma_{I,\infty}^{-k} \cap\Gamma_J) \exp(\bC\s_J)) \bs (M_J \cap \sS)$; we have a commutative diagram
\[
\begin{tikzcd}
  & ((\Gamma_{I,\infty}^{-k} \cap\Gamma_J) \exp(\bC\s_J)) \bs (M_J \cap \sS)
  \arrow[d]\\
  \Zc_J \cap X \arrow[r,"\Psi_J"']
  \arrow[ur,end anchor={south west}]
  & (\Gamma_J \exp(\bC\s_J)) \bs M_J \,.
\end{tikzcd}
\]
It follows from Lemmas \ref{L:MJinMI} and \ref{L:sX-a} that we have a well-defined map
\[
  ((\Gamma_{I,\infty}^{-k} \cap\Gamma_J) \exp(\bC\s_J)) \bs (M_J \cap \sS)
  \ \to \ 
  (\Gamma_{I,\infty}^{-k} \exp(\bC\s_{A'})) \bs \sS_I^k \,.
\]
Composing the lift with this map defines
\[
  \Psi_{J,\infty}^{-k} : 
  \Zc_J \cap X \ \to \ 
  (\Gamma_{I,\infty}^{-k} \exp(\bC\s_{A'})) \bs 
  \sS_I^k \,.
\]
Define a holomorphic map
\[
  \Psi_{X,\infty}^{-k} : \Zw_I \cap X 
  \ \to \ (\Gamma_{I,\infty}^{-k} \exp(\bC\s_{A'})) \bs \sS_I^k
\]
by specifying $\left.\Psi_{X,\infty}^{-k}\right|_{\Zc_J} = \Psi_{J,\infty}^{-k}$.  We have a commutative diagram:
\begin{equation} \label{E:PsiX-a}
\begin{tikzcd}
     \Zc_I \cap X
    \arrow[d,hook] \arrow[r,"\Psi_{I,\infty}^{-k}"]
    & (\Gamma_{I,\infty}^{-k} \exp(\bC\s_I)) \bs (\sM_I \cap \sS) \arrow[d]
    \\
    \Zw_I \cap X
    \arrow[r,"\Psi_{X,\infty}^{-k}"]
    \arrow[ddr,"\Theta_{I,\infty}^{-k}"',end anchor={west},bend right=10]
    & (\Gamma_{I,\infty}^{-k} \exp(\bC\s_{A'})) \bs \sS_I^k
    \arrow[d,"\hat\pi_k"]
    \\ 
    & \exp(\bC\s_{A'}) \bs \sS_I^{k-1}
    \arrow[d]
    \\
    & \sS_I^1\,.
\end{tikzcd}
\end{equation}

\noindent This brings us to our third, and final, inductive hypothesis:

\begin{a_list}
\setcounter{enumi}{2}
\item \label{i:hyp3}
Assume that the composition $\hat\pi_k \circ \Psi_{X,\infty}^{-k}$ is constant along $A'$.
\end{a_list}

\begin{lemma}
The inductive hypotheses hold for $k=3$.
\end{lemma}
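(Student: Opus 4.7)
My plan is to verify each of the three inductive hypotheses of \S\ref{S:indhyp} by invoking results already established earlier in this section. For $k=3$, hypothesis \eref{i:hyp1} asks that $\exp(\bQ\s_{A'})\cdot C_{I,\bQ}^{-4}$ be well-defined; this is precisely Corollary \ref{C:sX-2}. Hypothesis \eref{i:hyp2} asks for a lift of $\Phi|_U$ to $\Gamma_{I,\infty}^{-3}\bs(\sD\cap\sS)$ on a neighborhood $U=B\cap X$ of $A'$; this is precisely Corollary \ref{C:Phi-3}. Both are in hand from \S\ref{S:bs2-3}, so the substantive content is hypothesis \eref{i:hyp3}.

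For \eref{i:hyp3}, I will argue as indicated in Remark \ref{R:ind-c}. The construction of $\Psi_{X,\infty}^{-k}$ in \S\ref{S:bs2-3} is compatible across $k$, since the refined lift $\Phi_{I,\infty}^{-3}$ of Corollary \ref{C:Phi-3} descends to $\Phi_{I,\infty}^{-2}$ and the piecewise definition patches the same local lifts $\Psi_J$ through Lemma \ref{L:MJinMI}. Consequently the natural projection
\[
  \pi : \exp(\bC\s_{A'})\bs\sS_I^2 \ \to \ (\Gamma_{I,\infty}^{-2}\exp(\bC\s_{A'}))\bs\sS_I^2
\]
satisfies $\pi\circ\hat\pi_3\circ\Psi_{X,\infty}^{-3}=\Psi_{X,\infty}^{-2}$.

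By Theorem \ref{T:ta-bc} the right-hand side is constant on $A'$, so $(\hat\pi_3\circ\Psi_{X,\infty}^{-3})(A')$ lies in a single $\pi$-fiber. Each fiber of $\pi$ is an orbit of the discrete group $\Gamma_{I,\infty}^{-2}/(\Gamma_{I,\infty}^{-2}\cap\exp(\bC\s_{A'}))$ acting on $\exp(\bC\s_{A'})\bs\sS_I^2$, and is therefore a discrete subset. Since $A'$ is connected and $\hat\pi_3\circ\Psi_{X,\infty}^{-3}$ is holomorphic, the image must reduce to a single point.

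The principal obstacle is verifying cleanly that $\pi$ is well-defined: this requires that $\Gamma_{I,\infty}^{-2}$ normalizes $\exp(\bC\s_{A'})$ at the level of the Schubert quotient $\sS_I^2$. Because $\Gamma_{I,\infty}^{-2}\subset C_{I,\bQ}^{-2}$ and $\s_{A'}\subset\fc_{I,\bQ}^{-2}$, the bracket relation $[\fc_{I,\bQ}^{-2},\fc_{I,\bQ}^{-2}]\subset\fc_{I,\bQ}^{-4}$ from \eqref{E:cIab} shows that $\tAd_\gamma(\s_{A'})\equiv\s_{A'}$ modulo $\fc_{I,\bC}^{-4}\subset\fc_{I,\bC}^{-3}$ for $\gamma\in\Gamma_{I,\infty}^{-2}$, which is absorbed once one passes to $\sS_I^2$; the compatibility of the patched maps is then automatic from the construction in \S\ref{S:prf-ta-start} and \S\ref{S:bs2-3}.
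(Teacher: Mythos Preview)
Your proof is correct and follows the same structure as the paper's: hypotheses \eref{i:hyp1} and \eref{i:hyp2} are dispatched by Corollary~\ref{C:sX-2} (with \eqref{E:GammaI-3}) and Corollary~\ref{C:Phi-3}, respectively, and hypothesis \eref{i:hyp3} is Remark~\ref{R:ind-c}. Where the paper simply asserts Remark~\ref{R:ind-c}, you supply an explicit argument via the discrete-fiber projection $\pi$; this is a valid way to fill in the implication (an alternative, slightly more direct route is to observe that $\hat\pi_3\circ\Psi_{X,\infty}^{-3}$ and $\Psi_{X,\infty}^{-2}$ share the same local coordinate representation modulo $\bC\s_{A'}\oplus(\ff^\perp\cap\fc_{I,\bC}^{-3})$, so the constancy \eqref{E:cnst-2} established in Theorem~\ref{T:ta-bc} carries over immediately).
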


\begin{proof}
For hypothesis \ref{i:hyp1} see Corollary \ref{C:sX-2} and \eqref{E:GammaI-3}.  For hypothesis \ref{i:hyp2} see Corollary \ref{C:Phi-3}.  For hypothesis \ref{i:hyp3} see Remark \ref{R:ind-c}.
\end{proof}

\subsection{The inductive step} \label{S:ta-ind}

In the case $k=3$, the three inductive hypotheses of \S\ref{S:indhyp} are all corollaries of Theorem \ref{T:ta-bc}, cf.~\S\ref{S:bs2-3}.  The constructions of \S\ref{S:bs2-3} can be adapted in a straightforward way to show that, in order to establish the inductive step, it suffices to prove Theorem \ref{T:indstep}.

\begin{theorem} \label{T:indstep}
The map $\Psi_{X,\infty}^{-k}$ of \eqref{E:PsiX-a} is constant along $A'$.
\end{theorem}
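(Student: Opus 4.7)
The plan is to prove Theorem \ref{T:indstep} by generalizing the argument for Theorem \ref{T:ta-bc} from \S\ref{S:prf-tabc} to the inductive level $-k$.  The local coordinate representation of $\Psi_{X,\infty}^{-k}$ on $A' \cap \olU_o$ is
\[
  \Psi_{X,\infty}^{-k}(0,w) \ \equiv \ \log\hat g(0,w)
  \pmod{\bC\s_{A'}\,\oplus\,(\ff^\perp \cap \fc_{I,\bC}^{-k-1})}
\]
(combining \S\ref{S:coord-rep} with the construction in \S\ref{S:ta-ind}), so constancy of $\Psi_{X,\infty}^{-k}|_{A'}$ is equivalent to $(\td\log\hat g(0,w))^{p,q}|_{A'\cap\olU_o}\equiv 0\pmod{\bC\s_{A'}}$ for every $p+q\ge -k$.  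The inductive hypothesis \ref{i:hyp3} (constancy of $\hat\pi_k\circ\Psi_{X,\infty}^{-k}$ on $A'$), together with \S\ref{S:coord-rep}.\ref{i:dg-2}, handles all $p+q\ge 1-k$, so the task reduces to the single level $p+q=-k$.

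Mirroring \eqref{E:proj1}--\eqref{E:proj2}, I would compose $\Psi_{X,\infty}^{-k}|_{A'}$ with the projection of $\hat\pi_k^{-1}(\tilde p_1(F))$ onto
\[
  \frac{\ff^\perp \cap \fc_{I,\bC}^{-k}}{\bC\s_{A'} \,+\, (\ff^\perp \cap \fc_{I,\bC}^{-k-1}) \,+\, \cR_k \,+\, \fc_{I,\bZ}^{-k}}\,,
\]
where $\cR_k = \bigoplus_{\substack{p+q=-k,\ p\le -1\\ p\notin\{-1,1-k\}}} \fc_{I,F}^{p,q}$ is the complement inside $\ff^\perp$ at level $-k$ of the extreme pair $\fc_{I,F}^{-1,1-k}\oplus\fc_{I,F}^{1-k,-1}$.  (For $k=2$ these extremes collapse to the self-conjugate $\fc_{I,F}^{-1,-1}$, and $\cR_2=\fc_{I,F}^{-2,0}$, exactly recovering \eqref{E:proj2}.)  By the reality relation \eqref{E:ds-conj}, the image of the real lattice $\fc_{I,\bR}^{-k}/\fc_{I,\bR}^{-k-1}$ in $\fc_{I,F}^{-1,1-k}\oplus\fc_{I,F}^{1-k,-1}$ is the ``diagonal'' $\{(\eta,\overline\eta)\}$, which is totally real; hence the quotient is of the form $\bC^a\times(\bC^*)^b$, in particular Stein.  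This produces a holomorphic map $\psi: A' \to \bC^a\times(\bC^*)^b$, which must be constant because $A'$ is compact and connected (Lemma \ref{L:proper2}) and any holomorphic map from a compact connected complex space into a Stein manifold is constant.

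Reading the $(-1,1-k)$--component from the constancy of $\psi$ yields $(\td\log\hat g)^{-1,1-k}|_{A'\cap\olU_o}\equiv 0\pmod{\bC\s_{A'}}$; together with the inductive vanishing at levels $\ge 1-k$ this gives $(\td\log\hat g)^{-1,-r}|_{A'\cap\olU_o}\equiv 0\pmod{\bC\s_{A'}}$ for all $0\le r\le k-1$.  Invoking the infinitesimal period relation (Remark \ref{R:ipr-tg*} with $q=k-1$) then propagates this to $(\td\log\hat g)^{-p,-r}|_{A'\cap\olU_o}\equiv 0\pmod{\bC\s_{A'}}$ for every $p\ge 1$ and $0\le r\le k-1$, covering all $(p,q)$ with $p\le -1$ and $-k\le p+q\le -1$.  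Combined with \S\ref{S:coord-rep}.\ref{i:dg-2} this completes the proof.

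The main obstacle is the verification in the second step that the quotient, after further modding by $\bC\s_{A'}$, remains Stein (rather than acquiring a compact torus factor that would defeat the compactness argument).  For $k=2$ the self-conjugacy of $\fc_{I,F}^{-1,-1}$ together with Remark \ref{R:sX-2} (placing $\s_{A'}\subset \fc_{I,F}^{-1,-1}+\fc_{I,\bC}^{-4}$) makes this transparent.  For $k=3$ the analog follows since $\bC\s_{A'}$ makes no contribution at level $-3$ (again by Remark \ref{R:sX-2}).  For $k\ge 4$ controlling the contribution of $\bC\s_{A'}$ at level $-k$ may require an inductive refinement of Remark \ref{R:sX-2}; the key point is that $\bC\s_{A'}\cap(\fc_{I,F}^{-1,1-k}\oplus\fc_{I,F}^{1-k,-1})$, being defined over $\bR$, respects the diagonal real structure and so preserves total realness, keeping the quotient Stein.
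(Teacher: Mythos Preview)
Your overall strategy mirrors the paper's proof, and you have correctly identified your own gap: controlling the contribution of $\bC\s_{A'}$ at level $-k$ for $k\ge 4$. Your proposed fix (that $\bC\s_{A'}\cap(\fc_{I,F}^{-1,1-k}\oplus\fc_{I,F}^{1-k,-1})$ is ``defined over $\bR$'' and so ``preserves total realness'') is not a complete argument; quotienting a complex vector space by a complex subspace defined over $\bR$ can still produce compact torus factors after dividing by the lattice, and the claim would need a real proof.

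The paper closes this gap with a single observation you missed: $\s_{A'} \cap \fc_{I,\bC}^{-3} = \emptyset$. Every $N \in \s_{A'}$ lies in the interior of some $\s_J$ with $J \in \mathrm{wt}(I)$, hence $W(N) = W$; were $N \in \fc_{I,\bC}^{-3}$, the induced map $N : \tGr^W_{\sfn+1} \to \tGr^W_{\sfn-1}$ would vanish, contradicting \eqref{E:Niso}. The paper uses this to identify the $\hat q_k$--fibre (over $\hat p_{k-1}(F)$) with $(\ff^\perp \cap \fc_{I,\bC}^{-k})/(\ff^\perp \cap \fc_{I,\bC}^{-k-1})$, carrying \emph{no} $\bC\s_{A'}$ correction. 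Thus for $k\ge 3$ one is proving the exact vanishing $(\td\log\hat g)^{p,q}|_{A'\cap\olU_o}=0$ at level $-k$, not merely vanishing modulo $\bC\s_{A'}$. Your obstacle evaporates; no ``inductive refinement of Remark \ref{R:sX-2}'' is needed.

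A minor stylistic difference: the paper projects onto $\fc_{I,F}^{-1,1-k}$ alone (modding out $(\fc_{I,F}^{-1,1-k})^\perp=\bigoplus_{p\le-2,\,p+q=-k}\fc_{I,F}^{p,q}$), rather than retaining the conjugate pair $\fc_{I,F}^{-1,1-k}\oplus\fc_{I,F}^{1-k,-1}$ as you do. Once $\bC\s_{A'}$ is out of the picture at level $-k$, either projection lands in $(\bC^*)^n$ by \eqref{E:ds-conj}, and the remaining steps (compactness of $A'$, then Remark \ref{R:ipr-tg*} with $-q=1-k$) are the same as yours.
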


\begin{proof}
As in the proof of Theorem \ref{T:ta-bc}, basic idea is: (i) show that the restriction of $\Psi_{X,\infty}^{-k}$ to $A'$ defines a holomorphic map $\psi: A' \to (\bC^*)^n$; (ii) since $A'$ is compact $\psi$ must be constant; and (iii) the infinitesimal period relation implies that the restriction of $\Psi_{X,\infty}^{-k}$ to $A'$ is constant.

From \S\ref{S:coord-rep}, and the definition of $\Psi_{X,\infty}^{-k}$, we see that the local coordinate representation of $\Psi_{X,\infty}^{-k}$ is 
\[
  \Psi_{X,\infty}^{-k}(0,w) \ \equiv \ 
  \log \hat g(0,w)
  \ (\hbox{mod } \bC \s_{A'} \op (\ff^\perp \cap \fc_{I,\bC}^{-k-1}))
  \ \in \ \frac{\ff^\perp \cap \fc_{I,\bC}}{\bC\s_{A'} \,\op\, (\ff^\perp \cap \fc_{I,\bC}^{-k-1})} \,.
\]
We see from \eqref{E:lambda} that in order to prove the theorem (that $\Psi_{X,\infty}^{-k}$ is contant along $A'$) it suffices to show that 
\begin{equation} \nonumber 
  \left.\left(\td \log \hat g(0,w)\right)^{p,q}
  \right|_{A' \cap \olU_o} 
  \ \equiv \ 0 
  \quad\hbox{mod}\quad \bC\s_{A'} 
  \,,\quad \hbox{for all} \quad p+q \ge -k\,.
\end{equation}
The local coordinate representation of $\hat\pi_k \circ \Psi_{X,\infty}^{-k}$ is 
\[
  \hat\pi_k \circ \Psi_{X,\infty}^{-k}(0,w) \ \equiv \ 
  \log \hat g(0,w)
  \ (\hbox{mod } \bC \s_{A'} \op (\ff^\perp \cap \fc_{I,\bC}^{-k}))
  \ \in \ \frac{\ff^\perp \cap \fc_{I,\bC}}{\bC\s_{A'} \,\op\, (\ff^\perp \cap \fc_{I,\bC}^{-k})} \,.
\]
By inductive hypothesis \S\ref{S:indhyp}\ref{i:hyp3}, we have
\begin{equation}\label{E:cnst-k+1}
  \left.\left(\td \log \hat g(0,w)\right)^{p,q}
  \right|_{A' \cap \olU_o} 
  \ \equiv \ 0 
  \quad\hbox{mod}\quad \bC\s_{A'} 
  \,,\quad \hbox{for all} \quad p+q \ge 1-k\,.
\end{equation}
By definition \eqref{E:sA'} we have $W = W(N)$ for every $N \in \s_{A'}$.  This implies
\begin{equation}\label{E:int-sX}
  \s_{A'} \cap \fc_{I,\bC}^{-3} \ = \ \emptyset \,,
\end{equation}
else \eqref{E:Niso} fails.  So in order to prove Theorem \ref{T:indstep}, it suffices to show that 
\begin{equation}\label{E:indstep}
  \left.\left(\td \log \hat g(0,w)\right)^{p,q}
  \right|_{A' \cap \olU_o} 
  \ = \ 0  
  \,,\quad \hbox{for all} \quad p+q = -k\,.
\end{equation}

Recall the projections $\tilde p_k : \sM_I \cap \sS \to \sS_I^{k-1}$ and $\tilde q_k : \sS_I^k \to \sS_I^{k-1}$ of \eqref{E:tpq}.  The biholomorphisms $\lambda_{k-1}$ and $\lambda_k$ identify the fibre of $\tilde q_k : \sS_I^k \to \sS_I^{k-1}$ over $\tilde p_{k-1}(F) \in \sS_I^{k-1}$ with
\begin{equation}\label{E:tkf}
  \tilde q{}_k^{-1}(\tilde p_{k-1}(F)) \ \simeq \ 
  \frac{\ff^\perp \cap \fc_{I,\bC}^{-k}}{\ff^\perp \cap \fc_{I,\bC}^{-k-1}}
  \ \simeq \ 
  \bigoplus_{\substack{p<0\\p+q=-k}}
  \fc_{I,F}^{p,q} \,.
\end{equation}
The second identification follows from \eqref{E:ds-cIa} and \eqref{E:ffperp}.    Let $\hat p_{k-1} : \sM_I \cap \sS \to \exp(\bC\s_{A'}) \bs \sS_I^{k-1}$ be the natural projection.  From \eqref{E:int-sX} we see that the fibre of $\hat q_k : \exp(\bC\s_{A'}) \bs \sS_I^k \to \exp(\bC\s_{A'}) \bs \sS_I^{k-1}$ over $\hat p_{k-1}(F)\in \exp(\bC\s_{A'}) \bs \sS_I^{k-1}$ is identified with \eqref{E:tkf}.  The group $\Gamma_{I,\infty}^{-k}$ naturally acts on this fibre.  And the fibre of $\hat \pi_k : (\Gamma_{I,\infty}^{-k}\exp(\bC\s_{A'})) \bs \sS_I^k \to \exp(\bC\s_{A'}) \bs \sS_I^{k-1}$ over $\hat p_{k-1}(F) \in \exp(\bC\s_{A'}) \bs \sS_I^{k-1}$ is the quotient of $\tilde q{}_k^{-1}(\tilde p_{k-1}(F))$ by the action of $\Gamma_{I,\infty}^{-k}$.  

In fact, the larger group $\Gamma_I \cap (\exp(\bQ\s_{A'}) \cdot C_{I,\bQ}^{-k})$ acts on the fibre $\tilde q{}_k^{-1}(\tilde p_{k-1}(F))$.  This factors through an action of $(\Gamma_I \cap C_{I,\bQ}^{-k-1}) \bs (\Gamma_I \cap C_{I,\bQ}^{-k}) \simeq \fc_{I,\bZ}^{-k-1} \bs \fc_{I,\bZ}^{-k}$, with the identification induced by the logarithm $\log : \Gamma_I^{-\ell} \to \fc_{I,\bQ}^{-\ell}$.  Keeping \eqref{E:int-sX} in mind, it follows that we have a natural projection
\begin{equation}\label{E:proj1k}
  \hat \pi{}_k^{-1}(\hat p_{k-1}(F)) \ \to \ 
  \frac{\ff^\perp \cap \fc_{I,\bC}^{-k}}{(\ff^\perp \cap \fc_{I,\bC}^{-k-1}) \,+\, \fc_{I,\bZ}^{-k}} \,.
\end{equation}
The right-hand side is isomorphic to a complex torus $\bT \times (\bC^*)^m$ with compact factor $\bT$ and noncompact factor $(\bC^*)^m$.   It follows from \eqref{E:ds-conj} that 
\[
  \mathrm{image}\left\{ 
  \fc_{I,F}^{-1,1-k} \ \inj \ 
  \ff^\perp \cap \fc_{I,\bC}^{-k} \ \to \
  \frac{\ff^\perp \cap \fc_{I,\bC}^{-k}}{(\ff^\perp \cap \fc_{I,\bC}^{-k-1}) \,+\, \fc_{I,\bZ}^{-k}}
  \right\} \ \simeq \ (\bC^*)^n \,,
\]
with $n \le m$.  For our purposes it is convenient to work with the equivalent observation that we have a projection
\begin{equation}\label{E:proj2k}
  \frac{\ff^\perp \cap \fc_{I,\bC}^{-k}}{\ff^\perp \cap \fc_{I,\bC}^{-k-1} \,+\, \fc_{I,\bZ}^{-k}}
  \ \to \ 
  \frac{\ff^\perp \cap \fc_{I,\bC}^{-k}}{(\ff^\perp \cap \fc_{I,\bC}^{-k-1}) \,+\, (\fc_{I,F}^{-1,1-k})^\perp \,+\,\fc_{I,\bZ}^{-k}}
  \ \simeq \ (\bC^*)^n \,,
\end{equation}
where
\[
  (\fc_{I,F}^{-1,1-k})^\perp \ = \ \bigoplus_{\substack{p\le -2\\p+q=-k}}
  \fc_{I,F}^{p,q}
  \ = \ 
  \fc_{I,F}^{-2,2-k} \,\op\,
  \fc_{I,F}^{-3,3-k} \,\op\,
  \fc_{I,F}^{-4,4-k} \,\op \cdots
\]

By inductive hypothesis \S\ref{S:indhyp}\ref{i:hyp3}, the restriction of $\Psi_{X,\infty}^{-k}$ to $A'$ takes value in the fibre $\hat \pi{}_k^{-1}(\hat p_{k-1}(F))$.  Composing with the projections \eqref{E:proj1k} and \eqref{E:proj2k}, we obtain an analytic map
$\psi : A' \to (\bC^*)^n$.  Since $A'$ is compact and connected, the map must be constant.  Locally this map is given by 
\[
  (0,w) \ \mapsto \ [\log \hat g(0,w)]
  \ \in \ 
  \frac{\ff^\perp \cap \fc_{I,\bC}^{-k}}{(\ff^\perp \cap \fc_{I,\bC}^{-k-1}) \,+\, (\fc_{I,F}^{-1,1-k})^\perp \,+\,\fc_{I,\bZ}^{-k}}
  \ \simeq \ (\bC^*)^n \,,
\]
for all $(0,w) \in A' \cap \olU_o$.  Since this map is constant, we necessarily have
\[
  \left.\left(\td \log \hat g(0,w)\right)^{-1,1-k}
  \right|_{A' \cap \olU_o} 
  \ = \ 0 \,.
\]
Then the infinitesimal period relation (specifically Remark \ref{R:ipr-tg*} with $-q=1-k$, and \eqref{E:cnst-k+1}) establishes the desired \eqref{E:indstep}. 
\end{proof}

\subsection{Completing the proof of Theorem \ref{T:thatsall}} \label{S:prf-ta-finish}

Note that $C_I^{-2\sfn-1}$ is trivial. This implies that the inductive process terminates after finitely many steps.  We have $\sS_I^{2\sfn} = \sM_I \cap \sS$.  We obtain the statement of Theorem \ref{T:thatsall} by setting 
\begin{eqnarray*}
    \Gamma_{I,\infty}' & = & \Gamma_{I,\infty}^{-2\sfn} \,,\\
    \Theta_{I,\infty}' & = & \Theta_{I,\infty}^{-2\sfn} \,,\\
    \Psi_{I,\infty}' & = & \Psi_{I,\infty}^{-2\sfn} \,,\\
    \Psi_{I,\infty}'' & = & \Psi_{X,\infty}^{-2\sfn} \,.
\end{eqnarray*}
The map $\Phi_{I,\infty}'$ of Remark \ref{R:thatsall} is $\Phi_{I,\infty}^{-2\sfn}$.  \hfill\qed

\def\cprime{$'$} \def\Dbar{\leavevmode\lower.6ex\hbox to 0pt{\hskip-.23ex
  \accent"16\hss}D}


\end{document}